\newcommand{\mycaption}[1]{
 %\vspace{-0.5cm}
\begin{quote}
\centering
  {{\bf Figure} \arabic{figure}: #1}
  \end{quote}
 \vspace{0.01cm}
 \stepcounter{figure}}
\newcommand{\bN}{\mathbb{N}}
\newcommand{\bR}{\mathbb{R}}
\newcommand{\bT}{\mathbb{T}}
\newcommand{\cF}{\mathcal{F}}
\newcommand{\cP}{\mathcal{P}}
\newcommand{\cT}{\mathcal{T}}
\newcommand{\cR}{\mathcal{R}}
\numberwithin{equation}{section}
\theoremstyle{plain}
\newtheorem{theorem}{Theorem}[section]
\theoremstyle{remark}
\newtheorem{remark}[theorem]{Remark}
\theoremstyle{definition}
\newtheorem{definition}[theorem]{Definition}
\newtheorem{algorithm}[theorem]{Algorithm}
\theoremstyle{plain}
\newtheorem{corollary}[theorem]{Corollary}
\newtheorem{lemma}[theorem]{Lemma}
\newtheorem{prop}[theorem]{Proposition}
\begin{document}

\begin{frontmatter}

\title{A binary embedding \\ of the stable line-breaking construction}
%\atltitle{}

\runtitle{Binary embedding of the stable line-breaking construction}

\begin{aug}

\author{\fnms{Franz} \snm{Rembart}\thanksref{T1}\ead[label=e1]{rembart@stats.ox.ac.uk}}
\and
\author{\fnms{Matthias} \snm{Winkel}\thanksref{T2}\ead[label=e2]{winkel@stats.ox.ac.uk}}
\address{Department of Statistics, University of Oxford, 24--29 St Giles, Oxford OX1 3LB \\ \printead{e1,e2}}
\today
\runauthor{F. Rembart and M. Winkel}

\affiliation{University of Oxford}
\thankstext{T1}{Supported by EPSRC grant EP/P505666/1}
\thankstext{T2}{Supported by EPSRC grant EP/K029797/1}

\end{aug}

\begin{abstract} We embed Duquesne and Le Gall's stable tree into a binary compact continuum random tree (CRT) in a way that solves an open problem posed by Goldschmidt and Haas. This CRT can be obtained by applying a recursive construction method of compact CRTs as presented in earlier work to a specific distribution of a random string of beads, i.e.\ a random interval equipped with a random discrete measure. We also express this CRT as a tree built by replacing all branch points of a stable tree by rescaled i.i.d. copies of a Ford CRT. Some of these developments are carried out in a space of $\infty$-marked metric spaces generalising Miermont's notion of a $k$-marked metric space.

\end{abstract}

%\begin{abstract}[language=french]
%\end{abstract}

\begin{keyword}
\kwd{stable tree}
\kwd{line-breaking construction}
\kwd{string of beads}
\kwd{continuum random tree}
\kwd{marked metric space}
\kwd{recursive distribution equation}

\end{keyword}

\begin{keyword}[class=MSC]
\kwd{60J80}
%\kwd{60J05}
\end{keyword}
\end{frontmatter}

%\tableofcontents

\section{Introduction}\label{fig1}
\label{Intro}

Stable trees were introduced by Duquesne and Le Gall 
\cite{17} as a family of continuum random trees (CRTs) parametrised by a self-similarity parameter $\alpha \in (1,2]$ to describe the genealogical structure of continuous-state branching processes with branching mechanism $\lambda \mapsto \lambda^{\alpha}$. As such they form a 
subclass of L\'evy trees \cite{23} and contain Aldous's Brownian CRT \cite{6,7,8} as a special case ($\alpha=2$). They were studied by Miermont and others  
\cite{2, 16, 17, 23,33,35,36,38} in the context of self-similar fragmentations and by several authors to establish invariance principles \cite{24,Kor12,CP09,HM10,Die13} 
and other properties \cite{11,CK14}. Furthermore, they have deeper connections to random maps and Liouville quantum gravity \cite{Subord,DMS14,MS15}. 

We represent trees as \textit{$\mathbb R$-trees}, i.e.\ compact metric spaces $(\mathcal T, d)$ such that any two points $x,y \in \mathcal T$ are 
connected by a unique path $[[x,y]]$ in $\mathcal{T}$, which is furthermore required to have length $d(x,y)$. All our $\mathbb{R}$-trees are \em rooted \em at a distinguished $\rho\in\mathcal{T}$. We refer to a 
rooted $\mathbb R$-tree $(\mathcal T, d, \rho)$ equipped with a probability measure $\mu$ as a 
\textit{weighted} $\mathbb R$-tree $(\mathcal T, d, \rho, \mu)$, and equip sets of isometry classes of $\mathbb{R}$-trees and weighted $\mathbb R$-trees with the 
Gromov-Hausdorff and the Gromov-Hausdorff-Prokhorov topology, respectively.

Ever since Aldous \cite{6}, such trees have been built sequentially from a single branch $[[\rho,\Sigma_0]]$, grafting further branches
(line segments) $]]J_{k-1},\Sigma_k]]$ to build trees $\mathcal T_k$ spanned by a growing finite number of points $\rho,\Sigma_0,\ldots,\Sigma_k$, $k\ge 1$, finally
passing to the closure/completion $\mathcal T$ of $\bigcup_{k\ge 0}\mathcal T_k$. In a given weighted $\mathbb R$-tree $(\mathcal T,d,\rho,\mu)$, a natural sequence
$(\Sigma_k,k\ge 0)$ may be obtained as an independent sample from $\mu$. 
For the Brownian CRT, Aldous \cite{6} gave an autonomous description of the resulting tree-growth process $(\mathcal T_k,k\ge 0)$ by breaking the half-line $[0,\infty)$ 
at the 
points $(S_k,k\ge 0)$ of an inhomogeneous Poisson process with linearly growing intensity $tdt$ on $[0,\infty)$, each segment $]S_k,S_{k+1}]$ grafted in a point $J_k$ chosen 
uniformly from the length measure on the structure $\mathcal T_k$ already built, with $\mathcal T_0=[0,S_0]$. 

In Aldous's construction, the \em branch points \em $J_k$, $k\ge 0$, are distinct, the trees \em binary\em. This construction reveals some of the 
local complexity of the limiting tree, since elementary thinning of Poisson processes shows that every branch receives a dense set of branch points. Goldschmidt and Haas 
\cite{12} generalised this line-breaking construction to all stable trees $(\mathcal T,d,\rho,\mu)$, which are not binary for $\alpha\in(1,2)$. They describe
\begin{equation}
\mathcal T_k=\bigcup_{i=0}^k[[\rho,\Sigma_i]],\quad k \geq  0,\qquad\mbox{ for a sample }\Sigma_i\sim\mu,i\ge 0, \label{findimmarg1}
\end{equation}
not quite autonomously, as Aldous does in the special case $\alpha=2$, but by assigning weights 
\begin{equation} W^{(i)}_{k}, \quad i \in [b_k], \quad k \geq 0, \label{weights} 
\end{equation} 
to each branch point $v_i$ of $\mathcal T_k$, where $(v_i, i \geq 1)$ is the sequence of \em distinct branch points \em in their order of appearance in  
$(\mathcal T_k, k \geq 0)$, and $b_k\ge 0$ is the number of branch points of $\mathcal T_k$. Here, $[b]:=\{1,\ldots,b\}$. %\pagebreak

Specifically, it will be convenient to change the usual parametrisation of the stable trees from a parameter $\alpha \in (1,2]$ to an \em index \em  
$\beta =1-1/\alpha \in (0,1/2]$. For $k \geq 0$, the sum of the branch point weights $(W_k^{(i)}, i \in[b_k])$ and the total length $L_k={\rm Leb}(\mathcal T_k)$ of 
$\mathcal T_k$ is given by $S_k$, where $(S_k, k \geq 0)$ is the \em Mittag-Leffler Markov chain \em \cite{2,12,39} with parameter $\beta$,  
starting from $S_0 \sim \text{ML}(\beta, \beta)$ with transition density \vspace{-0.2cm}
\begin{equation*} 
  f_{S_{k+1}\lvert S_k=z} \left( y \right)=f(z,y)=\frac{1-\beta}{\Gamma(1/\beta)}(y-z)^{1/\beta-2}\frac{yg_{\beta}(y)}{g_\beta(z)}, \quad 0 < z <y, \quad k \geq 0. \end{equation*} 
Here, $\text{ML}(\alpha, \theta)$ denotes the Mittag-Leffler distribution with parameters $ 0 < \alpha < 1$ and $\theta > -\alpha$ (cf. Section \ref{secml}), and 
$g_\beta(\cdot)$ is the density of ML$(\beta, 0)$. Then
$S_k=W_k^{(1)}+\cdots+W_k^{(b_k)} + L_k \sim {\rm{ML}}(\beta, \beta+k)$.

%% GOLDSCHMIDT-HAAS ALGORITHM 

\begin{algorithm}[Goldschmidt-Haas \cite{12}] \label{GH}
Let $\beta\in(0,1/2]$. We grow discrete $\mathbb R$-trees $\mathcal T_k$ with weights $W_k^{(i)}$ in the branch points $v_i$, $i\in[b_k]$, of $\mathcal T_k$, and edge lengths between vertices, as follows.
  \begin{enumerate}\item[0.] Let $(\mathcal T_0,\rho)$ be isometric to $([0,S_0],0)$, where $S_0\sim{\rm ML}(\beta,\beta)$; let $b_0=0$ and $W_0^{(i)}=0$, $i\ge 1$.
  \end{enumerate}
  Given $(\mathcal T_j,(v_i,i\in[b_j]),(W_j^{(i)},i\in[b_j]))$, $0\le j\le k$, and $S_k=L_k+W_k^{(1)}+\cdots+W_k^{(b_k)}$, where $L_k={\rm Leb}(\mathcal T_k)$,   
  \begin{enumerate}\item select $I_k=i$ for each branch point $v_i$ of $\mathcal T_k$ with probability proportional to $W_k^{(i)}$, $i\in[b_k]$; 
    or select an edge $E_k\subset\mathcal T_k$ with probability proportional to its length and let $b_{k+1}=b_k+1$, $I_k=b_{k+1}$;
    \item if an edge $E_k$ is selected, sample $v_{b_{k+1}}$ from the normalised length measure on $E_k$; 
    \item sample $S_{k+1}$ with density $f(S_k,\cdot)$ and an independent $B_k\sim{\rm Beta}(1,1/\beta-2)$; attach to $\mathcal T_k$ at $J_k:=v_{I_k}$ a new branch of
      length $(S_{k+1}-S_k)B_k$ to form $\mathcal T_{k+1}$; increase the weight of $J_k=v_{I_k}$ to 
	  $$W_{k+1}^{(I_k)}=W_k^{(I_k)}+(S_{k+1}-S_k)(1-B_k),\qquad \text{and set}\quad W_{k+1}^{(j)}=W_k^{(j)},\quad j \neq I_k.$$ 
  \end{enumerate}
\end{algorithm}

When $\beta=1/2$, we understand $B_k=1$, so $W_k^{(i)}=0$ for all $i \geq 1$, $k \geq 0$, and $L_k=S_k$ for all $k \geq 0$. We obtain a sequence of compact binary 
$\mathbb R$-trees whose evolution is determined by attachment points chosen uniformly at random according to the length measure, and the total length given by the 
Mittag-Leffler Markov chain of parameter $\beta=1/2$, which can be seen to correspond to an inhomogeneous Poisson process of rate $\frac{1}{2}tdt$. Hence, this reduces to Aldous's 
line-breaking construction of the Brownian CRT \cite{6}.

It was shown in \cite{12} that the sequence of trees $(\mathcal T_k, k \geq 0)$ in Algorithm \ref{GH} has the same distribution as the sequence of trees from 
\eqref{findimmarg1}, i.e. we can formally define the \em stable tree of index $\beta \in (0,1/2]$ \em as the (Gromov-Hausdorff) limit $\mathcal T$ of $\mathcal T_k$, as 
$k\rightarrow\infty$. See also \cite{12} for an alternative line-breaking construction of the sequence $(\mathcal T_k, k \geq 0)$, where branch point selection is based 
on vertex degrees instead of weights. 

Goldschmidt and Haas \cite{12} asked if there was a sensible way to associate a notion of length with the branch point weights in Algorithm \ref{GH}. We answer this 
question by using the branch point weights to build rescaled Ford trees whose lengths correspond to these weights. Ford trees arise in the scaling limit of Ford's alpha 
model studied in \cite{9,2} and in the context of the alpha-gamma-model \cite{10} for $\gamma=\alpha$, which is also related to the stable tree in the case when 
$\gamma=1-\alpha$. Ford trees are examples of binary self-similar CRTs and have also been constructed via line-breaking:

%% LINE-BREAKING CONSTRUCTION OF FORD TREES

\begin{algorithm}[Haas-Miermont-Pitman-Winkel \cite {1,2}] \label{Ford} 
Let $\beta^\prime\in(0,1)$. We grow $\bR$-trees $\cF_m$, $m\ge 1$:
  \begin{enumerate}\setcounter{enumi}{-1}\item Let $(\cF_1,\rho)$ be isometric to $([0,S_1^\prime],0)$, where $S_1^\prime\sim{\rm ML}(\beta^\prime,1-\beta^\prime)$.
  \end{enumerate}
  Given $\cF_j$, $1\le j\le m$, let $S_m^\prime={\rm Leb}(\cF_m)$ denote the length of $\cF_m$;
  \begin{enumerate}\item select an edge $E_m\subset\cF_m$ with probability proportional to its length; 
    \item if $E_m$ is external, sample $D_m\sim{\rm Beta}(1,1/\beta^\prime-1)$ and place $J_m\in E_m$ to split $E_m$ into length proportions $D_m$ and $1-D_m$; 
%the former closer to the root $\rho$; 
    otherwise, sample $J_m$ from the normalised length measure on $E_m$; 
    \item sample $S_{m+1}^\prime$ with density $f(S_m^\prime,\cdot)$; attach to $\cF_m$ at $J_m$ an edge of length $S_{m+1}^\prime\!-\!S_m^\prime$ to form
      $\cF_{m+1}$. 
  \end{enumerate}
\end{algorithm}

The sequence of trees $(\mathcal F_m, m \geq 1)$ has as its (Gromov-Hausdorff) limit a CRT $\mathcal F$ as $k \rightarrow \infty$, a so-called \textit{Ford CRT} of index 
$\beta' \in (0,1)$, see \cite{1,2}. We refer to the trees $\mathcal F_m$, $m \geq 1$, as \em Ford trees\em. In the case when $\beta'=1/2$, Algorithm \ref{Ford} corresponds to 
Aldous's construction of the Brownian CRT. 

We combine the line-breaking constructions given in Algorithms \ref{GH} and \ref{Ford} in the framework of $\infty$-marked $\mathbb R$-trees, which we introduce in 
Section \ref{IMRT} as a natural extension of Miermont's notion of $k$-marked trees \cite{22}. An $\infty$-marked $\mathbb R$-tree 
$(\mathcal T, (\mathcal R^{(i)}, i \geq 1))$ is an $\mathbb R$-tree $(\mathcal T, d, \rho)$ with non-empty closed connected subsets 
$\mathcal R^{(i)} \subset \mathcal T$, $i \geq 1$. We will refer to this setting as a \textit{two-colour} framework, meaning that the \em marked \em set 
$\bigcup_{i \geq 1} \mathcal R^{(i)}$ and the \em unmarked \em remainder $\mathcal T \setminus \bigcup_{i \geq 1} \mathcal R^{(i)}$ are associated with two different colours. 
The marked components in the line-breaking construction below correspond to rescaled Ford trees with lengths equal to the branch point weights in Algorithm \ref{GH} and 
the unmarked remainder gives rise to a stable tree. Selection of a branch point in Algorithm \ref{GH} corresponds to an insertion into the respective marked component in 
the enhanced line-breaking construction given by Algorithm \ref{twocolour}.  \pagebreak

%% TWO-COLOUR LINE-BREAKING CONSTRUCTION

\begin{algorithm}[Two-colour line-breaking construction] \label{twocolour}  Let $\beta\in(0,1/2]$. We grow $\infty$-marked $\bR$-trees $(\cT_k^*,(\cR_k^{(i)},i\ge 1))$, $k\ge 0$, as follows.
  \begin{enumerate}\setcounter{enumi}{-1}\item Let $(\cT_0^*,\rho)$ be isometric to $([0,S_0],0)$, where $S_0\sim{\rm ML}(\beta,\beta)$; let $r_0=0$ and $\cR_0^{(i)}=\{\rho\}$, $i\ge 1$.
  \end{enumerate}
  Given $(\cT_j^*,(\cR_j^{(i)},i\ge 1))$, $0\le j\le k$, let $S_k={\rm Leb}(\mathcal T_k^*)$ be the length of $\cT_k^*$ and $r_k=\#\{i\ge 1\colon\cR_k^{(i)}\neq\{\rho\}\}$;   
  \begin{enumerate}\item select an edge $E_k^*\subset\cT_k^*$ with probability proportional to its length; if $E_k^*\subset\cR_k^{(i)}$ for some $i\in[r_k]$, let $I_k=i$; otherwise, i.e.\ if $E_k^*\subset\cT_k^*\setminus\bigcup_{i\in[r_k]}\cR_k^{(i)}$, let $r_{k+1}\!=\!r_k\!+\!1$, $I_k\!=\!r_{k+1}$; 
    \item if $E_k^*$ is an
      external edge of $\cR_k^{(i)}$, sample $D_k\sim{\rm Beta}(1,1/\beta-2)$ and place $J_k^*$ to split $E_k^*$ into length proportions $D_k$ and
      $1-D_k$;
      %, the former closer to the root $\rho$; 
      otherwise, i.e.\ if $E_k^*\subset\cT_k^*\setminus\bigcup_{i\in[r_k]}\cR_k^{(i)}$ or if $E_k^*$ is an internal edge of $\cR_k^{(i)}$, sample $J_k^*$ from the normalised length measure on $E_k^*$;
    \item sample $S_{k+1}$ with density $f(S_k,\cdot)$ and an independent $B_k\sim{\rm Beta}(1,1/\beta-2)$; attach to $\cT_k^*$ at $J_k^*$ a new  
      branch of length $S_{k+1}-S_k$ to form $\cT_{k+1}^*$, and add to $\cR_k^{(I_k)}$ the part of length $(S_{k+1}-S_k)(1-B_k)$ closest to the root
      to form $\cR_{k+1}^{(I_k)}$; set $\cR_{k+1}^{(j)}=\cR_k^{(j)}$, $j\neq I_k$. 
  \end{enumerate}
\end{algorithm}

Indeed, we obtain the correspondence of the branch point weights in Algorithm \ref{GH} and the lengths of the marked subtrees in Algorithm \ref{twocolour}, as well as
marked subtrees as in Algorithm \ref{Ford}, up to scaling:

%\begin{center} \Huge{Figure: Two-colour trees} \end{center}
%\begin{figure}[htb] \centering \def\svgwidth{200pt} \input{} \caption{.} \end{figure} 

%% weight-LENGTH CORRESPONDENCE

\begin{theorem}[Weight-length representation]\label{Mainresult1}  Let  $({\mathcal T}_k,({W}_k^{(i)}, i \geq 1), k \geq 0)$ be as in Algorithm \ref{GH}. Let $(\mathcal T_k^*,(\mathcal R_k^{(i)}, i \geq 1), k \geq 0)$ be the sequence of $\infty$-marked $\mathbb R$-trees constructed in Algorithm \ref{twocolour}, and let $\widetilde{W}_k^{(i)}={\rm Leb}(\mathcal R_k^{(i)})$ denote the length of $\mathcal R_k^{(i)}$, $i \geq 1$, respectively. For $k \geq 1$, contract each component $\mathcal R_k^{(i)}$ to a single branch point $\widetilde{v}_i$ by using an equivalence relation, and denote the resulting tree by $\widetilde{\mathcal T}_k$. Then
\begin{equation}\label{weighteq}
\left(\widetilde{\mathcal T}_k, \left(\widetilde{W}_k^{(i)}, i \geq 1 \right), k \geq 0\right) \,{\buildrel d \over =}\, \left({\mathcal T}_k, \left({W}_k^{(i)}, i \geq 1 \right), k \geq 0\right).
\end{equation}
See Figure \ref{fig1}. Furthermore, there exist positive random variables $C^{(i)}$ and subsequences $(k_m^{(i)}, m \geq 1)$, $i \geq 1$, such that the rescaled marked subtrees grow like Ford trees of index $\beta'=\beta/(1-\beta)$, i.e.
\begin{equation}\label{subfordgrowth}
\left(C^{(i)} \mathcal R^{(i)}_{k_m^{(i)}}, m \geq 1\right)\,{\buildrel d \over =}\,  \left(\mathcal F_m, m \geq 1 \right), \end{equation}
for all $i \geq 1$ where $(C^{(i)} \mathcal R^{(i)}_{k_m^{(i)}}, m \geq 1)$, $i \geq 1$, are independent of each other.
\end{theorem}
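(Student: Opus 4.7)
I would prove \eqref{weighteq} by induction on $k$, realising Algorithms \ref{GH} and \ref{twocolour} as two presentations of the same Markov chain related by the contraction map $\pi_k\colon\cT_k^*\to\widetilde\cT_k$ that collapses each $\cR_k^{(i)}$ to $\widetilde v_i$. The base case $k=0$ is immediate since both initial trees are isometric to $[0,S_0]$ with $S_0\sim{\rm ML}(\beta,\beta)$. For the inductive step, the key identity is the decomposition $S_k={\rm Leb}(\cT_k^*)=L_k+\sum_{i\ge 1}\widetilde W_k^{(i)}$, which matches the decomposition $S_k=L_k+\sum_{i\ge 1}W_k^{(i)}$ of Algorithm \ref{GH}. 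Selecting an edge of $\cT_k^*$ proportionally to length therefore descends to the mixed weight/length selection in step 1 of Algorithm \ref{GH}: landing in $\cR_k^{(i)}$ corresponds to reselecting the branch point $v_i$ with probability proportional to $W_k^{(i)}$, while landing on an unmarked edge matches new branch point creation.

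For the attachment step, when the chosen edge lies inside $\cR_k^{(i)}$, the precise location of $J_k^*$ is washed out by $\pi_k$, and so the uniform/${\rm Beta}(1,1/\beta-2)$ prescription of step 2 of Algorithm \ref{twocolour} is consistent with reselecting the existing branch point $v_{I_k}$; on an unmarked edge, uniform placement on both sides agrees. In step 3, the new branch of length $S_{k+1}-S_k$ attached in Algorithm \ref{twocolour} splits under $\pi_{k+1}$ into an absorbed portion of length $(S_{k+1}-S_k)(1-B_k)$, which increments $\widetilde W_k^{(I_k)}$, and a free portion of length $(S_{k+1}-S_k)B_k$, which appears as the new external branch of $\widetilde\cT_{k+1}$. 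Since $S_{k+1}$ and $B_k$ are sampled from the same densities in both algorithms, these match the weight increment and new branch length in step 3 of Algorithm \ref{GH}, closing the induction.

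For \eqref{subfordgrowth}, I would fix $i\ge 1$ and let $k_m^{(i)}$ be the time of the $m$-th visit to $\cR^{(i)}$, i.e.\ the $m$-th index $k$ with $I_k=i$ in Algorithm \ref{twocolour}. Observed only at these times, the subsequence $(\cR^{(i)}_{k_m^{(i)}+1},m\ge 1)$ evolves autonomously according to the rules of Algorithm \ref{Ford}: an edge is selected proportionally to length inside $\cR^{(i)}$, external edges split by a ${\rm Beta}(1,1/\beta-2)$ fraction, internal edges use uniform placement, and the total length grows by $(S_{k_m^{(i)}+1}-S_{k_m^{(i)}})(1-B_{k_m^{(i)}})$. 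The arithmetic identity $1/\beta-2=1/\beta^\prime-1$ with $\beta^\prime=\beta/(1-\beta)$ matches the splitting parameter to that of Algorithm \ref{Ford} at index $\beta^\prime$, and independence across $i$ follows from the conditional independence of the Beta variables and length increments at disjoint visit times.

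The main obstacle is identifying the random scaling constants $C^{(i)}$ and verifying that the rescaled lengths $(C^{(i)}\widetilde W^{(i)}_{k_m^{(i)}+1},m\ge 1)$ form a Mittag-Leffler Markov chain with the Ford transition density at index $\beta^\prime$. I expect $C^{(i)}$ to be (the inverse of) the almost sure limit of an appropriate functional of $\widetilde W_k^{(i)}/S_k^{\beta}$, arising from a Chinese-restaurant/size-biased-pick representation of the branch point masses in the stable tree as exploited in \cite{2,12}. Given the existence of this limit, identification of the initial length $C^{(i)}\widetilde W^{(i)}_{k_1^{(i)}+1}\sim{\rm ML}(\beta^\prime,1-\beta^\prime)$ and the matching of subsequent one-step transitions should follow from beta-gamma algebra identities relating ML-distributed variables at parameters $\beta$ and $\beta^\prime$, together with the Markov property of $(S_k,k\ge 0)$.
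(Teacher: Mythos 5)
Your argument for \eqref{weighteq} is correct, and it is genuinely different from the paper's route: you work directly with the length-based Algorithm \ref{twocolour} and observe that the transition kernel, pushed through the contraction map, depends only on the contracted state $(\widetilde{\mathcal T}_k,(\widetilde W_k^{(i)},i\ge 1))$ and coincides with the kernel of Algorithm \ref{GH} (selection probabilities $\widetilde W_k^{(i)}/S_k$ and ${\rm Leb}(E)/S_k$, uniform placement on unmarked edges, and the split of the new branch of length $S_{k+1}-S_k$ into $(S_{k+1}-S_k)B_k$ visible length and $(S_{k+1}-S_k)(1-B_k)$ absorbed length); the within-component placement of $J_k^*$ is indeed invisible after contraction, so this lumping argument closes the induction. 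The paper instead deduces \eqref{weighteq} from the mass-based Algorithm \ref{twocolourmass}: it characterises the law of $\mathcal T_k^*$ via Dirichlet mass splits and strings of beads (Proposition \ref{starstrings}), shows the mass-based and length-based algorithms agree (Lemma \ref{equivalence}, Remark \ref{rem45}), and then proves Theorem \ref{mainresult}(ii) by a coupling and Dirichlet/Mittag-Leffler computation. Your route is more elementary for \eqref{weighteq} alone; what the paper's heavier machinery buys is precisely what is needed next: the mass measures, the identification with projected subtree masses of the stable tree, and the scaling information for the marked components.

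For \eqref{subfordgrowth} there is a genuine gap. You correctly identify the shape and splitting rules (length-proportional edge selection within $\mathcal R^{(i)}$, the identity $1/\beta-2=1/\beta'-1$ matching the ${\rm Beta}$ split of Algorithm \ref{Ford}), but the length process of a marked component observed at its visit times is \emph{not} the Mittag-Leffler Markov chain of parameter $\beta'$: as the paper shows (Corollary \ref{lms} and Theorem \ref{embford}(ii)), it has the correct $\beta'$-transition rule but the wrong marginals, starting from ${\rm ML}(\beta,1-2\beta)$ rather than ${\rm ML}(\beta',1-\beta')$, because the increments $(S_{k+1}-S_k)(1-B_k)$ are driven by the global chain $S_k$ and enter the component scaled by a power of its (unobserved) mass. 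Consequently no constant read off from the lengths at any finite stage turns the observed process into $(\mathcal F_m,m\ge 1)$; the scaling $C^{(i)}$ is an almost sure limit identified through the stable tree's subtree masses and $(1-\beta)$-diversity, $(C^{(i)})^{-1}=(P^{(i)})^{\beta}(D^{(i)})^{\beta/(1-\beta)}$ (Remark \ref{pidi}), and proving that with this constant one gets exactly the Ford growth process is the substantive content. The paper does this either via the $m$-dependent rescaling and law-of-large-numbers argument of Theorem \ref{embford}(iii), or via the re-construction from a given stable tree (Algorithm \ref{depstruc1}) and the coagulation--fragmentation duality of Lemmas \ref{coag1}--\ref{coag2} feeding into Proposition \ref{algsame} and Corollary \ref{algsamecor}. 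Your closing sentence defers all of this to an unspecified ``beta-gamma algebra'' step; that is exactly the part that does not follow from the visit-time observation alone.

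The independence claim across $i\ge 1$ has the same problem: before rescaling, the component length processes are dependent, since conditionally on shapes all edge lengths of $\mathcal T_k^*$ form a single Dirichlet vector scaled by $S_k$. Independence of the processes $(C^{(i)}\mathcal R^{(i)}_{k_m^{(i)}},m\ge 1)$, $i\ge 1$ (and, in the paper, their independence of the limiting stable tree), is established only after the correct normalisation, via the product structure proved in Theorem \ref{embford}(i) or via the embedding of Proposition \ref{algsame}; ``conditional independence of the Beta variables at disjoint visit times'' is not by itself a proof.
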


\begin{figure}[t]   \vspace{-0.3cm}\centering 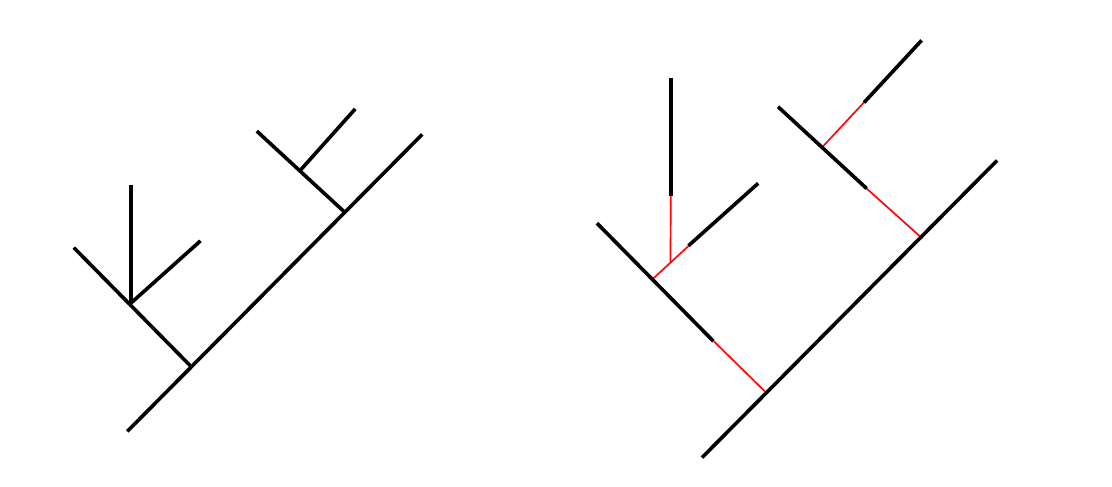\vspace{-0.2cm}  \mycaption{Example of $\widetilde{\mathcal T}_5$ with four branch points $v_1, \ldots, v_4$. \\ Branch point weights $W_5^{(1)}, \ldots, W_5^{(4)}$ are represented as lengths of marked subtrees $\mathcal R_5^{(1)}, \ldots, \mathcal R_5^{(4)}$.}\vspace{-0.2cm} \end{figure}

%We will prove Theorem \ref{Mainresult1} in Section \ref{sec4}, cf. Theorem \ref{mainresult}(ii) and Theorem \ref{embford}(i). The sequences $(\mathcal T_k, k \geq 1)$ and $(\mathcal F_m, m \geq 1)$ converge to CRTs $\mathcal T$ and $\mathcal F$, respectively. Similarly, we obtain convergence of the sequence $(\mathcal T_k^*, (\mathcal R_k^{(i)}, i \geq 1), k \geq 0)$ with respect to $d_{\rm GH}^\infty$, an analogue of the Gromov-Hausdorff distance on the space of $\infty$-marked $\mathbb R$-trees that we introduce in Section \ref{MMS}.

%The following result will be proved in the context of Theorem \ref{contwoc}.

To obtain limiting $\infty$-marked CRTs, we introduce a suitable metric $d_{\rm GH}^\infty$ in Section \ref{MMS}.

%% CONVERGENCE OF TWO-COLOUR TREES

\begin{theorem}[Convergence of two-colour trees] \label{Mainresult2} Let $(\mathcal T_k^*, (\mathcal R_k^{(i)}, i \geq 1), k \geq 0)$ be as above. Then 
\begin{equation}
\lim \limits_{k \rightarrow \infty} \left(\mathcal T_k^*,  \left(\mathcal R_k^{(i)}, i \geq 1 \right)\right) = \left(\mathcal T^*, \left( \mathcal R^{(i)}, i \geq 1 \right) \right) \quad \text{a.s.} \end{equation}
with respect to $d_{\rm GH}^\infty$, where $(\mathcal T^*, (\mathcal R^{(i)}, i \geq 1))$ is a compact $\infty$-marked $\mathbb R$-tree. Furthermore, \begin{itemize} \item the tree $\widetilde{\mathcal T}$, obtained from $\mathcal T^*$ by contracting each component $\mathcal R^{(i)}$ to a single branch point $\widetilde{v}_i$, is a stable tree of parameter $\beta$;  \item there exist scaling factors $(C^{(i)}, i \geq 1)$ such that the trees $C^{(i)} \mathcal R^{(i)}$, $i \geq 1$, are i.i.d. copies of a Ford CRT $\mathcal F$ of index $\beta'=\beta/(1-\beta)$, and the trees $C^{(i)} \mathcal R^{(i)}$, $i \geq 1$, are independent of $\widetilde{\mathcal T}$. \end{itemize} \pagebreak
\end{theorem}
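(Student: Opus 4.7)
My plan is to bootstrap from Theorem \ref{Mainresult1} together with the already established a.s.\ Gromov--Hausdorff convergence of Algorithms \ref{GH} and \ref{Ford}. Working on the natural probability space on which Algorithm \ref{twocolour} is realised, the maps $\cT_k^* \hookrightarrow \cT_{k+1}^*$ and $\cR_k^{(i)} \hookrightarrow \cR_{k+1}^{(i)}$ are isometric embeddings by construction, so I can define $\cT^*$ and $\cR^{(i)}$ as the metric completions of the increasing unions $\bigcup_{k\ge 0}\cT_k^*$ and $\bigcup_{k\ge 0}\cR_k^{(i)}$, equipped with the induced length metric. Connectedness and the rootedness $\rho \in \cR^{(i)}$ pass directly to the limit, so once compactness of $\cT^*$ is established, $(\cT^*, (\cR^{(i)}, i \ge 1))$ is indeed a compact $\infty$-marked $\bR$-tree.

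Compactness and $d_{\rm GH}^\infty$-convergence form the analytical core. From Theorem \ref{Mainresult1} and \cite{12}, the contracted sequence $\widetilde{\cT}_k$ converges a.s.\ in the Gromov--Hausdorff sense to a compact stable tree $\widetilde{\cT}$, which controls the coarse structure of $\cT^*$ after every $\cR^{(i)}$ has been collapsed to a point. Using \eqref{subfordgrowth} and the convergence theorem for Algorithm \ref{Ford} from \cite{1,2}, for each $i \ge 1$ the rescaled subtree $C^{(i)}\cR^{(i)}_{k_m^{(i)}}$ converges a.s.\ in GH to a Ford CRT $\cF^{(i)}$; between the subsequence times the component is unchanged, so the full sequence in $k$ converges as well. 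The metric $d_{\rm GH}^\infty$ introduced in Section \ref{MMS} is designed so that joint GH convergence of the ambient tree and of each marked component, combined with a tail condition on $\mathrm{diam}(\cR_k^{(i)})$ as $i \to \infty$, delivers convergence in $d_{\rm GH}^\infty$. The tail condition reduces to quantitative estimates on how the Mittag--Leffler increments $S_{k+1}-S_k$ and the Beta factors $1-B_k$ distribute new mass into the components $\cR_k^{(I_k)}$, together with a.s.\ control on the scalings $C^{(i)}$.

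The identification of the limiting objects then follows by passing the a.s.\ limits through the distributional equalities of Theorem \ref{Mainresult1}: the first identity yields $\widetilde{\cT} \buildrel d \over = $ stable tree of parameter $\beta$, and the second yields $C^{(i)}\cR^{(i)} \buildrel d \over = \cF$, a Ford CRT of index $\beta'=\beta/(1-\beta)$. Independence of the rescaled components among themselves and of $\widetilde{\cT}$ comes from the structure of Algorithm \ref{twocolour}: conditional on the ML jump sequence and the insertion data determining $\widetilde{\cT}$, the auxiliary Beta variables $(B_k,D_k)$ that drive each marked component are independent across $k$ and act on at most a single $\cR_k^{(I_k)}$ per step, so by a standard monotone class / coupling argument the conditional independence persists in the limit, giving the claimed joint law.

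\textbf{Main obstacle.} The hardest step will be the uniform-in-$i$ control of $\mathrm{diam}(\cR_k^{(i)})$ required both for compactness of $\cT^*$ and for $d_{\rm GH}^\infty$-convergence: the limit $\cT^*$ contains countably many Ford subtrees of finite but random diameter, and one must prove that these diameters decay fast enough in $i$ for the term governing $d_{\rm GH}^\infty$ to be summable (or uniformly small). This calls for sharp tail bounds on the total ML mass absorbed into the $i$-th component by step $k$, matched against the tails of the scaling constants $C^{(i)}$ arising from the Ford embedding of \eqref{subfordgrowth}.
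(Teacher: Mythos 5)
There is a genuine gap, and it sits exactly where you flag your ``main obstacle'': compactness of $\mathcal T^*$ and the a.s.\ Gromov--Hausdorff convergence of the uncontracted trees are asserted but not proved. Knowing that $\widetilde{\mathcal T}_k\rightarrow\widetilde{\mathcal T}$ (compact) and that $C^{(i)}\mathcal R^{(i)}_{k_m^{(i)}}\rightarrow\mathcal F^{(i)}$ for each fixed $i$ does not yield GH convergence of $\mathcal T_k^*$ to a compact limit: one needs that for every $\varepsilon>0$ only finitely many components $(C^{(i)})^{-1}\mathcal F^{(i)}$ have diameter exceeding $\varepsilon$, and the scaling factors $(C^{(i)})^{-1}=(P^{(i)})^{\beta}(D^{(i)})^{\beta/(1-\beta)}$ are nontrivial functionals of the branch-point mass partitions of the stable tree, so the required tail bounds are precisely the analytic content you postpone. (By contrast, the extra ``tail condition for $d_{\rm GH}^\infty$'' you mention is not really a separate issue: once the ambient tree and each $k$-marked truncation converge in GH and the limit is compact, the $2^{-k}$ weights in \eqref{GHKmarked} take care of the sum.) The paper avoids these estimates altogether: it couples the two-colour growth process (Algorithm \ref{twocemb}, Proposition \ref{oversamedist}) inside the compact CRT $(\mathcal T^*,\mu^*)$ built in \cite{RW} from i.i.d.\ $\beta$-mixed strings of beads (Theorem \ref{reccon}), so compactness is imported from that construction, and a.s.\ GHP convergence (Proposition \ref{propo2}) follows from a soft density argument: if the closure of the increasing union missed a subtree of positive $\mu^*$-mass, every projected measure $\overline{\mu}_k^*$ would retain a macroscopic atom, contradicting the vanishing of the largest atom obtained from the coupling with the stable tree. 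Without either this embedding or the missing diameter/tail estimates, your proof of the first assertion is incomplete.

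The second gap is the independence statement. Theorem \ref{Mainresult1} gives independence of the rescaled component growth processes among themselves, but not independence from $\widetilde{\mathcal T}$, and your argument that ``conditional on the insertion data the Beta variables are independent across $k$, so conditional independence persists in the limit'' does not hold as stated: in Algorithm \ref{twocolour} the same Mittag--Leffler increments $S_{k+1}-S_k$ feed both the unmarked lengths and the marked components, the choice of which component grows is made by length-proportional selection on the whole two-colour tree, and the scaling constants $C^{(i)}$ are themselves built from the subtree masses $P_j^{(i)}$ of the stable tree; there is no finite-$k$ conditional independence structure of the kind you invoke, and the independence only emerges after rescaling in the limit. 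In the paper this is the hardest distributional step: it is established by re-running the construction from a given stable tree decorated with independent labelled $(\beta',1-\beta')$-strings of beads (Algorithm \ref{depstruc1}) and proving, via the coagulation--fragmentation duality of Lemmas \ref{coag1} and \ref{coag2}, that this agrees in law with Algorithm \ref{twocolourmass} (Proposition \ref{algsame}, Corollary \ref{algsamecor}). Some substitute for that identification is needed; a monotone class appeal does not supply it.
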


The scaling factors $C^{(i)}$ can be given explicitly in terms of the masses of the subtrees of the stable tree $\widetilde{\mathcal T}$ above the branch 
point $\widetilde{v}_i$.
%%%old figure location
We can in fact use this, with the ingredients listed in Theorem \ref{Mainresult2}, to construct the two-colour tree $(\mathcal T^*,( \mathcal R^{(i)}, i \geq 1 ))$ from a stable 
tree $(\mathcal T, \mu)$ by replacing each branch point by a rescaled independent copy of a Ford CRT: 
%where we represent the scaling factors $(C^{(i)}, i \geq 1)$ via the relative $\mu$-masses of the subtrees above the branch points.

%% BRANCH POINT REPLACEMENT 

\begin{theorem}[Branch point replacement in a stable tree] \label{branchrepl}
Let $(\mathcal T, d, \rho, \mu)$ be a stable tree of index $\beta \in (0,1/2]$ equipped with an i.i.d. sequence of labelled leaves $(\Sigma_k, k \geq 0)$ sampled from $\mu$. Consider the reduced trees $(\mathcal T_k, k \geq 0)$ as in \eqref{findimmarg1} with branch points $(v_i, i \geq 1)$ in order of appearance. For each $i \geq 1$, consider the path from the root to the leaf with the smallest label above $v_i$ and the following variables:
\begin{itemize}
\item the total mass ${P}^{(i)}=\sum_{j \geq 1} {P}_j^{(i)}$ of the subtrees rooted at $v_i$ on this path with masses $({P}_j^{(i)}, j \geq 1)$, in the
order of their smallest labels;
\item the random variable ${D}^{(i)}=\lim_{n \rightarrow \infty}\big(1-\sum_{j\in[n]}P_j^{(i)}/P^{(i)}\big)^{1-\beta}(1-\beta)^{\beta-1}n^\beta$ derived from $({P}^{(i)}_j, j \geq 1)$.
\end{itemize}
For $i \geq 1$, replace $v_i$ by an independent Ford tree $\mathcal F^{(i)}$ of index $\beta'=\beta/(1-\beta)$ with distances rescaled by 
$\left(C^{(i)}\right)^{-1}=\left({P}^{(i)}\right)^{\beta} \cdot \left({D}^{(i)}\right)^{\beta/(1-\beta)}
                                         =\lim_{n \rightarrow \infty}\big(P^{(i)}-\sum_{j\in[n]}P_j^{(i)}\big)^{\beta}(1-\beta)^{-\beta}n^{\beta^2/(1-\beta)}$. Specifically,  
the root of $\mathcal F^{(i)}$ is identified with $v_i$ and the subtrees rooted at $v_i$ are attached to leaves of $\mathcal F^{(i)}$ in the order of their appearance in 
Algorithm \ref{Ford}. Then the tree $\mathcal T^*$ obtained here in the limit after all replacements has the same distribution as the tree $\mathcal T^*$ in Theorem \ref{Mainresult2}.
\end{theorem}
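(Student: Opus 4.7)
The plan is to start from the $\infty$-marked CRT $(\mathcal T^*, (\mathcal R^{(i)}, i \geq 1))$ of Theorem \ref{Mainresult2} and identify the implicit scaling factors $C^{(i)}$ appearing there with the explicit expressions in the statement. Theorem \ref{Mainresult2} already provides a canonical decomposition: contracting each $\mathcal R^{(i)}$ recovers the stable tree $\widetilde{\mathcal T}$, and at each branch point $v_i$ the rescaled Ford CRT $(C^{(i)})^{-1}\mathcal F^{(i)}$ takes the place of $v_i$, with $(\mathcal F^{(i)}, i\geq 1)$ i.i.d.\ and independent of $\widetilde{\mathcal T}$. Hence the theorem reduces to (a) verifying the attachment convention for the subtrees of $\widetilde{\mathcal T}$ at $v_i$ to the leaves of $\mathcal F^{(i)}$, and (b) computing $C^{(i)}$ as the stated $\widetilde{\mathcal T}$-measurable function of $P^{(i)}$ and $D^{(i)}$.

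Part (a) is a tracing of Algorithm \ref{twocolour}: a new external leaf of $\mathcal R_k^{(i)}$ is created precisely when the new sample $\Sigma_k$ (read off via the stable-tree representation of Theorem \ref{Mainresult1}) lands in a subtree of $\widetilde{\mathcal T}$ rooted at $v_i$ not yet visited by $\Sigma_0, \ldots, \Sigma_{k-1}$. Since $(\Sigma_k)$ is i.i.d.\ from $\mu$, the $m$-th such visit identifies the subtree containing the $m$-th smallest label above $v_i$, and this subtree attaches to the $m$-th Ford leaf of $\mathcal R^{(i)}$ in the order prescribed by Algorithm \ref{Ford}.

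For part (b), I would match the asymptotics of ${\rm Leb}(\mathcal R_k^{(i)})$ derived from the two descriptions as $k \to \infty$. Let $N_k^{(i)}$ be the number of leaves of $\mathcal R_k^{(i)}$ and $S'_m = {\rm Leb}(\mathcal F_m)$, so that the Ford side gives ${\rm Leb}(\mathcal R_k^{(i)}) = (C^{(i)})^{-1}S'_{N_k^{(i)}}$, with $S'_m$ governed by the Mittag-Leffler Markov chain at index $\beta'$. By Theorem \ref{Mainresult1}, the same length equals the Goldschmidt-Haas weight $W_k^{(i)}$, whose growth is controlled by the total $S_k$ and the stabilising weight proportions in Algorithm \ref{GH}. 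The count $N_k^{(i)}$ equals the number of distinct subtrees above $v_i$ hit by the first $k$ samples from $\mu$: by the law of large numbers it tracks the distinct-atom count in $\sim kP^{(i)}$ size-biased samples from the Poisson-Dirichlet-type partition $(P_j^{(i)}/P^{(i)})_{j \geq 1}$, and an Abel/Tauberian argument converts the tail-mass limit defining $D^{(i)}$ into exactly the corresponding counting-function asymptotic. Combining the three asymptotics and using independence of $\mathcal F^{(i)}$ from $\widetilde{\mathcal T}$, the random Mittag-Leffler prefactors on the two sides combine into a common variable that cancels, leaving a deterministic identity for $C^{(i)}$; power-counting with $\beta'=\beta/(1-\beta)$ then fixes the exponents of $P^{(i)}$ and $D^{(i)}$ and, tracking the constants through the defining limit of $D^{(i)}$ (in particular the factor $(1-\beta)^{\beta-1}$), yields $(C^{(i)})^{-1} = (P^{(i)})^\beta (D^{(i)})^{\beta/(1-\beta)}$.

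The main obstacle is the Tauberian passage from the tail-mass form of $D^{(i)}$ to a precise counting asymptotic for $N_k^{(i)}$, together with the careful bookkeeping of the Mittag-Leffler prefactors arising from the chains $(S_k)$ and $(S'_m)$. Independence of $\mathcal F^{(i)}$ from $\widetilde{\mathcal T}$, guaranteed by Theorem \ref{Mainresult2}, is what forces these prefactors to collapse into a single cancelling factor, so that $C^{(i)}$ ends up $\widetilde{\mathcal T}$-measurable and takes the claimed closed form.
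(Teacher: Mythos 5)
Your reduction of the theorem to (a) attachment bookkeeping and (b) identification of $C^{(i)}$ skips the two places where the real work lies. First, nothing in your outline shows that the ``limit after all replacements'' exists and is a compact tree, i.e.\ that the replacement procedure applied to an abstract stable tree together with independent rescaled Ford CRTs converges in $d_{\rm GHP}^{\infty}$ to something distributed as $\mathcal T^*$. The paper's proof of Theorem \ref{branchrepl} is largely devoted to exactly this: it formalises the construction as Algorithm \ref{algbprepl}, observes that its reduced trees coincide pathwise with those of Algorithm \ref{depstruc1} (a change of order of insertions, identity \eqref{brepldis}), invokes Proposition \ref{algsame} and Corollary \ref{contwoc} for the reduced trees, and then upgrades to the full trees by a height/containment estimate. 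Without some version of this, establishing (a) and (b) does not yet produce the claimed tree, let alone its law.

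Second, step (b) is the crux and your sketch of it assumes what must be proved. Theorem \ref{Mainresult2} only asserts the \emph{existence} of scaling factors with a distributional independence property; to deduce Theorem \ref{branchrepl} you need $C^{(i)}$ to equal the stated function of $(P^{(i)},D^{(i)})$ \emph{almost surely}, so that conditionally on $\widetilde{\mathcal T}$ the assembly involves no randomness beyond the Ford trees. Your argument that ``the random Mittag--Leffler prefactors on the two sides combine into a common variable that cancels'' presupposes an a.s.\ coupling identity between the asymptotics of the Goldschmidt--Haas weights $W_k^{(i)}$ and the Ford length chain $S'_m$ which is not supplied and cannot be read off from the purely distributional statements \eqref{weighteq} and \eqref{subfordgrowth}. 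In the paper this identification is not obtained by Tauberian matching at all: the scaling $(P^{(i)})^{\beta}(D^{(i)})^{\beta/(1-\beta)}$ is built into the explicit construction from the stable tree (Algorithm \ref{depstruc1}) and verified distributionally through the coagulation--fragmentation duality of Lemmas \ref{coag1}--\ref{coag2} in the proof of Proposition \ref{algsame}; the closest analogue of your asymptotic matching is Theorem \ref{embford}(iii), whose proof needs a careful law-of-large-numbers/Gamma-representation argument and even then expresses the limit of the scaling in terms of component masses $\mu^*_{k_m^{(i)}}(\mathcal R^{(i)}_{k_m^{(i)}})$, whose identification with the stable-tree subtree masses again rests on Proposition \ref{algsame}. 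Relatedly, your step (a) understates the structure: at each insertion the not-yet-discovered subtrees above $v_i$ are redistributed along the new Ford branch according to a labelled string of beads, and showing that, given the stable tree, the limiting components are independent Ford CRTs with the subtrees sitting exactly at their leaves in order of appearance is precisely the content of Proposition \ref{algsame}, not a routine trace of Algorithm \ref{twocolour}.
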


%% RECURSIVE TREE FRAMEWORKS

We will formalise this construction in Section \ref{bprepl}. The random variable ${D}^{(i)}$ is the so-called \em $(1-\beta)$-diversity \em of the mass partition $({P}^{(i)}_j/{P}^{(i)}, j \geq 1) \sim {\rm GEM}(1-\beta, -\beta)$, where GEM$(\alpha, \theta)$ denotes the Griffiths-Engen-McCloskey distribution with parameters $\alpha\in[0,1)$, $\theta > -\alpha$, whose ranked version is the Poisson-Dirichlet distribution ${\rm PD}(\alpha,\theta)$. Note that, when $\beta=1/3$, we have $\beta'=1/2$, which means that we 
replace the branch points of the stable tree by rescaled i.i.d.\ Brownian CRTs. This should be compared with Le Gall \cite{Subord}, who effectively contracts 
subtrees in the middle of a Brownian CRT to obtain a stable tree of parameter 3/2. Neither his subtrees nor our $\mathcal{T}^*$ appear to be rescaled Brownian CRTs.

The proofs of Theorems \ref{Mainresult2} and \ref{branchrepl}, in particular the compactness of $\mathcal T^*$, are based on an embedding of 
$(\mathcal T_k^*, k \geq 0)$ into a compact CRT whose existence follows from earlier work \cite{RW} where we constructed CRTs via i.i.d.\ copies of a 
random string of beads, i.e.\ any random interval equipped with a random discrete probability measure, see Section \ref{Embedding} here for details. The distribution 
$\nu$ of the string of beads needed to obtain this compact CRT combines two $(\beta, \theta)$-strings of beads (for $\theta=\beta$ and $\theta=1-2\beta$), 
which arise in the framework 
of ordered $(\beta, \theta)$-Chinese restaurant processes as introduced in \cite{1}. A \em $(\beta, \theta)$-string of beads \em is 
an interval of length $K \sim {\rm{ML}}(\beta, \theta)$ equipped with a discrete probability measure whose atom sizes are PD$(\beta, \theta)$, arranged in a random 
order that yields a regenerative property. 
%Specifically, a string of beads with distribution $\nu$ is obtained by concatenating independent $(\beta, 1-2\beta)$- and 
%$(\beta, \beta)$-strings of beads, scaled by the parts of an independent Beta$(1-2\beta, \beta)$ split.
It is crucial for our argument to equip each reduced tree with a mass measure which effectively captures projected subtree masses. This naturally leads to a new line-breaking construction of the stable tree where the selection of the attachment point $J_k$ is based on masses rather than lengths, and where a proportion of 
the mass in $J_k$ is spread over the new branch, depending on the degree ${\rm deg}(J_k,\cT_k)$ of $J_k$ in $\cT_k$.

\begin{algorithm}[Line-breaking construction of the stable tree with masses] \label{masssel}
Let $\beta\in(0,1/2]$. We grow weighted $\bR$-trees $(\cT_k,\mu_k)$, $k\ge 0$, as follows.
  \begin{enumerate}\setcounter{enumi}{-1}\item Let $(\cT_0,\mu_0)$ be isometric to a $(\beta,\beta)$-string of beads.
  \end{enumerate}
  Given $(\cT_j,\mu_j)$ with $\mu_j=\sum_{x\in\cT_j}\mu_j(x)\delta_x$, $0\le j\le k$,    
  \begin{enumerate}\stepcounter{enumi}
    \item[1.-2.] sample $J_k$ from $\mu_k$; 
    \item[3.] given ${\rm deg}(J_k,\cT_k)=d\ge 2$, let $Q_k\sim{\rm Beta}(\beta,(d-2)(1-\beta)+1-2\beta)$, and let $\xi_k$ be an
      independent $(\beta,\beta)$-string of beads; to form $(\cT_{k+1},\mu_{k+1})$, remove $Q_k\mu_k(J_k)\delta_{J_k}$ from $\mu_k$ and attach
      to $\cT_k$ at $J_k$ an isometric copy of $\xi_k$ with measure rescaled by $Q_k\mu_k(J_k)$ and metric rescaled by $(Q_k\mu_k(J_k))^\beta$.
  \end{enumerate}
\end{algorithm}

\begin{theorem} \label{samestable} In Algorithm \ref{masssel}, $(\mathcal T_k, k \geq 0)$ has the same distribution as the sequence of trees in \eqref{findimmarg1} (and as in Algorithm \ref{GH}). In particular, $\lim_{k \rightarrow \infty} \mathcal T_k = \mathcal T$ a.s. in the Gromov-Hausdorff topology for a stable tree $\mathcal T$. Furthermore, $\lim_{k \rightarrow \infty} (\mathcal T_k, \mu_k)= (\mathcal T, \mu)$ a.s. in the Gromov-Hausdorff-Prokhorov topology. 
\end{theorem}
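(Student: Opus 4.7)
The plan is to prove Theorem \ref{samestable} in three stages: (i) establish the distributional identity of the tree sequences $(\mathcal T_k)_{k\ge 0}$ from Algorithms \ref{masssel} and \ref{GH} by induction on $k$; (ii) invoke the Goldschmidt-Haas result from \cite{12} to conclude a.s.\ Gromov-Hausdorff convergence to a stable tree $\mathcal T$; and (iii) upgrade to Gromov-Hausdorff-Prokhorov convergence through a martingale-type argument for $(\mu_k)_{k\ge 0}$.

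For step (i), the induction hypothesis will strengthen the tree identity to a joint conditional law of $(\mathcal T_k,\mu_k)$: conditional on the underlying tree, $\mu_k$ is a discrete probability measure whose atom at each branch point $v_i$ of degree $d_i$ represents the normalised Goldschmidt-Haas weight $W_k^{(i)}/S_k$, and whose restriction to each edge has the joint mass/position structure of an appropriately rescaled $(\beta,\theta)$-string of beads with $\theta$ determined by the degrees of the edge's endpoints (as arising from the ordered $(\beta,\theta)$-Chinese restaurant processes of \cite{1}). Under this hypothesis, sampling $J_k\sim\mu_k$ decomposes into two complementary events: either $J_k$ is an existing branch point, which by the atomic mass is selected with probability proportional to $W_k^{(i)}$, matching the weight-biased selection of Algorithm \ref{GH}; or $J_k$ is a latent atom on some edge, and the regenerative property of strings of beads forces the latter, after marginalising over the bead structure, to match selecting an edge with probability proportional to its length and then a uniform location within it.

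The degree-dependent Beta parameter $(d-2)(1-\beta)+1-2\beta$ of $Q_k$ is precisely what beta-gamma algebra and the GEM$(\beta,\cdot)$ stick-breaking identity require so that the residual atom $(1-Q_k)\mu_k(J_k)$ at $J_k$, together with the freshly grafted, rescaled $(\beta,\beta)$-bead $\xi_k$, reproduces the same joint regenerative description at level $k{+}1$, now with the degree of $J_k$ increased by one. Meanwhile the metric rescaling of $\xi_k$ by $(Q_k\mu_k(J_k))^\beta$ converts the ${\rm ML}(\beta,\beta)$-distributed bead length into the Mittag-Leffler transition $S_{k+1}-S_k\sim f(S_k,\cdot)$, with the appropriate $B_k\sim{\rm Beta}(1,1/\beta-2)$ split between new external edge length and increment to $W_k^{(I_k)}$. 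For step (iii), the construction makes $(\mu_k)$ a measure-valued martingale with respect to the natural filtration, since the retraction of $\mu_{k+1}$ onto $\mathcal T_k$ (pushing the new atomic mass $Q_k\mu_k(J_k)$ of $\xi_k$ back to $J_k$) equals $\mu_k$; combined with a.s.\ compactness of $\mathcal T$ from step (ii), this produces tightness and an a.s.\ Prokhorov limit $\mu$, which by the standard empirical-measure characterisation of the stable tree's mass measure must coincide with the canonical $\mu$.

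The main obstacle will be the verification of the one-step distributional identity in step (i), and in particular pinning down that Beta$(\beta,(d-2)(1-\beta)+1-2\beta)$ is the unique law for $Q_k$ that preserves the regenerative joint structure of $(\mathcal T_k,\mu_k)$ under an insertion at an atom of current degree $d$. This requires careful bookkeeping of how the $\theta$-parameter of the bead structure restricted to an edge shifts when the degree of one of its endpoints increases, together with a verification that the resulting beta-gamma identities close up consistently across all possible degree configurations encountered in the growing tree. Once this identification is in place, the Mittag-Leffler transition, the a.s.\ Gromov-Hausdorff convergence, and the upgrade to the Gromov-Hausdorff-Prokhorov topology should follow by combining \cite{12} with routine martingale-and-compactness arguments.
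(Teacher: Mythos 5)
Your overall architecture (induction for the distributional identity, then \cite{12} for GH convergence, then an upgrade to GHP) is reasonable, and it differs from the paper, which instead proves Proposition \ref{masssel2} by coupling Algorithm \ref{masssel} directly to the reduced trees \eqref{findimmarg1} of a given stable tree, using the spinal decomposition (Proposition \ref{betastring}, Theorem \ref{masspart}, Proposition \ref{usefulPD}) to identify $J_k$, $Q_k$ and $\xi_k$; under that coupling $\mu_k=(\pi_k)_*\mu$ exactly, so both the GH and GHP limits come for free. However, your inductive hypothesis in step (i) is wrong as stated, and the error is not cosmetic. In the mass-based construction the atom of $\mu_k$ at a branch point $v_i$ is \emph{not} the normalised weight $W_k^{(i)}/S_k$: by Corollary \ref{Massesaslengths} the weight is $W_k^{(i)}=X_i^{\beta}M_k^{(i)}$ with $X_i=\mu_k(v_i)$ and an \emph{independent} Mittag-Leffler factor $M_k^{(i)}$, and likewise edge lengths are $X^{\beta}M$ with $M\sim{\rm ML}(\beta,\beta)$; the conditional mass split given the shape is ${\rm Dirichlet}(\beta,\ldots,\beta,w(d_1),\ldots,w(d_\ell))$ while the length/weight split is $S_k$ times a ${\rm Dirichlet}(1,\ldots,1,w(d_1)/\beta,\ldots)$, and all edge strings are $(\beta,\beta)$-strings of beads (the degree-dependent $\theta\in\{\beta,1-2\beta\}$ only appears in the two-colour tree $\mathcal T_k^*$, not here). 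Consequently your central matching step --- ``an existing branch point is selected, by its atomic mass, with probability proportional to $W_k^{(i)}$'' --- is false conditionally on the tree with its lengths and weights: mass-proportional and weight/length-proportional selection are genuinely different mechanisms and only produce the same law of the resulting tree sequence after a nontrivial Dirichlet--Mittag-Leffler marginalisation. This is exactly the content of Proposition \ref{DIRML} and Lemma \ref{equivalence}, and of the computations \eqref{equalexp1}--\eqref{equalexp2} in the appendix (carried out there for the two-colour analogue); your sketch gestures at ``marginalising over the bead structure'' for the edge case but supplies no such argument, and for the branch-point case it asserts an identity that does not hold.

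A second gap is in step (iii). Even granting the identity in law of $(\mathcal T_k,k\ge 0)$ with Algorithm \ref{GH}, that identity carries no mass measures, so the a.s.\ Prokhorov limit of $(\mu_k)$ produced by your consistency/compactness argument is not yet identified; ``the standard empirical-measure characterisation of the stable tree's mass measure'' is an assertion, not a proof. The paper closes this by proving the stronger statement that $(\mathcal T_k,\mu_k,k\ge0)$ has the law of the reduced trees equipped with the projected measures $(\pi_k)_*\mu$ (Proposition \ref{masssel2}); once $\mu_k$ is literally a projection of $\mu$ under the coupling, GHP convergence follows by the standard argument cited from \cite[Corollary 23]{1}. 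To repair your route you would either need to prove the joint identity including the measures (which essentially forces you back to the Dirichlet--ML transfer or to the stable-tree coupling), or give an independent characterisation of the limit measure; as written, both the one-step identification and the limit identification are missing.
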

The proof of Theorem \ref{samestable} is based on the following well-known property of the stable tree, phrased in different terminology in \cite[Corollary 10(3)]{35},  and \cite[discussion after Corollary 8]{1}, where the link between $(\beta, \beta)$-strings of beads and a Bessel bridge of dimension $2\beta$ was established.

\begin{prop} \label{betastring} Let $(\mathcal T, \mu)$ be a stable tree of parameter $\beta \in (0,1/2]$, and let $\Sigma_0 \sim \mu$. Consider the \em spine \em  
  $\mathcal T_0=[[\rho,\Sigma_0]]$, and equip $\mathcal T_0$ with the mass measure $\mu_0$, capturing the masses of the connected components of 
  $\mathcal T \setminus \mathcal T_0$ projected onto $\mathcal T_0$. Then $(\mathcal T_0, \mu_0)$ is a $(\beta, \beta)$-string of beads.

\end{prop}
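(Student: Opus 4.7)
The plan is to match the law of $(\mathcal{T}_0, \mu_0)$ against the defining description of a $(\beta,\beta)$-string of beads, using the Duquesne--Le Gall height-process encoding of the stable tree. Concretely, I would need to verify three things simultaneously: the length $d(\rho,\Sigma_0)$ is $\text{ML}(\beta,\beta)$-distributed; the ranked masses of the connected components of $\mathcal{T}\setminus\mathcal{T}_0$ form a $\text{PD}(\beta,\beta)$ partition of $[0,1]$; and the positions of these masses along the spine exhibit the correct regenerative structure driven by the $\beta$-stable subordinator. Matching these three pieces to the definition of a $(\beta,\beta)$-string of beads yields the claim.

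First, I would invoke the Duquesne--Le Gall construction, in which the stable tree is coded by a normalized $\beta$-stable L\'evy excursion $X$ together with its height function $H$, with $\mu$ corresponding to the push-forward of Lebesgue measure on $[0,1]$. A $\mu$-pick $\Sigma_0$ corresponds to a uniform time $U\in[0,1]$, and the ancestors of $\Sigma_0$ correspond to times $s\le U$ such that $X_s = \inf_{[s,U]} X$. A Bismut-type decomposition of the excursion at an independent uniform time identifies the law of the ancestor process, and in particular of $d(\rho,\Sigma_0)$, yielding the $\text{ML}(\beta,\beta)$ marginal; this is consistent with $S_0\sim\text{ML}(\beta,\beta)$ in Algorithm \ref{GH}.

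Second, I would read off the masses of the subtrees along the spine: each excursion of $X$ above its running infimum on $[0,U]$ encodes such a subtree, and its mass equals the excursion length. By Perman--Pitman--Yor applied to the jumps of the associated $\beta$-stable subordinator up to an appropriately distributed time, the normalized ranked masses form a $\text{PD}(\beta,\beta)$ partition, and their ordered positions along the spine inherit the regenerative property of the range of that subordinator. This matches the $(\beta,\beta)$-string of beads exactly, including the size-biased order encountered when traversing $[[\rho,\Sigma_0]]$. Alternatively, the identification can be extracted directly from \cite[Corollary 10(3)]{35} or \cite[after Corollary 8]{1}, where the link with the Bessel bridge of dimension $2\beta$ provides a ready-made description of the bead arrangement along $\mathcal{T}_0$.

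The main obstacle is the precise parameter bookkeeping in the second step: the Bismut--Perman--Pitman--Yor decomposition must deliver $(\beta,\beta)$ rather than $(\beta,0)$ or some other shift. The extra $\theta=\beta$ enters because $\Sigma_0$ is a $\mu$-pick (a size-biased sample), which introduces an additional Gamma factor in the Perman--Pitman--Yor representation; making sure this factor is tracked correctly through the excursion decomposition, and showing it is precisely what upgrades the parameters to $(\beta,\beta)$ and keeps $d(\rho,\Sigma_0)$ in $\text{ML}(\beta,\beta)$, is the delicate point of the argument.
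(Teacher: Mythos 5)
The first thing to note is that the paper contains no proof of Proposition \ref{betastring} at all: it is invoked as a known property of the stable tree, ``phrased in different terminology'' in \cite[Corollary 10(3)]{35} and in the discussion after Corollary 8 of \cite{1}, which are exactly the two references you offer as a fallback at the end of your sketch. So your citation route coincides with the paper's treatment, while your primary, from-scratch argument via the Duquesne--Le Gall coding, a Bismut-type decomposition and Perman--Pitman--Yor theory is a genuinely different and more ambitious route: it would amount to re-deriving the cited results.

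As a proof, however, the sketch has a gap at precisely the step that carries the content of the proposition. Matching the two marginals (length ${\rm ML}(\beta,\beta)$, ranked atom masses ${\rm PD}(\beta,\beta)$) does not identify the law of $(\mathcal T_0,\mu_0)$: a $(\beta,\beta)$-string of beads is defined through the ordered, regenerative $(\beta,\beta)$-interval partition, i.e.\ through the specific subordinator/local-time structure of Lemma \ref{cvgcs}, and Remark \ref{betamixedlength} of this very paper exhibits a string (the $\beta$-mixed string) with ${\rm ML}(\beta,1-\beta)$ length and ${\rm PD}(\beta,1-\beta)$ atoms that is nevertheless not a $(\beta,1-\beta)$-string of beads. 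Your handling of the arrangement --- ``the ordered positions inherit the regenerative property of the range of that subordinator'' --- is asserted rather than derived; to close it you would have to identify the cumulative projected-mass process along $[[\rho,\Sigma_0]]$, under the It\^o excursion measure, as a subordinator in the distance (local time) parametrisation, compute its Laplace exponent, and then carry the regeneration through both conditionings (normalising the excursion to total mass one, and the size-biasing coming from the $\mu$-pick $\Sigma_0$) to land on exactly the $(\beta,\beta)$ composition; this is where the Bessel-bridge identification in \cite{1} does the real work. Two further inaccuracies feed into this. For $\beta<1/2$ the components of $\mathcal T\setminus\mathcal T_0$ are not in bijection with excursions of $X$ above its running infimum with mass equal to excursion length (that is the $\alpha=2$ picture); they attach in infinite families at branch points coded by jumps of $X$, and $\mu_0$ aggregates each family into a single atom --- the coarse spinal mass of Theorem \ref{masspart} --- so reading off atoms and, crucially, positions requires the jump decomposition. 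And the spatial order of the beads along the spine is not a size-biased order (the atoms accumulate towards the root, so there is no ``first'' bead), so ``including the size-biased order encountered when traversing $[[\rho,\Sigma_0]]$'' is not the property one needs to verify. Finally, the Duquesne--Le Gall metric normalisation differs by a constant from the diversity normalisation used here (as in Algorithm \ref{GH}), which must be tracked if the spine length is to be exactly ${\rm ML}(\beta,\beta)$ rather than a multiple of it.
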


This paper is structured as follows. We introduce the framework of $\infty$-marked $\mathbb R$-trees in Section \ref{MMS}, and collect some preliminary results in 
Section \ref{prelim}. Section \ref{sec4} is devoted to the study of the two-colour line-breaking construction, while Section \ref{sec5} deals with its convergence to  
compact CRTs, as well as the branch point replacement. In Section \ref{Sec6}, we study a discrete two-colour tree-growth process whose two-step scaling 
limit is the two-colour CRT. An appendix includes some proofs postponed from earlier sections.

%% MARKED METRIC SPACES
\section{$\mathbb R$-trees and marked metric spaces} \label{MMS}

%% R-TREES 
\subsection{$\mathbb R$-trees and the Gromov-Hausdorff topology} \label{RT}

%We will use the framework of $\mathbb R$-trees (or \textit{real} trees) to analyse the trees considered in this paper. 
A compact metric space $(\mathcal T, d)$ is called an \textit{$\mathbb R$-tree} \cite{20,26} if for each $x,y \in \mathcal T$ the following holds.
\begin{enumerate}
\item[(i)] There is an isometry $f_{x,y} \colon [0, d(x,y)] \rightarrow \mathcal T$ such that $f_{x,y}(0)=x$ and $f_{x,y}(d(x,y))=y$.
\item[(ii)] For all injective paths $g \colon [0,1] \rightarrow \mathcal T$ with $g(0)=x$ and $g(1)=y$, we have $g([0,1])=f_{x,y}([0,d(x,y)])$.
\end{enumerate}
We denote the range of $f_{x,y}$ by $[[x,y]]:=f_{x,y}\left([0,d(x,y)]\right)$. 
All our $\mathbb R$-trees will be \em rooted \em at a distinguished element $\rho$, the \textit{root} of $\mathcal T$.
We call two $\mathbb R$-trees $(\mathcal T, d, \rho)$ and $(\mathcal T', d', \rho')$ 
\textit{equivalent} if there is an isometry from $\mathcal T$ to $\mathcal T'$ that maps $\rho$ onto $\rho'$. We denote by $\mathbb T$ the set of equivalence classes of 
rooted $\mathbb R$-trees, which we equip with the {Gromov-Hausdorff distance} $d_{\rm GH}$ \cite{21}  to obtain the Polish space $(\mathbb T, d_{\rm GH})$. The \textit{Gromov-Hausdorff distance} between two $\mathbb{R}$-trees $(\mathcal T, d, \rho)$ and $(\mathcal T',d', \rho')$ is defined as \begin{equation}  d_{\rm GH} \left(\left(\mathcal T, d,\rho\right), \left(\mathcal T', d',\rho'\right)\right) := \inf \limits_{\varphi, \varphi'} \left\{\max \left\{\delta \left(\varphi\left(\rho\right), \varphi'\left(\rho'\right)\right), \delta_{\rm H}\left(\varphi \left(\mathcal T\right), \varphi'\left(\mathcal T'\right)\right)\right\}\right\}, \label{GHdist} \end{equation}
where the infimum is taken over all metric spaces $(\mathcal M, \delta)$ and all isometric embeddings $\varphi\colon \mathcal T \rightarrow \mathcal M$, $\varphi' \colon \mathcal T' \rightarrow \mathcal M$ into the common metric space $(\mathcal M, \delta)$, and $\delta_{\rm H}$ is the Hausdorff distance between compact subsets of $(\mathcal M, \delta)$.
It is well-known that the Gromov-Hausdorff distance only depends on equivalence classes of rooted $\mathbb R$-trees, and we equip $\mathbb T$ with the Borel $\sigma$-algebra $\mathcal B(\mathbb T)$ induced by $d_{\rm GH}$. 

We can enhance a rooted $\mathbb R$-tree by considering a probability measure $\mu$ on its Borel sets $\mathcal B(\mathcal T)$, and call $(\mathcal T, d, \rho, \mu)$ a \textit{weighted} $\mathbb R$-tree. We call $(\mathcal T, d, \rho, \mu)$ and $(\mathcal T', d', \rho', \mu')$ \textit{equivalent} if there is an isometry from $\mathcal T$ to $\mathcal T'$ such that $\rho$ is mapped onto $\rho'$ and $\mu'$ is the push-forward of $\mu$ under this isometry. We let $\mathbb{T}_{\rm w}$ denote the set of equivalence classes of compact weighted $\mathbb R$-trees. Then $\mathbb{T}_{\rm w}$ is Polish when equipped with the \textit{Gromov-Hausdorff-Prokhorov distance} $d_{\rm GHP}$ induced by 
\begin{equation} d_{\rm GHP} \left(\left(\mathcal T, d,\rho, \mu\right), \left(\mathcal T', d',\rho', \mu'\right)\right) := \inf \limits_{\varphi, \varphi'} \left\{\max\left\{\delta\left(\varphi\left(\rho\right), \varphi'\left(\rho'\right)\right), \delta_{\rm H}\left(\varphi\left(\mathcal T\right), \varphi'\left(\mathcal T'\right)\right), \delta_{\text P}\left(\varphi_*\mu, \varphi'_*\mu'\right) \right\} \right\} \label{GHPdist} \end{equation}
for weighted $\mathbb{R}$-trees $(\mathcal T, d, \rho, \mu)$ and $(\mathcal T',d', \rho', \mu')$, where $\varphi, \varphi', \delta_{\text H}$ are as in \eqref{GHdist}, $\varphi_*\mu$, $\varphi'_*\mu$ are the push-forwards of $\mu$, $\mu'$ via $\varphi, \varphi'$, respectively, and $\delta_{\text P}$ is the Prokhorov distance on the space of Borel probability measures on $(\mathcal M, \delta)$ given by 
$\delta_{\rm P}\left(\mu, \mu' \right)=\inf \left\{ \epsilon >0\colon \mu(D) \leq \mu'( D^\epsilon)+\epsilon \quad \forall D \subset \mathcal M \text{ closed}\right\}$, 
where $D^\epsilon:=\{x \in \mathcal M\colon \inf_{y \in D} \delta(x,y) \leq\epsilon\}$ denotes the \textit{$\epsilon$-thickening} of $D$. 

%\begin{prop}[e.g. \cite{20,22,HM10}] $(\mathbb T,d_{\rm GH})$ and $(\mathbb T_{\rm w}, d_{\rm GHP})$ are separable and complete. \end{prop}

While some of our developments are more easily stated in $(\mathbb T,d_{\rm GH})$ or $(\mathbb T_{\rm w},d_{\rm GHP})$, others benefit from more explicit
embeddings into a particular metric space $(\mathcal{M},\delta)$, which we will mostly choose as 
$$\mathcal{M}=l^1(\mathbb{N}_0^2):=\left\{(s_{i,j})_{i,j\in\mathbb N_0}\in[0,\infty)^{\mathbb N_0^2}\colon\sum_{i,j\in\mathbb N_0}s_{i,j}<\infty\right\}$$
with the metric induced by the $l^1$-norm. This is a variation of Aldous's \cite{8,7,6} choice $\mathcal{M}=l^1(\mathbb{N})$.
We denote by $\mathbb{T}^{\rm emb}$ the space of all compact $\mathbb{R}$-trees $\mathcal T\subset l^1(\mathbb{N}_0^2)$ with root $0\in\mathcal T$, which we equip with the Hausdorff metric $\delta_{\rm H}$, and by $\mathbb{T}^{\rm emb}_{\rm w}$ the space of all weighted compact $\mathbb{R}$-trees $(\mathcal T,\mu)$ with $\mathcal T\in\mathbb{T}^{\rm emb}$, which we equip with the metric
$\delta_{\rm HP}((\mathcal T,\mu),(\mathcal T^\prime,\mu^\prime))=\max\{\delta_{\rm H}(\mathcal T,\mathcal T^\prime),\delta_{\rm P}(\mu,\mu^\prime)\}$.

\begin{prop}[e.g. \cite{RW}]\label{embprop}\begin{enumerate}\item[\rm(i)] $(\mathbb{T}^{\rm emb},\delta_{\rm H})$ and $(\mathbb{T}^{\rm emb}_{\rm w},\delta_{\rm HP})$ are separable and complete.
  \item[\rm(ii)] For all $\mathcal T,\mathcal T^\prime\in\mathbb{T}^{\rm emb}$ we have $d_{\rm GH}(\mathcal T,\mathcal T^\prime)\le\delta_{\rm H}(\mathcal T,\mathcal T^\prime)$, and for all $(\mathcal T,\mu),(\mathcal T^\prime,\mu^\prime)\in\mathbb{T}^{\rm emb}_{\rm w}$, we have $d_{\rm GHP}((\mathcal T,\mu),(\mathcal T^\prime,\mu^\prime))\le\delta_{\rm HP}((\mathcal T,\mu),(\mathcal T^\prime,\mu^\prime))$. 
  \item[\rm(iii)] Every rooted compact $\mathbb{R}$-tree is equivalent to an element of $\mathbb{T}^{\rm emb}$, and every rooted weighted compact $\mathbb{R}$-tree is
    equivalent to an element of $\mathbb{T}^{\rm emb}_{\rm w}$.
  \item[\rm(iv)] For $\mathcal T_n\in\mathbb{T}^{\rm emb}$ with $\mathcal T_n\subseteq \mathcal T_{n+1}$, $n\ge 1$, and the closure $\mathcal T:=\overline{\bigcup_{n\ge 1}\mathcal T_n}$, we
    have $(\mathcal T_n,n\ge 1)$ convergent in $(\mathbb{T},d_{\rm GH})$ if and only if $\lim_{n\rightarrow\infty}\delta_{\rm H}(\mathcal{T}_n,\mathcal{T})=0$. In particular, in this
    case $\mathcal{T}$ is compact.
  \end{enumerate}  
\end{prop}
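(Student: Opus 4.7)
The plan is to handle the four parts in the natural order, exploiting the separability of $l^1(\mathbb{N}_0^2)$, the four-point characterisation of $\mathbb{R}$-trees, and a standard $\ell^1$-embedding for compact $\mathbb{R}$-trees. The bulk of the work, and the only substantive difficulty, lies in part (iv).

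For part (ii), I would simply plug $\mathcal{M}=l^1(\mathbb{N}_0^2)$ and the identity embeddings $\varphi=\varphi^\prime={\rm id}$ into the infima defining $d_{\rm GH}$ and $d_{\rm GHP}$ in \eqref{GHdist} and \eqref{GHPdist}. Since both roots equal $0$ and the push-forwards are $\mu$ and $\mu^\prime$ themselves, the infima are dominated by $\delta_{\rm H}(\mathcal T,\mathcal T^\prime)$ and $\delta_{\rm HP}((\mathcal T,\mu),(\mathcal T^\prime,\mu^\prime))$ respectively. For part (i), I would invoke the general fact that for a separable complete metric space $E$, the space $\mathcal K(E)$ of nonempty compact subsets of $E$ equipped with the Hausdorff metric is itself Polish (and likewise for $\mathcal K(E)$ paired with Borel probability measures on $E$ under the Prokhorov metric). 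Then I would check that $\mathbb T^{\rm emb}$ is a closed subset of $\mathcal K(l^1(\mathbb N_0^2))$ by verifying that the property of containing $0$ is trivially closed, and that Hausdorff limits of compact $\mathbb{R}$-trees are $\mathbb{R}$-trees via the four-point condition (which passes to limits). The weighted version follows analogously, using that the support constraint ``$\mu$ is a probability measure on $\mathcal T$'' is preserved under joint Hausdorff--Prokhorov limits.

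For part (iii), given a rooted compact $\mathbb{R}$-tree $(\mathcal T,d,\rho)$, I would choose a countable dense sequence $(x_n)_{n\ge 0}$ with $x_0=\rho$ and use a standard isometric embedding into $\ell^1(\mathbb N)$ that sends $\rho$ to $0$ (for instance, writing $\mathcal T$ as an increasing union of finite subtrees spanned by $\{\rho,x_0,\ldots,x_n\}$ and embedding each new edge onto a fresh coordinate axis, with total displacements given by the edge lengths). Composing with a bijection $\mathbb N\leftrightarrow\mathbb N_0^2$ produces an equivalent element of $\mathbb T^{\rm emb}$. The weighted case is immediate by pushing $\mu$ forward under this embedding.

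The main obstacle is the non-trivial direction of part (iv): that Gromov--Hausdorff convergence of the nested sequence $(\mathcal T_n)$ forces $\mathcal T=\overline{\bigcup_n \mathcal T_n}$ to be compact and $\delta_{\rm H}(\mathcal T_n,\mathcal T)\to 0$. The ``if'' direction follows from (ii) since $d_{\rm GH}(\mathcal T_n,\mathcal T)\le\delta_{\rm H}(\mathcal T_n,\mathcal T)$ once one knows $\mathcal T\in\mathbb T^{\rm emb}$. For the converse, I would use the nested structure to show that the GH-distance is actually bounded below by (half) the intrinsic Hausdorff distance within the ambient tree: given any $\varepsilon$-correspondence $R\subset\mathcal T_m\times\mathcal T_n$ with $(\rho,\rho)\in R$, for a point $y\in\mathcal T_n$ at intrinsic distance $h=d(y,\mathcal T_m)$ from $\mathcal T_m$, one can pick the $R$-partner $x\in\mathcal T_m$ of $y$ and use the four-point condition (applied to $\rho$, $y$, $x$, and the nearest point to $y$ in $\mathcal T_m$) to derive a lower bound of order $h$ on the distortion of $R$. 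This implies the sequence $(\mathcal T_n)$ is Hausdorff--Cauchy in $l^1(\mathbb N_0^2)$, hence the union is totally bounded, so $\mathcal T$ is compact; monotonicity of $(\mathcal T_n)$ then yields $\delta_{\rm H}(\mathcal T_n,\mathcal T)=\sup_{x\in\mathcal T}d(x,\mathcal T_n)\to 0$. The last statement of (iv) is then just the fact that a Hausdorff limit of compact sets in a complete metric space is compact.
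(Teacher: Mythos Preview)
The paper does not give its own proof of this proposition; it simply records the statement and cites \cite{RW}. So there is no in-paper argument to compare against, and your task is really to produce a self-contained proof. Your sketches for (i)--(iii) are standard and correct.

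For (iv), your overall strategy for the ``only if'' direction --- show that Gromov--Hausdorff convergence of the nested sequence forces the union to be totally bounded, hence $\mathcal T$ is compact, hence $\delta_{\rm H}(\mathcal T_n,\mathcal T)\to 0$ --- is exactly right. However, the step where you invoke the four-point condition on $\rho,y,x,p$ to bound the distortion of $R$ below by order $h$ is under-specified and does not work as written. The four-point identity among $\rho,y,x,p$ in $\mathcal T_n$ holds automatically (it merely encodes that $p$ is the gate from $\mathcal T_m$ to $y$, giving $d(\rho,y)+d(x,p)=d(\rho,p)+d(x,y)$); by itself it places no constraint on $\mathrm{dis}(R)$. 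To extract a genuine lower bound one would need to bring in additional correspondence pairs (e.g.\ the $\mathcal T_m$-partner of $p$ viewed in $\mathcal T_n$, and iterate), and the bookkeeping is delicate --- it is not the one-line consequence your sketch suggests.

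A cleaner route avoids this entirely and does not even use the $\mathbb R$-tree structure. From $d_{\rm GH}(\mathcal T_N,\mathcal T_n)<\varepsilon/4$ one obtains $N(\mathcal T_n,\varepsilon)\le N(\mathcal T_N,\varepsilon/2)$ by transporting an $(\varepsilon/2)$-net through a correspondence of distortion $<\varepsilon/2$; hence $\sup_n N(\mathcal T_n,\varepsilon)<\infty$ for every $\varepsilon>0$. By nestedness, any finite $\varepsilon$-separated subset of $\bigcup_n\mathcal T_n$ lies in a single $\mathcal T_N$ and therefore has cardinality at most $\sup_n N(\mathcal T_n,\varepsilon/2)$. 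Thus $\bigcup_n\mathcal T_n$ is totally bounded, its closure $\mathcal T$ is compact, and then $\delta_{\rm H}(\mathcal T_n,\mathcal T)=\sup_{x\in\mathcal T}d(x,\mathcal T_n)\to 0$ by monotonicity and density exactly as you indicate.
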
  

%We will often refer to $\mathcal T$ as a (weighted) $\mathbb R$-tree, the metric $d$, the root $\rho$ and, in case of a weighted $\mathbb R$-tree, the mass measure $\mu$ being implicit. 
For $\mathcal T\in\mathbb T^{\rm emb}$ and $c>0$, we define $c\mathcal T:=\{cx\colon x\in\mathcal T\}$. More generally for any $\mathbb{R}$-tree $(\mathcal T,d)$, we 
slightly abuse notation and denote by $c\mathcal T$ the metric space $(\mathcal T, cd)$ obtained when all distances are multiplied by $c$. We consider random 
$\mathbb R$-trees whose equivalence class in $\mathbb T$ has the distribution of a stable or Ford tree, and also refer to these trees as stable or Ford trees, and to the associated law on $\mathbb T$ as their \em distribution\em.

%Further terminology is needed to describe specific characteristics of an $\mathbb R$-tree $(\mathcal T,d, \rho)$.
%For each element $x \in \mathcal T$, we call $d(\rho, x)$ the \textit{height} of $x$, ${\rm ht}(\mathcal T):=\sup_{x \in \mathcal T}d(\rho,x)$ the \textit{height} of $\mathcal T$, and ${{\rm diam}}(\mathcal T):=\sup_{x,y \in \mathcal T}d(x,y)$ the \textit{diameter} of $\mathcal T$. 
If $x \in \mathcal T\setminus \{\rho\}$ is such that $\mathcal T \setminus \{x\}$ is connected, we call $x$ a $\textit{leaf}$ of $\mathcal T$. A \textit{branch point} is 
an element $x \in \mathcal T$ such that $\mathcal T \setminus \{x\}$ has at least three connected components. We refer to the number of these components as the 
\textit{degree} {{deg}}$(x, \mathcal T)$ of $x$. We denote the 
sets of all leaves and branch points by ${\rm Lf}(\mathcal T)$ and ${\rm Br}(\mathcal T)$. If $\mathcal T \setminus {\rm Br}(\mathcal T)$
has only finitely many connected components, we call $\mathcal T$ a \em discrete \em $\mathbb{R}$-tree and these components (with or without one or both endpoints) \textit{edges}. We denote 
the set of edges by  Edg($\mathcal T$), and call  
$\#{\rm Lf}(\mathcal T)$ the \em size \em of $\mathcal T$. Also, $|\mathcal T|:=\#{\rm Edg}(\mathcal T)$. We call the discrete graph with edge set Edg($\mathcal T$) the \textit{shape} of $\mathcal T$. 

In the case of discrete weighted $\mathbb R$-trees it will often be of interest how the total mass of $1$ is distributed between the edges, with possibly some mass in
branch points, which for convenience we will also write in the form $E=\{v\}$. For any weighted $\mathbb R$-tree $(\mathcal T, \mu)$ with $n$ edges/branch points 
$E_1, \ldots, E_n$, the vector $(X_1, \ldots, X_n)$ with $X_i:=\mu(E_i)$, $i \in [n]$, is called the \textit{mass split} in $\mathcal T$. We will also 
consider mass splits in subtrees $\mathcal R\subset \mathcal T$, i.e.\ mass splits in $(\mathcal R, \mu(\mathcal R)^{-1} \mu \restriction_{\mathcal R})$. To distinguish 
mass splits in the \enquote{big} tree $\mathcal T$ and in \enquote{small} subtrees, we will speak of the \textit{total} and 
\textit{internal} (or \textit{relative}) mass splits, respectively. 

The limiting trees of the weighted $\mathbb R$-trees in our constructions will be \textit{continuum trees}, i.e.\ weighted $\mathbb R$-trees $(\mathcal T, d, \mu)$ such that the probability measure $\mu$ on $\mathcal T$ satisfies the following three properties.
(i) $\mu$ is supported by the set Lf($\mathcal T)$ of leaves of $\mathcal T$. (ii) $\mu$ is non-atomic, i.e. for any $x \in {\rm Lf}(\mathcal T)$, $\mu(x)=0$. 
(iii) For any $x \in \mathcal T \setminus {\rm Lf}(\mathcal T)$ and $\mathcal T_x := \{ \sigma \in \mathcal T\colon x \in [[\rho, \sigma]]\}$, we have $\mu(\mathcal T_x)>0$.

It is an immediate consequence of (i)-(iii) that, for any continuum tree $(\mathcal T, d)$, the set of leaves ${\rm Lf} (\mathcal T)$ is uncountable and that it has no isolated points. Finally, we introduce the notion of a \textit{reduced} subtree 
\begin{equation} \mathcal R(\mathcal T, x_1,\ldots,x_n):= \bigcup \limits_{i\in[n]} [[\rho, x_i]] 
\end{equation} 
of an $\mathbb R$-tree $\mathcal T$ spanned by the root and $x_1,x_2,\ldots,x_n \in {\rm Lf}(\mathcal T)$. Note that $\mathcal R(\mathcal T, x_1,\ldots,x_n)$ is a discrete $\mathbb R$-tree with root $\rho$ and leaves $x_1,\ldots,x_n$. We further consider the projection map
\begin{equation} \pi_k\colon \mathcal T \rightarrow \mathcal R(\mathcal T, x_1, \ldots, x_k) , \quad y \mapsto f_{\rho, y}\left(\sup\{t \geq 0\colon f_{\rho, y}(t) \in \mathcal R(\mathcal T,x_1,\ldots,x_k) \}\right),\label{pm}
\end{equation}
where $f_{\rho,y} \colon [0, d(\rho,y)] \rightarrow \mathcal T$ is the unique isometry  with $f_{\rho,y}(0)=\rho$ and $f_{\rho,y}(d(\rho,y))=y$ from the definition of an $\mathbb R$-tree. The push-forward of a probability measure $\mu$ on $\mathcal T$ via this projection map is denoted by $(\pi_k)_*\mu$, i.e. 
\begin{equation} (\pi_k)_*\mu\left(D\right)=\mu\left(\pi_k^{-1}\left(D\right)\right), \qquad D \subset \mathcal R(\mathcal T, x_1, \ldots, x_k) \text{ Borel measurable}.  \label{pfpm}
\end{equation}
More details on $\mathbb R$-trees and proofs for the statements made in this section can be found in  \cite{20,25,26}.

\subsection{$\infty$-marked $\mathbb R$-trees} \label{IMRT}

We introduce $\infty$-marked $\mathbb R$-trees to capture the framework of an $\mathbb R$-tree with infinitely many marked components. This is a generalisation of Miermont's concept of a $k$-marked metric space, \cite[Section 6.4]{22}. In the context of the two-colour line-breaking construction, the marked components correspond to the rescaled Ford trees by which we replace the branch points in the stable line-breaking construction. Each Ford tree, i.e. each connected red component, is related to a new marked subset of the $\infty$-marked $\mathbb R$-tree.

A \textit{k-marked} $\mathbb R$-tree $(\mathcal T,d, \rho, (\mathcal R^{(1)}, \ldots, \mathcal R^{(k)}))$, $k \geq 1$, is a rooted $\mathbb R$-tree $(\mathcal T, d, \rho)$ with non-empty closed connected subsets $\mathcal R^{(1)}, \ldots, \mathcal R^{(k)} \subset \mathcal T$. 
%As in the unmarked case, we do not always mention the distance $d$ and the root $\rho$ explicitly, and rather write $(\mathcal T, (\mathcal R^{(1)}, \ldots, \mathcal R^{(k)}))$. 
We call two $k$-marked $\mathbb R$-trees $(\mathcal T,d, \rho, (\mathcal R^{(1)}, \ldots, \mathcal R^{(k)}))$ and $(\mathcal T^\prime,d^\prime, \rho, (\mathcal R'^{(1)}, \ldots, \mathcal R^{\prime(k)}) )$ \textit{equivalent} if there exists an isometry from $\mathcal T$ to $\mathcal T^\prime$ such that each $\mathcal R^{(i)}$ is mapped onto $\mathcal R^{\prime(i)}$, $i \in [k]$, respectively, and $\rho$ is mapped onto $\rho'$. If $\mathcal T$ and $\mathcal T^\prime$ are equipped with mass measures $\mu$ and $\mu^\prime$, we speak of \textit{weighted} $k$-marked $\mathbb R$-trees, and we call them \textit{equivalent} if there is an isometry from $\mathcal T$ to $\mathcal T^\prime$ such that each $\mathcal R^{(i)}$ is mapped onto $\mathcal R^{\prime(i)}$, $i \in [k]$, $\rho$ is mapped to $\rho'$ and $\mu^\prime$ is the push-forward of $\mu$ under this isometry. The set of equivalence classes of $k$-marked $\mathbb R$-trees is denoted by $\mathbb T^{[k]}$, and $\mathbb T_{\rm w}^{[k]}$ is the set of equivalence classes of weighted $k$-marked $\mathbb R$-trees.

For $k$-marked $\mathbb{R}$-trees $(\mathcal T,d, \rho, (\mathcal R^{(1)}, \ldots, \mathcal R^{(k)}))$, $(\mathcal T^\prime,d^\prime, \rho^\prime, (\mathcal R^{\prime(1)}, \ldots, \mathcal R^{\prime(k)})) \in \mathbb T^{[k]}$, define
\begin{align} d_{\rm{GH}}^{[k]} &\left( \left(\mathcal T,d, \rho, \left(\mathcal R^{(1)}, \ldots, \mathcal R^{(k)} \right)\right), \left(\mathcal T^\prime, d^\prime, \rho^\prime, \left(\mathcal R^{\prime(1)}, \ldots, \mathcal R^{\prime(k)}\right) \right) \right) \nonumber \\
&:=\inf_{\varphi, \varphi^\prime} \left\{  \max \left\{ \delta_{\rm{H}} \left( \varphi\left(\mathcal T\right), \varphi^\prime\left(\mathcal T^\prime\right) \right), \max \limits_{1 \leq i \leq k} \delta_{\rm H} \left( \varphi \left(\mathcal R^{(i)}\right), \varphi'\left( \mathcal R^{\prime(i)} \right)\right), \delta_{} \left( \varphi\left(\rho\right), \varphi \left(\rho^\prime\right) \right)  \right\} \right\} \label{GHK} \end{align} where the infimum is taken over all isometric embeddings $\varphi$, $\varphi^\prime$ of $\mathcal T$, $\mathcal T^\prime$ into a common metric space $(\mathcal M, \delta)$, and $\delta_{\text H}$ is the Hausdorff distance on $(\mathcal M, \delta)$. It was shown in \cite{22} that $d^{[k]}_{\rm{GH}}$ is a metric on $\mathbb T^{[k]}$.

\begin{lemma}[{\cite[Proposition 9(ii)]{22}}] \label{sepcomp} The space $(\mathbb T^{[k]}, d_{\rm GH}^{[k]})$ is separable and complete. \end{lemma}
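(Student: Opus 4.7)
The plan is to adapt the standard proof that $(\mathbb T, d_{\rm GH})$ is Polish, accounting for the additional structure of $k$ closed connected marked subsets. The argument splits into a separability part and a completeness part.

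For \emph{separability}, I would enumerate finite combinatorial rooted tree shapes together with rational positive edge lengths and, for each of the $k$ marks, a choice of a connected rooted subset of edges to serve as $\mathcal R^{(i)}$. Each such tuple yields a discrete $k$-marked $\mathbb R$-tree, and the resulting family is countable. For density, given $(\mathcal T, \rho, (\mathcal R^{(i)})_{i\in[k]}) \in \mathbb T^{[k]}$ and $\epsilon > 0$, I would choose finite $\epsilon$-nets in $\mathcal T$ and in each $\mathcal R^{(i)}$ (including the root), form the reduced subtree $\widehat{\mathcal T}$ spanned by their union, and take as approximating mark the reduced subtree of $\widehat{\mathcal T}$ spanned by the net in $\mathcal R^{(i)}$. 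Rationalising edge lengths then produces an element of the countable family, and the Hausdorff distance between each $\mathcal R^{(i)}$ and its approximation, as well as between $\mathcal T$ and $\widehat{\mathcal T}$, is of order $\epsilon$; taking the maximum over $i$ and the root displacement yields an $O(\epsilon)$ upper bound for $d_{\rm GH}^{[k]}$.

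For \emph{completeness}, given a Cauchy sequence $(\mathcal T_n, \rho_n, (\mathcal R_n^{(i)})_{i \in [k]})$, I would pass to a subsequence whose consecutive $d_{\rm GH}^{[k]}$-distances are summable. By the standard gluing-of-correspondences construction from the proof that $(\mathbb T, d_{\rm GH})$ is complete, one embeds all $\mathcal T_n$ isometrically in a single Polish space $(\mathcal M, \delta)$ so that both the embedded $\mathcal T_n$ and, simultaneously, each embedded $\mathcal R_n^{(i)}$ are Cauchy in the Hausdorff metric on compact subsets of $\mathcal M$. Completeness of the Hausdorff metric on nonempty compact subsets of a complete metric space then yields compact limits $\mathcal T \supset \mathcal R^{(i)}$ and a root $\rho$. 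The limit $\mathcal T$ inherits the four-point condition and hence is an $\mathbb R$-tree, while each $\mathcal R^{(i)}$ is closed by construction and connected: if the limit were to split as a disjoint union of two nonempty closed pieces at positive distance, the approximating $\mathcal R_n^{(i)}$ would be disconnected for large $n$, a contradiction.

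The principal obstacle is this joint convergence step: arranging a single ambient Polish space in which all $(k+1)$-tuples $(\mathcal T_n, \mathcal R_n^{(1)}, \ldots, \mathcal R_n^{(k)})$ are simultaneously Cauchy in Hausdorff distance. This is what distinguishes the $k$-marked setting from the unmarked one, but it follows from a routine adaptation of Gromov's embedding argument applied to near-optimal correspondences witnessing $d_{\rm GH}^{[k]}$ rather than $d_{\rm GH}$, together with the preservation of connectedness under Hausdorff limits noted above.
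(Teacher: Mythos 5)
Your argument is essentially correct, but it takes a different route from the paper, which does not prove this lemma directly at all: the statement is imported from Miermont \cite[Proposition 9(ii)]{22}, who establishes separability and completeness for the more general space $\mathbb M^{[k]}$ of $k$-marked compact metric spaces, and the paper's subsequent remark observes that $\mathbb T^{[k]}$ sits as a closed subset of $\mathbb M^{[k+1]}$, so Polishness of the tree subspace follows from the general result. Your proposal instead adapts the classical proof that $(\mathbb T,d_{\rm GH})$ is Polish: a countable dense family of discrete marked trees with rational edge lengths for separability, and for completeness the gluing of near-optimal correspondences into one ambient Polish space, followed by completeness of the Hausdorff metric, the four-point condition for the limit tree, and preservation of non-emptiness, inclusion and connectedness under Hausdorff limits. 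The joint-convergence step you flag as the main obstacle is indeed unproblematic, because the infimum in the definition of $d_{\rm GH}^{[k]}$ is over a \emph{single} pair of embeddings controlling the tree, the root and all $k$ marks simultaneously, so the same chain-gluing that works for $d_{\rm GH}$ carries the marks along for free; this is in substance also how Miermont argues, via correspondences on marked spaces. What the citation route buys is brevity and a framework that extends uniformly to the weighted versions $d_{\rm GHP}^{[k]}$ used later; what your route buys is a self-contained, elementary proof in which the fact that the limit is again a (marked) $\mathbb R$-tree — precisely the closedness of $\mathbb T^{[k]}$ in $\mathbb M^{[k+1]}$ that the paper invokes separately — comes out of the completeness argument itself. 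Two small points to tighten: in the separability step the approximating mark should be the connected span (segment hull) of the $\epsilon$-net of $\mathcal R^{(i)}$ rather than a root-containing reduced subtree, since $\mathcal R^{(i)}$ need not contain the root; and the countable family should be described so that marks are subtrees spanned by designated vertices (with rational lengths), which your construction delivers after adding the relevant branch points as vertices.
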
\pagebreak

We extend the notion of a $k$-marked $\mathbb R$-tree to an \textit{$\infty$-marked} $\mathbb R$-tree $(\mathcal T, d, \rho, (\mathcal R^{(i)}, i \geq 1))$. %, i.e.\ a rooted $\mathbb R$-tree $(\mathcal T,d, \rho)$ with non-empty closed connected subsets $\mathcal R^{(i)} \subset \mathcal T$, $ i \geq 1$. 
The marked components $\mathcal R^{(i)}, i \geq 1$, of an $\infty$-marked $\mathbb R$-tree $(\mathcal T, ( \mathcal R^{(i)}, i \geq 1))$ are themselves $\mathbb R$-trees when equipped with the metric restricted to $\mathcal R^{(i)}$, and rooted at the point of $\mathcal R^{(i)}$ closest to the root of $\mathcal T$, $i\geq 1$.  
%We call an element of $\mathcal R^{(i)}$ a \textit{branch point} of $\mathcal R^{(i)}$ if its removal disconnects $\mathcal R^{(i)}$ into three or more components; it is a \textit{leaf} of $\mathcal R^{(i)}$ if its removal does not disconnect $\mathcal R^{(i)}$. ${\rm Lf}(\mathcal R^{(i)})$ and ${\rm Br}(\mathcal R^{(i)})$ are the set of leaves and the set of branch points of $\mathcal R^{(i)}$, respectively. 
We will consider $\infty$-marked $\mathbb R$-trees $(\mathcal T, d, \rho, (\mathcal R^{(i)}, i \geq 1))$ with a discrete branching structure, and distinguish between \textit{internal} and \textit{external} edges of $\mathcal R^{(i)}$. % where the \textit{edges} of $\mathcal R^{(i)}$ are the connected components of $\mathcal R^{(i)}\setminus ({\rm Br}(\mathcal R^{(i)}) \cup {\rm Lf}(\mathcal R^{(i)}))$. 
External edges of $\mathcal R^{(i)}$ are edges connecting a branch point/root and a leaf of $\mathcal R^{(i)}$, internal edges connect two branch points or the root and a branch point. %The same definition is applied to distinguish between internal and external edges of $\mathcal T$. 

 As in the $k$-marked case, \textit{$\infty$-marked} $\mathbb R$-trees $(\mathcal T, d, \rho, (\mathcal R^{(i)}, i \geq 1))$, $(\mathcal T^\prime, d^\prime, \rho^\prime, (\mathcal R^{\prime(i)}, i \geq 1))$ are \textit{equivalent} if there is an isometry from $\mathcal T$ to $\mathcal T^\prime$ such that $\rho$ is mapped onto $\rho^\prime$, and each $\mathcal R^{(i)}$ is mapped onto $\mathcal R^{\prime(i)}$, $i \geq 1$, respectively. We write $\mathbb T^{\infty}$ for the set of equivalence classes of compact $\infty$-marked $\mathbb R$-trees, and equip it with the metric $d^{\infty}_{\rm GH}:=\sum_{k \geq 1} 2^{-k}d_{\rm GH}^{[k]}$, i.e. for $(\mathcal T, d, \rho, (\mathcal R^{(i)}, i \geq 1))$, $(\mathcal T^\prime, d^\prime, \rho^\prime, (\mathcal R^{\prime(i)}, i \geq 1)) \in \mathbb T^{\infty}$,
\begin{align}
d^{\infty}_{\rm GH}&\left(\left(\mathcal T, d, \rho,\left(\mathcal R^{(i)},i \geq 1\right)\right),\left(\mathcal T', d^\prime, \rho^\prime, \left(\mathcal R'^{(i)}, i \geq 1 \right)\right) \right)\nonumber \\ & \hspace{3cm}:= \sum_{k \geq 1} 2^{-k} d_{\rm GH}^{[k]} \left(\left(\mathcal T, \left(\mathcal R^{(i)}, \ldots, \mathcal R^{(k)} \right)\right), \left(\mathcal T', \left(\mathcal R'^{(1)}, \ldots, \mathcal R'^{(k)}\right)\right)\right). \label{GHKmarked}
\end{align}

\begin{corollary} \label{33} The space $(\mathbb T^{\infty}, d_{\rm GH}^{\infty})$ is separable and complete. \end{corollary}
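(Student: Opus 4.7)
My plan is to deduce both separability and completeness from the corresponding properties of each $(\mathbb{T}^{[k]}, d_{\rm GH}^{[k]})$ provided by Lemma \ref{sepcomp}, exploiting the forgetful maps $\tau_k \colon \mathbb{T}^\infty \to \mathbb{T}^{[k]}$ that retain only the first $k$ marked subsets. First I would check that $d_{\rm GH}^\infty$ is a well-defined finite metric. The sequence $(d_{\rm GH}^{[k]})_{k \geq 1}$ is non-decreasing in $k$, because adding a marked subset only restricts the feasible embeddings in \eqref{GHK}; and a uniform bound $d_{\rm GH}^{[k]}(X, X') \leq \mathrm{diam}(\mathcal T) + \mathrm{diam}(\mathcal T')$ follows by embedding both trees into the wedge at their roots. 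Consequently the series $\sum_{k \geq 1} 2^{-k} d_{\rm GH}^{[k]}$ converges, and the metric axioms carry over termwise from those of $d_{\rm GH}^{[k]}$.

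For separability, I would use a two-stage truncation-plus-discretisation argument. For $N \geq 1$, let $T_N(X) \in \mathbb{T}^\infty$ denote the truncation obtained from $X = (\mathcal T, d, \rho, (\mathcal R^{(i)}, i \geq 1))$ by replacing $\mathcal R^{(i)}$ with $\{\rho\}$ for $i > N$. The identity embedding yields $d_{\rm GH}^{[k]}(X, T_N(X)) = 0$ for $k \leq N$ and $d_{\rm GH}^{[k]}(X, T_N(X)) \leq \mathrm{diam}(\mathcal T)$ for $k > N$, so $d_{\rm GH}^\infty(X, T_N(X)) \leq \mathrm{diam}(\mathcal T) \cdot 2^{-N} \to 0$. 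Moreover, between two trees whose marks beyond index $N$ are all trivial, one easily checks $d_{\rm GH}^\infty \leq d_{\rm GH}^{[N]}$, since for any common embedding $\delta_{\rm H}(\{\varphi(\rho)\}, \{\varphi'(\rho')\}) = \delta(\varphi(\rho), \varphi'(\rho'))$ is already absorbed by the root term in \eqref{GHK}, so each $d_{\rm GH}^{[k]}$ with $k > N$ equals $d_{\rm GH}^{[N]}$. Extending a countable $d_{\rm GH}^{[N]}$-dense subset of $\mathbb{T}^{[N]}$ by trivial marks therefore produces a countable $d_{\rm GH}^\infty$-dense family of truncations, and the union over $N$ is dense in $\mathbb{T}^\infty$.

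For completeness, let $(X_n)$ be Cauchy in $d_{\rm GH}^\infty$. Since $2^{-k} d_{\rm GH}^{[k]} \leq d_{\rm GH}^\infty$, the sequence $(\tau_k(X_n))_n$ is Cauchy in $(\mathbb{T}^{[k]}, d_{\rm GH}^{[k]})$ for each $k$ and converges to some $Y_k \in \mathbb{T}^{[k]}$ by Lemma \ref{sepcomp}. The forgetful map $\mathbb{T}^{[k+1]} \to \mathbb{T}^{[k]}$ is $1$-Lipschitz, so $Y_{k+1}$ projects to $Y_k$, and the family $(Y_k)$ is consistent. To assemble a single $Y \in \mathbb{T}^\infty$ with $\tau_k(Y) = Y_k$, I would inductively build a representative: pick any representative $(\mathcal T, d, \rho, \mathcal R^{(1)})$ of $Y_1$; given a representative $(\mathcal T, d, \rho, (\mathcal R^{(1)}, \ldots, \mathcal R^{(k)}))$ of $Y_k$, choose any representative of $Y_{k+1}$; by consistency it projects to an equivalent $k$-marked tree, so there is an isometry of underlying $k$-marked trees fixing root and first $k$ marks, and transporting the $(k+1)$-th marked subset through this isometry produces $\mathcal R^{(k+1)} \subset \mathcal T$. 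The underlying rooted tree never changes, so $Y = (\mathcal T, d, \rho, (\mathcal R^{(i)}, i \geq 1)) \in \mathbb{T}^\infty$ has $\tau_k(Y) = Y_k$ for all $k$. Convergence $X_n \to Y$ then follows by splitting $\sum_k 2^{-k} d_{\rm GH}^{[k]}(X_n, Y)$ at an index $K$: the tail is dominated uniformly by a diameter bound (uniform in $n$ because the sequence is Cauchy) times $2^{-K}$, and each of the finitely many head terms vanishes as $n \to \infty$ by convergence of the projections.

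The main obstacle is the inductive construction of $Y$ from the consistent family $(Y_k)$, since equivalence classes have many representatives and the isometries must be selected coherently. The saving observation is that at each step only one new closed subset is adjoined to a rooted compact tree that is fixed from the outset, so the previously constructed structure is preserved automatically. A minor secondary point is the uniform diameter bound for Cauchy sequences, which holds because $\mathrm{diam} \colon \mathbb{T}^\infty \to [0, \infty)$ is $1$-Lipschitz with respect to $d_{\rm GH}^{[1]}$ and hence continuous in $d_{\rm GH}^\infty$.
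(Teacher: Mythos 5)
Your proof is correct and follows essentially the same route as the paper: separability by extending countable $d_{\rm GH}^{[k]}$-dense subsets of $\mathbb{T}^{[k]}$ with trivial marks $\{\rho\}$, and completeness by projecting a Cauchy sequence to the complete spaces $(\mathbb{T}^{[k]}, d_{\rm GH}^{[k]})$, using consistency of the limits and reassembling an element of $\mathbb{T}^{\infty}$. Your inductive construction of the limit from the consistent family and the split-at-$K$ tail estimate merely spell out steps the paper leaves implicit.
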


\begin{proof}
By Lemma \ref{sepcomp}, for any $k \geq 1$, there exists a
countable dense subset $\mathbb C_k \subset \mathbb T_k$ such that for any $\epsilon >0$ and any $(\mathcal T, d, \rho, (\mathcal R^{(1)}, \ldots, \mathcal R^{(k)})) \in \mathbb T^{[k]}$, there is $(\mathcal T', d', \rho', (\mathcal R'^{(1)}, \ldots, \mathcal R'^{(k)}))\in \mathbb C_k$ with
\begin{equation}
d_{\rm GH}^{[k]}\left(\left(\mathcal T, \left(\mathcal R^{(1)}, \ldots, \mathcal R^{(k)}\right)\right), \left(\mathcal T', \left(\mathcal R'^{(1)}, \ldots, \mathcal R'^{(k)}\right)\right)\right) < \epsilon. \label{sepink}
\end{equation}
For any $k \geq 1$, the set 
$\mathbb C_k':=\left\{ \left(\mathcal T', \left(\mathcal R^{(1')}, \ldots, \mathcal R^{(k')}, \{\rho'\}, \{\rho'\}, \ldots \right)\right)\colon \left(\mathcal T', \left(\mathcal R'^{(1)}, \ldots, \mathcal R'^{(k)}\right)\right) \in \mathbb C_k\right\}$ 
is countable. For any $(\mathcal T, (\mathcal R^{(1)}, \mathcal R^{(2)}, \ldots)) \in \mathbb T^{\infty}$ there is $(\mathcal T', (\mathcal R'^{(1)}, \ldots, \mathcal R'^{(k)}, \{\rho'\}, \{\rho'\}, \ldots)) \in \mathbb C_k'$ such that \eqref{sepink} holds when we restrict to the first $k$ marked components, i.e. for ${\rm diam}(\mathcal T)=\sup\{d(x,y)\colon x,y\in\mathcal T\}$, 
\begin{equation*}
d_{\rm GH}^{\infty}\left(\left(\mathcal T, \left(\mathcal R^{(i)}, i \geq 1 \right)\right), \left(\mathcal T', \left(\mathcal R'^{(i)}, i \geq 1 \right)\right)\right) \leq 2^{-k} \epsilon \left( 1+2+ \cdots +2^{k-1}\right)+ \sum_{n \geq k+1} 2^{-n} {\rm diam}\left(\mathcal T\right) < \epsilon  \end{equation*}
for $k$ large enough. Therefore, $\mathbb C_\infty:={\bigcup_{k \geq 1} \mathbb C_k'}$ is countable and dense in $\mathbb T^\infty$, i.e. $(\mathbb T^\infty,d_{\rm GH}^{\infty})$ is separable.

To see that $(\mathbb T^\infty,d_{\text GH}^{\infty})$ is complete, consider a Cauchy sequence $(\mathcal T_n, (\mathcal R_n^{(i)}, i \geq 1), n \geq 1)$ in $\mathbb T^\infty$. By definition of $d_{\rm GH}^{\infty}$, for any $k \geq 1$, the sequence $(\mathcal T_n, (\mathcal R_n^{(1)}, \ldots, \mathcal R_n^{(k)}), n \geq 1)$ is Cauchy in $\mathbb T^{[k]}$. By Lemma \ref{sepcomp}, $(\mathbb T^{[k]}, d_{\rm GH}^{[k]})$ is complete, and we conclude that there is $(\mathcal T, (\mathcal R^{(1)}, \ldots, \mathcal R^{(k)})) \in \mathbb T^{[k]}$ such that 
\begin{equation}
\lim_{n \rightarrow \infty} \left(\mathcal T, \left(\mathcal R_n^{{(1)}}, \ldots, \mathcal R_n^{(k)}\right)\right) = \left(\mathcal T, \left(\mathcal R^{(1)}, \ldots, \mathcal R^{(k)}\right)\right). \label{kvaries}
\end{equation}
Furthermore, the limits \eqref{kvaries} are easily seen to be consistent as $k$ varies, i.e. given $\eqref{kvaries}$ for some $k \geq 2$,
\begin{equation}
\lim_{n \rightarrow \infty} \left(\mathcal T, \left(\mathcal R_n^{(1)}, \ldots, \mathcal R_n^{(k-1)}\right)\right) = \left(\mathcal T, \left(\mathcal R^{(1)}, \ldots, \mathcal R^{(k-1)}\right)\right) 
\end{equation}
in $(\mathbb T^{[k-1]}, d_{\rm GH}^{[k-1]})$. We conclude that there is $(\mathcal T, (\mathcal R^{(1)}, \mathcal R^{(2)}, \ldots))$ such that
\begin{equation*}
\lim \limits_{n \rightarrow \infty} d_{\rm GH}^{\infty}\left(\left(\mathcal T_n, \left(\mathcal R_n^{(i)}, i \geq 1\right)\right),\left(\mathcal T, \left(\mathcal R^{(i)}, i \geq 1\right)\right)\right)=0.\vspace{-0.5cm}
\end{equation*}
\end{proof}

We can extend $d_{\rm GH}^{[k]}$ to a metric on $\mathbb T_{\rm w}^{[k]}$ by adding a Prokhorov component to $d_{\rm GH}^{[k]}$. For any $k \in \{1,2, \ldots\}$ and $(\mathcal T, (\mathcal R^{(1)}, \ldots, \mathcal R^{(k)}), \mu)$, $(\mathcal T', (\mathcal R'^{(1)}, \ldots, \mathcal R'^{(k)}), \mu') \in \mathbb T^{[k]}$, we define
\begin{align*} d_{\rm GHP}^{[k]}&\left( \left(\mathcal T, \left(\mathcal R^{(1)}, \ldots, \mathcal R^{(k)}\right), \mu\right), \left(\mathcal T', \left(\mathcal R'^{(1)}, \ldots, \mathcal R'^{(k)}\right), \mu'\right)\right) \\ & \hspace{-0.8cm} :=\inf_{\varphi, \varphi'} \left\{  \max \left\{ \delta_{\rm{H}} \left( \varphi\left(\mathcal T\right), \varphi'\left(\mathcal T'\right) \right), \max \limits_{1 \leq i \leq k} \delta_{\rm H} \left( \varphi \left(\mathcal R^{(i)}\right), \varphi'\left( \mathcal R'^{(i)} \right)\right), \delta_{} \left( \varphi\left(\rho\right), \varphi' \left(\rho'\right) \right), \delta_{\rm P} \left( \varphi_*\mu, \varphi'_*\mu'\right) \right\} \right\} \end{align*}
where $\varphi, \varphi'$ and $\varphi_*\mu$, $\varphi'_*\mu'$ are as in \eqref{GHPdist} and \eqref{GHK}. In the spirit of \eqref{GHKmarked}, we define
\begin{align}
d_{\rm GHP}^\infty & \left( \left( \mathcal T, \left( \mathcal R^{(i)},i \geq 1 \right), \mu \right), \left( \mathcal T', \left(\mathcal R'^{(i)}, i \geq 1 \right), \mu' \right) \right) \nonumber \\ &\hspace{2cm}= \sum_{k \geq 1} 2^{-k} d_{\rm GHP}^{[k]} \left( \left(\mathcal T, \left(\mathcal R^{(1)}, \ldots, \mathcal R^{(k)}\right), \mu\right), \left(\mathcal T', \left(\mathcal R'^{(1)}, \ldots, \mathcal R'^{(k)}\right), \mu'\right) \right) \end{align}
for two weighted $\infty$-marked $\mathbb R$-trees $(\mathcal T, (\mathcal R^{(i)},i \geq 1), \mu)$ and  $( \mathcal T', (\mathcal R'^{(i)}, i \geq 1), \mu')$.

\begin{lemma} The function $d_{\rm GHP}^{[k]}$ defines a distance on $\mathbb T_{\rm w}^{[k]}$, and the space $(\mathbb T_{\rm w}^{[k]}, d_{\rm GHP}^{[k]})$ is separable and complete, for any $k \in \{0,1,2, \ldots; \infty\}$.
\end{lemma}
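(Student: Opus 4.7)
The plan is to adapt the proof of Corollary \ref{33} by incorporating the Prokhorov distance as an additional coordinate within the maximum defining the $k$-marked Gromov--Hausdorff distance in \eqref{GHK}. The required metric and topological properties for $d_{\rm GHP}^{[k]}$ will follow by combining Miermont's treatment of $d_{\rm GH}^{[k]}$ (Lemma \ref{sepcomp}) with the standard Gromov--Hausdorff--Prokhorov techniques underlying the Polishness of $(\mathbb T_{\rm w},d_{\rm GHP})$.

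First, I would verify the metric properties. Non-negativity and symmetry are immediate. The identification of indiscernibles requires extracting from a sequence of $\varepsilon$-optimal embeddings (as $\varepsilon\downarrow 0$) a correspondence yielding an isometry $\mathcal T\to\mathcal T'$ that preserves root and marked components and pushes $\mu$ forward to $\mu'$; the vanishing Prokhorov component forces the last assertion since weak limits determine finite Borel measures. The triangle inequality follows by the usual three-space gluing: given $\varepsilon$-optimal embeddings of $(\mathcal T,(\mathcal R^{(i)}),\mu)$ and $(\mathcal T',(\mathcal R'^{(i)}),\mu')$ into $(\mathcal M_1,\delta_1)$, and of $(\mathcal T',(\mathcal R'^{(i)}),\mu')$ and $(\mathcal T'',(\mathcal R''^{(i)}),\mu'')$ into $(\mathcal M_2,\delta_2)$, one glues along the two isometric copies of $\mathcal T'$ and applies the triangle inequalities for $\delta$, $\delta_{\rm H}$ and $\delta_{\rm P}$ componentwise inside the maximum.

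Separability follows by enlarging the countable family underlying Lemma \ref{sepcomp} with rational atomic probability measures: given $(\mathcal T,(\mathcal R^{(i)}),\mu)$, approximate the unweighted $k$-marked tree by a rational discrete representative via Lemma \ref{sepcomp}, then approximate the pushforward of $\mu$ onto this finite skeleton by an atomic measure with rational masses on finitely many vertices. For completeness at finite $k$, given a Cauchy sequence $((\mathcal T_n,(\mathcal R_n^{(i)}),\mu_n))_{n\ge 1}$, its projection onto $\mathbb T^{[k]}$ is Cauchy, so Lemma \ref{sepcomp} supplies a limit $(\mathcal T,(\mathcal R^{(i)}))$. Passing to a fast subsequence and iteratively concatenating $\varepsilon_n$-optimal embeddings, one constructs a compact Polish space $(\mathcal M,\delta)$ containing isometric copies of all $\mathcal T_{n_j}$ and of $\mathcal T$ that preserve roots and marked components and in which the pushforwards $\varphi_{n_j,*}\mu_{n_j}$ form a Cauchy sequence in $\delta_{\rm P}$; completeness of the Prokhorov metric then yields a weak limit $\mu$ supported on the image of $\mathcal T$, and the original Cauchy property upgrades convergence along the subsequence to convergence of the whole sequence.

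For $k=\infty$ the argument of Corollary \ref{33} transfers verbatim: one uses the countable sets $\mathbb C_k'$ now enriched with rational atomic measures, and completeness follows from the consistency of the componentwise limits guaranteed by the finite-$k$ case together with the geometric-tail bound using the uniform estimate $d_{\rm GHP}^{[k]}\le {\rm diam}(\mathcal T)+{\rm diam}(\mathcal T')+1$. The case $k=0$ reduces to the known Polishness of $(\mathbb T_{\rm w},d_{\rm GHP})$. The main obstacle is the completeness step at finite $k$, namely the construction of a single common Polish space in which a subsequence of trees embeds compatibly while keeping the measures jointly tight; this is standard in the Gromov--Hausdorff--Prokhorov literature but must be carried out so as to simultaneously preserve all $k$ marked components, which is why we first invoke Miermont's result to pin down the limiting tree, its root, and its marked subsets before addressing convergence of the measures.
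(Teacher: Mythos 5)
Your proposal is correct and follows essentially the same route as the paper, whose proof simply notes that for finite $k$ the argument is a direct generalisation of Lemma \ref{sepcomp} together with Miermont's treatment of $d_{\rm GHP}$, and that the case $k=\infty$ is deduced exactly as in Corollary \ref{33}. You have merely spelled out in detail the gluing, separability and completeness steps that the paper leaves implicit.
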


\begin{proof} For $k\in\{0,1,2,\ldots\}$, the proof is a direct generalisation of the proof of Lemma \ref{sepcomp}. In particular, it is straightforward to generalise the results about $d_{\rm GHP}$ in \cite[Section 6.2/6.3]{22} to $d_{\rm GHP}^{[k]}$.
For $k=\infty$, the claim can then be deduced as in the proof of Corollary \ref{33}. \end{proof}

\begin{remark} Miermont \cite{22} introduced the more general concept of a $k$-marked metric space, and studied the space $\mathbb M^{[k]}$ of equivalence classes of $k$-marked metric spaces. $\mathbb T^{[k]}$ is a closed subset of $\mathbb M^{[k+1]}$ (\cite[Lemma 2.1]{21}), i.e. the results on $(\mathbb T^{[k]}, d_{\rm GH}^{[k]})$ presented here follow from his study of $(\mathbb M^{[k]}, d_{\rm GH}^{[k]})$, $k\ge 0$.
\end{remark}

\section{Mittag-Leffler distributions, strings of beads and stable trees} \label{prelim}

\subsection{Dirichlet and Mittag-Leffler distributions} \label{secml}

In this section, we present the distributional relationships that are key for our constructions. A random variable $L$ follows a (generalised) Mittag-Leffler distribution with parameters $(\alpha, \theta)$ for $\alpha >0$ and $\theta > -\alpha$ if its $p$th moment is given by\vspace{-0.1cm}
\begin{equation}
\mathbb E\left[ L^p\right]=\frac{\Gamma(\theta+1)\Gamma(\theta/\alpha+1+p)}{\Gamma(\theta/\alpha+1)\Gamma(\theta+p\alpha+1)}, \quad p\geq 1, \label{mlmoms}
\end{equation}  
for short $L \sim \rm{ML}(\alpha, \theta)$. The moments \eqref{mlmoms} uniquely characterise ML$(\alpha, \theta)$, cf. \cite{5}. 
%The following proposition will be needed to describe the scaling factors of the independent Ford CRTs built in our construction.
%
%\begin{prop}[Relationship between Mittag-Leffler variables] \label{mlrel} Let $\beta \in  (0, 1/2]$, and consider independent random variables $C \sim {\rm ML}(1-\beta, k(1-\beta)-\beta)$ and  $S \sim {\rm{ML}}(\beta/(1-\beta),k- \beta/(1-\beta))$ for some $k \geq 1$. Then
%\begin{equation}  C^{{\beta}/{(1-\beta)}}S \,{\buildrel d \over =}\, K \end{equation}
%where $K \sim {\rm ML}(\beta, k(1-\beta)-\beta)$.
%\end{prop}
%
%\begin{proof} Recall that the Mittag-Leffler distribution is uniquely characterised by its moments \eqref{mlmoms}. It is easily checked that, for each $p \geq 1$, $\mathbb E [(C^{{\beta}/{(1-\beta)}}S)^p]=\mathbb E[K^p]$.
%\end{proof}

The Mittag-Leffler distribution naturally appears when we study lengths in the trees considered in this paper. To analyse mass and length splits across the branches of these trees we have to consider Dirichlet distributions. We will be able to relate mass and length splits on the edges using the following result.

\begin{prop}[\cite{12} Proposition 4.2] \label{DIRML} Let $\beta \in (0,1)$. For $n \geq 2$, let $\theta_1, \ldots, \theta_n >0$ and let $\theta:=\sum_{i\in[n]} \theta_i$. Consider $S \sim {\rm{ML}}(\beta, \theta)$ and an independent vector $(Y_1, \ldots, Y_n) \sim {\rm{Dirichlet}}(\theta_1/\beta, \ldots, \theta_n/\beta)$. Then,
\begin{equation}
S \cdot \left(Y_1, \ldots, Y_n\right)\,{\buildrel d \over =}\, \left(X_1^{\beta}S^{(1)}, \ldots, X_n^{\beta}S^{(n)}\right)
\end{equation}
where $(X_1, \ldots, X_n) \sim {\rm{Dirichlet}}(\theta_1, \ldots, \theta_n)$ and $S^{(i)} \sim {\rm{ML}}(\beta, \theta_i)$, $i \in [n]$, are independent. 
\end{prop}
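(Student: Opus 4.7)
The plan is to establish the distributional identity by matching all joint mixed moments of the two random vectors, exploiting the moment-determinacy of $\mathrm{ML}(\beta,\theta)$ noted immediately after \eqref{mlmoms}.

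I would fix nonnegative integers $p_1,\ldots,p_n$ and write $P=p_1+\cdots+p_n$. For the left-hand side, independence of $S$ from $(Y_1,\ldots,Y_n)$ gives
\[
\mathbb E\!\left[\prod_{i=1}^n (SY_i)^{p_i}\right] = \mathbb E[S^P]\cdot\mathbb E\!\left[\prod_{i=1}^n Y_i^{p_i}\right],
\]
into which I substitute \eqref{mlmoms} with parameters $(\beta,\theta)$ and the classical Dirichlet moment formula for parameters $(\theta_1/\beta,\ldots,\theta_n/\beta)$. For the right-hand side, the mutual independence of $(X_1,\ldots,X_n)$ from all of the $S^{(i)}$ (and of the $S^{(i)}$ from each other) gives
\[
\mathbb E\!\left[\prod_{i=1}^n \bigl(X_i^\beta S^{(i)}\bigr)^{p_i}\right] = \mathbb E\!\left[\prod_{i=1}^n X_i^{\beta p_i}\right]\cdot\prod_{i=1}^n \mathbb E\!\left[\bigl(S^{(i)}\bigr)^{p_i}\right],
\]
into which I substitute the Dirichlet moment formula with parameters $(\theta_1,\ldots,\theta_n)$ together with \eqref{mlmoms} applied to each $(\beta,\theta_i)$. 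Using only the functional equation $\Gamma(x+1)=x\Gamma(x)$, both expressions should collapse to
\[
\frac{\Gamma(\theta)}{\Gamma(\theta+\beta P)}\prod_{i=1}^n\frac{\Gamma(\theta_i/\beta+p_i)}{\Gamma(\theta_i/\beta)},
\]
so all joint moments agree.

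The final step is to upgrade equality of joint moments to equality in joint distribution. Each coordinate of either vector is a product of a factor bounded by $1$ and a Mittag-Leffler variable, so its marginal has the same moment growth rate as an $\mathrm{ML}$ law and is therefore moment-determinate by the uniqueness statement for $\mathrm{ML}(\beta,\theta)$ referenced in \cite{5}. Any nonnegative linear combination $\sum c_i SY_i=S\sum c_i Y_i$, and similarly $\sum c_i X_i^\beta S^{(i)}$, is dominated by a constant multiple of an $\mathrm{ML}$ variable and hence is again moment-determinate, so Cram\'er--Wold yields joint moment-determinacy and completes the proof. I expect this last step to be the only delicate point, since Mittag-Leffler moments grow just fast enough that Carleman's condition is not automatic and one must instead appeal to the dedicated univariate uniqueness result; by contrast, the Gamma-function cancellation that reduces both moment expressions to the common form above is entirely mechanical.
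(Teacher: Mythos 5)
Your moment computation is sound: both joint moment expressions do collapse, via $\Gamma(x+1)=x\Gamma(x)$, to $\frac{\Gamma(\theta)}{\Gamma(\theta+\beta P)}\prod_{i\in[n]}\Gamma(\theta_i/\beta+p_i)/\Gamma(\theta_i/\beta)$, provided you note that on the right-hand side you are using the Dirichlet moment formula with the \emph{non-integer} exponents $\beta p_i$, which is legitimate via the representation of a Dirichlet vector by normalised independent Gamma variables (independence of the normalised vector from its sum). Note also that the paper itself offers no proof to compare against: Proposition \ref{DIRML} is imported verbatim from \cite{12}, so a self-contained argument such as yours is a genuine addition rather than a variant of an in-paper proof.

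The gap is in your final determinacy step, and it sits exactly where you flag delicacy, but your diagnosis points to the wrong tool. The inference ``$\sum_i c_iSY_i$ is dominated by a constant multiple of an $\mathrm{ML}$ variable, hence moment-determinate by the uniqueness statement for $\mathrm{ML}(\beta,\theta)$'' is a non sequitur: the uniqueness result cited after \eqref{mlmoms} applies to laws that \emph{are} Mittag--Leffler, which $S\sum_i c_iY_i$ is not, and moment-determinacy is not in general inherited under stochastic domination. Moreover your parenthetical claim that Carleman's condition ``is not automatic'' for Mittag--Leffler variables is backwards: by \eqref{mlmoms} and Stirling, $\mathbb{E}[S^p]$ grows like $(p!)^{1-\beta}$ up to geometric factors, so $\mathbb{E}[S^p]^{-1/(2p)}\asymp p^{-(1-\beta)/2}$ and the Carleman sum diverges for every $\beta\in(0,1)$. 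This is the fact that rescues the argument, because Carleman's condition (unlike bare determinacy) \emph{is} preserved under the moment bound $\mathbb{E}[Z^p]\le c^p\,\mathbb{E}[S^p]$: each linear combination (for mixed signs, bound $|\langle c,Z\rangle|$ by $\max_i|c_i|$ times $S$, respectively $\sum_iS^{(i)}$, and control even moments) then satisfies Carleman, hence is determinate, and Cram\'er--Wold applies — note that Cram\'er--Wold requires all real coefficient vectors, not only nonnegative ones, unless you instead argue through joint Laplace transforms of the nonnegative vectors, or invoke Petersen's theorem that determinacy of the coordinate marginals already forces determinacy of the joint law. With that repair the proof is complete; as written, the key step rests on an invalid transfer of determinacy.
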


We will also need some standard properties of the Dirichlet distribution.

\begin{prop} \label{Diri} Let $n \in \mathbb N$, $\theta_1, \ldots, \theta_n>0$ and $X:=(X_1, \ldots, X_n) \sim {\rm{Dirichlet}}(\theta_1, \ldots, \theta_n)$. 
\begin{enumerate}

\item[{\rm (i)}] \textit{Symmetry}. For any permutation $\sigma\colon [n] \rightarrow [n]$, 
$\left(X_{\sigma(1)}, \ldots, X_{\sigma(n)}\right) \sim {\rm{Dirichlet}}\left(\theta_{\sigma(1)}, \ldots, \theta_{\sigma(n)}\right)$.

\item[{\rm (ii)}] \textit{Aggregation and deletion}. Let $X':=(\sum_{i\in[m]}X_i, X_{m+1},\ldots, X_n)$ for some $m \in [n-1]$. Then the vectors 
$
X' \sim {\rm{Dirichlet}}\big(\sum_{i\in[m]}\theta_i, \theta_{m+1}, \ldots, \theta_n \big)
$
and $X^*:=( {X_1}/{\sum_{i\in[m]} X_i}, \ldots, {X_m}/{\sum_{i\in[m]} X_i}) \sim {\rm Dirichlet}\left(\theta_1, \ldots, \theta_m\right)$ are independent.

\item[{\rm (iii)}] \textit{Decimation}. Let $i \in [n]$, $m \in \mathbb N$, and let $\theta_{i,1}, \ldots, \theta_{i,m}>0$ be such that $\sum_{j\in[m]} \theta_{i,j}=\theta_i$. Consider an independent random vector 
$
\left(P_1, \ldots, P_m\right) \sim {\rm{Dirichlet}}\left(\theta_{i,1}, \ldots, \theta_{i,m}\right).
$ 
Then we have $X'':=(X_1, \ldots, X_{i-1}, P_1X_i, \ldots, P_mX_i, X_{i+1}, \ldots, X_n)\sim {\rm{Dirichlet}}\left(\theta_1, \ldots, \theta_{i-1}, \theta_{i,1}, \ldots, \theta_{i,m}, \theta_{i+1}, \ldots, \theta_n\right).
$

\item[{\rm (iv)}] \textit{Size-bias}. Let $I \in [n]$ be a random index such that $\mathbb P(I=i|X_1, \ldots, X_n)=X_i$ a.s.\ for $i \in [n]$. Then, conditionally given $I=i$, we have 
$X \sim {\rm{Dirichlet}}\left(\theta_1, \ldots, \theta_{i-1}, \theta_{i}+1, \theta_{i+1},\ldots, \theta_n\right)$ for any $i \in [n]$.
Furthermore, we have $\mathbb P(I=i)=\theta_i\big/\sum_{j\in[n]} \theta_j$.

\end{enumerate}
\end{prop}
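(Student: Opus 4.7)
The plan is to derive all four parts from the standard Beta--Gamma representation of the Dirichlet distribution. Specifically, take independent $\gamma_i \sim {\rm Gamma}(\theta_i, 1)$ for $i \in [n]$, set $S = \sum_{i \in [n]} \gamma_i$, and recall that $S \sim {\rm Gamma}(\theta, 1)$ with $\theta = \sum_i \theta_i$, that $S$ is independent of the normalised vector $(\gamma_1/S, \ldots, \gamma_n/S)$, and that this normalised vector has distribution ${\rm Dirichlet}(\theta_1, \ldots, \theta_n)$.

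Part (i) is immediate, since permuting the indices simultaneously in the $\gamma_i$ and in the parameters $\theta_i$ yields the same joint distribution. For (ii), write $T = \sum_{i \in [m]} \gamma_i \sim {\rm Gamma}(\sum_{i \in [m]} \theta_i, 1)$ independent of $(\gamma_{m+1}, \ldots, \gamma_n)$; dividing $(T, \gamma_{m+1}, \ldots, \gamma_n)$ by $S$ gives $X'$ as a Dirichlet vector with the claimed parameters. Moreover, $X^* = (\gamma_1/T, \ldots, \gamma_m/T)$ is independent of $(T, \gamma_{m+1}, \ldots, \gamma_n)$ by the Beta--Gamma independence applied inside the first block, and $X'$ is a function of the latter. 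This yields both the distributional identity and the independence.

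For (iii), use the representation of $(P_1, \ldots, P_m)$ as $(\gamma_{i,1}/T_i, \ldots, \gamma_{i,m}/T_i)$ with $T_i = \sum_{j \in [m]} \gamma_{i,j}$ and $\gamma_{i,j} \sim {\rm Gamma}(\theta_{i,j}, 1)$ independent of each other and of the $\gamma_j$. Since $T_i \,{\buildrel d \over =}\, \gamma_i$, we may couple $\gamma_i = T_i$, so that $P_j X_i = \gamma_{i,j}/S$. Hence the vector $X''$ is the normalisation of the independent gamma family $(\gamma_1, \ldots, \gamma_{i-1}, \gamma_{i,1}, \ldots, \gamma_{i,m}, \gamma_{i+1}, \ldots, \gamma_n)$ by their total $S$, which is the claimed Dirichlet. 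Finally, (iv) is a direct calculation: the joint law of $(X, I)$ has density proportional to $x_i \prod_{j \in [n]} x_j^{\theta_j-1}$ on the simplex times the indicator of $I=i$; marginalising gives $\mathbb{P}(I=i) = \mathbb{E}[X_i] = \theta_i/\theta$, and the conditional density of $X$ given $I=i$ is proportional to $x_i^{\theta_i} \prod_{j \neq i} x_j^{\theta_j-1}$, i.e.\ ${\rm Dirichlet}(\theta_1, \ldots, \theta_{i-1}, \theta_i+1, \theta_{i+1}, \ldots, \theta_n)$.

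None of the steps should be difficult: each follows either from the independence properties of gamma variables or from a one-line density computation. The only mild bookkeeping arises in (iii), where one should be careful that the coupling between $\gamma_i$ and $T_i$ preserves both the Dirichlet law of $X$ and the independence of $(P_1, \ldots, P_m)$ from $X$, but this is standard.
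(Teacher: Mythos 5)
Your proof is correct and follows essentially the route the paper indicates: the paper disposes of this proposition by citing \cite[Propositions 13--14, Remark 15]{29} together with ``the Gamma variable representation for the Dirichlet distribution,'' and your argument is precisely that Gamma representation worked out in detail (including the within-block independence needed for (ii) and the coupling $\gamma_i=T_i$ in (iii), which is handled correctly). Nothing further is needed.
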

\begin{proof} We refer to \cite[Propositions 13-14, Remark 15]{29}, and the Gamma variable representation for the Dirichlet distribution.
\end{proof}

%\begin{prop} \label{Betabetas} Let $S \sim {\rm{ML}}(\beta, \theta+k+1)$ and $A \sim {\rm{Beta}}((\theta+k)/\beta+1, 1/\beta-1)$ for some $\beta >0$, $\theta > -\beta$, $k \in \mathbb N$, be independent. Then $SA \sim {\rm{ML}}(\beta, \theta+k)$.
%\end{prop}
%\begin{proof} The special case $\beta=1/3$ was contained in \cite{11}. For general $\beta>0$, we consider the moments of $SA$. By independence, for $p \geq 1$, we have
%\begin{equation*} \mathbb E\left[ \left(SA\right)^p\right] = \mathbb E\left[ S^p\right] \mathbb E\left[ A^p\right] =\frac{\Gamma(\theta+k+2)\Gamma((\theta\!+\!k\!+\!1)/\beta+1+p)}{\Gamma((\theta\!+\!k\!+\!1)/\beta+1)\Gamma(\theta+k+p\beta+2)} \frac{\Gamma((\theta\!+\!k)/\beta+1 +p)\Gamma((\theta\!+\!k\!+\!1)/\beta)}{\Gamma((\theta\!+\!k)/\beta+1)\Gamma((\theta\!+\!k\!+\!1)/\beta +p)}. \end{equation*}
%We can apply $\Gamma(t+1)=t\Gamma(t)$ for $t \geq 0$ to show that 
%\begin{equation*} \mathbb E\left[ \left(SA\right)^p\right]  = \frac{\Gamma(\theta+k+1)\Gamma((\theta+k)/\beta+1+p)}{\Gamma((\theta+k)/\beta+1)\Gamma(\theta+k+p\beta+1)}, \end{equation*}
%which is the $p$th moment of ML$(\beta, \theta)$. 
%\end{proof}

\subsection{Chinese restaurant processes and strings of beads} \label{Sec22}

We consider $(\alpha, \theta)$-strings of beads for $\alpha \in (0,1), \theta >0$, arising in the scaling limit of ordered $(\alpha, \theta)$-Chinese restaurant processes (CRPs), cf. \cite{james2006,1,5}.
Consider customers labelled by $[n]:=\{1,\ldots,n\}$ sitting at a random number of tables as follows. Let customer $1$ sit at the first table. At step $n+1$, conditionally given that we have $k$ tables with $n_1, \ldots, n_k$ customers, the next customer labelled by $n+1$ 
\begin{itemize}
\item sits at the $i$th occupied table with probability $(n_i-\alpha)/(n+\theta)$, $i \in [k]$;
\item opens a new table to the left of the first table, or between any two tables with probability $\alpha/(k\alpha+\theta)$;
\item opens a new table to the right of the last table with probability $\theta/(k\alpha+\theta)$.
\end{itemize}
%The induced sequence of ordered random partitions $(\Pi_n, n \geq 1)$ of $[n]$ is called an \textit{ordered $(\alpha, \theta)$-Chinese restaurant process} (CRP). 
%For each partition $\pi$ of $[n]$ into $k$ blocks of sizes $n_1, \ldots, n_k$, we have
%\begin{equation}
%\mathbb P(\Pi_n=\pi)=\frac{\prod_{i=1}^{k-1}(\theta+\alpha i)}{[1+\theta]_{n-1}}\prod\limits_{i=1}^k [1-\alpha]_{n_i-1}, \label{partdist}
%\end{equation} where, for $x \in \mathbb R$ and $m \in \mathbb N$, we define the $m$-rising factorial by $[x]_{m}:=x(x+1)(x+2)\cdots(x+m).$ Note that the distribution of $\Pi_n$, given by \eqref{partdist}, only depends on block sizes, i.e. the partitions $\Pi_n$ are exchangeable. 
%They are also consistent as $n$ varies, i.e. they induce a {random partition} $\Pi_\infty$ of $\mathbb N$, which yields $\Pi_n$ when restricted to $[n]$.
%
%Pitman and Winkel \cite{1} enhanced the $(\alpha, \theta)$-CRP with a \textit{table order}, i.e. a random total order $<$ on the tables, independently of the actual seating process. The tables are ordered from left to right as follows. The second table is put to the right of the first table with probability $\theta/(\alpha+\theta)$, and to the left with probability $\alpha/(\alpha+\theta)$. For $k\geq 1$, conditionally given any of the $k!$ possible orderings of the first $k$ tables, the $(k+1)$-st table is put
This induces the \textit{ordered} $(\alpha, \theta)$-CRP $(\widetilde{\Pi}_n, n\geq 1)$. The classical \em unordered $(\alpha,\theta)$-CRP \em $(\Pi_n, n \geq 1)$ is 
obtained from $(\widetilde{\Pi}_n, n\geq 1)$ by ordering the  \pagebreak blocks by least labels. For $n \in \mathbb N$, we write 
$\Pi_n=(\Pi_{n,1}, \ldots, \Pi_{n,K_n})$ and $\widetilde{\Pi}_n=(\widetilde{\Pi}_{n,1}, \ldots, \widetilde{\Pi}_{n,K_n})$
for the blocks of the two partitions of $[n]$, where $K_n$ denotes the number of tables at step $n$. The block sizes at step $n$ form 
random \textit{compositions} of $n$, $n \geq 1$, i.e.\ sequences of positive integers $(n_1,\ldots,n_k)$ with sum $n=\sum_{j\in[k]} n_j$. The composition related to $\widetilde{\Pi}_n$, $n \geq 1$, can be shown to be \textit{regenerative} in the sense of Gnedin and Pitman \cite{3}.

The number of tables $K_n$ at step $n$, rescaled by $n^\alpha$, converges a.s., i.e. there is $L_{\alpha,\theta} > 0$ a.s. such that \begin{equation} L_{\alpha,\theta} = \lim \limits_{n \rightarrow \infty}{n^{-\alpha}}{K_n} \quad \text{a.s.}. \label{tables} \end{equation} The distribution of $L_{\alpha, \theta}$ can be identified as $\rm{ML}(\alpha, \theta)$. Furthermore, there are limiting proportions $(P_1, P_2, \ldots)$ of the relative table sizes $n^{-1}\#\Pi_{n,i}$, $i \in [K_n]$, as $n \rightarrow \infty$ in order of least labels, i.e. 
\begin{equation} \lim_{n \rightarrow \infty}\left( n^{-1} \#\Pi_{n,1}, \ldots, n^{-1} \#\Pi_{n,K_n} \right) =\left(P_1, P_2, P_3, \ldots \right) = \left(V_1, \overline{V}_1 V_2, \overline{V}_1 \overline{V}_2 V_3, \ldots\right) \quad \text{a.s.} \label{v1v2} \end{equation} where $(V_i, i \geq 1)$ are independent with $V_i \sim \text{Beta}(1-\alpha, \theta+i\alpha)$, and $\overline{V}_i:=1-V_i$. The distribution of the vector $(P_1, P_2, \ldots)$ is a \textit{Griffiths-Engen-McCloskey distribution} GEM$(\alpha, \theta)$. Ranking $(P_i, i \geq 1)$ in decreasing order we obtain a Poisson-Dirichlet distribution $(P_i^{\downarrow}, i \geq 1):=(P_i, i \geq 1)^{\downarrow} \sim \text{PD}(\alpha, \theta)$. Each $P_i, i \geq 1,$ is further associated with a position on the limiting interval $[0,L_{\alpha,\theta}]$ induced by the table order. 

\begin{lemma}[{\cite[Proposition 6]{1}}]\label{cvgcs}
Consider an \textit{ordered} $(\alpha, \theta)$-CRP $(\widetilde{\Pi}_n=(\widetilde{\Pi}_{n,1},\ldots, \widetilde{\Pi}_{n,K_n}), n\geq 1)$ for $\alpha \in (0,1)$, $\theta >0$. Let $N_{n,j}:=\sum _{i\in[j]} \# \widetilde{\Pi}_{n,i}$, $j \in [n]$, be the number of customers at the first $j$ tables from the left. Then,
\begin{equation}
\lim \limits_{n \rightarrow \infty} \left\{n^{-1}{N_{n,j}}, j \geq 0\right\}= \mathcal N_{\alpha, \theta}:=\left\{1-e^{-G_t}, t \geq 0\right\}^{\text{\rm cl}} \quad \text{a.s.} \label{lefttables}
\end{equation}
with respect to the Hausdorff metric on closed subsets of $[0,1]$, where ${\rm cl}$ denotes the closure in $[0,1]$, and $(G_t, t\geq 0)$ is a subordinator with Laplace exponent $\Phi_{\alpha, \theta}(s)={s\Gamma(s+\theta)\Gamma(1-\alpha)}/{\Gamma(s+\theta+1-\alpha)}.$

There is a continuous local time process $\mathcal L=(\mathcal L(u), u \in [0,1])$ for $\mathcal L_n(u):=\#\{j \in [K_n]\colon n^{-1} N_{n,j} \leq u\}$, $u \in [0,1]$, such that
\begin{equation*}
\lim \limits_{n \rightarrow \infty} \sup \limits_{u \in [0,1]} \lvert n^{-\alpha} \mathcal L_n(u)- \mathcal L(u)\rvert =0 \qquad \text{a.s.}\end{equation*}
where $\mathcal N_{\alpha, \theta}$ is the set of points at which $\mathcal L$ increases a.s..
\end{lemma}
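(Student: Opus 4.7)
The plan is to exploit the deep regenerative structure of the ordered $(\alpha,\theta)$-CRP as developed by Gnedin and Pitman \cite{3}. The composition of $[n]$ read off left-to-right from the seating arrangement of $(\widetilde{\Pi}_n,n\ge 1)$ is a \emph{regenerative composition structure}; that is, conditionally on the size of the leftmost block, the remaining composition is an independent regenerative composition on the remaining customers with the same law. A direct calculation from the $(\alpha,\theta)$-seating rule (using beta integrals and the formula $\Gamma(s+1)=s\Gamma(s)$) identifies the associated subordinator as the one with Laplace exponent
\begin{equation*}
\Phi_{\alpha,\theta}(s)=\frac{s\,\Gamma(s+\theta)\Gamma(1-\alpha)}{\Gamma(s+\theta+1-\alpha)},
\end{equation*}
by matching the tail $\nu([x,\infty))$ of the Lévy measure with the probability that, in the limit $n\to\infty$, the leftmost relative block size exceeds $x$.

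Having identified the limiting law, my first substantive step would be to couple the CRP to the subordinator on a single probability space. The standard device is the \emph{paintbox representation}: sample $(G_t,t\ge 0)$, consider the random closed set $\mathcal N_{\alpha,\theta}=\overline{\{1-e^{-G_t}\}}\subset [0,1]$, and then draw $U_1,U_2,\ldots$ i.i.d.\ uniform on $[0,1]$, declaring $i$ and $j$ to lie in the same block if $U_i,U_j$ fall in the same gap of $\mathcal N_{\alpha,\theta}$. Blocks are then ordered by the positions of their gaps. By the characterisation of $(\alpha,\theta)$-regenerative compositions, the resulting left-to-right composition is an ordered $(\alpha,\theta)$-CRP. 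Under this coupling, $n^{-1}N_{n,j}$ is the empirical fraction of the $U_i$ falling to the left of the $j$-th gap (counted from the left), so the Glivenko–Cantelli theorem applied to the empirical distribution function of the $U_i$ yields
\begin{equation*}
\sup_{j}\bigl|n^{-1}N_{n,j}-F(j)\bigr|\to 0\quad\text{a.s.,}
\end{equation*}
where $F(j)$ is the right endpoint of the $j$-th gap. This gives the Hausdorff-metric convergence claimed in \eqref{lefttables}, since the empirical closed set $\{n^{-1}N_{n,j}\}$ differs from $\mathcal N_{\alpha,\theta}$ by at most the uniform sup-norm deviation.

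For the local time statement, I would first note that $n^{-\alpha}\mathcal L_n(1)=n^{-\alpha}K_n\to L_{\alpha,\theta}$ a.s.\ by \eqref{tables}, and more generally that for each fixed $u\in[0,1]$, $\mathcal L_n(u)$ counts the tables whose cumulative mass lies to the left of $u$. Under the paintbox coupling, this equals the local time at $u$ of the inverse subordinator $t\mapsto G_t^{-1}$ pulled back by the map $G\mapsto 1-e^{-G}$, up to an error that vanishes after the $n^{-\alpha}$ rescaling. Define $\mathcal L$ as this limiting continuous local time; it increases exactly on $\mathcal N_{\alpha,\theta}$ because the subordinator's local time does. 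Monotonicity of $\mathcal L_n$ combined with pointwise convergence on a dense set and continuity of $\mathcal L$ (a consequence of $\alpha\in(0,1)$, hence atomlessness of the subordinator's Lévy measure at zero) upgrades pointwise to uniform convergence via a Dini-type argument.

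The main obstacle is executing the paintbox coupling rigorously and verifying that the discrete seating rule indeed corresponds precisely to reading off the gaps of $\mathcal N_{\alpha,\theta}$ sampled by the $U_i$; this is where the specific Laplace exponent $\Phi_{\alpha,\theta}$ is pinned down, and where consistency of the labelling (new tables opening to the left of, between, or right of existing ones) must be checked against the geometry of the closed random set. Once the coupling is in place, the convergence assertions are essentially Glivenko–Cantelli plus continuity of local time, so the bulk of the work is in that identification, which was carried out in \cite{1,3}.
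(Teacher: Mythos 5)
The paper offers no proof of this lemma at all—it is imported verbatim as \cite[Proposition 6]{1}—and your sketch follows essentially the same route as the proof given there and in Gnedin--Pitman \cite{3}: the ordered CRP composition is a regenerative composition structure, the Laplace exponent $\Phi_{\alpha,\theta}$ is identified from the seating rule, the paintbox/sampling coupling with the multiplicative subordinator gives the Hausdorff convergence via Glivenko--Cantelli, and the rescaled block counts converge to the local time. The two points you leave implicit (that every point of $\mathcal N_{\alpha,\theta}$ is approximated by right endpoints of \emph{occupied} gaps, and the pointwise strong law $n^{-\alpha}\mathcal L_n(u)\rightarrow\mathcal L(u)$ at fixed $u$, which is genuinely more than Glivenko--Cantelli) are exactly the parts carried out in \cite{1,3}, so nothing essential is missing.
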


We refer to the collection of open intervals in $[0,1]\setminus \mathcal N_{\alpha, \theta}$ as the \textit{($\alpha,\theta$)-regenerative interval partition} associated with the local time process $\mathcal L$, where $\mathcal L(1)=L_{\alpha,\theta}$ a.s.. Note that the joint law of ranked lengths of components of this interval partition is PD$(\alpha,\theta)$.
% and that $(\mathcal L(u), 0 \leq u \leq 1)$ can be characterized via the subordinator $(G_t, t \geq 0)$, i.e. 
%\begin{equation*}
%\mathcal L\left(1-e^{-G_t}\right)=\Gamma(1-\alpha) \int_{0}^t e^{-\alpha G_s}ds, \quad t \geq 0.
%\end{equation*}
The inverse local time $\mathcal L^{-1}$ defined by
\begin{equation}
\mathcal L^{-1}:[0,L_{\alpha, \theta}) \rightarrow [0,1), \qquad \mathcal L^{-1}(x):=\inf\{u \in [0,1]\colon \mathcal L(u) > x\},
\end{equation}
is right-continuous increasing. We equip the random interval $[0, L_{\alpha, \theta}]$ with the Stieltjes measure $d \mathcal L^{-1}$.%, and refer to the pair $([0, L_{\alpha, \theta}],d\mathcal L^{-1})$ as an $(\alpha, \theta)$-string of beads in the following sense. 

\begin{definition}[String of beads]\rm
A \textit{string of beads} $(I, \lambda)$ is an interval $I$ equipped with a discrete mass measure $\lambda$. A measure-preserving isometric copy of $([0,L_{\alpha, \theta}], d \mathcal L^{-1})$ associated as above with an $(\alpha, \theta)$-regenerative interval partition $[0,1] \setminus \mathcal N_{\alpha, \theta}$ is called
an \textit{$(\alpha, \theta)$-string of beads}, for $\alpha \in (0,1), \theta > 0$.
\end{definition}

%We would like to consider random variables taking values in a set of strings of beads. To this end, we equip the space $\Xi$ of strings of beads with the Borel $\sigma$-algebra associated with the topology of weak convergence of probability measures on $[0, \infty)$, where the length of a string of beads $(I, \lambda)$ can be obtained as the supremum of the support of the measure $\lambda$. 
We can view a string of beads $([0,K], \lambda)$ as a weighted $\mathbb R$-tree consisting of one single branch connecting the root $0$ with a leaf at distance $K$. %We will also refer to the equivalence class of such a tree in the space of equivalence classes of weighted compact $\mathbb R$-trees $\mathbb T_{\rm w}$ as a string of beads.

%We also refer to measure-preserving isometric copies of the weighted interval $([0, L_{\alpha, \theta}], d\mathcal L^{-1})$ as an $(\alpha, \theta)$-string of beads. 
Since the lengths of the interval components of an $(\alpha, \theta)$-regenerative interval partition $[0,1] \setminus \mathcal N_{\alpha, \theta}$ are the masses of the atoms of the associated $(\alpha, \theta)$-string of beads, we conclude that the joint law of the masses $(P_i^\downarrow, i \geq 1)$ of the atoms of an $(\alpha, \theta)$-string of beads ranked in decreasing order is PD$(\alpha, \theta)$. It is well-known that the length $L_{\alpha, \theta}\sim \text{ML}(\alpha, \theta)$ of an $(\alpha, \theta)$-string of beads can be recovered from the ranked atom masses $(P_i^{\downarrow}, i \geq 1)$ or the vector $(P_i, i \geq 1)$ of the stick-breaking representation \eqref{v1v2} via
\begin{equation} L_{\alpha, \theta}=\lim \limits_{i \rightarrow \infty} i \Gamma(1-\alpha) (P_i^{\downarrow})^{\alpha}=\lim_{k \rightarrow \infty} \left(1-\sum_{i\in[k]} P_i\right)^{\alpha} \alpha^{-\alpha}k^{1-\alpha}, \label{alphadiv}
\end{equation}
which is the so-called \textit{$\alpha$-diversity} of $(P_i^\downarrow, i \geq 1) \sim {\rm{PD}}(\alpha, \theta)$, cf. \cite[Lemma 3.11]{5}.

One of the key properties of $(\alpha, \theta)$-strings of beads is the regenerative nature inherited from the underlying regenerative interval partition, cf. \cite{3}. Pitman and Winkel \cite{1} developed a method (``$(\alpha,\theta)$-coin-tossing sampling'') to sample an atom of an $(\alpha, \theta)$-string of beads such that the two strings of beads obtained in this way are rescaled independent $(\alpha, \alpha)$- and $(\alpha, \theta)$-strings of beads (the first one being the one closer to the origin). The mass split between the two induced interval components and the selected atom is Dirichlet$(\alpha, 1-\alpha, \theta)$, with parameters assigned in their order on the interval $[0, L_{\alpha, \theta}]$. When $\theta=\alpha$, the special sampling reduces to uniform sampling from the mass measure $d\mathcal L^{-1}$.

\begin{prop}[{\cite[Proposition 10/14(b), Corollary 15]{1}}] \label{cointoss} 
Let $(I, \lambda):=([0, L_{\alpha, \theta}], d \mathcal L^{-1})$ be an $(\alpha, \theta)$-string of beads for some $\alpha \in (0,1), \theta >0$.
% with associated $(\alpha, \theta)$-regenerative interval partition $[0,1] \setminus \mathcal N_{\alpha,\theta}$ as in \eqref{lefttables}. Consider the switching probability function 
%\begin{equation*} p:[0,1] \rightarrow [0,1], \quad u \mapsto \frac{(1-u)\theta}{(1-u)\theta+u\alpha}.
%\end{equation*}
%For $t \geq 0$, select a block $(1-e^{-G_{t-}}, 1- e^{-G_t})$ of $[0,1] \setminus \mathcal N_{\alpha,\theta}$ with probability 
%\begin{equation*} p\left(e^{-\Delta G_t}\right)  \prod_{s < t} \left(1-p \left(e^{-\Delta G_s}\right)\right) \end{equation*}
%where $\Delta G_s:= G_{s}- G_{s-} >0$, and consider the local time $J:=\mathcal L(1-e^{-G_t})$ related to the selected block $(1-e^{-G_{t-}}, 1- e^{-G_t})$. 
Then there is a random variable $J\in(0,L_{\alpha,\theta})$ on a suitably enlarged probability space such that the following are independent.
\begin{itemize}
\item The mass split $( \lambda([0, J)), \lambda(J), \lambda((J, L_{\alpha, \theta}])) \sim \text{\rm Dirichlet}(\alpha, 1- \alpha, \theta)$;
\item (the isometry class of) the $(\alpha, \alpha)$-string of beads $( \lambda([0,J))^{-\alpha} [0,J), \lambda([0,J))^{-1} \lambda \restriction_{[0,J)} )$; 
\item (the isometry class of) the $(\alpha, \theta)$-string of beads $( \lambda((J, L_{\alpha, \theta}])^{-\alpha} (J, L_{\alpha, \theta}], \lambda((J, L_{\alpha, \theta}])^{-1} \lambda \restriction_{(J, L_{\alpha, \theta}]} ).$
\end{itemize}
\end{prop}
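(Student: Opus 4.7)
\medskip

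\noindent\textbf{Proof proposal.} The plan is to establish the statement at the discrete level of ordered $(\alpha,\theta)$-Chinese restaurant processes (CRPs) as described in Section \ref{Sec22} and then pass to the scaling limit via Lemma \ref{cvgcs}. Let $(\widetilde{\Pi}_n, n\ge 1)$ denote an ordered $(\alpha,\theta)$-CRP with left-to-right enumeration $\widetilde{\Pi}_{n,1},\ldots,\widetilde{\Pi}_{n,K_n}$. I would enrich the probability space with a random marked table index $M_n\in[K_n]$ defined consistently in $n$ so that $(\widetilde{\Pi}_n, M_n)$ evolves as follows: at step $n+1$, customer $n+1$ performs the usual $(\alpha,\theta)$-seating, and separately the mark is transferred to a newly opened table with an explicit probability, chosen so that exactly one table carries the mark at every step. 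This is the ``$(\alpha,\theta)$-coin-tossing sampling'' of Pitman--Winkel.

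The key inductive claim to verify in this first step is that, conditionally on $M_n=i$, the three sub-compositions $(\widetilde{\Pi}_{n,1},\ldots,\widetilde{\Pi}_{n,i-1})$, $(\widetilde{\Pi}_{n,i})$ and $(\widetilde{\Pi}_{n,i+1},\ldots,\widetilde{\Pi}_{n,K_n})$ are independent and, up to relabelling, evolve respectively as an ordered $(\alpha,\alpha)$-CRP, a singleton block absorbing customers at the appropriate $(1-\alpha)$-biased rate, and an ordered $(\alpha,\theta)$-CRP. This is proved by checking that the one-step seating probabilities for $(\widetilde{\Pi}_{n+1}, M_{n+1})$ factor correctly as the product of the three independent parallel CRP transitions, which reduces to a finite calculation using the identities $(k\alpha+\theta)=(k_L\alpha+\alpha)+(k_R\alpha+\theta)$ for the number of existing tables split as $k=k_L+1+k_R$, together with the explicit mark-transfer probabilities.

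Once Step 1 is in place, I would pass to the scaling limit. Let $J_n := n^{-1}N_{n, M_n-1}$ and apply Lemma \ref{cvgcs} to each of the three sub-compositions (rescaled) to get a.s.\ convergence to independent rescaled $(\alpha,\alpha)$- and $(\alpha,\theta)$-regenerative interval partitions, with the middle ``marked'' component converging to a single interval of positive length. The total-mass split $(J_n,n^{-1}\#\widetilde{\Pi}_{n,M_n},1-J_n-n^{-1}\#\widetilde{\Pi}_{n,M_n})$ has, by Proposition \ref{Diri}(ii) and the standard urn/Dirichlet limit for CRP block sizes, a limit distributed as Dirichlet$(\alpha,1-\alpha,\theta)$, independently of the isometry classes of the three rescaled regenerative partitions. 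Translating back to strings of beads, with $J$ the limit of $J_n$, the atom $J$ of the $(\alpha,\theta)$-string of beads satisfies $\lambda([0,J))=X_1$, $\lambda(J)=X_2$, $\lambda((J,L_{\alpha,\theta}])=X_3$, and the two remaining strings of beads are $(\alpha,\alpha)$ and $(\alpha,\theta)$ respectively after the rescaling dictated by the mass-length relation \eqref{alphadiv} applied to each piece's $\alpha$-diversity.

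The main obstacle will be Step 1: producing the coin-tossing rule explicitly and verifying the one-step factorisation of transition probabilities so that the conditional structure on the three sub-compositions is genuinely independent with the stated parameters. The bookkeeping is delicate because the $(\alpha,\alpha)$-parameter on the left piece encodes the fact that, once the mark has been fixed, ``opening a new table immediately left of the marked one'' plays the role of the boundary event at the right end of a $(\alpha,\alpha)$-CRP; correctly apportioning this probability between the mark-transfer rule and the ordinary CRP dynamics is the heart of the argument. Once that is pinned down, Steps 2 and 3 are essentially bookkeeping via Lemma \ref{cvgcs}, \eqref{alphadiv} and Proposition \ref{Diri}.
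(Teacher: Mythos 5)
There is a mismatch of expectations here: the paper does not prove Proposition \ref{cointoss} at all — it is imported verbatim from Pitman--Winkel \cite{1} (Proposition 10/14(b), Corollary 15), and the surrounding text only describes the ``$(\alpha,\theta)$-coin-tossing sampling'' informally. So what you have written is not being measured against an in-paper argument but against the cited reference, whose strategy (discrete ordered CRP with a marked table, independence of the three sub-compositions, scaling limit via the local-time convergence of Lemma \ref{cvgcs} and the diversity formula \eqref{alphadiv}) is indeed the one you outline.

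As a proof, however, your proposal has a genuine gap, and it is exactly the step you flag as ``the main obstacle'': you never specify the mark-transfer (coin-tossing) rule, and everything else hinges on it. Two things must be verified and are not: (a) that with your rule the left sub-composition, the marked block and the right sub-composition are, conditionally on the mark's position, \emph{independent} and evolve as ordered $(\alpha,\alpha)$- and $(\alpha,\theta)$-CRPs together with a block absorbing customers at rate proportional to $n_M-\alpha$ — the one-step factorisation of transition probabilities is the entire content of the cited result and cannot be waved through with the identity $k\alpha+\theta=(k_L\alpha+\alpha)+(k_R\alpha+\theta)$ alone, since new tables opened immediately adjacent to the marked one must be apportioned consistently; and (b) that the three aggregate weights form a P\'olya-type urn effectively started from weights $(\alpha,\,1-\alpha,\,\theta)$, which is what produces the ${\rm Dirichlet}(\alpha,1-\alpha,\theta)$ mass split. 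Your appeal to ``the standard urn/Dirichlet limit'' plus Proposition \ref{Diri}(ii) does not deliver this, because at the (random) time the mark appears there are in general already customers to its left and right, so the initial weights are not $(\alpha,1-\alpha,\theta)$ unless the selection rule is tuned precisely to compensate — this tuning is the heart of Pitman--Winkel's construction. The limit passage also needs joint (not piecewise) convergence of the three rescaled compositions and the mass split to transfer the discrete independence to independence of the two isometry classes and the Dirichlet vector; that is routine once (a) and (b) are in place, but it should be stated. As it stands, your text is a correct roadmap to the proof in \cite{1}, not a proof.
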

%The random variable $J$ can be constructed by a procedure called $(\alpha, \theta)$-\textit{coin tossing sampling}. 

%There is an informal description of coin tossing sampling: consider a walker going along the string of beads $([0, L_{\alpha, \theta}], d \mathcal L^{-1})$ from $0$, tossing a coin for each of the (infinite number of atom) masses aligned along the interval; the heads probability is given by $p(\exp(-\Delta G_t))$, i.e. it depends on the relative remaining mass after an atom; the walker stops when he sees heads for the first time, and we select the related atom.

In Section \ref{sec4} we will formulate the algorithms of the introduction based on masses rather than lengths. In particular, the attachment points in the update step will be mass-sampled, not length-sampled. The following lemma will imply that that the algorithms based on masses induce the length versions.

\begin{lemma} \label{equivalence} Let $(X_1, \ldots, X_n) \sim {\rm Dirichlet}(\theta_1, \ldots, \theta_n)$ for some $\theta_1, \ldots, \theta_n >0$ and $n \in \mathbb N$, and let $([0,L_i], \lambda_i)$ be independent $(\alpha, \theta_i)$-strings of beads, respectively, $i\in[n]$.

\begin{itemize}
\item Select $I'=j \in [n]$ with probability $X_j$ and, conditionally given $I'=j$, select $L' \in [0, L_j]$ via $(\alpha, \theta_j)$-coin tossing sampling on $([0,L_j], \lambda_j)$.
\item Select $I''=j \in [n]$ with probability proportional to $X_j^\alpha L_j$ and, conditionally given $I''=j$, select $L''=B L_j$ where $B \sim {\rm Beta}(1, \theta_j/\alpha)$ is independent.
\end{itemize}
Then $
\left(I',L_1, \ldots, L_{I'-1}, L',  L_{I'}-L', L_{I'+1}, \ldots L_n\right) \,{\buildrel d \over =}\, \left(I'',L_1, \ldots, L_{I''-1}, L'', L_{I''}-L'', L_{I''+1}, \ldots L_n\right).
$
\end{lemma}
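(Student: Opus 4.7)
The plan is to verify the distributional identity by decomposing the joint law of the output in each procedure into independent building blocks, using Propositions \ref{cointoss} and \ref{DIRML} as the main tools, and then showing that the two decompositions describe the same joint law.

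For Procedure 1, I would exploit the independence of $(X_1,\ldots,X_n)$ and $(L_1,\ldots,L_n)$: since $I'$ is a function of $(X_1,\ldots,X_n)$ only, we get immediately that $P(I'=j)=E[X_j]=\theta_j/\sum_i\theta_i$ (Proposition \ref{Diri}(iv)) and that $(L_1,\ldots,L_n)$ is independent of $I'$, so conditionally on $I'=j$ each $L_i$ is $\text{ML}(\alpha,\theta_i)$-distributed. Next, Proposition \ref{cointoss} applied to the $j$-th string gives the sample-path decomposition $L'=U^\alpha\tilde K$ and $L_j-L'=W^\alpha\tilde K'$ with $(U,m,W)\sim\text{Dir}(\alpha,1-\alpha,\theta_j)$ and independent $\tilde K\sim\text{ML}(\alpha,\alpha)$, $\tilde K'\sim\text{ML}(\alpha,\theta_j)$. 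Writing $U=(1-m)V_1$, $W=(1-m)V_2$ with $(V_1,V_2)\sim\text{Dir}(\alpha,\theta_j)$ independent of $1-m$ (aggregation of the Dirichlet, Proposition \ref{Diri}(ii)), and then invoking the two-component case of Proposition \ref{DIRML} on $(V_1^\alpha\tilde K,V_2^\alpha\tilde K')$, I would deduce that the sum $V_1^\alpha\tilde K+V_2^\alpha\tilde K'\sim\text{ML}(\alpha,\alpha+\theta_j)$ is independent of the normalised ratio, which is $\text{Beta}(1,\theta_j/\alpha)$-distributed. Consequently, conditionally on $I'=j$ the pair $(L',L_j-L')$ has the law of $(L_jB',L_j(1-B'))$ with $B'\sim\text{Beta}(1,\theta_j/\alpha)$ independent of $L_j\sim\text{ML}(\alpha,\theta_j)$.

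For Procedure 2, I would apply Proposition \ref{DIRML} at the full vector level to identify $(X_j^\alpha L_j)_{j\in[n]}\stackrel{d}{=}(SY_j)_{j\in[n]}$ for independent $S\sim\text{ML}(\alpha,\sum_i\theta_i)$ and $(Y_1,\ldots,Y_n)\sim\text{Dir}(\theta_1/\alpha,\ldots,\theta_n/\alpha)$. The selection of $I''$ proportional to $X_j^\alpha L_j$ is then in distribution a size-biased pick from $(Y_j)_j$, which gives $P(I''=j)=(\theta_j/\alpha)/(\sum_i\theta_i/\alpha)=\theta_j/\sum_i\theta_i$, matching the Procedure-1 marginal of the index. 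The $\text{Beta}(1,\theta_j/\alpha)$-split $L''=BL_{I''}$ is independent of everything else, producing the same Beta split that was identified in the Procedure-1 analysis.

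The main obstacle is that the weight $X_j^\alpha L_j$ in Procedure 2 couples the index selection with the lengths, so that on the face of it $L_{I''}$ given $I''$ appears to be size-biased in a non-trivial way; the crux is to show that this apparent bias is cancelled by the Beta rescaling to yield the same joint law as Procedure 1. To do this, I would write out the joint density of $(L_1,\ldots,L_n,B)$ conditioned on $I''=j$, change variables via $L''=BL_j$ to $(L_1,\ldots,L_{j-1},L'',L_j-L'',L_{j+1},\ldots,L_n)$, and simplify via the two-component identity from Proposition \ref{DIRML} with parameters $(\alpha,\theta_j)$. The identification reduces to the statement established in the Procedure-1 analysis, namely that an independent $\text{Beta}(1,\theta_j/\alpha)$-split of an $\text{ML}(\alpha,\theta_j)$-variable produces the same joint law as the coin-tossing pair $(U^\alpha\tilde K,W^\alpha\tilde K')$; propagating this identification through the marginalisation over $(X_1,\ldots,X_n)$ recovers exactly the Procedure-1 description, completing the proof.
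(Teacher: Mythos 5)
Your analysis of the first procedure coincides with the paper's: Proposition \ref{Diri}(iv) gives $\mathbb P(I'=j)=\theta_j/\sum_{i\in[n]}\theta_i$, and Proposition \ref{cointoss} combined with Proposition \ref{Diri}(ii) and the two-component case of Proposition \ref{DIRML} shows that, given $I'=j$, the relative split $L'/L_j\sim{\rm Beta}(1,\theta_j/\alpha)$ is independent of $L_j$. Your identification of $\mathbb P(I''=j)=\theta_j/\sum_{i\in[n]}\theta_i$ through the representation $(X_i^\alpha L_i)_{i\in[n]}\,{\buildrel d\over=}\,S\cdot(Y_i)_{i\in[n]}$ of Proposition \ref{DIRML} is also the paper's step.

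The step that would fail is the claimed cancellation of the size-bias by the Beta rescaling. Conditionally on $I''=j$, the joint law of $(L_1,\ldots,L_n)$ acquires the factor $\mathbb E_X\bigl[X_j^\alpha\ell_j\big/\sum_{i\in[n]}X_i^\alpha\ell_i\bigr]$, a genuinely non-constant function of $(\ell_1,\ldots,\ell_n)$, and the independent split $L''=BL_j$ cannot remove it: the split leaves $(L_i)_{i\neq j}$ and the sum $L''+(L_j-L'')=L_j$ untouched, so the change-of-variables computation you propose will not reduce to the first procedure's description of the unrescaled lengths, no matter how the Beta variable is chosen. What the paper's argument (and its use in Remark \ref{rem45}) really runs on is the identity read through Proposition \ref{DIRML} at the level of the rescaled lengths $X_i^\alpha L_i$ and the rescaled cut position: there the ${\rm ML}(\alpha,\sum_i\theta_i)$ total is independent of the ${\rm Dirichlet}(\theta_1/\alpha,\ldots,\theta_n/\alpha)$ proportions, selecting proportionally to $X_j^\alpha L_j$ size-biases only the proportions (Proposition \ref{Diri}(iv)), and the ${\rm Beta}(1,\theta_j/\alpha)$ decimation of the chosen coordinate (Proposition \ref{Diri}(iii)) yields exactly the refined ${\rm Dirichlet}(\theta_1/\alpha,\ldots,1,\theta_j/\alpha,\ldots,\theta_n/\alpha)$ proportions, with the same independent total, that mass-selection plus coin-tossing produces on the other side. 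The length-bias you correctly identified is thus absorbed into the Dirichlet proportions of the rescaled lengths rather than cancelled on the unrescaled $L_i$; a proof pinned to the unrescaled vector, as in your outline, stalls at that factor.
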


\begin{proof} We need to show that, for any bounded and continuous function $f\colon \mathbb R^{n+2} \rightarrow \mathbb R$
\begin{equation}
\mathbb E \left[ f \left(I',L_1, \ldots, L_{I'-1}, L',  L_{I'}\!-\!L', L_{I'+1}, \ldots, L_n \right)\! \right] = \mathbb E \left[ f \left(I'',L_1, \ldots, L_{I''-1}, L'', L_{I''}\!-\!L'', L_{I''+1}, \ldots L_n \right)\! \right] . \label{massselislengthsel}\end{equation}

Conditioning on $I'=j$, and using Proposition \ref{Diri}(iv), the LHS of \eqref{massselislengthsel} is 
\begin{align*}
\sum_{j\in[n]} \mathbb E \left[ f \left(I',L_1, \ldots, L_{I'-1}, L',  L_{I'}-L', L_{I'+1}, \ldots L_n\right) \mid I'=j \right] \left({\theta_j}\bigg/{\sum_{i\in[n]} \theta_i}\right).\end{align*}
Conditionally given $I'=j$, we select an atom of the $(\alpha, \theta_j)$-string of beads via $(\alpha, \theta_j)$-coin tossing sampling. By Proposition \ref{cointoss} and Proposition \ref{Diri}(ii), the mass split $(1-\lambda_j(L'))^{-1}\left(\lambda_j\left(\left[0, L'\right)\right), \lambda_j\left(\left(L', L_j\right]\right)\right) \sim {\rm Dirichlet}\left(\alpha, \theta_j\right)$
and the $(\alpha, \alpha)$- and the $(\alpha, \theta)$-strings of beads given by 
\begin{equation*} \left( \lambda\left(\left[0, L'\right)\right)^{-\alpha} [0, L'), \lambda\left(\left[0, L'\right)\right)^{-1}\lambda\restriction_{\left[0, L'\right)} \right), \quad \left( \lambda\left(\left( L', L_j\right]\right)^{-\alpha} \left(L', L_j\right], \lambda\left(\left( L',L_j\right]\right)^{-1}\lambda\restriction_{\left(L',L_j\right]} \right), 
\end{equation*}
respectively, are independent. By Proposition \ref{DIRML}, we conclude that the relative length split on $[0,L_j]$ is $L'/L_j\sim {\rm Beta}(1, \theta_j/\alpha)$. To see \eqref{massselislengthsel}, proceed likewise with the RHS of \eqref{massselislengthsel}, using that, by Proposition \ref{DIRML}, $(L_1, \ldots, L_n)\sim {\rm Dirichlet}(\theta_1/\alpha, \ldots, \theta_n/\alpha)$. More precisely, note that $\mathbb P(I''=j)=(\theta_j/\alpha)/(\sum_{i\in[n]} \theta_i/\alpha)=\theta_j/\sum_{i\in[n]} \theta_i$, and that, conditionally given $I''=j$, we have $L''/L_j\sim {\rm Beta}(1,\theta_j/\alpha)$, as before.
\end{proof}

%\begin{remark} As a natural special case of weighted $k$-marked $\mathbb R$-trees, cf. Section \ref{MMS}, we can introduce \textit{$k$-marked strings of beads} of the form \begin{equation*} \left(I=[0,K], I_1=[K_{1},K_1'], \ldots, I_k=[K_k, K_k'], \lambda \right), \end{equation*} i.e.\ intervals $I=[0,K]$ with marked subsets $I_i=[K_i, K_i'] \subset [0,K]$, $1 \leq i \leq k$, equipped with a discrete mass measure $\lambda$. We can view a $k$-marked string of beads as a weighted $k$-marked $\mathbb R$-tree, and it will be sufficient to restrict our considerations to equivalence classes of $1$-marked strings of beads, where the marked component $[K_1, K_1']$ is of the form $[0, K_1']$, i.e. $K_1=0$. We view the points of colour changes in $k$- and $\infty$-marked $\mathbb R$-trees as branch points of degree $2$, i.e. an interval $[0,K]$ with a marked component $[0,K_1]$ as in a $1$-marked string of beads is related to two edges in the related tree representation. \end{remark}

We will also need the following statement about sampling from Poisson-Dirichlet distributions.

\begin{prop}[Sampling from PD$(\alpha, \theta)$, {\cite[Proposition 34]{30}}] \label{usefulPD}
Let $(P_i, i \geq 1) \sim \rm{PD}(\alpha, \theta)$ for some $0 \leq \alpha < 1$ and $\theta > -\alpha$, and let $N$ be an index such that 
\begin{equation*} \mathbb P\left(N=i \mid P_i, i \geq 1 \right)=P_i, \quad i \geq 1. \end{equation*}
Let $(P'_i, i \geq 1)$ be obtained from $P$ by deleting $P_N$, and set $P''_i:=P_i'/(1-P_N)$ for $i \geq 1$. Then, $P_N \sim \rm{Beta}(1-\alpha, \alpha + \theta)$, and $(P_i'', i \geq 1) \sim \rm{PD}(\alpha, \alpha + \theta)$ is independent of $P_N$.
\end{prop}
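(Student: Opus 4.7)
The plan is to derive the statement from the stick-breaking (GEM) representation of $\mathrm{PD}(\alpha,\theta)$ together with the classical Pitman--Yor identification of $\mathrm{GEM}(\alpha,\theta)$ as the size-biased permutation of $\mathrm{PD}(\alpha,\theta)$. Let $(\tilde P_i,i\ge 1)$ denote the size-biased permutation of $(P_i,i\ge 1)\sim\mathrm{PD}(\alpha,\theta)$. Then $\tilde P_i=V_i\prod_{j<i}(1-V_j)$ with independent $V_j\sim\mathrm{Beta}(1-\alpha,\theta+j\alpha)$, $j\ge 1$, which is exactly the form \eqref{v1v2} specialised to $\mathrm{GEM}(\alpha,\theta)$.

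First I would identify $P_N$ in distribution with $\tilde P_1$. Since the size-biased permutation begins by a size-biased pick from the underlying collection of atoms, and since the law of $P_N$ depends only on the multiset of values $\{P_i,i\ge 1\}$, we obtain $P_N\,{\buildrel d\over =}\,\tilde P_1=V_1\sim\mathrm{Beta}(1-\alpha,\alpha+\theta)$, which is the first claim.

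For the second claim, I would couple $N$ to the size-biased permutation so that $P_N=\tilde P_1$. Then the sequence $(P_i',i\ge 1)$ obtained by deleting $P_N$ from $(P_i)$ is, up to reordering, the multiset $\{\tilde P_i,i\ge 2\}$. Normalising by $1-P_N=1-V_1$ yields
\[
\frac{\tilde P_i}{1-V_1} = V_i\prod_{1<j<i}(1-V_j),\qquad i\ge 2.
\]
Setting $W_k:=V_{k+1}$, $k\ge 1$, gives independent $W_k\sim\mathrm{Beta}(1-\alpha,(\alpha+\theta)+k\alpha)$, so $(\tilde P_{k+1}/(1-V_1),k\ge 1)$ is the stick-breaking sequence of $\mathrm{GEM}(\alpha,\alpha+\theta)$. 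Its ranked version $(P_i'',i\ge 1)$ is therefore $\mathrm{PD}(\alpha,\alpha+\theta)$, and it is independent of $V_1=P_N$ by independence of $V_1$ from $(V_j,j\ge 2)$.

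The main obstacle is a careful bookkeeping step rather than a delicate estimate: one must argue that the operations of (i) picking $N$ from the ranked sequence and deleting $P_N$, and (ii) reading off the first term of a size-biased permutation and examining the ranked tail, agree in joint distribution. This rests on the Pitman--Yor characterisation of $\mathrm{GEM}(\alpha,\theta)$ as the size-biased permutation of $\mathrm{PD}(\alpha,\theta)$ and on the invariance of size-biased sampling under reorderings of the atoms. Once that machinery is invoked, the remainder of the proof is the purely algebraic shift $W_k=V_{k+1}$ above.
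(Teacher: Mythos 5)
The paper does not prove this proposition at all: it is quoted verbatim from the literature (Proposition 34 of the cited reference), so there is no in-paper argument to compare against. Your derivation is correct and is essentially the standard proof of that cited result: you use the Pitman--Yor identification of ${\rm GEM}(\alpha,\theta)$ as the size-biased permutation of ${\rm PD}(\alpha,\theta)$, couple $N$ with the first size-biased pick so that $P_N=\tilde P_1=V_1\sim{\rm Beta}(1-\alpha,\alpha+\theta)$, and observe that the residual sticks $W_k=V_{k+1}\sim{\rm Beta}(1-\alpha,(\alpha+\theta)+k\alpha)$ make the renormalised remainder a ${\rm GEM}(\alpha,\alpha+\theta)$ sequence, whose ranked version is ${\rm PD}(\alpha,\alpha+\theta)$ and is a function of $(V_j,j\ge 2)$, hence independent of $V_1$. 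The only load-bearing external input is the invariance/size-biased-permutation characterisation itself, which you correctly flag; given that, the bookkeeping step (deleting $P_N$ from the ranked sequence leaves exactly the multiset $\{\tilde P_i,i\ge 2\}$ under your coupling) is sound.
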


\subsection{Line-breaking constructions of the stable tree, and the proof of Theorem \ref{samestable}} \label{lconst}

In this section, we collect some preliminary results on stable trees and prove Theorem \ref{samestable}. Recall the line-breaking construction of the stable tree given by Algorithm \ref{GH} yielding the sequence of compact $\mathbb R$-trees $(\mathcal T_k, k \geq 0)$. Leaves and branch points have a natural order induced by the time of appearance in the sequence $(\mathcal T_k, k \geq 0)$, i.e. we can write $(v_i, i \geq 1)$ for the branch points, and $W_k^{(i)}$ for the branch point weight of $v_i$ in $\mathcal T_k$ (if $v_i \notin {\rm Br}(\mathcal T_k)$ or $i > b_k$, set $W_k^{(i)}=0$). We will list the edges $E_k^{(1)},\ldots,E_k^{(|\mathcal T_k|)}$ of $\mathcal T_k$ and their 
lengths $L^{(i)}_k={\rm Leb}(E^{(i)}_k)$, $i\in[|\mathcal T_k|]$, in the order encountered on a depth-first search directed by
least labels.% he edge lengths of $(\mathcal T_k, k \geq 0)$ in Algorithm \ref{GH}; it was suggested in \cite{12} to proceed inductively as follows. 

\begin{lemma}[{\cite[Proposition 3.2]{12}}] \label{GH1} For $k \geq 1$, given the shapes of $\mathcal T_0, \ldots, \mathcal T_k$, and $\lvert\mathcal T_k\vert -(k+1)=\ell$, i.e. conditionally given that the tree $\mathcal T_k$ has $k+1+\ell$ edges and $\ell$ branch points $(v_i, i \in [\ell])$, 
\begin{equation}
\left( L_k^{(1)}, \ldots, L_k^{(k+1+\ell)}, W_k^{(1)}, \ldots, W_k^{(\ell)}\right)=S_k \cdot \left(Z_k^{(1)}, \ldots, Z_k^{(k+\ell+1)}, Z_k^{(k+\ell+2)}, \ldots Z_{k}^{(k+1+2\ell)}\right)
\end{equation}
where $(Z_k^{(1)}, \ldots, Z_k^{(k+\ell+1)}, Z_k^{(k+\ell+2)}, \ldots Z_{k}^{(k+1+2\ell)}) \sim {\rm{Dirichlet}}\left( 1, \ldots, 1, w(d_1)/{\beta}, \ldots, w(d_\ell)/{\beta}\right)$ and $S_k \sim {\rm{ML}}(\beta, \beta+k)$ are independent, $w(d_i)=(d_i-3)(1-\beta)+1-2\beta$ and $d_i={\rm deg}(v_i, \mathcal T_k)$ is the degree of $v_i$.
\end{lemma}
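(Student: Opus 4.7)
The plan is to proceed by induction on $k$. The base case $k=0$ is immediate: $\mathcal{T}_0=[0,S_0]$ has a single edge of length $S_0\sim{\rm ML}(\beta,\beta)$ and $\ell=0$ branch points, so the claim reduces to the degenerate one-coordinate Dirichlet assigning unit mass to that edge.

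The first key ingredient for the inductive step is a property of the Mittag-Leffler transition kernel $f(z,y)$: a direct computation with the explicit density shows that $S_{k+1}\sim{\rm ML}(\beta,\beta+k+1)$ and
$$\left(S_k/S_{k+1},\;(S_{k+1}-S_k)/S_{k+1}\right)\sim{\rm Dirichlet}\bigl(1+(\beta+k)/\beta,\,(1-\beta)/\beta\bigr),$$
independently of $S_{k+1}$. Combining this with the independent $(B_k,1-B_k)\sim{\rm Dirichlet}(1,(1-2\beta)/\beta)$ split via decimation (Proposition \ref{Diri}(iii)) yields
$$\left(S_k,\,(S_{k+1}-S_k)B_k,\,(S_{k+1}-S_k)(1-B_k)\right)\,{\buildrel d \over =}\,S_{k+1}\cdot(V_1,V_2,V_3),$$
with $(V_1,V_2,V_3)\sim{\rm Dirichlet}(1+(\beta+k)/\beta,\,1,\,w(3)/\beta)$ independent of $S_{k+1}$, where I have used $w(3)/\beta=(1-2\beta)/\beta$.

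Given the inductive hypothesis, the selection probability in Algorithm \ref{GH} at step $k$ for each coordinate of $\vec{Z}_k$ is precisely the corresponding entry of $\vec{Z}_k$, so by size-bias (Proposition \ref{Diri}(iv)) conditioning on the selection increments the chosen parameter by $1$; the resulting size-biased parameters sum to $(\beta+k)/\beta+1$, exactly matching the parameter of $V_1$, which permits decimation to expand $V_1$ into the size-biased vector $V_1\vec{Z}_k$. In Case 1 (branch point $v_i$ selected), I then aggregate (Proposition \ref{Diri}(ii)) the size-biased weight coordinate $V_1Z_k^{(k+\ell+1+i)}$ (parameter $w(d_i)/\beta+1$) with $V_3$ (parameter $(1-2\beta)/\beta$) to form the updated weight $W_{k+1}^{(i)}/S_{k+1}$ with parameter $w(d_i)/\beta+1+(1-2\beta)/\beta=w(d_i+1)/\beta$, while $V_2$ contributes the new external edge of parameter $1$. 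In Case 2 (edge $E_k^{(j)}$ selected, size-biased to parameter $2$), the uniform placement of $v_{b_{k+1}}$ amounts to a further decimation splitting this parameter into $(1,1)$; then $V_2$ and $V_3$ respectively furnish a new edge parameter $1$ and the new branch-point parameter $w(3)/\beta$, in agreement with the new branch point having degree $3$. In both cases one obtains a Dirichlet vector with exactly the parameters claimed for $\vec{Z}_{k+1}$, independent of $S_{k+1}$, and a numerology check confirms that the total parameter sum advances from $(\beta+k)/\beta$ to $(\beta+k+1)/\beta$.

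The main obstacle is establishing and then correctly using the transition identity for $(S_k/S_{k+1},(S_{k+1}-S_k)/S_{k+1})$: the $+1$ shift in its first Dirichlet parameter, arising from the factor $y\, g_\beta(y)$ in $f(z,y)$, must precisely cancel the $+1$ shift produced by size-bias on $\vec{Z}_k$, so that the combined vector $(V_1\vec{Z}_k^{\mathrm{sb}},V_2,V_3)$ sits naturally inside a single Dirichlet distribution. A secondary care is that edges and weights are indexed in depth-first order by least labels; the relabelling caused by insertion of the new edge or branch point is absorbed by Dirichlet symmetry (Proposition \ref{Diri}(i)).
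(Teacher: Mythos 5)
This lemma is not proved in the paper at all: it is imported verbatim from Goldschmidt--Haas \cite[Proposition 3.2]{12}, so there is no in-paper argument to compare against; what matters is whether your blind induction stands on its own, and as far as I can check it does. Your key identity, that $S_k/S_{k+1}\sim{\rm Beta}(k/\beta+2,1/\beta-1)$ independently of $S_{k+1}\sim{\rm ML}(\beta,\beta+k+1)$, is exactly the decomposition the paper itself invokes (via Proposition \ref{DIRML}) in the appendix proof of Theorem \ref{mainresult}(ii), and it does follow from the displayed transition density together with the tilting relation that the ${\rm ML}(\beta,\theta)$ density is proportional to $s^{\theta/\beta}g_\beta(s)$. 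The bookkeeping also checks out: conditioning on the shape of $\mathcal T_{k+1}$ is equivalent to conditioning on the selection made at step $k$, so Proposition \ref{Diri}(iv) yields the size-biased Dirichlet whose total parameter $(\beta+k)/\beta+1$ matches the first parameter of $(V_1,V_2,V_3)$; the decimation is legitimate because the (size-biased) vector $Z_k$ is independent of the fresh randomness $(S_{k+1},B_k)$ and of the uniform placement of the new branch point; and the aggregation $w(d_i)/\beta+1+(1-2\beta)/\beta=w(d_i+1)/\beta$ in the branch-point case, respectively the split of the size-biased parameter $2$ into $(1,1)$ plus the new coordinates of parameters $1$ and $w(3)/\beta$ in the edge case, give precisely the parameters claimed at step $k+1$, with total $(\beta+k+1)/\beta$ and independence from $S_{k+1}$ preserved. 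This is essentially the same Dirichlet/size-bias/decimation calculus the paper deploys in its appendix for the two-colour analogue (and, in spirit, the original argument of \cite{12}), so the route is not new, but it is a valid self-contained proof. Two small points of hygiene: the attribution of the ``$+1$ shift'' to the factor $y\,g_\beta(y)$ is loose --- it is really the exponent-versus-parameter convention together with the tilt of the ${\rm ML}(\beta,\beta+k)$ density --- and you should say explicitly that the size-biased $Z_k$ remains independent of $(V_1,V_2,V_3)$ (true, since the selection uses only $Z_k$ and auxiliary randomness), as this independence is what licenses the decimation step.
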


%The following corollary is key for our weight-length representation.

\begin{corollary}[Masses as lengths] \label{Massesaslengths} For $k \geq 1$, given the shapes of $\mathcal T_0, \ldots, \mathcal T_k$, and $\lvert\mathcal T_k\vert -(k+1)=\ell$,
\begin{equation}
\left( L_k^{(1)}, \ldots, L_k^{(k+1+\ell)}, W_k^{(1)}, \ldots, W_k^{(\ell)}\right)= \left(X_{1}^\beta M_k^{(1)}, \ldots, X_{k+1+2\ell}^\beta M_{k}^{(k+1+2\ell)}\right) \label{masl}
\end{equation}
where the random variables $M_k^{(i)} \sim {\rm ML}(\beta, \beta)$, $i \in [k+1+\ell]$, $M_k^{(k+1+\ell+i)} \sim {\rm ML}(\beta, w(d_i))$, $i \in [\ell]$, and 
$X=\left(X_1, \ldots, X_{k+1+\ell}, X_{k+2+\ell}, \ldots X_{k+1+2\ell}\right)\sim {\rm{Dirichlet}}\left( \beta, \ldots, \beta, w(d_1), \ldots,w(d_\ell)\right)$
are independent, and $w(d_i)=(d_i-3)(1-\beta)+1-2\beta$ with $d_i={\rm deg}(v_i, \mathcal T_k)$.
\end{corollary}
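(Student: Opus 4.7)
The approach is to combine Lemma \ref{GH1} with the distributional identity of Proposition \ref{DIRML}, which is tailor-made for converting a single Mittag-Leffler factor times a Dirichlet vector into coordinate-wise products of independent Mittag-Lefflers raised to the $\beta$-power of a re-parametrised Dirichlet vector.

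First, I will start from the representation in Lemma \ref{GH1}:
\begin{equation*}
\left( L_k^{(1)}, \ldots, L_k^{(k+1+\ell)}, W_k^{(1)}, \ldots, W_k^{(\ell)}\right)=S_k \cdot \left(Z_k^{(1)}, \ldots, Z_k^{(k+1+2\ell)}\right),
\end{equation*}
where the Dirichlet parameters are $(1,\ldots,1,w(d_1)/\beta,\ldots,w(d_\ell)/\beta)$ and $S_k\sim{\rm ML}(\beta,\beta+k)$. To apply Proposition \ref{DIRML} with parameter $\beta$, I take $\theta_i=\beta$ for $i\in[k+1+\ell]$ and $\theta_{k+1+\ell+j}=w(d_j)$ for $j\in[\ell]$, so that $\theta_i/\beta$ recovers the Dirichlet parameters above.

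The crucial consistency check is that $\theta:=\sum_i\theta_i$ equals $\beta+k$, matching the index of $S_k$. Using $w(d)=(d-3)(1-\beta)+1-2\beta$ and the handshake identity $\sum_{i\in[\ell]}d_i+(k+1)+1=2(k+1+\ell)$ (summing degrees over leaves, root and branch points of a tree with $k+1+\ell$ edges, $k+1$ leaves and a degree-one root), one gets $\sum_{i\in[\ell]}d_i=k+2\ell$, and then a short algebraic calculation gives
\begin{equation*}
(k+1+\ell)\beta+\sum_{i\in[\ell]}w(d_i)=(k+1+\ell)\beta+(1-\beta)(k+2\ell)-(2-\beta)\ell=\beta+k,
\end{equation*}
as required.

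With this check in hand, Proposition \ref{DIRML} gives
\begin{equation*}
S_k\cdot(Z_k^{(1)},\ldots,Z_k^{(k+1+2\ell)})\,{\buildrel d \over =}\,\left(X_1^\beta M_k^{(1)},\ldots,X_{k+1+2\ell}^\beta M_k^{(k+1+2\ell)}\right),
\end{equation*}
where $(X_1,\ldots,X_{k+1+2\ell})\sim{\rm Dirichlet}(\beta,\ldots,\beta,w(d_1),\ldots,w(d_\ell))$ and the Mittag-Lefflers $M_k^{(i)}\sim{\rm ML}(\beta,\beta)$ for $i\in[k+1+\ell]$, $M_k^{(k+1+\ell+j)}\sim{\rm ML}(\beta,w(d_j))$ for $j\in[\ell]$ are independent of $X$ and of each other. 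Combining this with Lemma \ref{GH1} yields \eqref{masl}. There is no real obstacle: the only thing that could go wrong is a mismatch of parameters, which is why I would present the handshake/degree-sum computation explicitly, as the numerics are what ensures the single Mittag-Leffler factor splits cleanly into the stated family indexed by leaves/edges and by branch points with appropriate weights $w(d_i)$.
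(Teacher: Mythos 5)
Your proposal is correct and follows essentially the same route as the paper: apply Lemma \ref{GH1} together with Proposition \ref{DIRML} with $\theta_i=\beta$ for the edges and $\theta_{k+1+\ell+i}=w(d_i)$ for the branch points, and verify $\sum_i\theta_i=\beta+k$ via the degree-sum (handshake) identity $\sum_{i\in[\ell]}d_i=2(k+1+\ell)-(k+1)-1$. Your explicit algebraic check of the parameter sum matches the paper's computation exactly, so nothing further is needed.
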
 

\begin{proof} We apply Lemma \ref{GH1}, and Proposition \ref{DIRML} with $n=k+1+2\ell$, $\theta_i=\beta, i \in [k+1+\ell]$, and $\theta_{k+1+\ell+i}=w(d_i)$, 
  $i \in [\ell]$. It remains to check that $\theta=\sum_{i\in[n]} \theta_i=\beta+k$, i.e. that 
  \begin{equation*} {(\beta+k})/{\beta}= k+\ell+1+\sum_{i\in[\ell]} \left({(d_i-3)(1/\beta-1)+(1/\beta-2)} \right). 
  \end{equation*} 
  This follows from the fact that the sum of the vertex degrees in a tree with $m$ edges is $2m$, i.e. 
  $\sum_{i\in[\ell]} d_i=2(k+1+\ell)-(k+1)-1$, since $\mathcal T_k$ has $k+1+\ell$ edges and $(k+1)+1$ degree-1 vertices. 
\end{proof}

Haas et al. \cite{35} analysed the stable tree as an example of a self-similar CRT. Let $(\mathcal T, d, \rho)$ with mass measure $\mu$ be the stable tree of parameter 
$\beta \in (0,1/2]$, and let $\Sigma \sim \mu$ be a leaf sampled from $\mu$. Consider the \textit{spine}, i.e.\ the path $[[\rho, \Sigma]]$ from the root to this leaf. 
Remove all vertices of degree one or two from this path. This yields a sequence of connected components that can a.s. be ranked in decreasing order of mass, and which
we denote by $(\overline{\mathcal S}^{(i)}, i \ge 1)$, rooted at vertices $\rho_i \in [[\rho, \Sigma]]$ of a.s. infinite degree, $i\ge 1$, respectively. Each 
$\overline{\mathcal S}^{(i)}$ further separates into a sequence $(\overline{\mathcal S}^{(i)\downarrow}_j,j\ge 1)$ when removing $\rho_i$. 
\begin{itemize} 
\item  The \textit{coarse spinal mass partition} is $\big(\overline{P}^{(i)}, i \geq 1\big):=\big(\mu(\overline{\mathcal S}^{(i)}), i \geq 1\big)$,
%, i.e. the sequence of masses $(\mu(\mathcal S^{(i)}), i \in I)$ ranked in decreasing order.
\item The \textit{fine spinal mass partition} is the sequence  $\big(\overline{P}_{j}^{(i)\downarrow}, j\geq 1,i\geq 1\big)^{\downarrow}:=\big(\mu \big(\overline{\mathcal S}_{j}^{(i)\downarrow}\big), j \ge 1, i \ge 1\big)^{\downarrow}$, i.e.\ the ranked sequence of masses of connected components obtained after removal of the whole spine.
\end{itemize}

\begin{theorem}[Mass partition in the stable tree, {\cite[Corollary 10]{35}}] \label{masspart}

Let $\beta \in (0,1/2]$, and let $\mathcal T$ be the stable tree of parameter $\beta$. Then the following statements hold.
\begin{enumerate}
\item[{\rm (i)}] The coarse spinal mass partition has a Poisson-Dirichlet distribution with parameters $(\beta, \beta)$, i.e.  \begin{equation*} \left(\overline{P}^{(i)}, i \geq 1\right) = \left(\mu\left(\overline{\mathcal S}^{(i)}\right), i \ge 1\right) \sim {\rm{PD}}\left(\beta, \beta\right). \end{equation*}
\item[{\rm (ii)}] The fine spinal mass partition is a $(1-\beta,-\beta)$-fragmentation of the coarse spinal mass partition, i.e. for each block 
  $\mu(\overline{\mathcal S}^{(i)})$ of the coarse partition, the relative part sizes 
  $(\mu(\overline{\mathcal S}^{(i)\downarrow}_j)/\mu(\overline{\mathcal S}^{(i)}),j\ge 1)$ are independent with distribution {\rm PD}$(1-\beta, -\beta)$, $i\ge 1$.
%and re-ranking in decreasing order yields the fine spinal mass partition, i.e. 
%\begin{equation*} \left(\mu\left(\mathcal S_{j}^{(i)}\right), j \in I_i, i \in I\right)^{\downarrow} = \left( G^{(i)} \cdot \mu\left(\mathcal S_i\right), i \in I \right)^{\downarrow} \quad \text{ a.s. } \end{equation*}
%for i.i.d. random variables $G^{(i)}:=(G_{j}^{(i)}, j \geq 1) \sim \rm{PD}(1-\beta, - \beta)$, $i \in I$.
%\item[{\rm (iii)}] The fine spinal mass partition has a Poisson-Dirichlet distribution with parameters $(1-\beta, \beta)$, i.e.  \begin{equation*} \left(\mu\left(\mathcal S_{j}^{(i)}\right), j \in I_i, i \in I\right)^{\downarrow} \sim {\rm PD}\left(1-\beta, \beta\right). \end{equation*}
\item[{\rm (iii)}]Conditionally given the fine spinal mass partition $(\mu(\overline{\mathcal S}_{j}^{(i)\downarrow}), j \ge 1, i\ge 1)^{\downarrow}$, the rescaled trees equipped with restricted mass measures
\begin{equation}
\left(\mu\left(\overline{\mathcal S}_{j}^{(i)\downarrow}\right)^{-\beta} \overline{\mathcal  S}_{j}^{(i)\downarrow}, \mu(\overline{\mathcal  S}_{j}^{(i)\downarrow})^{-1} \mu \restriction_{\overline{\mathcal  S}_{j}^{(i)\downarrow}} \right),  \quad j \ge 1, i \ge 1, \end{equation}
 are i.i.d. copies of $(\mathcal T, \mu)$.
\end{enumerate}
\end{theorem}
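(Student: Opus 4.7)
The plan is to reduce all three statements to the excursion representation of stable trees. Code $(\mathcal{T}, d, \rho, \mu)$ by a normalized excursion of a spectrally positive stable L\'evy process of index $\alpha = 1/(1-\beta) \in (1,2]$: then $\mu$ corresponds to Lebesgue measure on $[0,1]$ and distances are encoded via the height function built from the excursion's left-continuous minima. For $\Sigma \sim \mu$, the spine $[[\rho, \Sigma]]$ coincides with the leftmost path to $\Sigma$ in the coded tree, and the infinite-degree branch points $(\rho_i)$ on this spine are in bijection with the jumps of the underlying L\'evy process traversed along that leftmost path; these jumps form, conditionally on the local time of the excursion at the spine, a Poisson point process whose intensity is governed by the stable subordinator tracking spinal local times.

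For part (i), the coarse masses $\overline{P}^{(i)} = \mu(\overline{\mathcal{S}}^{(i)})$ coincide with the sizes of the sub-excursions cut off at each jump along the spine. Combining the It\^o excursion decomposition of the stable excursion with the Perman--Pitman--Yor representation of size-biased stable-subordinator jumps identifies the ranked normalized masses as $\mathrm{PD}(\beta, \beta)$ --- the first $\beta$ from the index of the stable subordinator and the second from the size-biasing implicit in the choice of the $\mu$-sampled leaf $\Sigma$. For part (ii), each $\overline{\mathcal{S}}^{(i)}$ is itself rooted at an infinite-degree branch point $\rho_i$ corresponding to a jump of the excursion; the finer decomposition into $(\overline{\mathcal{S}}^{(i)\downarrow}_j)$ is obtained by breaking the sub-excursion at $\rho_i$ into its first-generation descendants. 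Applying It\^o's excursion theory locally at $\rho_i$ (equivalently, Bismut's size-biased decomposition for the L\'evy tree fragmentation at height zero) yields the conditional normalized mass partition $\mathrm{PD}(1-\beta, -\beta)$, with independence across $i$ inherited from the independence of distinct jumps of the L\'evy process. For part (iii), by the stable scaling property --- rescaling time by $t$ and space by $t^{1-\beta}$ maps a normalized excursion to one of lifetime $t$ --- each $\overline{\mathcal{S}}^{(i)\downarrow}_j$, with metric rescaled by $\mu(\overline{\mathcal{S}}^{(i)\downarrow}_j)^{-\beta}$ and restricted mass measure normalized, is a normalized stable tree of parameter $\beta$ with the same law as $(\mathcal{T}, \mu)$; conditional independence across $(i,j)$ follows from the independence of disjoint sub-excursions under It\^o's measure.

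The main obstacle is the explicit identification of $\mathrm{PD}(1-\beta, -\beta)$ in part (ii), which requires a delicate application of the L\'evy--It\^o decomposition to the stable bridge at a marked jump time and a matching of the local dislocation law against the Pitman--Yor two-parameter family. Once the conditional dislocation law is pinned down, the remaining ingredients --- PD parameters for the coarse partition from Perman--Pitman--Yor, the rescaling statement from stable self-similarity, and the conditional independence from excursion theory --- are standard manipulations. An alternative route that avoids some of the L\'evy-analytic bookkeeping is to derive (i)--(iii) as the $k \to \infty$ limit of Algorithm \ref{GH}: the weights $W_k^{(i)}$, read off via Corollary \ref{Massesaslengths} as Dirichlet--Mittag-Leffler mixtures, converge to the $\overline{P}^{(i)}$, and the Dirichlet aggregation/decimation identities of Proposition \ref{Diri} propagate through to the limiting PD structure; this bypasses It\^o's theory at the cost of carefully tracking the combinatorics of branch-point degrees through infinitely many insertions.
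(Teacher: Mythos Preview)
The paper does not provide a proof of this theorem at all: it is stated as a preliminary result and cited verbatim from \cite[Corollary~10]{35} (Haas--Miermont--Pitman--Winkel). There is no proof environment following the statement; the paper immediately moves on to discussing $\alpha$-diversities and then to Corollary~\ref{GEM}. So there is no ``paper's own proof'' to compare against---this is an imported black box that the authors use downstream (in particular in the proofs of Proposition~\ref{masssel2} and Corollary~\ref{GEMS}).

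Your excursion-theoretic sketch is a plausible outline of how one would prove such a result from scratch, and is in spirit close to the self-similar fragmentation machinery of \cite{35}. However, your proposed ``alternative route'' via Algorithm~\ref{GH} and Corollary~\ref{Massesaslengths} is circular within this paper's logical structure: the paper uses Theorem~\ref{masspart} (via the cited \cite{35}) as an input to establish Proposition~\ref{masssel2}, which in turn underpins the mass-based line-breaking construction. Deriving Theorem~\ref{masspart} back from those algorithms would therefore invert the dependency graph. If you want a self-contained argument, the excursion/L\'evy route is the right one, but you should be aware that the precise identification of the ${\rm PD}(1-\beta,-\beta)$ dislocation law (your acknowledged ``main obstacle'') is exactly the substantive content of the cited corollary in \cite{35}, and your sketch does not actually carry out that computation.
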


The $\alpha$-diversities of PD$(\alpha, \theta)$ partitions can naturally be interpreted as lengths in trees. In 
particular the $\beta$-diversity of the coarse spinal mass partition has distribution $S_0 \sim \text{ML}(\beta, \beta)$, which is the starting point of 
Goldschmidt-Haas' line-breaking constructions. The fragmenting PD$(1-\beta, -\beta)$ random partitions for each block of the coarse spinal 
mass partition capture important information about the branch points that we relate to sizes of the Ford CRTs by which we 
replace them in Theorem \ref{branchrepl}. Specifically, the independence of these PD$(1-\beta, -\beta)$ vectors relates to the independence of the Ford trees. 
Sampling i.i.d. leaves $(\Sigma_k, k \geq 0)$ from the measure $\mu$ of the stable tree yields a natural random order of 
$(\overline{\mathcal S}_j^{(i)\downarrow}, j \ge 1)$, in terms of smallest leaf labels of the subtrees, which we write as $(\overline{\mathcal S}_j^{(i)}, j \geq 1)$, for each $i \geq 1$.

\begin{corollary} \label{GEM} Let $(\mathcal T, \mu)$ be a stable tree of index $\beta \in (0,1/2]$ with associated reduced tree sequence $(\mathcal T_k, k \geq 0)$. Let 
  $\overline{\mathcal S}^{(i)}$ be the subtree rooted at $\rho_i \in  [[\rho, \Sigma_0]]$, $i \ge 1$, related to the coarse spinal mass partition $(\mu(\overline{\mathcal {S}}^{(i)}), i \ge 1)$.
  For each $i \ge 1$, let $(\overline{\mathcal S}_j^{(i)}, j \geq 1)$ denote the connected components of $\overline{\mathcal S}^{(i)} \setminus \rho_i$, ordered in increasing order of least 
  leaf labels. Then $ ( \mu(\overline{\mathcal S}_j^{(i)})^{-1} \mu(\overline{\mathcal S}_j^{(i)}), j \geq 1) \sim {\rm GEM}\left(1-\beta, -\beta\right).$ 
\end{corollary}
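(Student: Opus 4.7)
\bigskip

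\noindent\textbf{Proof plan for Corollary \ref{GEM}.} The plan is to combine Theorem \ref{masspart}(ii) with the classical identity that a size-biased permutation of a $\mathrm{PD}(\alpha,\theta)$-distributed mass partition has distribution $\mathrm{GEM}(\alpha,\theta)$. First, I would fix $i\ge 1$ and recall from Theorem \ref{masspart}(ii) that the normalised decreasingly-ranked sizes
\begin{equation*}
 \left(\mu(\overline{\mathcal S}^{(i)\downarrow}_j)/\mu(\overline{\mathcal S}^{(i)}),\,j\ge 1\right)
\end{equation*}
of the components of $\overline{\mathcal S}^{(i)}\setminus\{\rho_i\}$ have distribution $\mathrm{PD}(1-\beta,-\beta)$, independently of the coarse mass partition. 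So the corollary reduces to showing that the reordering obtained from least-label ordering is precisely a size-biased permutation of this $\mathrm{PD}$ vector.

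Next, I would analyse the role of the i.i.d. $\mu$-sample $(\Sigma_k,k\ge 1)$. Since $\mu$ is concentrated on the leaves and is non-atomic on each subtree $\overline{\mathcal S}^{(i)\downarrow}_j$, conditionally on the tree $(\mathcal T,\mu)$ and on the set $K^{(i)}:=\{k\ge 1 : \Sigma_k\in\overline{\mathcal S}^{(i)}\}$, the variables $(\Sigma_k)_{k\in K^{(i)}}$ are i.i.d. with distribution $\mu(\overline{\mathcal S}^{(i)})^{-1}\mu\restriction_{\overline{\mathcal S}^{(i)}}$, and hence each such $\Sigma_k$ falls into $\overline{\mathcal S}^{(i)\downarrow}_j$ independently with probability $\mu(\overline{\mathcal S}^{(i)\downarrow}_j)/\mu(\overline{\mathcal S}^{(i)})$. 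Indexing the components $\overline{\mathcal S}^{(i)}_j$ in the order of the smallest index $k\in K^{(i)}$ with $\Sigma_k\in\overline{\mathcal S}^{(i)\downarrow}_j$ therefore coincides, by the usual coupon-collector/coupling argument (cf.\ Pitman \cite{30}), with a size-biased permutation of the components listed in decreasing order of mass.

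Finally, I would invoke the classical result that the size-biased permutation of a $\mathrm{PD}(\alpha,\theta)$-distributed mass partition has distribution $\mathrm{GEM}(\alpha,\theta)$ (a standard consequence of the stick-breaking/Chinese restaurant representation recalled in Section \ref{Sec22}; compare \eqref{v1v2} with $\alpha=1-\beta$, $\theta=-\beta$). Applied with $\alpha=1-\beta$ and $\theta=-\beta$, this yields $(\mu(\overline{\mathcal S}^{(i)}_j)/\mu(\overline{\mathcal S}^{(i)}),\,j\ge 1)\sim \mathrm{GEM}(1-\beta,-\beta)$, as required.

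The only mild obstacle is the size-bias identification: one has to verify that labelling the sub-blocks by the least index of a $\mu$-sample point falling in them is distributionally the same as the usual size-biased permutation defined via independent draws from the mass partition itself. This is handled by conditioning on the partition and on $K^{(i)}$ as above, using that the subsample $(\Sigma_k)_{k\in K^{(i)}}$ is i.i.d. from the normalised restriction of $\mu$ to $\overline{\mathcal S}^{(i)}$, so the least-label ordering depends only on the sequence of sub-block memberships, which are i.i.d. with probabilities equal to the normalised masses.
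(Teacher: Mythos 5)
Your proposal is correct and follows essentially the same route as the paper, whose proof simply cites Theorem \ref{masspart}(ii) together with the PD-sampling result (Proposition \ref{usefulPD}) and the stick-breaking construction \eqref{v1v2} of GEM. Your extra care in identifying the least-leaf-label ordering with a size-biased permutation (via the i.i.d.\ subsample from the normalised restriction of $\mu$) is exactly the implicit step the paper leaves to the reader, so there is no substantive difference.
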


\begin{proof} This is a direct consequence of Theorem \ref{masspart}(ii) in combination with results on sampling from PD$(\alpha, \theta)$, cf. Theorem \ref{usefulPD}, and the construction \eqref{v1v2} of GEM$(\alpha, \theta)$. 
\end{proof}

%% THIRD LINE-BREAKING CONSTRUCTION OF THE STABLE TREE

We now show that the line-breaking construction of the stable tree based on masses (Algorithm \ref{masssel}), yields trees $(\mathcal T_k, k \geq 0)$ as in \eqref{findimmarg1} and Algorithm \ref{GH}. The following result will prove Theorem \ref{samestable}.

\begin{prop} \label{masssel2} The sequence of weighted $\mathbb R$-trees $(\mathcal T_k, \mu_k, k \geq 0)$ from Algorithm \ref{masssel} has the same distribution as the sequence of trees in \eqref{findimmarg1} equipped with projected subtree masses, i.e. with the mass measures $(\pi_k)_*\mu, k \geq 1,$ as in \eqref{pm}-\eqref{pfpm}.
Furthermore, conditionally given $\lvert \mathcal T_k \rvert =k+1+\ell$, the edges of $\mathcal T_k$ equipped with the mass measure $\mu_k$ restricted to each edge, are rescaled independent $(\beta, \beta)$-strings of beads given via
\begin{equation} \left( \mu_k\left(E_k^{(i)}\right)^{-\beta} E_k^{(i)}, \mu_k\left(E_k^{(i)}\right)^{-1} \mu_k \restriction_{E_k^{(i)}}\right), \quad i \in [k+1+\ell], \end{equation} 
and the total mass distribution 
\begin{equation*} \left(\mu_k\left(E_k^{(1)}\right), \ldots, \mu_{k}\left(E_k^{(k+1+\ell)}\right), \mu_k\left(v_1\right), \ldots, \mu_k \left(v_\ell\right)\right) \sim {\rm{Dirichlet}}\left(\beta, \ldots, \beta, w\left(d_1\right), \ldots, w\left(d_\ell\right)\right) \end{equation*}
where $v_i, i \in [\ell],$ are the branch points of $\mathcal T_k$ of degrees $d_i={\rm deg}(v_i, \mathcal T_k)$, $i \in [\ell]$, respectively,  and $w(d_i)=(d_i-3)(1-\beta)+(1-2\beta)$, $i \in [\ell]$, and where we number the edges $E_k^{(i)}, i \in [k+1+\ell]$ by depth-first search.
\end{prop}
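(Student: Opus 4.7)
The proof is by induction on $k$, establishing all three assertions simultaneously. For $k=0$, Algorithm \ref{masssel} initialises $(\mathcal{T}_0,\mu_0)$ as a $(\beta,\beta)$-string of beads, matching $([[\rho,\Sigma_0]],(\pi_0)_*\mu)$ by Proposition \ref{betastring}; the Dirichlet claim is trivial (one edge, parameter $\beta$).

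For the inductive step, assume the three assertions at step $k$. First I would match the shape evolution with Algorithm \ref{GH}. By the inductive hypothesis each edge of $\mathcal{T}_k$ is a rescaled $(\beta,\beta)$-string of beads, so Proposition \ref{cointoss} (with $\theta=\alpha=\beta$) identifies uniform mass sampling on the edge with $(\beta,\beta)$-coin tossing. Treating each branch point $v_i$ as a degenerate $(\beta,w(d_i))$-component carrying an auxiliary ${\rm ML}(\beta,w(d_i))$-weight (cf.\ Corollary \ref{Massesaslengths}), Lemma \ref{equivalence} applied with $\alpha=\beta$ and $\theta_j\in\{\beta,w(d_1),\ldots,w(d_\ell)\}$ equates the mass-sampling of $J_k$ (Algorithm \ref{masssel}, steps 1--2) in distribution with the length/weight-based selection of Algorithm \ref{GH}, step 1. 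Since \cite{12} shows that Algorithm \ref{GH} produces the reduced tree sequence \eqref{findimmarg1}, the shape of $\mathcal{T}_{k+1}$ is as claimed.

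The bulk of the argument is the mass update. In Case A ($J_k$ lies inside edge $E_k^{(m)}$, so ${\rm deg}(J_k,\mathcal{T}_k)=2$), I would combine three ingredients: (i) size-biased selection (Proposition \ref{Diri}(iv)) raises the conditional Dirichlet parameter at position $m$ from $\beta$ to $\beta+1$; (ii) Proposition \ref{cointoss} splits the mass of $E_k^{(m)}$ into (left, middle, right) with relative Dirichlet$(\beta,1-\beta,\beta)$ parameters summing to $\beta+1$, with the two side-pieces being independent rescaled $(\beta,\beta)$-strings of beads; (iii) the Beta$(\beta,1-2\beta)$ variable $Q_k$ from step 3 decimates the middle parameter $1-\beta$ into $(\beta,1-2\beta)$ via Proposition \ref{Diri}(iii). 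Since $w(3)=1-2\beta$, the resulting four parameters $(\beta,\beta,w(3),\beta)$ at position $m$ match exactly the predicted structure for $\mathcal{T}_{k+1}$ with $J_k$ now a degree-$3$ branch point, and the newly attached $\xi_k$ is an independent $(\beta,\beta)$-string of beads by construction. In Case B ($J_k=v_i$ of degree $d_i$), size-bias raises the parameter at $v_i$ to $w(d_i)+1$; the Beta$(\beta,w(d_i+1))$ split then decimates this as $(\beta,w(d_i+1))$ thanks to the arithmetic identity $\beta+w(d_i+1)=w(d_i)+1$ (equivalently $w(d+1)-w(d)=1-\beta$), matching the parameter $\beta$ of the new edge and the updated parameter $w(d_i+1)$ of the degree-$(d_i+1)$ branch point.

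Finally, to complete the identification $(\mathcal{T}_{k+1},\mu_{k+1})\,{\buildrel d \over =}\,(\mathcal{T}_{k+1},(\pi_{k+1})_*\mu)$, I would combine the shape match with Proposition \ref{betastring} applied to each subtree of the stable tree $\mathcal{T}$ rooted above a branch point of the reduced tree, together with Theorem \ref{masspart}(iii) (self-similarity): these give that projected mass along each edge of $\mathcal{T}_{k+1}$ is a rescaled $(\beta,\beta)$-string of beads, and that the branch-point masses form a Dirichlet vector whose parameters follow from the coarse/fine decomposition of Theorem \ref{masspart}(i)--(ii) --- which is exactly the structure constructed in the previous paragraph. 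The main obstacle is the Case A bookkeeping, where size-bias, coin tossing, and a further Beta split must compose in the right order to yield the four parameters $(\beta,\beta,w(3),\beta)$; the arithmetic works out cleanly because $w(d+1)-w(d)=1-\beta$.
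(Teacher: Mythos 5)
Your inductive Dirichlet bookkeeping on the algorithm side is correct: the size-bias/coin-tossing/decimation composition in Case A, and the identity $\beta+w(d+1)=w(d)+1$ in Case B, do reproduce the claimed ${\rm Dirichlet}(\beta,\ldots,\beta,w(d_1),\ldots,w(d_\ell))$ split together with independent rescaled $(\beta,\beta)$-strings, and this is exactly the "elementary Dirichlet argument" the paper itself invokes (cf.\ the proof of Proposition \ref{starstrings}). The shape identification via Proposition \ref{Diri}(iv) (selection probabilities $\beta/(k+\beta)$ and $w(d_i)/(k+\beta)$, i.e.\ Marchal/Goldschmidt--Haas growth rules) is also fine; Lemma \ref{equivalence} is more than you need there.

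The genuine gap is in your last paragraph, which is precisely where the paper's proof does its work. To conclude the first assertion you must show that the reduced stable trees of \eqref{findimmarg1} \emph{with projected masses} $(\pi_k)_*\mu$ have, conditionally on the labelled shape, the same Dirichlet-plus-strings structure you derived for the algorithm (and then invoke consistency of the nested sequences to upgrade fixed-$k$ equality to equality of the sequences, a point you do not address). Proposition \ref{betastring} and Theorem \ref{masspart}(i)--(iii) alone do not deliver this: the coarse PD$(\beta,\beta)$ partition lives on a single spine, and turning the fine PD$(1-\beta,-\beta)$ fragmentation into the statement that a degree-$d$ branch point of $\mathcal T_k$ carries projected mass with Dirichlet parameter $w(d)=(d-3)(1-\beta)+1-2\beta$, independent of the edge strings, requires an induction over leaf additions: one must show that $\Sigma_{k+1}$ lands under an atom of $(\pi_k)_*\mu$ with probability equal to its mass, that at a degree-$d$ branch point the residual subtree masses are PD$(1-\beta,(d-3)(1-\beta)+1-2\beta)$ so that the size-biased pick gives the Beta$(\beta,(d-3)(1-\beta)+1-2\beta)$ factor $Q_k$ independent of the rest (Proposition \ref{usefulPD}, as in Corollary \ref{GEMS}), and that by self-similarity (Theorem \ref{masspart}(iii)) the new spine segment carries an independent rescaled $(\beta,\beta)$-string. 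This is exactly the coupling argument the paper carries out for Proposition \ref{masssel2}: it identifies $J_k$, $Q_k$ and $\xi_k$ inside the reduced stable tree sequence and checks they have the laws required by Algorithm \ref{masssel}, which yields the identification directly; your proposal asserts the conclusion of that step rather than proving it, so as written the stable-tree-side identification is missing.
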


The proof of Proposition \ref{masssel2} is part of Appendix \ref{appen1}, where we collect several similar proofs. We also record the following consequence of Algorithm \ref{masssel} and Proposition \ref{masssel2}. 

\begin{corollary} \label{GEMS} Let $(\mathcal T, \mu)$ be a stable tree of index $\beta \in (0, 1/2]$, and let $(\mathcal T_k, k \geq 0)$ be as in \eqref{findimmarg1} with branch points $(v_i, i \geq 1)$ in order of appearance. Let $k_i:=\inf\left\{k \geq 0\colon [[\rho, \Sigma_k]] \cap [[\rho, v_i]]=[[\rho, v_i]] \right\}$ and let $(\mathcal S_j^{(i)}, j \geq 1)$ be the subtrees of $\mathcal T \setminus [[\rho, \Sigma_{k_i}]]$ rooted at $v_i$ in increasing order of smallest leaf labels, $i\ge 1$.  Set $P_j^{(i)}:=\mu(\mathcal S_j^{(i)})$ and $P^{(i)}=\sum_{j \geq 1} \mu(S_j^{(i)} )$, $i \geq 1$. Then the sequences $(P_j^{(i)}/P^{(i)}, j \geq 1), i \geq 1$, are i.i.d. with distribution ${\rm GEM}(1-\beta, -\beta)$.
\end{corollary}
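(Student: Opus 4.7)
The plan is to combine Corollary \ref{GEM}, which handles branch points lying on the spine $[[\rho,\Sigma_0]]$, with the self-similar decomposition of the stable tree provided by Theorem \ref{masspart}(iii), and iterate.

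First I would establish the result for $v_i$ on the spine $[[\rho,\Sigma_0]]$, i.e.\ for those $v_i$ with $k_i=0$. Such a $v_i$ coincides with one of the infinite-degree points $\rho_m$ of the coarse spinal decomposition, and $(\mathcal S_j^{(i)},j\geq 1)$ agrees, up to relabelling, with $(\overline{\mathcal S}_j^{(m)},j\geq 1)$. Corollary \ref{GEM} then yields $(P_j^{(i)}/P^{(i)},j\geq 1)\sim{\rm GEM}(1-\beta,-\beta)$. For mutual independence across distinct spinal $v_i$'s, I would invoke Theorem \ref{masspart}(ii): the fine partitions inside distinct coarse blocks are independent ${\rm PD}(1-\beta,-\beta)$ vectors; the i.i.d.\ labels $(\Sigma_k,k\geq 0)$ that fall in different blocks are conditionally independent, so the smallest-label orderings give mutually independent size-biased permutations, i.e.\ mutually independent ${\rm GEM}(1-\beta,-\beta)$ sequences.

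Next I would bootstrap to $v_i\notin[[\rho,\Sigma_0]]$ using Theorem \ref{masspart}(iii): such a $v_i$ lies in a unique connected component $\overline{\mathcal S}_{j_0}^{(m_0)\downarrow}$ which, after rescaling its metric by $\mu(\overline{\mathcal S}_{j_0}^{(m_0)\downarrow})^{-\beta}$ and normalising its mass, is an i.i.d.\ copy $(\mathcal T',\mu')$ of $(\mathcal T,\mu)$, jointly independent of all other rescaled components and of the fine spinal partition. Conditionally on which $\Sigma_k$ fall into $\mathcal T'$, those samples are i.i.d.\ from $\mu'$, and $k_i$ coincides with the first such index for which $v_i$ lies on the intrinsic spine to that leaf in $\mathcal T'$. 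Applying the base case inside $\mathcal T'$ (with $v_i$ on that intrinsic spine after at most finitely many iterations, by separability of $\mathcal T$) delivers $(P_j^{(i)}/P^{(i)},j\geq 1)\sim{\rm GEM}(1-\beta,-\beta)$ for every branch point of $\mathcal T$.

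Joint independence of the full family would then be assembled as follows: every $v_i$ is assigned to a unique element (the original spine, or an iterated rescaled subtree) of the recursive decomposition, and $(P_j^{(i)}/P^{(i)},j\geq 1)$ is measurable with respect to the data of that element alone. Independence within each element comes from the base case applied inside the corresponding rescaled stable tree; independence across distinct elements follows from Theorem \ref{masspart}(iii) applied inductively. The hard part will be verifying compatibility of orderings between the global and local enumerations: one must check that ordering the $\mathcal S_j^{(i)}$ by smallest labels in the global sequence $(\Sigma_k,k\geq 0)$ agrees with the corresponding intrinsic ordering inside the rescaled subtree containing $v_i$. This reduces to the elementary observation that restricting an enumeration to a subsequence preserves its relative order, but a little care is needed because the rescaled copies $(\mathcal T',\mu')$ are only distributionally, not pathwise, identical to $(\mathcal T,\mu)$.
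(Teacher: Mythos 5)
Your argument is correct in outline, but it takes a genuinely different route from the paper. The paper's proof is a two-line reading of the mass-based line-breaking construction: by Proposition \ref{masssel2}, the reduced trees with projected subtree masses evolve as in Algorithm \ref{masssel}, and there the mass at a branch point $v_i$ is split, at its $m$th selection, by an independent variable $Q_{k_m^{(i)}}\sim{\rm Beta}(\beta,m(1-\beta)-\beta)$ whose conditional law does not depend on the selection times; this is precisely the stick-breaking representation \eqref{v1v2} of ${\rm GEM}(1-\beta,-\beta)$, and joint independence across branch points is immediate from the independence of the $Q_k$. You instead work in the limiting tree, combining Corollary \ref{GEM} for spinal branch points with Theorem \ref{masspart}(ii)--(iii) and a recursion through the self-similar decomposition. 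This works, but it forces you to carry out the bookkeeping the algorithmic proof avoids: the conditional independence, given all masses, of the coarse allocation of labels and the within-block visiting orders (splitting of the i.i.d.\ label sequence), the compatibility of global smallest-label order with the intrinsic order inside each rescaled copy (restriction preserves relative order, as you note), and the fact that which spinal points become branch points, and in which order, is measurable with respect to data independent of the GEM-relevant data, so that the i.i.d.\ property survives the reindexing by order of appearance. One small correction: the finiteness of the recursion depth has nothing to do with separability; it holds because the index of the first label of each nested component strictly increases along the nest while $\Sigma_{k_i}$ lies in every component of the nest containing $v_i$, so the depth is bounded by $k_i$. What your approach buys is a proof relying only on the spinal decomposition of \cite{35} and sampling properties of ${\rm PD}$/${\rm GEM}$, independent of the coupling behind Proposition \ref{masssel2}; what the paper's approach buys is brevity and an immediate handle on the joint independence.
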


\begin{proof} This is a direct consequence of the stick-breaking representation \eqref{v1v2} of GEM$(1-\beta, -\beta)$ and the random variables $(Q_k, k \geq 0)$ splitting branch point mass into subtrees from Algorithm \ref{masssel}. Specifically, conditionally given the branch point degrees in the sequence $(\mathcal T_k, k \geq 0)$, for each branch point $v_i$, we can find a sequence of random variables $(Q_{m}^{(i)}, m \geq 1)$ such that $$P_j^{(i)}= \mu_{k_1^{(i)}-1}\left(v_i\right)  Q_j^{(i)}    \prod_{m\in[j-1]} \left(1-Q_m^{(i)} \right), \quad j \geq 1,$$
where $Q_m^{(i)}:=Q_{k_m^{(i)}} \sim {\rm Beta}(\beta, m(1-\beta)-\beta))$ and $k_m^{(i)}=\inf\{k \geq 1\colon {\rm deg}(v_i, \mathcal T_k) =m+1\}$.
Note that, for $m_1, \ldots, m_i \geq 1$, the random variables $Q_j^{(i)}, j \in [m_i]$, $i \geq 1$, have conditional distributions given $k_j^{(i)}$, $j \in [m_i]$, $i \geq 1$, that do not depend on $k_j^{(i)}$, $j \in [m_i]$, $i \geq 1$, and are hence unconditionally independent.%\pagebreak
\end{proof}

\section{The binary two-colour line-breaking construction with masses} \label{sec4}

We present an enhanced version of Algorithm \ref{twocolour}, which is based on sampling from the mass measure. We use this enhanced version to prove Theorem
\ref{Mainresult1}. 

The following (1-marked) string of beads will be at the centre of our construction. For $\beta \in (0,1/2]$, 
consider % a function \begin{equation} \Psi_\beta\colon {\Xi} \times {\Xi} \times [0,1] \rightarrow  {\Xi}^{[1]},
%\end{equation} where
%we denote by $\Xi$ and ${\Xi}^{[1]}$ the space of strings of beads and the space of $1$-marked strings of beads, respectively, i.e.
%\begin{equation} {\Xi}^{[1]}=\left\{\left(I=[a,b],I'=[a',b'],\lambda\right)\colon \left(I, \lambda \right) \in {\Xi}, I'\subset I \right\}. \end{equation} 
%For strings of beads $(I_1=[0,K_1], \lambda_1),  (I_2=[0,K_2], \lambda_2) \in {\Xi}$ and $B \in [0,1]$, define
$([0,K_1], \lambda_1)$ and $([0,K_2], \lambda_2)$ two independent $(\beta, 1-2\beta)$- and $(\beta, \beta)$-strings of beads, respectively, and an independent $B \sim \rm{Beta}(1-2\beta, \beta)$. Then scale the two strings by $B$ and $1-B$, as follows: set
%\begin{equation} \left(I=[0,K],I'=[0,K'], \lambda \right) :=\Psi_\beta \left(\left(I_1, \lambda_1 \right), \left(I_2, \lambda_2\right), B \right) \label{stringmerged}
%\end{equation}
%via
\begin{equation} K:=B^\beta K_1+(1-B)^\beta K_2, \qquad K':=B^\beta K_1 \label{twocolstr1}\end{equation}
and consider the mass measure $\lambda$ on $[0,K]$ given by
\begin{equation}\label{twocolstr2} 
\lambda\left(\left[0,x\right]\right)=\begin{cases} B \lambda_1\left(\left[0,B^{-\beta}x\right]\right) &\text{ if } x \in \left[0,K'\right], \\
B+ \left(1-B\right) \lambda_2\left(\left[0,\left(1-B\right)^{-\beta}\left(x-K'\right)\right]\right) &\text{ if } x \in \left[K', K\right]. \end{cases}
\end{equation}
The string of beads $([0,K],\lambda)$ %obtained via \eqref{stringmerged} when $(I_1, \lambda_1)$ and $(I_2, \lambda_2)$ are independent $(\beta, 1-2\beta)$- and $(\beta, \beta)$-strings of beads, respectively, and $B \sim \rm{Beta}(1-2\beta, \beta)$ is independent 
is called a \textit{$\beta$-mixed} string of beads \cite{RW}. We denote the distributions of $([0,K], \lambda)$ and $([0,K],[0,K'], \lambda)$ %on ${\Xi}$ and ${\Xi}^{[1]}$ 
on $\mathbb{T}_{\rm w}$ and $\mathbb{T}_{\rm w}^{[1]}$ 
by $\nu_\beta$ and $\nu_\beta^{[1]}$, respectively.%, i.e. \begin{equation} \Psi_\beta \left(\left(I_1, \lambda_1\right), \left(I_2, \lambda_2\right), B\right) \sim \nu_\beta^{[1]}. \label{twocolourstring} \end{equation}

\begin{remark} \label{betamixedlength} 
%Recall from Section \ref{Sec22} that for any $(\alpha, \theta)$-string of beads $([0,L_{\alpha,\theta}], \lambda)$, $\alpha \in (0,1)$ and $\theta >-\alpha$, we have $L_{\alpha,\theta} \sim \text{ML}(\alpha, \theta)$ implying that $\mathbb E[L_{\alpha,\theta}^p] < \infty$ for all $p \geq 1$, and 
%\begin{equation} (\lambda(x)\colon x \in [0,L_{\alpha,\theta}], \lambda(x) > 0)^{\downarrow} \sim \text{PD}(\alpha, \theta). \label{PDnu}
%\end{equation} 
%The total length of a $\beta$-mixed string of beads is given by $K=B^\beta K_1+(1-B)^\beta K_2$ where $B \sim \text{Beta}(1-2\beta, \beta)$ and $K_1 \sim \text{ML}(\beta,1-2\beta)$, $K_2\sim \text{ML}(\beta, \beta)$ are independent. Therefore, 
By Proposition \ref{DIRML} with $\theta_1=1-2\beta, \theta_2=\beta$, noting that $(B, 1-B) \sim \text{Dirichlet}(1-2\beta, \beta)$, we have 
\begin{equation} \left(B^\beta K_1, (1-B)^\beta K_2\right)\,{\buildrel d \over =}\, L  \left({B}', 1-{B}'\right)\label{twoclb} \end{equation}
where ${B}' \sim \text{Beta}(1/\beta-2, 1)$ is independent of $L$, and $L \sim \text{ML}(\beta, 1-\beta)$. We conclude that for each $\beta$-mixed string of beads $\xi=([0,K], \lambda)$ we have
$ (\lambda(x)\colon x \in [0,K], \lambda(x) > 0)^{\downarrow} \sim \text{PD}(\beta, 1-\beta ),  $
cf. e.g. \cite[Corollary 1.2]{31}. Although the length of a $\beta$-mixed string of beads $\xi$ is ML$(\beta, 1-\beta)$ and the atom sizes are PD$(\beta,1-\beta)$, we cannot expect that $\xi$  is a $(\beta, 1-\beta)$-string of beads when $\beta \in (0,1/2)$. Specifically, at the junction point in a $(\beta, 1-\beta)$-string of beads, we would expect a Beta$(\beta, 1-2\beta)$ mass split into a rescaled $(\beta, \beta)$- and a rescaled $(\beta, 1-2\beta)$-string of beads in this order (and not vice versa).
\end{remark}
  
We will use the notation
$ \xi = \left( [0,K|, \sum_{i \geq 1}P_i \delta_{X_i}\right)$
for any $(\alpha, \theta)$- or $\beta$-mixed string of beads where $K$ is the length of the string of beads with ranked atomic masses of sizes $1 > P_1 > P_2 > \cdots > 0$, a.s., in the points $X_i \in [0,K]$, $i \geq 1$, respectively.

Let us now explain how to attach a weighted $\mathbb R$-tree onto another weighted $\mathbb R$-tree. This clarifies in particular how to construct weighted $\mathbb R$-trees by attaching strings of beads as a string of beads can be interpreted as a weighted $\mathbb R$-tree consisting of a single branch. For any weighted $\mathbb R$-tree $(\mathcal T, d, \rho,  \mu)$, a parameter $\beta \in (0,1/2]$, an element $J \in \mathcal T$ and another weighted $\mathbb R$-tree $(\mathcal T^+, d^+, \rho^+, \mu^+)$ with $\mathcal T \cap \mathcal T^+ = \emptyset$, the tree $(\mathcal T', d', \mu')$ created from $(\mathcal T, d, \mu)$ by \textit{attaching} to $J$ the tree $(\mathcal T^+, d^+, \rho^+, \mu^+)$ with mass measure $\mu^+$ rescaled by $\mu(J)$ and metric $d^+$ rescaled by $\mu(J)^\beta$ is defined as follows. Specifically, set
\begin{equation}
\mathcal T':= \mathcal T \setminus \{J\} \sqcup \mathcal T^+,  \label{attach1}\quad
\begin{aligned}
d'(x,y):=\begin{cases} d(x,y) & \text{ if } x,y \in \mathcal T, \\ d(x,J)+(\mu(J))^{\beta} d^+(\rho^+,y) & \text{ if } x \in \mathcal T, y \in \mathcal T^+, \\
(\mu(J))^{\beta} d^+(x, y) & \text{ if } x, y \in \mathcal T^+,\end{cases}\qquad\rho'=\rho,\quad%\label{attach2}
\end{aligned}
\end{equation}
and equip $(\mathcal T',d',\rho')$ with the mass measure $\mu'$ given by 
$\mu' \restriction_{\mathcal T \setminus \{J\}}=\mu\restriction_{\mathcal T \setminus \{J\}}, \  \mu'\left(J\right)=0, \ \mu' \restriction_{\mathcal T^+}=\mu\left(J\right) \mu^+.$% \label{attach3}

We are now ready to present the two-colour line-breaking construction with masses. 

\begin{algorithm}[Two-colour line-breaking construction with masses] \label{twocolourmass} \rm Let $\beta\in(0,1/2]$. We grow weighted $\infty$-marked $\bR$-trees $(\cT_k^*,(\cR_k^{(i)},i\ge 1),\mu_k^*)$, $k\ge 0$, as follows.
  \begin{enumerate}\setcounter{enumi}{-1}\item Let $(\cT_0^*,\mu_0^*)$ be isometric to a $(\beta,\beta)$-string of beads; let $r_0=0$ and $\cR_0^{(i)}=\{\rho\}$, $i\ge 1$.
  \end{enumerate}
  Given $(\cT_j^*,(\cR_j^{(i)},i\ge 1),\mu_j^*)$ with $\mu_j^*=\sum_{x\in\cT_j^*}\mu_j^*(x)\delta_x$, $0\le j\le k$, let $r_k=\#\{i\ge 1\colon\cR_k^{(i)}\neq\{\rho\}\}$;
  \begin{enumerate}\item select an edge $E_k^*\subset\cT_k^*$ with probability proportional to its mass $\mu_k^*(E_k^*)$; if 
      $E_k^*\subset\cR_k^{(i)}$ for some $i\in[r_k]$, let $I_k=i$; otherwise, i.e.\ if $E_k^*\subset\cT_k^*\setminus\bigcup_{i\in[r_k]}\cR_k^{(i)}$, let $r_{k+1}\!=\!r_k\!+\!1$, $I_k\!=\!r_{k+1}$;  
    \item if $E_k^*$ is an external edge of $\cR_k^{(i)}$, perform $(\beta,1-2\beta)$-coin tossing sampling on $E_k^*$ to determine $J_k^*\in E_k^*$ (cf. Proposition \ref{cointoss});  otherwise, i.e.\ if $E_k^*\subset\cT_k^*\setminus\bigcup_{i\in[r_k]}\cR_k^{(i)}$ or if $E_k^*$ is an internal edge of $\cR_k^{(i)}$, sample $J_k^*$ from the normalised mass measure on $E_k^*$;
    \item let $(E_k^+,R_k^+,\mu_k^+)$ be an independent $\beta$-mixed string of beads; to form $(\cT_{k+1}^*,\mu_{k+1}^*)$ remove $\mu_k^*(J_k^*)\delta_{J_k^*}$ from $\mu_k^*$ and attach to $\cT_k^*$ at $J_k^*$ an isometric copy of $(E_k^+,\mu_k^+)$ with measure rescaled by $\mu_k^*(J_k^*)$ and metric rescaled by $(\mu_k^*(J_k^*))^\beta$; add to $\cR_k^{(I_k)}$ the (image under the isometry of) $R_k^+$ to form $\cR_{k+1}^{(I_k)}$; set $\cR_{k+1}^{(i)}=\cR_k^{(i)}$, $i \neq I_k$. 
  \end{enumerate}
\end{algorithm}

\subsection{The distribution of two-colour trees}

To analyse Algorithm \ref{twocolourmass}, we will need some more notation, in particular with regard to the marked subtree growth processes $(\mathcal R_k^{(i)}, k \geq 0)$, $i \geq 1$. Define the random subsequences $(k_m^{(i)}, m \geq 1)$, $i \geq 1$, by
\begin{equation} 
k_1^{(i)}:=\inf \left\{n \geq 1\colon \mathcal R_{n}^{(i)} \neq \mathcal R_{0}^{(i)} \right\}=\inf \left\{n \geq 1\colon \mathcal R_{n}^{(i)} \neq \{\rho\}\right\},
\end{equation}
and, for $m \geq 1$, \vspace{-0.2cm}
\begin{equation}
k_{m+1}^{(i)}:=\inf \left\{n \geq k_m^{(i)}\colon \mathcal R_{n}^{(i)} \neq \mathcal R_{k_m^{(i)}}^{(i)}\right\}, \label{subsequences}
\end{equation}
i.e. there is a change in $(\mathcal R_k^{(i)}, k \geq 1)$ when $k=k_{m}^{(i)}$ for some $m \geq 1$. 
Note that $\bigcup_{i \geq 1} \{ k_m^{(i)}, m \geq 1\}=\{1,2, \ldots\}$ is a disjoint union, and that, for any $i \geq 1$,  $\mathcal R_k^{(i)}$ is a binary tree for any $k \geq 1$. We will also use the convention that $\rho\notin \mathcal R_k^{(i)}$ for $k \geq k_1^{(i)}$. For $k=k_m^{(i)}-1$, we write
 \begin{equation*} R_k^+ = [[J_k^*, \Omega_{m}^{(i)}]] \subset E_k^+ =[[J_k^*, \Sigma_{k+1}]], \qquad \text{i.e.} \quad ]]J_k^*, \Omega_{m}^{(i)}]]= \mathcal R_{k+1}^{(i)} \setminus \mathcal R_{k}^{(i)}. \end{equation*} 
In other words, at step $k=k_m^{(i)}-1$, $\Omega_{m}^{(i)}$ and $\Sigma_{k+1}$ denote the leaves added to $\mathcal R_{k}^{(i)}$ and $\mathcal T_k^*$, respectively. 

We write $\xi_k^{(1)}, \xi_k^{(2)}$ and $\gamma_k$ for the random variables inducing the $\beta$-mixed string of beads $(E_k^+, R_k^+, \mu_k^+)$, i.e. 
$\left(E_k^+, R_k^+, \mu_k^+\right)$ is built from independent $\xi_k^{(1)}, \xi_k^{(2)}$ and $\gamma_k$ in the same way as $([0,K],[0,K^\prime],\lambda)$ is built from independent $([0,K_1],\lambda_1)$, $([0,K_2],\lambda_2)$ and $B$ in \eqref{twocolstr1}-\eqref{twocolstr2}.

Furthermore, we use an equivalence relation $\sim$ on $(\mathcal T_k^*, (\mathcal R_k^{(i)}, i \geq 1))$ to contract each marked component $\mathcal R_k^{(i)}$, $i \geq 1$, of $\mathcal T_k^*$ to a single point, i.e. 
\begin{equation}
x \sim y\qquad :\Leftrightarrow \qquad x,y \in \mathcal R_k^{(i)} \quad \text{ for some } i \geq 1. \label{equrel}
\end{equation}
Note that, for all $i$ with $\mathcal R_k^{(i)} \neq \{\rho\}$, $x,y \in \mathcal R_k^{(i)}$ implies $x,y\in \mathcal R_{k'}^{(i)}$ for all $k' \geq k$, and hence the equivalence relation $\sim$ is consistent as $k$ varies. Denote the equivalence class related to $\mathcal R_k^{(i)}$ by $\widetilde{v}_i:=[\mathcal R_k^{(i)}]_{\sim}$, and let 
\begin{equation}
\widetilde{\mathcal T}_k:=\mathcal T_k^*/\sim \label{ttilde}
\end{equation}
denote the quotient space of $\mathcal T_k^*, k \geq 0$, with the canonical quotient metric. Furthermore, for $k \geq 0$, let $\widetilde{\mu}_k$ be the push-forward of $\mu_k^*$ under the projection map from $\mathcal T_k^*$ onto $\widetilde{\mathcal T}_k$. 

The following characterisation of Ford trees will be useful to obtain the distribution of $\mathcal T_k^*$. 

\begin{prop}[{\cite[Proposition 18]{2}}] \label{Forddis} Consider the tree growth process $(\mathcal F_m, m\geq 1)$ from Algorithm \ref{Ford} for some $\beta' \in (0,1)$. The distribution of $\mathcal F_m$ is given in terms of three independent random variables: its shape,  the total length $S_m' \sim {\rm ML}(\beta', m-\beta')$ and the length split between the edges of $\mathcal F_m$ which has a ${\rm Dirichlet}\left(1, \ldots, 1, (1-\beta')/\beta', \ldots, (1-\beta')/\beta'\right)$ distribution, where a parameter of $1$ is assigned to each of the $m-1$ internal edges, and  a parameter of $(1-\beta')/\beta'$ to each of the $m$ external edges of $\mathcal F_m$.
\end{prop}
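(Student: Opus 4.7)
The natural approach is induction on $m$. The base case $m=1$ is immediate from Algorithm \ref{Ford}: $\mathcal F_1$ is a single external edge of length $S_1'\sim\mathrm{ML}(\beta',1-\beta')$, matching the formula at $m=1$, and the length split is the trivial (degenerate) Dirichlet with the single external-edge parameter $(1-\beta')/\beta'$; the shape is deterministic, so independence is vacuous.

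For the inductive step I would combine three ingredients. First, a direct computation with the ML$(\beta',m-\beta')$ density of $S_m'$ and the transition density $f(S_m',\cdot)$: multiplying and integrating out $S_m'$ yields $S_{m+1}'\sim\mathrm{ML}(\beta',m+1-\beta')$ and shows that $S_m'/S_{m+1}'\sim\mathrm{Beta}(m/\beta',(1-\beta')/\beta')$ is independent of $S_{m+1}'$; equivalently,
$$ \left(S_m'/S_{m+1}',\,(S_{m+1}'-S_m')/S_{m+1}'\right)\sim\mathrm{Dirichlet}\bigl(m/\beta',(1-\beta')/\beta'\bigr) $$
independently of $S_{m+1}'$. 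Second, Proposition \ref{Diri}(iv): by the inductive hypothesis, selecting an edge with probability proportional to its length amounts to a size-biased pick from the current Dirichlet length-split vector, independent of $S_m'$ and of the shape, and the conditional distribution given the chosen index $i$ updates the parameter $\theta_i\mapsto\theta_i+1$. Third, Proposition \ref{Diri}(iii) (decimation), used twice: once to split the selected edge and once to append the new edge.

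Combining these, conditional on selecting edge $i$ the updated parameter is $\theta_i+1$. If $i$ is internal ($1\to 2$), the uniform split is $\mathrm{Dirichlet}(1,1)$-decimation, producing two internal sub-edges each of parameter $1$. If $i$ is external ($(1-\beta')/\beta'\to 1/\beta'$), the split $D_m\sim\mathrm{Beta}(1,(1-\beta')/\beta')$ is a $\mathrm{Dirichlet}(1,(1-\beta')/\beta')$-decimation of $1/\beta'$, yielding a new internal sub-edge of parameter $1$ on the parent side and an external sub-edge of parameter $(1-\beta')/\beta'$ on the leaf side. Rescaling the resulting vector by $S_m'/S_{m+1}'$ and appending the coordinate $(S_{m+1}'-S_m')/S_{m+1}'$ is a final decimation: the Dirichlet$(m/\beta',(1-\beta')/\beta')$ split from the first ingredient is decimated by the (independent) updated edge-split vector, producing a Dirichlet on the edges of $\mathcal F_{m+1}$ with $m$ internal parameters equal to $1$ and $m+1$ external parameters equal to $(1-\beta')/\beta'$, summing to $(m+1-\beta')/\beta'$, as required. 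Independence of shape, total length, and length split is preserved at each step, since the shape evolves only through the discrete choice of selected edge (a function of the index, not the continuous Dirichlet components), $S_{m+1}'$ is independent of $S_m'/S_{m+1}'$, and the decimation/size-bias identities are conditionally independent of the ML variables.

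The main obstacle is orientation book-keeping in the external-split case: I must verify that the sub-edge receiving Dirichlet parameter $1$ really is the parent-side piece that becomes internal in $\mathcal F_{m+1}$, while the parameter $(1-\beta')/\beta'$ sub-edge is the leaf-side piece that remains external. This is the implicit convention for $D_m$ in Algorithm \ref{Ford} and is forced by the requirement that the decimation of $1/\beta'$ into $(1,(1-\beta')/\beta')$ align with the Beta parameters of $D_m$. A secondary point is treating the shape at a combinatorial level; I would label edges by order of appearance, so that the inductive hypothesis conditions on the labelled shape and the update rules are bijective, ensuring the shape distribution evolves consistently and remains independent of the lengths.
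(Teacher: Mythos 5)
Your argument is correct, but there is nothing in the paper to compare it with: Proposition \ref{Forddis} is quoted from \cite[Proposition 18]{2} and no proof is given here, so your induction is a genuinely self-contained alternative to the citation, built entirely from this paper's toolkit. The ingredients all check out: the Mittag--Leffler step is exactly the identity the paper itself records (with $\beta$ in place of $\beta'$) in the appendix, namely $S_k^*=A_k^*S_{k+1}^*$ with $A_k^*\sim{\rm Beta}(k/\beta+2,1/\beta-1)$ independent of $S_{k+1}^*\sim{\rm ML}(\beta,k+1+\beta)$, whose $\beta'$-analogue is your ${\rm Beta}(m/\beta',(1-\beta')/\beta')$ ratio; size-biasing (Proposition \ref{Diri}(iv)) turns the selected internal parameter $1$ into $2$, split as $(1,1)$ by the uniform point, and the selected external parameter $(1-\beta')/\beta'$ into $1/\beta'$, split as $(1,(1-\beta')/\beta')$ by $D_m\sim{\rm Beta}(1,1/\beta'-1)$; and the final decimation of ${\rm Dirichlet}(m/\beta',(1-\beta')/\beta')$ by the refined within-tree split yields $m$ internal parameters $1$ and $m+1$ external parameters $(1-\beta')/\beta'$, with total $(m+1-\beta')/\beta'$, as you note. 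Two minor remarks. Your resolution of the orientation of $D_m$ is phrased somewhat circularly (``forced by the requirement that the decimation align''); it can be pinned down independently from the mass picture in this paper: by Proposition \ref{cointoss}, Lemma \ref{equivalence} and Proposition \ref{starstrings}, the root-side piece of a split external edge is the one that becomes internal and its relative length is the ${\rm Beta}(1,\theta/\alpha)$ proportion, so $D_m$ is indeed the parent-side piece, confirming your convention. Second, the independence of shape and length split should be read with a shape-adapted listing of edges (the internal/external pattern fixed, each group ordered by appearance), so that the conditional law of the split given the shape is the same Dirichlet for every shape; your labelled-shape bookkeeping handles exactly this point.
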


We can describe the distribution of the tree $\mathcal T_k^*$ as follows.%\pagebreak[2]

\begin{prop}[Distribution of $\mathcal T_k^*$] \label{starstrings} Let $(\cT_k^*,(\cR_k^{(i)},i\ge 1),\mu_k^*, k \geq 0)$ be as in Algorithm \ref{twocolourmass} for some
  $\beta \in (0,1/2]$. The distribution of $\mathcal T_k^*$ is characterised by the following independent random variables:
  \begin{itemize} 
    \item the shape $T_k^*$ of $\mathcal T_k^*$ obtained from the shape $\widetilde{T}_k$ of $\widetilde{\mathcal T}_k$ and the shapes $R_k^{(i)}$ of 
      $\mathcal R_k^{(i)}$, $i \geq 1$, as follows; 
      \begin{itemize} 
        \item $\widetilde{T}_k$ has the distribution of the shape of a stable tree $\mathcal T_k$ reduced to the first $k$ leaves, and 
        \item conditionally given that $\widetilde{T}_k$ has $\ell$ branch points of degrees $d_1, \ldots, d_\ell$, the shapes $R_k^{(1)}, \ldots, R_k^{(\ell)}$ are the shapes 
          of Ford trees with $m_1:=d_1-2, \ldots, m_\ell:=d_\ell-2$ leaves, respectively; 
      \end{itemize}
    \item the total mass split between the $3k+1$ edges of $\mathcal T_k^*$ has a
      \begin{equation}
        {\rm Dirichlet}\left(\beta, \ldots, \beta, 1-2\beta, \ldots, 1-2\beta\right) \label{Es0} 
      \end{equation}
      distribution, with parameter $\beta$ for each internal marked and each unmarked edge, and parameter $1-2\beta$ for each external marked edge with edges ordered 
      according to depth-first search (first run for unmarked and internal marked edges, then for external marked edges); 
    \item the $3k+1$ independent $(\beta, \theta)$-strings of beads isometric to
      \begin{equation}
        \left(\mu_k^*\left(E\right)^{-\beta}E, \mu_k^*\left(E\right)^{-1} \mu_k^* \restriction_{E}\right), \quad E \in {\rm Edg}\left(\mathcal T_k^*\right), \label{Es}
      \end{equation} 
      where $\theta=1-2\beta$ if $E$ is an external marked edge of $\mathcal R_k^{(i)}$ for some $i \in [\ell]$, and $\theta=\beta$ otherwise, again listed according to 
      depth-first search. %\pagebreak
  \end{itemize}
\end{prop}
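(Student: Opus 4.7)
The plan is to prove the statement by induction on $k$, tracking the three ingredients of the characterisation through one step of Algorithm \ref{twocolourmass}. The base case $k=0$ is immediate: $(\mathcal{T}_0^*,\mu_0^*)$ is a $(\beta,\beta)$-string of beads on a single unmarked edge, all $\mathcal{R}_0^{(i)}=\{\rho\}$, so the shape, Dirichlet split and string-of-beads claims hold trivially or vacuously. For the inductive step, assume the proposition holds at step $k$ and split the analysis into three cases according to the selected edge $E_k^*$: (A) an internal edge of some $\mathcal{R}_k^{(i)}$, (B) an external edge of some $\mathcal{R}_k^{(i)}$, or (C) an unmarked edge. In each case $E_k^*$ is partitioned by $J_k^*$ into a left half, an atom $\mu_k^*(J_k^*)\delta_{J_k^*}$, and a right half, and the atom mass is transferred to the independent $\beta$-mixed string $(E_k^+,R_k^+,\mu_k^+)$, whose marked part $R_k^+$ contributes one new external-marked edge of $\mathcal{R}_{k+1}^{(I_k)}$ and whose complement contributes one new unmarked edge, producing $3(k+1)+1$ edges overall.

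For the shape claim, Dirichlet aggregation (Proposition \ref{Diri}(ii)) applied to the induction hypothesis shows that the total $\mu_k^*$-mass of $\mathcal{R}_k^{(i)}$ has a Beta marginal with parameter $(d_i-3)\beta+(d_i-2)(1-2\beta)=w(d_i)$. Thus the probability of mass-biased selection of any edge of $\mathcal{R}_k^{(i)}$ in $\mathcal{T}_k^*$ matches the probability of selecting the branch point $\widetilde v_i$ in the mass-based stable line-breaking Algorithm \ref{masssel} (cf.\ Proposition \ref{masssel2}), so $\widetilde T_{k+1}$ evolves as the shape of a reduced stable tree. Conditionally on selecting $\mathcal{R}_k^{(i)}$, Lemma \ref{equivalence} converts the mass-based edge sampling (coin-tossing on external edges and uniform-from-mass on internal edges, using that $(\beta,\beta)$-coin-tossing reduces to uniform mass sampling) into the length-based Beta$(1,(1-2\beta)/\beta)$ and uniform-from-length prescriptions of Algorithm \ref{Ford} at index $\beta'=\beta/(1-\beta)$, so $R_{k+1}^{(i)}$ has the shape of a Ford tree of the correct size.

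For the mass split and the edge strings of beads, Proposition \ref{cointoss} is applied to $E_k^*$: the coin-tossing step splits $\mu_k^*\restriction_{E_k^*}$ into (left, atom, right) with a Dirichlet$(\beta,1-\beta,\theta_{E_k^*})$ internal composition, leaving two independent rescaled $(\beta,\beta)$- and $(\beta,\theta_{E_k^*})$-strings of beads on the halves, where $\theta_{E_k^*}\in\{\beta,1-2\beta\}$ is the original Dirichlet parameter of $E_k^*$. The atom is then replaced by the independent $\beta$-mixed string, whose construction \eqref{twocolstr1}--\eqref{twocolstr2} splits the atom's mass-parameter $1-\beta$ into $1-2\beta$ (marked, a rescaled $(\beta,1-2\beta)$-string) and $\beta$ (unmarked, a rescaled $(\beta,\beta)$-string) via $B\sim{\rm Beta}(1-2\beta,\beta)$; Dirichlet decimation (Proposition \ref{Diri}(iii)) thus produces parameters $(\beta,1-2\beta,\beta,\theta_{E_k^*})$ on the four replacement pieces of $E_k^*$. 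Combining this with the size-bias adjustment from Proposition \ref{Diri}(iv), which boosts $\theta_{E_k^*}\to \theta_{E_k^*}+1=\beta+(1-\beta)+\theta_{E_k^*}$, and relabelling the edges by depth-first search, the overall mass split on the $3(k+1)+1$ edges of $\mathcal{T}_{k+1}^*$ is Dirichlet with parameter $\beta$ on every internal-marked and unmarked edge and $1-2\beta$ on every external-marked edge, matching \eqref{Es0}, while the strings of beads are mutually independent $(\beta,\theta)$-strings with the correct parameters.

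The main obstacle will be the orientation bookkeeping in Case B: one must verify that $(\beta,1-2\beta)$-coin-tossing on an external marked edge places the rescaled $(\beta,\beta)$-string on the side closer to the root of $\mathcal{R}_k^{(i)}$ and the rescaled $(\beta,1-2\beta)$-string on the leaf side, so that once $J_k^*$ becomes a branch point the two halves correctly inherit the internal-marked and external-marked labels. Mutual independence of the three ingredients at step $k+1$ then propagates from step $k$, because the new variables $J_k^*$, $B$ and $(E_k^+,R_k^+,\mu_k^+)$ introduced in the update are independent of $\mathcal{T}_k^*$ given its shape, and the factorisation between shape, Dirichlet parameters and conditional string-of-beads laws is preserved by Propositions \ref{Diri} and \ref{cointoss}.
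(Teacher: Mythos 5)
Your proposal is correct and follows essentially the same route as the paper's own proof: an induction on $k$ in which the shape transition is matched to Algorithm \ref{masssel} via the aggregated Dirichlet parameter $w(d_i)$ of each marked component and to the Ford growth rule via the relative ${\rm Dirichlet}(\beta,\ldots,\beta,1-2\beta,\ldots,1-2\beta)$ split, while the mass split and strings of beads are updated using Proposition \ref{Diri}(iii)--(iv), Proposition \ref{cointoss} (which already settles your ``orientation'' concern, since the $(\beta,\beta)$-string is the one closer to the root) and the ${\rm Beta}(1-2\beta,\beta)$ split of the atom, with independence recovered at the end because the conditional laws do not depend on the shape. The only cosmetic difference is your use of Lemma \ref{equivalence} to match the Ford shape growth, where the paper compares selection probabilities directly with Proposition \ref{Forddis}.
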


\begin{proof} This proof is mainly an application of the properties of the Dirichlet distribution, Proposition \ref{Diri}, and of  coin tossing sampling, Proposition \ref{cointoss}. We give a brief sketch of the proof via an induction on $k$. 

For $k=0$, the claim is trivial as $(\mathcal T_0^*, \mu_0^*)$ is a $(\beta, \beta)$-string of beads by definition. For the induction step, suppose that the claim holds for some $k \geq 0$.

We first consider the shape transition from $T_k^*$ to $T_{k+1}^*$. Observe that, given $\widetilde{T}_k$ has $\ell$ branch points of degrees $d_1, \ldots, d_\ell$, we have a $ {\rm{Dirichlet}}\left(\beta, \ldots, \beta, w(d_1), \ldots, w(d_\ell)\right)$ mass split in $\widetilde{\mathcal T}_k$ with weight $\beta$ for each edge and weight $w(d)=(d-2)(1-\beta)-\beta$ for each branch point of degree $d \geq 3$. Hence, by Proposition \ref{masssel2}, the overall edge selection is as in Algorithm \ref{masssel}. 

Conditionally given that the $i$th branch point of $\widetilde{T}_k$ is selected, an edge of $\mathcal R_k^{(i)}$ is chosen proportionally to the weights assigned by the relative $\rm{Dirichlet}\left(\beta, \ldots, \beta, 1-2\beta, \ldots, 1-2\beta\right)$ mass split in $\mathcal R_k^{(i)}$, so each internal edge is chosen with probability $\beta/((d_i-2)(1-2\beta)+(d_i-3)\beta)$, each external edge with probability $(1-2\beta)/((d_i-2)(1-2\beta)+(d_i-3)\beta)$. This corresponds to the shape growth rule in a Ford tree growth process of index $\beta/(1-\beta)$, using obvious cancellations, cf. Algorithm \ref{Ford} and Proposition \ref{Forddis}.

In the update step from $\mathcal T_k^*$ to $\mathcal T_{k+1}^*$, we first select an edge of $\mathcal T_k^*$ proportionally to mass. By Proposition \ref{Diri}(iv), the parameter for this edge in the Dirichlet split \eqref{Es0}, conditionally given that it has been selected, is then increased by $1$. We select an atom $J_k^*$ on this edge via $(\beta, \theta)$-coin tossing, where $\theta=1-2\beta$ for external marked edges, and $\theta=\beta$ otherwise, and, by Proposition \ref{cointoss}, the selected edge is split by $J_k^*$ into a rescaled independent $(\beta, \beta)$- and a rescaled independent $(\beta, \theta)$-string of beads where the relative mass split on this edge is ${\rm Dirichlet}(\beta, 1-\beta, \theta)$, which is conditionally independent of the total mass split. Furthermore, the mass $\mu_k^*(J_k^*)$ is split by the independent random variable $\gamma_k \sim {\rm Beta}(1-2\beta, \beta)$ into a marked $(\beta,1-2\beta)$-string of beads, and an unmarked $(\beta, \beta)$-string of beads, which are independent, i.e., by Proposition \ref{Diri}(iii), the claims \eqref{Es0} and \eqref{Es} follow, as statements conditionally given tree shapes.

Finally, these conditional distributions of the Dirichlet mass split \eqref{Es0} and the independent $(\beta, \theta)$-strings of beads \eqref{Es} do not depend on the shape $T_{k+1}^*$, and are hence unconditionally independent. \end{proof}

\begin{remark}\label{rem45} 
  By Proposition \ref{starstrings} and Lemma \ref{equivalence} we see that Algorithm \ref{twocolourmass} reduces to Algorithm \ref{twocolour}. 
\end{remark}

\subsection{Identification of the stable line-breaking constructions} \label{stgp}

We now turn to the trees $(\widetilde{\mathcal T}_k, k \geq 0)$ obtained from $(\mathcal T_k^*, (\mathcal R_k^{(i)}, i \geq 1), k \geq 0)$ by contracting all marked components to single branch points as in \eqref{equrel}-\eqref{ttilde}. This description yields another formulation of the atom selection procedure on $\mathcal T_k^*$ in Algorithm \ref{twocolourmass}. 

Given $(\mathcal T_j^*, (\mathcal R_j^{(i)}, i \geq 1), \mu_j^*)$, $0 \leq j \leq k$, and $r_k = \#\{i \geq 1\colon \mathcal R_k^{(i)} \neq \{\rho\}\} =\#\{i \geq 1\colon \widetilde{v}_i \neq \{\rho\}\}$,
 \begin{enumerate}\item[1.-2.] select $\widetilde{J}_k$ from $\widetilde{\mu}_k$; if $\widetilde{J}_k \neq \widetilde{v}_i$ for all $i \in [r_k]$, set $J_k^*=\widetilde{J}_k$; otherwise, if $\widetilde{J}_k=\widetilde{v}_i$ for some $i \in [r_k]$, sample an edge $E_k^*$ of $\mathcal R_k^{(i)}$ proportionally to its mass $\mu_k^*(E_k^*)$; if $E_k^*$ is an internal edge of $\mathcal R_k^{(i)}$, sample $J_k^*$ from the normalised mass measure on $E_k^*$; if $E_k^*$ is an external edge of $\mathcal R_k^{(i)}$, perform $(\beta, 1-2\beta)$-coin tossing sampling on $E_k^*$ to determine $J_k^* \in E_k^*$.
  \end{enumerate}

It is this view on Algorithm \ref{twocolourmass} that we pursue further now. The following theorem contains the desired weight-length transformation, i.e. the branch point weights in Goldschmidt-Haas' stable line-breaking construction (Algorithm \ref{GH}) are indeed as the lengths of the marked subtrees in the two-colour line-breaking construction (Algorithm \ref{twocolourmass}). Its proof is given with other similar proofs in the appendix.

\begin{theorem} \label{mainresult} Let the sequence $(\mathcal T_k^*, (\mathcal R_k^{(i)}, i \geq 1 ), \mu_k^*, k \geq 0)$ be as in Algorithm \ref{twocolourmass}, and 
  associate $( \widetilde{\mathcal T}_k, (\widetilde{v}_i=[\mathcal R_k^{(i)}]_{\sim}, i \geq 1 ), \widetilde{\mu}_k, k \geq 0)$ as in \eqref{ttilde}. Then the following hold.
\begin{itemize}
\item[\rm(i)] The sequence of trees with mass measures from Algorithm \ref{twocolourmass} and \eqref{ttilde} has the same distribution as the sequence in Algorithm \ref{masssel}, i.e. 
\begin{equation}
\left(\widetilde{\mathcal T}_k, \widetilde{\mu}_k, k \geq 0\right) \,{\buildrel d \over =}\, \left({\mathcal T}_k, \mu_k , k \geq 0\right). \label{emstlb1}
\end{equation}
\item[\rm(ii)] The sequence of trees with marked component lengths from Algorithm \ref{twocolourmass} and \eqref{ttilde} has the same distribution as the sequence of trees with weights from Algorithm \ref{GH}, i.e. 
\begin{equation}
\left(\widetilde{\mathcal T}_k, \left(\widetilde{W}_k^{(i)}, i \geq 1\right), k \geq 0\right) \,{\buildrel d \over =}\, \left({\mathcal T}_k,  \left({W}_k^{(i)}, i \geq 1\right), k \geq 0\right), \label{emstlb2} \end{equation}
where  $\widetilde{W}_k^{(i)}={\rm Leb}(\mathcal R_k^{(i)})$ is the length of $\mathcal R_k^{(i)}, i \geq 1$, respectively. In particular, letting ${S}^*_k={\rm Leb}(\mathcal T_k^*)$ denote the length of $\mathcal T_k^*$, 
the sequence $({S}^*_k, k\geq 0)$ is a Mittag-Leffler Markov chain starting from ${\rm ML}(\beta,\beta)$% (cf. Section \ref{subsec2})
, i.e. 
$
\left({S}^*_k, k \geq 0\right)  \,{\buildrel d \over =}\,  \left(S_k, k \geq 0\right).
$
\end{itemize}
\end{theorem}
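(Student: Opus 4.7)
I would proceed by induction on $k$, exploiting the explicit distributional description of $(\mathcal T_k^*,\mu_k^*)$ from Proposition \ref{starstrings}. At $k=0$ both constructions produce a $(\beta,\beta)$-string of beads, with $\widetilde{\mathcal T}_0=\mathcal T_0^*$, settling (i) and giving $S_0^*\sim{\rm ML}(\beta,\beta)$ for (ii).

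\textbf{Identifying the distribution of the quotient.} For the inductive step of (i), I would first identify $(\widetilde{\mathcal T}_k,\widetilde{\mu}_k)$ with the description given by Proposition \ref{masssel2}. Applying Dirichlet aggregation (Proposition \ref{Diri}(ii)) to the $(3k+1)$-component mass split in Proposition \ref{starstrings} collapses each $\mathcal R_k^{(i)}$ into a single block of parameter $(d_i-3)\beta+(d_i-2)(1-2\beta)=w(d_i)$, yielding a ${\rm Dirichlet}(\beta,\ldots,\beta,w(d_1),\ldots,w(d_\ell))$ split on the edges and branch points of $\widetilde{\mathcal T}_k$, with the unmarked edges remaining independent rescaled $(\beta,\beta)$-strings of beads, exactly as in Proposition \ref{masssel2}.

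\textbf{The transition and the Beta multiplicative identity.} Next I would check that the one-step transition matches Algorithm \ref{masssel}. By Lemma \ref{equivalence} together with the coincidence (at $\alpha=\theta=\beta$) of $(\beta,\beta)$-coin-tossing and plain mass-sampling, the algorithmic selection of $J_k^*$ is equivalent in distribution to sampling $J_k^*$ directly from $\mu_k^*$, so $\widetilde{J}_k$ is a $\widetilde{\mu}_k$-sample. Writing $B\sim{\rm Beta}(1-2\beta,\beta)$ for the weight split of the attached $\beta$-mixed string, the unmarked half of the string contributes a new branch of mass $(1-B)\mu_k^*(J_k^*)$. In \emph{Case~A} ($J_k^*$ on an unmarked edge, $d=2$), $\mu_k^*(J_k^*)=\widetilde{\mu}_k(\widetilde{J}_k)$ and the transferred proportion is $1-B\sim{\rm Beta}(\beta,1-2\beta)$, matching $Q_k$ at $d=2$. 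In \emph{Case~B} ($\widetilde{J}_k=\widetilde{v}_i$, $d_i\ge 3$), size-biased selection of an edge of $\mathcal R_k^{(i)}$ followed by $(\beta,\theta_j)$-coin-tossing (Proposition \ref{Diri}(iii)--(iv)) makes the decimated vector ${\rm Dirichlet}(\theta_1,\ldots,\beta,1-\beta,\theta_j,\ldots,\theta_n)$ with total parameter $w(d_i)+1$, so the selected atom's relative mass $V_2:=\mu_k^*(J_k^*)/\mu_k^*(\mathcal R_k^{(i)})$ is ${\rm Beta}(1-\beta,\beta+w(d_i))$ --- crucially a marginal independent of which edge $j$ was selected. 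Combining with the independent factor $1-B$ via the Beta identity ${\rm Beta}(a,b)\cdot{\rm Beta}(a+b,c)\,{\buildrel d\over=}\,{\rm Beta}(a,b+c)$ with $(a,b,c)=(\beta,1-2\beta,\beta+w(d_i))$ gives
\[
Q_k:=(1-B)V_2\,\sim\,{\rm Beta}\bigl(\beta,\,(d_i-2)(1-\beta)+1-2\beta\bigr),
\]
matching $Q_k$ in Algorithm \ref{masssel}. The new branch is the rescaled unmarked $(\beta,\beta)$-string of beads, as required, so the induction propagates.

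\textbf{Part (ii) and main obstacle.} Part (ii) then follows from (i) together with Proposition \ref{DIRML}: the edges of $\mathcal R_k^{(i)}$ have lengths $X_j^\beta L_j$ with $L_j$ independent ${\rm ML}(\beta,\theta_j)$ and $X_j$ their absolute masses; the reverse direction of Proposition \ref{DIRML} collapses the sum to $\widetilde{W}_k^{(i)}\,{\buildrel d\over=}\, M_i^\beta T_i$ with $T_i\sim{\rm ML}(\beta,w(d_i))$, matching the representation of $W_k^{(i)}$ in Corollary \ref{Massesaslengths}. The Mittag-Leffler Markov chain property of $S_k^*={\rm Leb}(\widetilde{\mathcal T}_k)+\sum_i\widetilde{W}_k^{(i)}$ is then inherited from that of $S_k$ in Algorithm \ref{GH}. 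The main technical obstacle is the Case~B computation, where one must chase the Dirichlet structure through size-biased selection and decimation, verify that the marginal of $V_2$ does not depend on the selected edge $j$, and apply the Beta multiplicative identity to collapse $(1-B)V_2$ to a single Beta of the correct shape; once this is in hand the induction closes cleanly because the independence of $B$ and of the attached $(\beta,\beta)$-string are built in by construction.
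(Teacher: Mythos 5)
Your treatment of part (i) is essentially correct and follows the same route as the paper: the paper also argues by an inductive coupling, identifies $Q_k$ as the product of the independent ${\rm Beta}(\beta,1-2\beta)$ split of the $\beta$-mixed string with the relative mass $\mu_k^*(J_k^*)/\widetilde{\mu}_k(\widetilde{v}_i)\sim{\rm Beta}(1-\beta,(d_i-2)(1-\beta))$, and uses Propositions \ref{starstrings}, \ref{cointoss} and \ref{Diri} exactly as you do; your parameters agree with the paper's (note $\beta+w(d_i)=(d_i-2)(1-\beta)$ and $1-\beta+w(d_i)=(d_i-2)(1-\beta)+1-2\beta$). The only point you should state explicitly is that $V_2$ is independent not only of the selected edge but also of the total component mass $\widetilde{\mu}_k(\widetilde{v}_i)$ (this is the internal-versus-total mass-split independence from Proposition \ref{starstrings} via Proposition \ref{Diri}(ii)); without it the conditional law of $Q_k$ required by Algorithm \ref{masssel} is not fully verified.

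For part (ii), however, there is a genuine gap. The identity \eqref{emstlb2} is an equality of the laws of the \emph{sequences} $(\,\cdot\,,k\ge 0)$, and the earlier weights $W_j^{(i)}$, $j<k$, are not recoverable from the data at step $k$ (from $\mathcal T_k$ and $W_k^{(I_{k-1})}$ one sees the attached branch length $(S_k-S_{k-1})B_{k-1}$ but not the increment $(S_k-S_{k-1})(1-B_{k-1})$ added to the weight), so matching the fixed-$k$ marginal law of $(\widetilde{\mathcal T}_k,(\widetilde W_k^{(i)},i\ge1))$ — which is all your ML/Dirichlet collapse via Proposition \ref{DIRML} and Corollary \ref{Massesaslengths} delivers — does not imply the process-level statement. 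This is why the paper proves the stronger induction hypothesis \eqref{toshow}, involving the weights at \emph{all} steps $j\le k$, and why the bulk of its proof consists of establishing \eqref{equalexp1}--\eqref{equalexp2}: one must show that the mass-based selection of $\widetilde J_k$ in Algorithm \ref{twocolourmass}, \emph{jointly} with the current edge lengths and component lengths, has the same law as the length/weight-based selection of Algorithm \ref{GH} jointly with $(L_k,W_k)$ — a two-colour analogue of Lemma \ref{equivalence} requiring the full Dirichlet/Mittag--Leffler computation in both the edge case and the branch-point case, together with the identification of the increment $S_{k+1}^*-S_k^*=\mu_k^*(J_k^*)^\beta M_k^*$ and the split $B_k^*$. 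Your sketch never addresses this dynamics/selection-rule equivalence, and the closing remark that the Mittag--Leffler Markov chain property of $S_k^*$ is ``inherited'' from Algorithm \ref{GH} presupposes precisely the process identity \eqref{emstlb2} being proved. Consequently, the main technical obstacle is not the Case~B computation of part (i) but this joint-in-$k$ matching in part (ii).
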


Let us pull some threads together and deduce the first assertion of Theorem \ref{Mainresult1} and the limit of $\widetilde{\mathcal T}_k$.

\begin{proof}[Proof of \eqref{weighteq} in Theorem \ref{Mainresult1}] We noted in Remark \ref{rem45} that the sequence of two-colour trees of Algorithm 
  \ref{twocolourmass} without mass measures has the same joint distribution as the sequence of two-colour trees of Algorithm \ref{twocolour}. Hence, \eqref{emstlb2} is 
  precisely \eqref{weighteq}.
\end{proof}

%We record the following simple consequence of Theorem \ref{mainresult}.

\begin{corollary} \label{convstable} In the setting of Theorem \ref{mainresult}, 
$ \lim_{k \rightarrow \infty} (\widetilde{\mathcal T}_k, \widetilde{\mu}_k)=(\mathcal T, \mu)$ a.s.\ with respect to the Gromov-Hausdorff-Prokhorov distance, where $(\mathcal T, \mu)$ is a stable tree of index $\beta$.
\end{corollary}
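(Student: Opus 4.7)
My plan is to derive this corollary as a direct consequence of Theorem \ref{mainresult}(i) and Theorem \ref{samestable}, via a transfer-of-convergence argument. The first step is to invoke Theorem \ref{mainresult}(i), which gives the joint distributional identity
\begin{equation*}
\left(\widetilde{\mathcal T}_k, \widetilde{\mu}_k, k \geq 0\right) \,{\buildrel d \over =}\, \left({\mathcal T}_k, \mu_k , k \geq 0\right)
\end{equation*}
in the space $(\mathbb{T}_{\rm w}^{\mathbb{N}_0}, d_{\rm GHP}^{\otimes\mathbb{N}_0})$, where the right-hand side is the sequence produced by Algorithm \ref{masssel}. The second step is to invoke Theorem \ref{samestable}, which states that $(\mathcal T_k, \mu_k) \to (\mathcal T, \mu)$ a.s.\ in $d_{\rm GHP}$, where $(\mathcal T, \mu)$ is a stable tree of index $\beta$.

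The third step is the transfer: since $(\mathbb{T}_{\rm w}, d_{\rm GHP})$ is a complete separable metric space, the event
\begin{equation*}
  \mathcal{C} := \{(T_k, \nu_k)_{k\ge 0} \colon \text{the sequence is Cauchy in } d_{\rm GHP}\}
\end{equation*}
is a measurable subset of $\mathbb{T}_{\rm w}^{\mathbb{N}_0}$, and on $\mathcal{C}$ the limit map $(T_k, \nu_k)_{k\ge 0} \mapsto \lim_k (T_k, \nu_k)$ is Borel measurable. By Theorem \ref{samestable}, the sequence $(\mathcal T_k, \mu_k, k \geq 0)$ belongs to $\mathcal C$ almost surely, and its distributional twin $(\widetilde{\mathcal T}_k, \widetilde{\mu}_k, k \geq 0)$ must therefore also lie in $\mathcal C$ almost surely. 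This produces an a.s.\ limit $(\widetilde{\mathcal T}, \widetilde{\mu}):=\lim_k (\widetilde{\mathcal T}_k, \widetilde{\mu}_k)$ in $d_{\rm GHP}$. Applying the continuous mapping theorem, or more directly pushing forward the sequence-level distributional equality through the measurable limit map, gives $(\widetilde{\mathcal T}, \widetilde{\mu}) \stackrel{d}{=} (\mathcal T, \mu)$, so the limit is a stable tree of index $\beta$.

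There is essentially no real obstacle here beyond making sure that the limit operation is indeed Borel measurable on the (necessarily almost sure) set where the Cauchy property holds, so that equality in distribution of sequences descends to equality in distribution of limits. The compactness of the limit is automatic since $d_{\rm GHP}$ convergence in $\mathbb{T}_{\rm w}$ means convergence of equivalence classes of compact weighted $\mathbb{R}$-trees (cf.\ Proposition \ref{embprop}(iv) for the unweighted analogue). No further estimates on diameters or masses are required, as all the work has already been done in proving Theorems \ref{mainresult} and \ref{samestable}.
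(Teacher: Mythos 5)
Your proof is correct and follows essentially the paper's route: the paper likewise deduces the corollary by transferring known a.s.\ convergence across the distributional identity of Theorem \ref{mainresult} (its one-line proof cites Goldschmidt and Haas for the right-hand side of \eqref{emstlb2}), leaving the measurability of the transfer implicit. Your version, routed through Theorem \ref{mainresult}(i) and Theorem \ref{samestable} with the Cauchy-set and limit-map measurability spelled out, is if anything a slightly more careful justification, in particular of the Gromov--Hausdorff--Prokhorov (mass-measure) part of the statement.
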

\begin{proof} Goldschmidt and Haas \cite{12} showed this for the RHS of \eqref{emstlb2}, so it also holds for the LHS.
\end{proof}

\subsection{Identification of marked subtree growth processes, and the proof of Theorem \ref{Mainresult1}} \label{sec42}

The main aim of this section is to identify the marked tree growth processes $(\mathcal R_k^{(i)}, k \geq 1)$, $i \geq 1$, as rescaled i.i.d. Ford tree growth processes of index $\beta'=\beta/(1-\beta)$. We will show the following.

\begin{theorem} \label{fordembb} Let $(\mathcal T_k^*, (\mathcal R_k^{(i)}, k \geq 1), \mu_k^*, k \geq 0)$ be the weighted $\infty$-marked tree growth process of Algorithm \ref{twocolourmass} for some $\beta \in (0,1/2]$. Then there exists a sequence of scaling factors $(C^{(i)}, i \geq 1)$ such that for all $i \geq 1$
$$ \lim_{k \rightarrow \infty} \mathcal R_k^{(i)}=\mathcal R^{(i)} \quad \text{a.s.} $$ in the Gromov-Hausdorff topology where 
 $(C^{(i)} \mathcal R^{(i)}, i \geq 1)$ is a sequence of i.i.d. Ford CRTs of index $\beta'=\beta/(1-\beta)$. Furthermore, the sequence $(C^{(i)} \mathcal R^{(i)}, i \geq 1)$ is independent of the stable tree $(\widetilde{\mathcal T}, \widetilde{\mu})=\lim_{k \rightarrow \infty} (\widetilde{\mathcal T}, \widetilde{\mu})$ obtained from $(\mathcal T_k^*, (\mathcal R_k^{(i)}, i \geq 1), \mu_k^*, k \geq 0)$ as in Corollary \ref{convstable}.
\end{theorem}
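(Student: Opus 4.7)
My plan is to identify, for each $i\ge 1$, the sub-sequence $(\mathcal{R}_{k_m^{(i)}}^{(i)}, m \ge 1)$ of the marked tree growth process with a rescaled Ford tree growth process of index $\beta' = \beta/(1-\beta)$, then pass to the limit using the known a.s.\ Gromov-Hausdorff convergence $\mathcal{F}_m \to \mathcal{F}$ of \cite{1,2} and Proposition \ref{embprop}(iv), and finally identify the scaling $C^{(i)}$ using Corollaries \ref{GEM} and \ref{convstable}.

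First I would match finite-$m$ distributions. By Proposition \ref{starstrings}, conditionally on the shape $\widetilde{T}_k$ of $\widetilde{\mathcal{T}}_k$, the shape of $\mathcal{R}_k^{(i)}$ is that of a Ford tree of index $\beta'$ on $m_i = d_i - 2$ leaves, the relative edge mass split is Dirichlet with parameter $\beta$ on internal edges and $1-2\beta$ on external edges, and each edge carries an independent rescaled $(\beta, \beta)$- or $(\beta, 1-2\beta)$-string of beads accordingly. Translating the masses into lengths via Proposition \ref{DIRML} edge-by-edge, the conditional relative length split on $\mathcal{R}_k^{(i)}$ is Dirichlet$(1, \ldots, 1, (1-2\beta)/\beta, \ldots, (1-2\beta)/\beta)$, which is precisely the Ford distribution from Proposition \ref{Forddis} since $(1-\beta')/\beta' = (1-2\beta)/\beta$. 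Likewise the total length of $\mathcal{R}_k^{(i)}$ equals $(Y_m^{(i)})^{\beta} S_m^{(i)}$ in distribution with $Y_m^{(i)} := \mu_{k_m^{(i)}}^{*}(\mathcal{R}_{k_m^{(i)}}^{(i)})$ and $S_m^{(i)} \sim \mathrm{ML}(\beta, m(1-\beta) - \beta)$.

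Next I would verify that the Ford growth transitions match. At step $k = k_{m+1}^{(i)} - 1$, a new leaf is added to $\mathcal{R}_k^{(i)}$ by attaching a rescaled $(\beta, 1-2\beta)$-string of beads at an atom $J_k^*$ chosen via mass-based edge selection followed by coin-tossing sampling (external) or uniform sampling (internal) on the selected edge. Lemma \ref{equivalence} converts mass-based to length-based selection, producing a Beta$(1, (1-2\beta)/\beta) = \mathrm{Beta}(1, 1/\beta' - 1)$ split on external edges, which is exactly Ford's split in Algorithm \ref{Ford}, together with the ML$(\beta', m -\beta')$ total-length law of Proposition \ref{Forddis}. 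Consequently there exist random constants $c_m^{(i)} > 0$ with $(c_m^{(i)} \mathcal{R}_{k_m^{(i)}}^{(i)}, m \ge 1) \,{\buildrel d \over =}\, (\mathcal{F}_m, m \ge 1)$, and since the left-hand side is an increasing embedded sequence, Proposition \ref{embprop}(iv) applied in an appropriate embedding of $\mathbb{T}^{\rm emb}$ gives $c_m^{(i)} \to C^{(i)}$ a.s.\ and $C^{(i)} \mathcal{R}_{k_m^{(i)}}^{(i)} \to \mathcal{F}^{(i)}$ a.s.\ in $(\mathbb{T}, d_{\rm GH})$, for a Ford CRT $\mathcal{F}^{(i)}$ of index $\beta'$.

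To identify $C^{(i)}$, I would use Corollary \ref{GEM}: the normalized subtree masses at the branch point $v_i$ are GEM$(1-\beta, -\beta)$, with $(1-\beta)$-diversity $D^{(i)}$ and total mass $P^{(i)}$. Tracing the dependence of $c_m^{(i)}$ on $Y_m^{(i)}$ and $m$ together with the a.s.\ diversity identity \eqref{alphadiv} yields $(C^{(i)})^{-1} = (P^{(i)})^{\beta} (D^{(i)})^{\beta'}$, matching Theorem \ref{branchrepl}. Independence of $(C^{(i)} \mathcal{R}^{(i)}, i \ge 1)$ and of this family from $\widetilde{\mathcal{T}}$ follows by combining (a) the conditional independence of the shapes $R_k^{(i)}$ given $\widetilde{T}_k$, (b) the Dirichlet aggregation/deletion property (Proposition \ref{Diri}(ii)) applied to the mass split of Proposition \ref{starstrings} which gives, conditionally on $\widetilde{T}_k$, independence of the internal mass splits on each $\mathcal{R}_k^{(i)}$ from one another and from the contracted mass $\widetilde{\mu}_k$, and (c) the independence of the $\beta$-mixed strings of beads attached at distinct steps. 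Letting $k \to \infty$ and using Corollary \ref{convstable} transfers this to the limit. The main obstacle is the rigorous identification of $C^{(i)}$: matching the ML$(\beta', m - \beta')$ Ford length with the rescaled ML$(\beta, m(1-\beta) - \beta)$ length from the $(\beta, \theta)$-strings of beads requires carefully pairing the $\beta$-scaling coming from the branch-point mass with the $m^{1/\beta'}$-growth of the Ford length, and converting this into the GEM-diversity factor $D^{(i)}$ via \eqref{alphadiv}.
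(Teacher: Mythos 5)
Your finite-$k$ computations are sound, and they reproduce what the paper records separately as Theorem \ref{embford}(i)--(ii) and Corollary \ref{lms}: via Proposition \ref{starstrings}, Proposition \ref{DIRML} and Lemma \ref{equivalence}, each marked component follows the shape and relative length-split rules of a Ford growth process of index $\beta'$. The gap is the step ``Consequently there exist random constants $c_m^{(i)}$ with $(c_m^{(i)}\mathcal R^{(i)}_{k_m^{(i)}},m\ge1)\,{\buildrel d\over=}\,(\mathcal F_m,m\ge1)$''. Matching the growth rules only shows that the component is a Markov chain obeying Ford transitions with the \emph{wrong} length law: by Corollary \ref{lms} the length of $\mathcal R^{(i)}_{k_m^{(i)}}$ is $\mu^*_{k_m^{(i)}}(\mathcal R^{(i)}_{k_m^{(i)}})^{\beta}S_{m,i}$ with $S_{m,i}\sim{\rm ML}(\beta,m(1-\beta)-\beta)$, while $\mathcal F_m$ has length ${\rm ML}(\beta',m-\beta')$, and no $m$-wise rescaling turns the one family into the other at finite $m$ (this is exactly the point of Theorem \ref{embford}(ii): the process is Ford ``but starting from ${\rm ML}(\beta,1-2\beta)$''). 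The only scaling that yields an exact Ford growth process is the constant-in-$m$, limit-dependent factor $C^{(i)}$ with $(C^{(i)})^{-1}=(P^{(i)})^{\beta}(D^{(i)})^{\beta'}$, and proving that $(C^{(i)}\mathcal R^{(i)}_{k_m^{(i)}},m\ge1)$ is \emph{exactly} a Ford growth process, jointly i.i.d.\ over $i$ and independent of the stable tree, is the substance of the theorem. You flag the identification of $C^{(i)}$ as ``the main obstacle'' but do not resolve it, and your independence argument (a)--(c) plus ``letting $k\to\infty$'' cannot deliver it: the insertion times $k_m^{(i)}$ are exactly the selection times of $\widetilde v_i$, and $C^{(i)}$ itself is a functional of the stable tree through $P^{(i)}$ and $D^{(i)}$, so independence of the scaled limits from $\widetilde{\mathcal T}$ is a cancellation statement that does not follow from conditional independence given $\widetilde T_k$ passed to the limit. (Your use of Proposition \ref{embprop}(iv) to get a.s.\ GH-convergence is fine, but only once the distributional identification is in place.)

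The paper closes this gap by a different device: it re-runs the two-colour construction starting from a \emph{given} stable tree together with independent labelled $(\beta',1-\beta')$-strings of beads (Algorithm \ref{depstruc1}), in which the rescaled Ford CRTs are independent of the stable tree by construction, and then proves equality in law with Algorithm \ref{twocolourmass} (Proposition \ref{algsame}) using the coagulation--fragmentation duality Lemmas \ref{coag1} and \ref{coag2}; Theorem \ref{fordembb} is then read off from Corollary \ref{algsamecor} together with Corollary \ref{convstable}. Some substitute for this exchangeability/duality input is indispensable --- the alternative route via Theorem \ref{embford}(iii) produces convergence of $C^{(i)}_m\mathcal R^{(i)}_{k_m^{(i)}}$ to i.i.d.\ Ford CRTs but still does not give independence from $\widetilde{\mathcal T}$ --- and your proposal contains no argument playing that role.
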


We will prove this by carrying out the two-colour line-breaking construction using a given stable tree $(\mathcal T, \mu)$ equipped with a sequence 
of i.i.d. leaves $(\Sigma_k, k \geq 0)$ sampled from $\mu$, and i.i.d.\ sequences of i.i.d.\ ordered $(\beta', 1-\beta')$-Chinese restaurant processes 
$(\widetilde{\Pi}_n^{(i,m)}, n \geq 1)$, $i \geq 1$, $m \geq 1$, cf. Section \ref{Sec22}. 

\begin{definition}[Labelled bead tree/string of beads] A pair $(x, \Lambda)$ is called a \textit{labelled bead} if $\Lambda \subset \mathbb N$ is an infinite label set. A weighted $\mathbb R$-tree $(\mathcal R, \mu_{\mathcal R})$ equipped with a point process $\mathcal P_{\mathcal R}=\sum_{i \geq 1} \delta_{(x_i, \Lambda_{i})}$ on some countable subset $\{x_i, i \geq 1\} \subset \mathcal R$, $x_i \neq x_j, i \neq j$, is called a \textit{labelled bead tree} if $(x_i, \Lambda_i)$ is a labelled bead for every $ i \geq 1$. If $(\mathcal R, \mu_{\mathcal R})$ is a string of beads we call  $(\mathcal R, \mu_{\mathcal R}, \mathcal P_{\mathcal R})$ a \textit{labelled string of beads}.
\end{definition}

We will also speak of \textit{labelled $(\alpha, \theta)$-strings of beads} for $\alpha \in (0,1)$, $\theta >0$, as induced by an ordered $(\alpha, \theta)$-Chinese restaurant process. Specifically, the label sets are the blocks $\Pi_{\infty,i}$, $i\ge 1$, of the limiting partition of $\mathbb N$, which we relabel by
$\mathbb N\setminus\{1\}$ using the increasing bijection $\mathbb{N}\rightarrow\mathbb{N}\setminus\{1\}$. The locations $X_i$ are the locations of the corresponding
atom of size $P_i$ on the string, $i\ge 1$. A Ford tree growth process of index $\beta' \in (0,1)$ as in Algorithm \ref{Ford} can be represented in terms of labelled $(\beta', 1-\beta')$-strings of beads $\widehat{\xi}_{m}$, $m \geq 1$, as follows \cite[Corollary 16]{1}.

\begin{prop}[Ford tree growth via labelled strings of beads] \label{labelledbeadford} For $\beta' \in (0,1)$, construct a sequence of labelled bead trees $(\mathcal F_m, \nu_m, \mathcal P_{m}, m \geq 1)$ as follows. 

\begin{itemize} \item[\rm 0.] Let $\widehat{\xi}_0=(\mathcal F_1, \nu_1, \mathcal P_1)$ be a labelled $(\beta', 1-\beta')$-string of beads with label set $\mathbb{N}\setminus\{1\}$.
\end{itemize}
Given $(\mathcal F_j, \nu_j, \mathcal P_{j})$, $1 \leq j \leq m$, with $\mathcal P_{m}=\sum_{i\ge 1}\delta_{X_{m,i},\Lambda_{m,i}}$, to construct 
$(\mathcal F_{m+1}, \nu_{m+1}, \mathcal P_{m+1})$,
\begin{itemize} \item[\rm 1.-2.] select the unique $X_{m,i} \in \mathcal F_m$ such that $m+1 \in \Lambda_{m,i}$; 
  \item[\rm 3.] to obtain $(\mathcal F_{m+1},\nu_{m+1},\mathcal{P}_{m+1})$, remove $\nu_m(X_{m,i})\delta_{X_{m,i}}$ from $\nu_m$ and $\delta_{(X_{m,i},\Lambda_{m,i})}$ 
    from $\mathcal P_m$; attach to $\mathcal F_m$ at ${X_{m,i}}$ an independent copy $\widehat{\xi}_{m}$ of $\widehat{\xi}_0$ with metric rescaled by 
    $\nu_m(X_{m,i})^{\beta'}$, mass measure by $\nu_m(X_{m,i})$, and label sets in $\widehat{\xi}_m$ relabelled by the increasing bijection 
    $\mathbb N\setminus\{1\} \rightarrow \Lambda_{m,i}\setminus\{m+1\}$. %replacing the atom and  labelled bead in $X_{m,i}$ to obtain $\nu_{m+1}$ and $\mathcal P_{m+1}$.
\end{itemize} 
Then the tree growth process $(\mathcal F_m, m \geq 1)$ is a Ford tree growth process of index $\beta' \in (0,1)$. 
\end{prop}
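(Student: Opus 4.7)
The plan is to proceed by induction on $m \geq 1$, maintaining a stronger joint statement about $(\mathcal F_m, \nu_m, \mathcal P_m)$: namely that (a) $\mathcal F_m$ has the shape and edge-length law of a Ford tree on $m$ leaves as described in Proposition \ref{Forddis}, (b) conditionally on the shape, the total mass split across the $2m-1$ edges is $\mathrm{Dirichlet}(\beta', \ldots, \beta', 1-\beta', \ldots, 1-\beta')$ with $m-1$ weights of $\beta'$ (one per internal edge) and $m$ weights of $1-\beta'$ (one per external edge), and (c) conditionally on the shape and masses, each edge $E$ carries an independent rescaled $(\beta', \theta_E)$-string of beads with $\theta_E = \beta'$ if $E$ is internal and $\theta_E = 1-\beta'$ if $E$ is external, whose atoms carry the CRP label partition of $\{m+1, m+2, \ldots\}$. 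The base case $m=1$ is immediate from the definition of $\widehat{\xi}_0$, viewed as a single external edge of length $\mathrm{ML}(\beta', 1-\beta')$. Note that the distribution of $\mathcal F_m$ as an $\mathbb R$-tree that (a)--(c) encode is, by Proposition \ref{DIRML} applied with parameters $\theta_i = \beta'$ or $1-\beta'$, precisely the Ford distribution from Proposition \ref{Forddis}; so it suffices to check the transition.

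For the induction step, the key observation is that on any edge $E$ the atom containing the smallest label currently present on $E$ is the ``first block'' in the CRP order of the $(\beta', \theta_E)$-string of beads on $E$, which by \cite[Proposition 14(b), Corollary 15]{1} (recalled in Proposition \ref{cointoss}) is precisely the atom selected by $(\beta', \theta_E)$-coin tossing sampling. Combined with the CRP property that, given $\nu_m$, label $m+1$ lies on edge $E$ with probability proportional to $\nu_m(E)$, the selection of $X_{m,i}$ in steps 1--2 of Proposition \ref{labelledbeadford} coincides exactly with Method 1 of Lemma \ref{equivalence} applied to the independent $(\beta', \theta_{E})$-strings of beads on the $2m-1$ edges with Dirichlet masses. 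Lemma \ref{equivalence} then identifies this with Method 2: selecting an edge proportionally to length and performing a $\mathrm{Beta}(1, \theta_E/\beta')$ split on it. Since $\theta_E/\beta' = 1$ on internal edges (uniform length split) and $\theta_E/\beta' = 1/\beta' - 1$ on external edges, this matches Algorithm \ref{Ford} exactly.

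It remains to check that the post-attachment structure satisfies the induction hypothesis at level $m+1$. By Proposition \ref{cointoss}, $(\beta', \theta_E)$-coin tossing splits $E$ into rescaled independent $(\beta', \beta')$- and $(\beta', \theta_E)$-strings of beads with a $\mathrm{Dirichlet}(\beta', 1-\beta', \theta_E)$ relative mass split among (before-atom, atom, after-atom). The grafted copy $\widehat{\xi}_m$ is an independent $(\beta', 1-\beta')$-string of beads with total mass equal to $\nu_m(X_{m,i})$, which becomes the new external edge. A case analysis on $E$ (external vs.\ internal) verifies that $\mathcal F_{m+1}$ has $m$ internal and $m+1$ external edges. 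Updating the Dirichlet mass profile via the size-biasing in Proposition \ref{Diri}(iv) (increment the chosen edge's parameter by $1$) followed by the decimation in Proposition \ref{Diri}(iii) (split the incremented parameter as $(\beta', 1-\beta', \theta_E)$, with the middle piece reassigned to the new external edge) yields exactly $\mathrm{Dirichlet}(\beta', \ldots, \beta', 1-\beta', \ldots, 1-\beta')$ with $m$ internal and $m+1$ external weights, while the string-of-beads structures on all other edges are untouched and hence remain independent. The label decimation built into the relabelling bijection $\mathbb N \setminus \{1\} \to \Lambda_{m,i} \setminus \{m+1\}$ keeps the CRP structure consistent.

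The main obstacle is the careful bookkeeping of independence and Dirichlet structure through the simultaneous split of $E$ into two pieces and the grafting of $\widehat{\xi}_m$, in particular ensuring that the three Dirichlet$(\beta', 1-\beta', \theta_E)$ sub-parameters arising from coin tossing correctly replace the single enlarged parameter after size-biasing, and that the newly grafted string is independent of the rest. Once this is verified, the induction closes and, as noted above, the induced marginal law of $\mathcal F_m$ as an $\mathbb R$-tree is the Ford tree law of Proposition \ref{Forddis}; since the dynamical step has been matched to that of Algorithm \ref{Ford} via Lemma \ref{equivalence}, the joint law of the sequence $(\mathcal F_m, m \geq 1)$ coincides with that produced by Algorithm \ref{Ford}.
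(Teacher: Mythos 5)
The paper itself gives no proof of Proposition \ref{labelledbeadford}: it is quoted from Pitman and Winkel \cite{1} (their Corollary 16), so there is no in-paper argument to compare against, and your proposal has to stand on its own. Its architecture is the natural one and much of it is sound: the induction hypothesis (a)--(c), the Dirichlet bookkeeping via Proposition \ref{Diri} (size-bias then decimation with parameters $(\beta',1-\beta',\theta_E)$), the identification of the three resulting strings via Proposition \ref{cointoss}, the edge counts, and the use of Lemma \ref{equivalence} together with Proposition \ref{DIRML} to convert a mass-proportional edge choice followed by coin-tossing into the length-proportional choice with uniform or ${\rm Beta}(1,1/\beta'-1)$ split of Algorithm \ref{Ford} are all correct.

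The genuine gap is that the statements you make about the labels are precisely the substance of the cited result, and they do not follow from what you invoke. Proposition \ref{cointoss} is an unlabelled statement; it gives you neither (i) that, given $(\mathcal F_m,\nu_m)$, label $m+1$ lies on edge $E$ with probability $\nu_m(E)$, nor (ii) that, conditionally on this, the atom of $E$ containing $m+1$ is distributed as the coin-tossing pick independently of the label set of $E$, nor (iii) that after crushing that atom and relabelling by increasing bijections the label sets carried by the two halves of $E$ and by the new external edge, jointly with the mass split, again form independent labelled strings so that the induction hypothesis propagates. Note that labels are \emph{not} assigned to atoms i.i.d.\ according to mass given the string (if they were, the atom containing the least label would be a size-biased pick, not the coin-tossing pick), so (i)--(iii) require a real exchangeability/regeneration argument for labelled strings of beads --- this is exactly what \cite{1} (Propositions 14--15, Corollary 16) supply, and it is the kind of fact the present paper has to prove by explicit combinatorics in the analogous Lemma \ref{coag1}; your sentence that the relabelling ``keeps the CRP structure consistent'' asserts rather than proves it. A second, smaller gap: Lemma \ref{equivalence} matches only the attachment point and the split of the selected edge, not the length of the newly grafted branch; to recover Algorithm \ref{Ford} you must also show that $\nu_m(X_{m,i})^{\beta'}$ times the fresh string's ${\rm ML}(\beta',1-\beta')$ length, given the current edge lengths, is distributed as $S'_{m+1}-S'_m$ with $S'_{m+1}\sim f(S'_m,\cdot)$ and is conditionally independent of the chosen edge --- a Proposition \ref{DIRML} computation of the type carried out in the appendix proof of Theorem \ref{mainresult}(ii), which your closing paragraph skips.
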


%We will present a version of the two-colour line-breaking construction where Ford trees will be, by construction, independent of each other and of the stable tree. 
It will be useful to represent two-colour trees in the space $l^1(\mathbb N_0^2)$ as follows. We denote by $e_{a,b}$, $a,b\geq 0$, the unit coordinate 
vectors. We will use $e_{k,0}$, $k\geq 0$, to embed a given stable tree $(\mathcal T,d,\rho,\mu)$, using $e_{k,0}$ to embed $\Sigma_k$, $k\geq 0$. Indeed, from now on we 
assume $(\mathcal T,d,\rho,\mu)=(\mathcal T,d,0,\mu)\in\mathbb T^{\rm emb}_{\rm w}$ is this \em embedded stable tree\em, with embedded leaves $\Sigma_k$, $k\geq 0$. We 
will use $e_{m,i}$, 
$i\ge 1$, $m\ge 1$, to embed the $m$th branch of the $i$th red component, so the last step of Algorithm \ref{twocolourmass} is:
\begin{enumerate}\item[3.] let $([0,L_k],[0,L_kB_k^\prime],\mu_k^+)$ be an independent $\beta$-mixed string of beads in the notation of \eqref{twoclb}; denote by $M_k$ 
  the size (number of leaves) of $\mathcal R_k^{(I_k)}$; define the scale factor $c=\mu_k^*(J_k^*)$ and set
   \begin{align*}&\mathcal{T}_{k+1}^*:=\cT_k^*\cup\left(J_k^*+]0,L_kB_k^\prime c^\beta]e_{M_k+1,I_k}\right)\cup\left(J_k^*+L_kB_k^\prime c^\beta e_{M_k+1,I_k}+]0,L_k(1-B_k^\prime)c^\beta]e_{k+1,0}\right)\\
      &\mathcal{R}_{k+1}^{(I_k)}:=\mathcal{R}_k^{(I_k)}\!\cup\!\left(J_k^*+]0,L_kB_k^\prime c^\beta]e_{M_k+1,I_k}\right),\quad \cR_{k+1}^{(i)}\!:=\!\cR_k^{(i)}, i \!\neq\! I_k,\qquad \mu_{k+1}^*:=\mu_k^*-c\delta_{J_k^*}+\lambda_k^+\\
    &\mbox{where }\begin{array}{rll}\lambda_k^+(J_k^*+]c^\beta s,c^\beta t]e_{M_k+1,I_k})=&\!\!\!c\mu_k^+(]s,t]),\qquad &0\le s<t\le L_kB_k^\prime,\\[0.2cm]
     \lambda_k^+(J_k^*+L_kB_k^\prime c^\beta+]c^\beta s,c^\beta t]e_{M_k+1,I_k})=&\!\!\!c\mu_k^+(L_kB_k^\prime+]s,t]),\qquad &0\le s<t\le L_k(1-B_k^\prime).\end{array}
   \end{align*}  
\end{enumerate}
%This rephrasing does not yield independence of red components nor indeed that the red components converge to rescaled Ford CRTs. This does not even appear to give the
%compactness of the increasing limit of $\mathcal{T}_k^*$ as $k\rightarrow\infty$. For all of these, 
We will now formulate a modification of Algorithm \ref{twocolourmass} starting from a given stable tree. 
%and develop some notation introduced in Theorem \ref{branchrepl}. 
Let $(\mathcal T, \mu)$ be a stable tree of index $\beta \in (0,1/2]$ and $(\Sigma_k,k \geq 0)$ an i.i.d. sequence of leaves sampled from $\mu$. Consider the 
sequence of reduced weighted $\mathbb R$-trees $(\mathcal T_k, \mu_k, k \geq 0)$ where $\mu_k$ captures the masses of the connected components of 
$\mathcal T \setminus \mathcal T_k$ projected onto $\mathcal T_k$ as in \eqref{findimmarg1}. Let $(v_i, i \geq 1)$ be the sequence of branch points of $\mathcal T$ in 
order of appearance in $(\mathcal T_k, k \geq 0)$, and denote by 
$(\mathcal S_j^{(i)}, j \geq 1)$ 
the subtrees of $\mathcal T \setminus \mathcal T_{k^{(i)}}$ rooted at $v_i$, $i \geq 1$, where $k^{(i)}=\inf\{k \geq 0\colon v_i \in \mathcal T_k\}$ and where indices are 
assigned in increasing order of least leaf labels $\min\{\ell\ge k^{(i)}\colon\Sigma_\ell\in\mathcal S_j^{(i)}\}$, $j\ge 1$. 
For $i,j \geq 1$, set $P_j^{(i)}:=\mu(\mathcal S_j^{(i)})$, \vspace{-0.1cm} 
\begin{equation}\label{pididef}
  D^{(i)} :=\lim_{n \rightarrow \infty} \left(1-\sum_{j\in[n]} P_j^{(i)}/P^{(i)}\right)^{1-\beta} (1-\beta)^{\beta-1}n^{\beta},\qquad \mbox{where }P^{(i)}:=\sum_{j \geq 1} P_j^{(i)}.\vspace{-0.1cm}
\end{equation} 
This yields an i.i.d. sequence of $(1-\beta)$-diversities $(D^{(i)}, i \geq 1)$ with $D^{(i)} \sim {\rm ML}(1-\beta, -\beta)$, cf. Theorem \ref{masspart} and 
\eqref{alphadiv}. 
%We further consider an i.i.d. sequence of labelled $(\beta', 1-\beta')$-strings of beads $\widehat{\xi}_k, k\geq 1,$ $\beta'=\beta/(1-\beta)$, as in
%Proposition \ref{labelledbeadford}, independent of the stable tree $(\mathcal T, \mu)$.
In the following algorithm, we build i.i.d. Ford trees in the branch points of the stable tree $(\mathcal T, \mu)$ from i.i.d. labelled $(\beta', 1-\beta')$-strings of 
beads $\widehat{\xi}_k, k\geq 0$, for $\beta'=\beta/(1-\beta)$. To do so, we consider two separate mass measures: the measures $(\widehat{\mu}_k,k\ge 0)$, that equal  
$\mu$ on (shifted) subtrees of the stable tree, and the measures $\widehat{\nu}_k$ on the Ford trees, which, restricted to each Ford tree separately, play the role 
of the mass measures $\nu_{m}$, $m \geq 1$, in the construction in Proposition \ref{labelledbeadford}.

\begin{algorithm}[Algorithm \ref{twocolourmass} with subtrees from a given stable tree]  \label{depstruc1} 
We construct a sequence of weighted $\infty$-marked $\mathbb R$-trees  
$\big(\widehat{\mathcal T}_k,\big(\widehat{\mathcal R}_k^{(i)}, i \geq 1\big), \widehat{\mu}_k, \widehat{\nu}_k, 
 \big(\widehat{\Sigma}_n^{(k)}, n \geq 0 \big), k \geq 0\big)$ 
embedded in $l^1(\mathbb N_0^2)$, each equipped with an infinite leaf sequence $(\widehat{\Sigma}_n^{(k)}, n \geq 0)$ and an additional finite measure $\widehat{\nu}_k$ as follows.

\begin{itemize}

\item[0.] Let $(\widehat{\mathcal T}_0, (\widehat{\mathcal R}_0^{(i)}, i \geq 1), \widehat{\mu}_0, \widehat{\nu}_0,  (\widehat{\Sigma}_n^{(0)}, n \geq 0 )) = (\mathcal T, (\{\rho\}, i \geq 1), \mu, 0, (\Sigma_n, n \geq 0))$ be a stable tree.
\end{itemize}
Given $(\widehat{\mathcal T}_j,(\widehat{\mathcal R}_j^{(i)}, i \geq 1), \widehat{\mu}_j, \widehat{\nu}_j, (\widehat{\Sigma}_n^{(j)}, n \geq 0))$, $0 \leq j \leq k$,
let $\widehat{r}_k=\#\{i\ge 1\colon\cR_k^{(i)}\neq\{\rho\}\}$;
\begin{itemize}
\item[1.-2.] let $\widehat{J}_k \in \widehat{\mathcal T}_k$ be the closest point to the leaf $\widehat{\Sigma}_{k+1}^{(k)}$ in 
  $\mathcal R(\widehat{\mathcal T}_k, \widehat{\Sigma}_1^{(k)}, \ldots, \widehat{\Sigma}_{k}^{(k)})$; if $\widehat{J}_k\in\widehat{\cR}_k^{(i)}$ for some $i\in[\widehat{r}_k]$, 
  set $I_k=i$, otherwise let $I_k=\widehat{r}_k+1$; denote by $M_k\ge 0$ the size of $\cR_k^{(I_k)}$;
\item[3.] let $\widehat{\xi}_k$ be an independent labelled $(\beta', 1-\beta')$-string of beads; if $M_k\ge 1$, define the scale factor $\widehat{c}=\widehat{\nu}_k(\widehat{J}_k)$, otherwise set $\widehat{c}=1$; write as $([0,K_k],\nu_k,\sum_{j\ge 1}\delta_{(X_{k,j},\Lambda_{k,j})})$ 
  the string of beads $\widehat{\xi}_k$ with metric rescaled by $\widehat{c}^{\beta^\prime}(P^{(I_k)})^{\beta} (D^{(I_k)})^{\beta'}$ and mass measure rescaled by $\widehat{c}$, where $P^{(I_k)}$ and $D^{(I_k)}$ are as in (\ref{pididef});
  denote by $\mathcal S_{k,j}$, $j\in\{0,1,2,\ldots;\infty\}$, the 
  connected components of $\widehat{\mathcal T}_k\setminus\{\widehat{J}_k\}$, where $\mathcal S_{k,\infty}$ contains the root and the other components are ordered by least 
  label; let $X_{k,0}:=K_k$ and set
  $$\widehat{\mathcal T}_{k+1}:=\mathcal S_{k,\infty}\cup\mathcal S_{k,0}\cup\left(\widehat{J}_k+[0,K_k]e_{M_{k+1},I_k}\right)\cup\bigcup_{j\ge 0}\left(X_{k,j}e_{M_{k+1},I_k}+\mathcal S_{k,j+1}\right).$$
  if $M_k=0$, let $\widehat{\mathcal R}_{k+1}^{(I_k)}=\widehat{J}_k+[0,K_k]e_{M_{k+1},I_k}$, otherwise add this shifted string to $\widehat{\mathcal R}_{k}^{(I_k)}$
  to form $\widehat{\mathcal R}_{k+1}^{(I_k)}$; retain the other
  marked components, just shifted by the appropriate $X_{k,j}e_{M_{k+1},I_k}$ if $\widehat{\mathcal R}_k^{(i)}\subset\mathcal S_{j,k}$. Finally, let $\widehat{\mu}_{k+1}$ 
  denote the mass measure obtained from $\widehat{\mu}_k$ by appropriate shifting, and similarly for $\widehat{\nu}_{k+1}$, just with $\nu_k$ shifted onto
  $\widehat{J}_k+[0,K_k]e_{M_{k+1},I_k}$ replacing $\widehat{\nu}_k(\widehat{J}_k)\delta_{\widehat{J}_k}$. 
% projected onto $\mathcal R( \widehat{\mathcal T}_{k+1}, \widehat{\Sigma}_0^{(k+1)}, \ldots, \widehat{\Sigma}_{k+1}^{(k+1)})$.
\end{itemize}

\end{algorithm}

%As for Algorithm \ref{twocolourmass}, we can rephrase the last step so as to be explicit about the representation in $l^1(\mathbb{N}_0^2)$. The main 
%additional complication compared to rephrasing Algorithm \ref{twocolourmass} is that attaching subtrees previously rooted at $\widehat{J}_k$ on the new string corresponds
%do a shift of the entire subtree in the coordinate direction of the new string. This is a purely notational problem, and we skip the details. 

\begin{remark} \label{pidi} Note that the scaling factor $(C^{(i)})^{-1}:=(P^{(i)})^{\beta} (D^{(i)})^{\beta/(1-\beta)}$ can be rewritten as 
$$(C^{(i)})^{-1}=\lim_{n \rightarrow \infty} \left( P^{(i)}- \sum_{j\in[n]} P_j^{(i)}\right)^{\beta}(1-\beta)^{-\beta}n^{\beta^2/(1-\beta)} =\lim_{n \rightarrow \infty} \left( \sum_{j\geq n+1} P_j^{(i)}\right)^{\beta}(1-\beta)^{-\beta}n^{\beta^2/(1-\beta)},$$
or, alternatively, using \eqref{alphadiv}, as $ (C^{(i)})^{-1} = \lim \limits_{j \rightarrow \infty} \left(j \Gamma(\beta)\right)^{\beta/(1-\beta)}  \left(P_j^{(i)\downarrow}\right)^{\beta}.$
\end{remark}

The following result follows directly from the construction in Algorithm \ref{depstruc1} and Proposition \ref{labelledbeadford}.

\begin{prop} In the setting of Algorithm \ref{depstruc1}, there exists a sequence of i.i.d. Ford CRTs $(\widehat{\mathcal F}_i, i \geq 1)$ of index $\beta'=\beta/(1-\beta)$ which is independent of the stable tree $(\mathcal T, \mu)$ such that, for all $i \geq 1$, 
$$\lim_{k \rightarrow \infty} \widehat{\mathcal R}_k^{(i)}=:\widehat{\mathcal R}^{(i)}= \left(C^{(i)}\right)^{-1} \widehat{\mathcal F}_i \quad \text{ a.s. w.r.t. to the Gromov-Hausdorff topology}.$$
\end{prop}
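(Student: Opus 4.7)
For each fixed $i \geq 1$, the plan is to identify the rescaled marked subtree growth process $\left(C^{(i)} \widehat{\mathcal R}_{k_m^{(i)}}^{(i)}, m \geq 1\right)$, equipped with the restriction of $\widehat{\nu}_{k_m^{(i)}}$ to $\widehat{\mathcal R}_{k_m^{(i)}}^{(i)}$, with a Ford tree growth process of index $\beta'=\beta/(1-\beta)$ in the labelled-strings-of-beads form given by Proposition \ref{labelledbeadford}. The key observation is that the measure $\widehat{\nu}_k$ restricted to $\widehat{\mathcal R}_k^{(i)}$ is built up recursively: at step $k_1^{(i)}-1$ the scale factor $\widehat{c}$ equals $1$ (since $M_{k_1^{(i)}-1}=0$), so the initial string on $\widehat{\mathcal R}_{k_1^{(i)}}^{(i)}$ carries exactly the mass measure $\nu_{k_1^{(i)}-1}$ of an (unlabelled) $(\beta',1-\beta')$-string of beads of total mass $1$; at each subsequent step $k_m^{(i)}-1$ with $m \geq 2$, an independent labelled $(\beta',1-\beta')$-string of beads is attached at $\widehat{J}_{k_m^{(i)}-1}$ with mass measure rescaled by $\widehat{c}=\widehat{\nu}_{k_m^{(i)}-1}(\widehat{J}_{k_m^{(i)}-1})$, replacing the atom of mass $\widehat{c}$ at the attachment point. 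This matches verbatim the attachment rule in the proof of Proposition \ref{labelledbeadford}.

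Next I would check that the metric rescaling produces a standard Ford tree after globally multiplying distances by $C^{(i)}$. At step $k_m^{(i)}-1$, the new string has its metric rescaled by $\widehat{c}^{\,\beta'}(P^{(i)})^{\beta}(D^{(i)})^{\beta'}=\widehat{c}^{\,\beta'}(C^{(i)})^{-1}$; after multiplication by $C^{(i)}$ this becomes $\widehat{c}^{\,\beta'}$, exactly the Ford rescaling rule. The initial string, carrying extra metric factor $(C^{(i)})^{-1}$, becomes a standard labelled $(\beta',1-\beta')$-string of beads upon the same global rescaling, serving as the Ford CRT's $\mathcal{F}_1$. Hence the rescaled process satisfies the dynamics of Proposition \ref{labelledbeadford} driven by the independent labelled strings $\widehat{\xi}_{k_m^{(i)}-1}$, $m \geq 1$, provided the choice of attachment point $\widehat{J}_{k_m^{(i)}-1}$ within $\widehat{\mathcal R}_{k_m^{(i)}-1}^{(i)}$ coincides in distribution with the label-matching selection used there.

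The main obstacle is precisely this matching of the attachment-point selection rule. Algorithm \ref{depstruc1} picks $\widehat{J}_k$ as the projection of $\widehat{\Sigma}_{k+1}^{(k)}$ onto the already-built reduced tree, so when $I_k=i$ and $m\geq 2$, the location of $\widehat{J}_k$ inside $\widehat{\mathcal R}_k^{(i)}$ is governed by which subtree of the stable tree $\mathcal T$ rooted at $v_i$ contains $\Sigma_{k+1}$. To handle this, I would combine Corollary \ref{GEMS} (giving the GEM$(1-\beta,-\beta)$ distribution of the normalised subtree masses) with Theorem \ref{masspart} and Remark \ref{pidi} (which shows that $(C^{(i)})^{-1}$ is exactly the $(1-\beta)$-diversity scaling needed to pass from PD$(1-\beta,-\beta)$ subtree data to a standard Ford-CRT spine). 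The ordering of the subtrees $\mathcal S_{k,j+1}$ by least label and their attachment to the atoms $X_{k,j}$ of $\widehat{\xi}_k$ in the same order then implements, via the coupling between the two $(\beta',1-\beta')$-CRPs, the label-matching rule of Proposition \ref{labelledbeadford} in distribution.

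Once this identification is established, the i.i.d.\ property of $(\widehat{\mathcal F}_i, i\geq 1)$ and their independence of $(\mathcal T,\mu)$ follow because the rescaled limits $\widehat{\mathcal F}_i=C^{(i)}\widehat{\mathcal R}^{(i)}$ depend only on the disjoint subfamilies $\{\widehat{\xi}_{k_m^{(i)}-1}\colon m\geq 1\}$, $i\geq 1$, which are themselves i.i.d.\ and independent of $(\mathcal T,\mu)$ by construction; the stable-tree dependence entered only through the metric factors $(C^{(i)})^{-1}$, which have been cancelled. Almost-sure convergence in the Gromov--Hausdorff topology then follows from the corresponding convergence of Ford trees to the Ford CRT established in \cite{1,2}, noting that the in-between steps $k \notin \{k_m^{(i)}\colon m\geq 1\}$ leave $\widehat{\mathcal R}_k^{(i)}$ unchanged.
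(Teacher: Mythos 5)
Your first two paragraphs are sound and reproduce what the paper itself treats as the content of this statement (which it asserts ``follows directly from the construction and Proposition \ref{labelledbeadford}''): the mass measure $\widehat{\nu}$ is built by exactly the attachment rule of Proposition \ref{labelledbeadford}, and the metric factors telescope, since each new string carries the factor $\widehat{c}^{\,\beta'}(P^{(i)})^{\beta}(D^{(i)})^{\beta'}=\widehat{c}^{\,\beta'}(C^{(i)})^{-1}$, so that after global multiplication by $C^{(i)}$ one recovers the Ford rescaling $\widehat{c}^{\,\beta'}$.

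The gap is in how you handle the attachment-point selection and, as a consequence, in the independence and i.i.d.\ claims, which are the real content of the proposition. In paragraph three you concede that the location of $\widehat{J}_{k}$ inside $\widehat{\mathcal R}_k^{(i)}$ is dictated by which subtree of the stable tree contains the new leaf, and you propose to show only that this selection agrees \emph{in distribution} with the label-matching rule, citing Corollary \ref{GEMS}, Theorem \ref{masspart} and Remark \ref{pidi}. Those results give the ${\rm GEM}(1-\beta,-\beta)$ subtree masses and identify $C^{(i)}$ as a diversity, but they do not deliver the joint, step-by-step law of the atom selections together with the $\widehat{\nu}$-masses used for rescaling; in the paper that kind of matching is exactly what the coagulation--fragmentation duality of Lemmas \ref{coag1}--\ref{coag2} is introduced for (in the proof of Proposition \ref{algsame}), and your sketch cannot substitute for it. Worse, paragraph four then deduces independence from $(\mathcal T,\mu)$ and i.i.d.-ness across $i$ from the assertion that $C^{(i)}\widehat{\mathcal R}^{(i)}$ ``depends only on'' the disjoint families $\{\widehat{\xi}_{k_m^{(i)}-1}\colon m\ge 1\}$. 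This contradicts your own paragraph three: pathwise, the insertion times $k_m^{(i)}$, the atom at which each insertion occurs, and hence which string lands where, are all functions of the stable tree and its sampled leaves, so the limit is not a measurable function of the strings alone, and an ``in distribution only'' identification of the selection rule does not yield the claimed independence. What is needed (and what makes the paper's ``direct'' argument work) is an argument conditional on $(\mathcal T,\mu,(\Sigma_n,n\ge 0))$: given these data the strings fed into the $i$th component remain i.i.d.\ with unchanged law, every atom of the recursive structure is a.s.\ eventually treated, and the limiting glued object does not depend on the order in which atoms are treated, so the conditional law of $C^{(i)}\widehat{\mathcal R}^{(i)}$ is the (unconditional) Ford CRT law of Proposition \ref{labelledbeadford} and \cite{1,2}, independently over $i$; independence of the stable tree and the i.i.d.\ property then follow. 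As written, your proposal establishes the scaling identification but not the independence statement, so the proof is incomplete.
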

 
We will now prove that the sequence of reduced $\infty$-marked $\mathbb R$-trees constructed in Algorithm \ref{depstruc1} and the sequence of trees constructed in Algorithm \ref{twocolourmass} are equal in distribution.

\begin{prop} \label{algsame} Let $(\widehat{\mathcal T}_k, (\widehat{\mathcal R}_k^{(i)}, i \geq 1), \widehat{\mu}_k, (\widehat{\Sigma}_n^{(k)},n\ge 0), k \geq 0)$ and 
  $(\mathcal T_k^*, (\mathcal R_k^{(i)}, i \geq 1), \mu_k^*, k \geq 0)$ be as in %the sequences of weighted $\infty$-marked $\mathbb R$-trees from 
  Algorithms \ref{depstruc1} and \ref{twocolourmass}, respectively,
  $\widehat{\pi}_k\colon\widehat{\mathcal T}_k\rightarrow \mathcal R(\widehat{\mathcal T}_k, \widehat{\Sigma}_0^{(k)}, \ldots, \widehat{\Sigma}_k^{(k)})$ the
  projection as in \eqref{pm}. Then, 
  \begin{equation} 
    \left(\mathcal R\left(\widehat{\mathcal T}_k, \widehat{\Sigma}_0^{(k)}, \ldots, \widehat{\Sigma}_k^{(k)}\right),\left(\widehat{\mathcal R}_k^{(i)}, i \geq 1\right), 
		  (\widehat{\pi}_k)_*\widehat{\mu}_k, k \geq 0 \right) 
    \,{\buildrel d\over =}\,  \left({\mathcal T}_k^*,\left({\mathcal R}_k^{(i)}, i \geq 1\right), \mu_k^*, k \geq 0 \right). \label{algsameq} 
  \end{equation}
  Furthermore, $(P_j^{(x)}, j \geq 1)$ with $P_j^{(x)}:=\widehat{\mu}_k\left(\mathcal S_j^{(x)}\right)/\sum_{\ell \geq 1} \widehat{\mu}_k\left(\mathcal S_\ell^{(x)}\right)$, $j\ge 1$, are i.i.d. 
  ${\rm GEM}(1-\beta, -\beta)$ for all $x \in \mathcal R(\widehat{\mathcal T}_k, \widehat{\Sigma}_0^{(k)}, \ldots, \widehat{\Sigma}_k^{(k)})$ with 
  $(\widehat{\pi}_k)_*\widehat{\mu}_k(x) >0$, where $(\mathcal S_j^{(x)}, j \geq 1)$ are the connected components of 
  $\widehat{\mathcal T}_k \setminus  \mathcal R(\widehat{\mathcal T}_k, \widehat{\Sigma}_0^{(k)}, \ldots, \widehat{\Sigma}_k^{(k)})$ rooted at 
  $x \in  \mathcal R(\widehat{\mathcal T}_k, \widehat{\Sigma}_0^{(k)}, \ldots, \widehat{\Sigma}_k^{(k)})$, ranked in increasing order of least leaf labels.
\end{prop}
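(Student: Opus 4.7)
\medskip
\noindent\textbf{Proof plan.} The proof will proceed by induction on $k$. For the base case $k=0$, the reduced tree $\mathcal R(\widehat{\mathcal T}_0, \widehat{\Sigma}_0^{(0)})=[[\rho,\Sigma_0]]$ equipped with the projected mass measure $(\widehat{\pi}_0)_*\mu$ is, by Proposition \ref{betastring}, a $(\beta,\beta)$-string of beads, which coincides with the initial distribution of $(\mathcal T_0^*,\mu_0^*)$ in Algorithm \ref{twocolourmass}; all marked components equal $\{\rho\}$ on both sides. The GEM$(1-\beta,-\beta)$ claim is immediate from Corollary \ref{GEM} applied to the sequence of subtrees of $\mathcal T\setminus[[\rho,\Sigma_0]]$ rooted on the spine.

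For the inductive step, I would assume \eqref{algsameq} at level $k$ together with the GEM property and examine the transition to $k+1$. The attachment point in Algorithm \ref{depstruc1} is $\widehat{J}_k=\widehat{\pi}_k(\widehat{\Sigma}_{k+1}^{(k)})$; since $\widehat{\Sigma}_{k+1}^{(k)}$ is, conditionally on the structure at stage $k$, an independent $\mu$-sample from the appropriate subtree, its projection onto the reduced tree is sampled from the projected mass $(\widehat{\pi}_k)_*\widehat{\mu}_k$. By the induction hypothesis this agrees in law with the mass-sampling rule of Algorithm \ref{twocolourmass} (as reformulated in Section \ref{stgp}), so selection of the edge $E_k^*$ and the atom/branch-point type match. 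The remaining task is to identify the law of the new branch $[[\widehat{J}_k,\widehat{\Sigma}_{k+1}^{(k+1)}]]$ carrying its projected mass as a $\beta$-mixed string of beads appropriately split into a marked and unmarked portion.

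The main obstacle is this last identification, which I would handle by splitting into two cases according to whether $\widehat{J}_k$ falls in a marked component or not. If $\widehat{J}_k$ lies outside every $\widehat{\mathcal R}_k^{(i)}$, then the relevant subtree of $\mathcal T$ above $\widehat{J}_k$ is (after rescaling) a stable tree by Theorem \ref{masspart}(iii), and its spine towards the first new leaf together with projected subtree masses is a $(\beta,\beta)$-string by Proposition \ref{betastring}; the labelled string $\widehat{\xi}_k$ from Proposition \ref{labelledbeadford} then interlaces, via the coarse/fine spinal decomposition of Theorem \ref{masspart}(i)-(ii) and the stick-breaking representation \eqref{v1v2}, to produce precisely the two-block structure (marked $(\beta,1-2\beta)$ part plus unmarked $(\beta,\beta)$ part) of the $\beta$-mixed string, with Beta$(1-2\beta,\beta)$ split. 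If instead $\widehat{J}_k$ lies in some $\widehat{\mathcal R}_k^{(i)}$, the scaling factor $\widehat{c}=\widehat{\nu}_k(\widehat{J}_k)$ together with the rescaling by $(P^{(I_k)})^\beta(D^{(I_k)})^{\beta'}$ is designed so that the string $\widehat{\xi}_k$ plays the role of the next Ford-tree insertion (Proposition \ref{labelledbeadford}), and Proposition \ref{cointoss} applied to the ambient $(\beta,1-2\beta)$-string recovers the coin-tossing rule in step 2 of Algorithm \ref{twocolourmass}. Proposition \ref{DIRML} (with the conversion $\beta'=\beta/(1-\beta)$) is what reconciles the $(\beta',1-\beta')$ parameters natural to Ford trees with the $(\beta,1-2\beta)$ parameters appearing in $\beta$-mixed strings.

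Finally, the GEM$(1-\beta,-\beta)$ claim at stage $k+1$ follows by consistency: the subtrees of $\mathcal T$ rooted at points of the reduced tree at stage $k+1$ are a refinement of those at stage $k$ obtained either by appending a fresh subtree of $\mathcal T$ rooted at the new leaf (whose first splitting gives a GEM vector by Corollary \ref{GEM} applied to the rescaled subtree in Theorem \ref{masspart}(iii)) or by reorganising an existing GEM family via an independent size-biased pick. Assembling these steps yields the distributional identity \eqref{algsameq} at level $k+1$ and completes the induction. \qed
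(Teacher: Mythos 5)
Your skeleton is reasonable (base case via Proposition \ref{betastring} and Corollary \ref{GEM} matches the paper), but the inductive step hides the actual mathematical content of the result. The decisive point is your claim that the labelled $(\beta',1-\beta')$-string $\widehat{\xi}_k$ ``interlaces'' with the ${\rm GEM}(1-\beta,\cdot)$ subtree masses at the branch point ``to produce precisely'' the marked $(\beta,1-2\beta)$ part and unmarked $(\beta,\beta)$ part of a $\beta$-mixed string with an independent ${\rm Beta}(1-2\beta,\beta)$ split. This does not follow from Theorem \ref{masspart}, the stick-breaking representation \eqref{v1v2}, Proposition \ref{cointoss} or Proposition \ref{DIRML}: what is needed is that coagulating a ${\rm GEM}(1-\beta,\theta)$ mass sequence according to the label sets of an independent labelled $(\beta',\cdot)$-string of beads (and, at the level of whole Ford edges, according to a Dirichlet partition of labels) produces exactly a $(\beta,\cdot)$-string of beads (respectively a Dirichlet mass split), independent of the ${\rm GEM}$ fragments inside each atom. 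This is a strong coagulation--fragmentation duality; the paper isolates it as Lemmas \ref{coag1} and \ref{coag2} and proves it by a combinatorial Chinese-restaurant computation, and the proof of Proposition \ref{algsame} is essentially an assembly of these two lemmas against the characterisation in Proposition \ref{starstrings}. Without such a lemma both of your cases are assertions rather than arguments; in the marked case in particular you must identify the law of the projected masses deposited along the new marked edge (a coagulation of part of the branch point's ${\rm GEM}$ masses by $\widehat{\xi}_k$), not merely recover the coin-tossing selection rule.

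Two further points. First, a transition-step induction needs a stronger hypothesis than \eqref{algsameq} plus the ${\rm GEM}$ statement: Algorithm \ref{depstruc1} redistributes previously attached subtrees along each new string, so to control step $k\to k+1$ you must also carry, conditionally on the stage-$k$ object, the residual label structure of the Ford strings within every atom, the i.i.d.\ rescaled-stable-tree structure of the hanging subtrees, and the joint independence of all Dirichlet splits and strings across edges; the paper avoids this bookkeeping by using consistency to reduce to a fixed-$k$ statement and verifying the characterisation of Proposition \ref{starstrings} directly at each $k$. Second, matching the scaling $\widehat{c}^{\beta'}(P^{(I_k)})^{\beta}(D^{(I_k)})^{\beta'}$ of Algorithm \ref{depstruc1} with the scaling $(\mu_k^*(J_k^*))^{\beta}$ of Algorithm \ref{twocolourmass} requires the decomposition $(P^{(i)})^{\beta}(D^{(i)})^{\beta'}=(D_{m_i}^{(i)})^{\beta'}\big(P^{(i)}_{(m_i)}\big)^{\beta}$ of the $(1-\beta)$-diversity into the part attached to the first $m_i$ subtrees and the remaining mass, plus the independence of these factors; saying the scaling ``is designed so that'' the string plays the role of the next Ford insertion does not discharge this step.
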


The following is a direct consequence of Proposition \ref{algsame}.

\begin{corollary} \label{algsamecor} In Algorithm \ref{twocolourmass}, the tree growth processes 
  $ \left(C^{(i)} \mathcal R_{k_m^{(i)}}^{(i)}, m \geq 1 \right)$, $i \geq 1$, are i.i.d.\ Ford tree growth processes of index $\beta'=\beta/(1-\beta)$ independent of 
  the stable tree $({\mathcal T}, {\mu})=\lim_{k \rightarrow \infty} (\widetilde{\mathcal T}_k, \widetilde{\mu}_k)$ of Corollary \ref{convstable}, where the
  scaling factors $(C^{(i)})^{-1}=(P^{(i)})^{\beta} (D^{(i)})^{\beta/(1-\beta)}$, $i \geq 1$, are as in Remark \ref{pidi}.  
\end{corollary}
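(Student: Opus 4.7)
The plan is to derive the corollary as an immediate consequence of Proposition \ref{algsame} combined with the labelled-string-of-beads characterisation of Ford tree growth in Proposition \ref{labelledbeadford}. By the distributional identity \eqref{algsameq}, it is enough to prove the claim in the Algorithm \ref{depstruc1} setting, where the input stable tree $(\mathcal T,\mu)$ is already present at step $0$ and (by Corollary \ref{convstable}) may be identified with the limit of the contracted trees $\widetilde{\mathcal T}_k$ on both sides. So I would work entirely with the sequences $(\widehat{\mathcal R}_k^{(i)}, k\ge 0)$ and the strings of beads $\widehat{\xi}_k$ attached along the way in Algorithm \ref{depstruc1}.

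Fix $i\ge 1$. The update times of the $i$-th marked component are $(k_m^{(i)}-1, m\ge 1)=\{k\ge 0\colon I_k=i\}$. At the $m$-th such step Algorithm \ref{depstruc1} attaches an independent labelled $(\beta',1-\beta')$-string of beads $\widehat{\xi}_{k_m^{(i)}-1}$ with mass measure rescaled by $\widehat{c}_m:=\widehat{\nu}_{k_m^{(i)}-1}(\widehat{J}_{k_m^{(i)}-1})$ (with $\widehat{c}_1=1$) and metric rescaled by $\widehat{c}_m^{\beta'}(P^{(i)})^{\beta}(D^{(i)})^{\beta'}$. Multiplying all distances by $C^{(i)}=(P^{(i)})^{-\beta}(D^{(i)})^{-\beta'}$ cancels the stable-tree-dependent factor in every bead and leaves the $m$-th attached string with metric scale $\widehat{c}_m^{\beta'}$ and mass scale $\widehat{c}_m$. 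This is precisely the update rule of Proposition \ref{labelledbeadford} applied to the input strings $\widehat{\xi}_{k_m^{(i)}-1}$, $m\ge 1$; hence $(C^{(i)}\widehat{\mathcal R}_{k_m^{(i)}}^{(i)}, m\ge 1)$ is a Ford tree growth process of index $\beta'$.

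It remains to establish joint independence. The strings $(\widehat{\xi}_k, k\ge 0)$ are i.i.d.\ and independent of $(\mathcal T,\mu)$; since the sets $\{k\ge 0\colon I_k=i\}$ partition $\mathbb N_0$, the subfamilies $(\widehat{\xi}_{k_m^{(i)}-1}, m\ge 1)$, $i\ge 1$, are i.i.d.\ sequences of i.i.d.\ labelled $(\beta',1-\beta')$-strings of beads, still independent of $(\mathcal T,\mu)$. Since the previous paragraph shows that $(C^{(i)}\widehat{\mathcal R}_{k_m^{(i)}}^{(i)}, m\ge 1)$ is a deterministic functional of the $i$-th such subfamily together with the constant $(P^{(i)}, D^{(i)})$, the Ford growth processes obtained are i.i.d.\ across $i$ and independent of $(\mathcal T,\mu)$ after conditioning on the latter. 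The piece of bookkeeping I expect to be most delicate is checking that $\widehat{c}_m$ coincides with the $\nu_m$-mass at the insertion point in the Ford growth rule of Proposition \ref{labelledbeadford}; this reduces to observing that at each step the atom $\widehat{c}_m\delta_{\widehat{J}_{k_m^{(i)}-1}}$ is replaced by a measure of the same total mass $\widehat{c}_m$ distributed as the rescaled string, so that the restriction of $\widehat{\nu}$ to $\widehat{\mathcal R}_k^{(i)}$ remains a probability measure evolving exactly as the Ford mass measure $\nu_m$.
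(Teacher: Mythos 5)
Your proposal follows the paper's own route: the corollary is likewise obtained there by combining Proposition \ref{algsame} (to transfer the statement to the setting of Algorithm \ref{depstruc1}, where the stable tree is the input and coincides with the limit of the contracted trees) with the construction via the i.i.d.\ labelled strings $\widehat{\xi}_k$ and Proposition \ref{labelledbeadford}, the factor $(P^{(i)})^{\beta}(D^{(i)})^{\beta'}$ being cancelled by $C^{(i)}$ and the allocation indices $I_k$ being measurable with respect to the stable tree and its leaf sequence, so the disjoint subfamilies of strings give i.i.d.\ Ford growth processes independent of $(\mathcal T,\mu)$. Your check that $\widehat{c}_m$ agrees with the Ford mass $\nu_m$ at the insertion point is exactly the bookkeeping the paper leaves implicit, and it is correct.
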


To prove Proposition \ref{algsame}, we will need a strong form of coagulation-fragmentation duality. 

\begin{lemma} \label{coag1} Let $P=(P_i,i\ge1)\sim{\rm GEM}(\alpha,\theta)$ with $\alpha$-diversity $S$, and 
  $\widehat{\xi}=([0,\widehat{K}],\widehat{\mu},\widehat{\cP}\!=\!\sum_{j\ge 1}\!\delta_{(X_j,\widehat{\Lambda}_j)})$ an 
  independent labelled $(\beta',\theta/\alpha)$-string of beads. Use $([0,\widehat{K}],\widehat{\mu},\widehat{\cP})$ to coagulate $(P_i,i\ge1)$ into 
  $\mu(\{X_j\}):=\sum_{i\in\widehat{\Lambda}_{j}}P_i$, with relative part sizes $Q_m^{(j)}:=P_{\pi_j(m)}/\mu(\{X_j\})$, $m\ge 1$, labelled by the increasing bijection
  $\pi_j\colon\bN\rightarrow\widehat{\Lambda}_j$, $j\ge 1$. Then 
  \begin{itemize}\item the string of beads $([0,S^{\beta'}\widehat{K}],\mu)$ is an $(\alpha\beta',\theta)$-string of beads,
	\item the sequence of fragments $(Q_m^{(j)},m\ge 1)$ has a ${\rm GEM}(\alpha,-\alpha\beta')$ distribution, for each $j\ge 1$,
    \item the string $([0,S^{\beta'}\widehat{K}],\mu)$ and the fragments $(Q_m^{(j)},m\ge 1)$ of $\mu(\{X_j\})$, $j\ge 1$, are independent.
  \end{itemize} 
\end{lemma}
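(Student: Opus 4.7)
The lemma is a ``stringified'' analogue of the classical coagulation-fragmentation duality for Poisson-Dirichlet mass partitions. The plan is to combine (i) the PD coag-frag duality, which controls the ranked mass statistics; (ii) the size-biased nature of the CRP-induced labelling, which converts ranked statements into GEM statements; (iii) a moment calculation via \eqref{mlmoms} for the length; and (iv) a nested-CRP argument for the regenerative positional structure of $\mu$ on $[0,S^{\beta'}\widehat{K}]$.

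For (i)--(ii), I first pass to ranked masses: $(P_i)^\downarrow\sim{\rm PD}(\alpha,\theta)$, the ranked atoms of $\widehat{\xi}$ are ${\rm PD}(\beta',\theta/\alpha)$, and the partition $(\widehat{\Lambda}_j,j\ge 1)$ of $\bN$ is exchangeable with those frequencies and independent of $(P_i)$. The PD coag-frag duality then produces a ranked coagulated partition $\sim{\rm PD}(\alpha\beta',\theta)$ with conditionally independent ${\rm PD}(\alpha,-\alpha\beta')$ within-block fragments, the two structures being independent. Since $(\widehat{\Lambda}_j)$ is enumerated in least-label order (which equals size-biased order by the standard CRP convention) and the composite blocks inherit this enumeration (the least label of the $j$th composite block being exactly the least label of $\widehat{\Lambda}_j$), the sequence $(\mu(\{X_j\}),j\ge 1)$ is the size-biased permutation of the coagulated PD masses, hence $\sim{\rm GEM}(\alpha\beta',\theta)$. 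Analogously, $\pi_j$ lists $\widehat{\Lambda}_j$ in increasing (least-label) order within that block, giving $(Q_m^{(j)},m\ge 1)\sim{\rm GEM}(\alpha,-\alpha\beta')$ for each $j$, independently across $j$.

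For (iii), independence of $S\sim{\rm ML}(\alpha,\theta)$ and $\widehat{K}\sim{\rm ML}(\beta',\theta/\alpha)$ combined with \eqref{mlmoms} lets me compute $\bE[(S^{\beta'}\widehat{K})^p]=\bE[S^{p\beta'}]\bE[\widehat{K}^p]$; the gamma factors telescope to match the ${\rm ML}(\alpha\beta',\theta)$ moments, and since \eqref{mlmoms} is moment-determining one obtains $S^{\beta'}\widehat{K}\sim{\rm ML}(\alpha\beta',\theta)$. For (iv), I set up a nested CRP: run an ordered $(\alpha,\theta)$-CRP realizing $(P_i)$, whose scaling limit is an $(\alpha,\theta)$-regenerative partition of $[0,S]$ by Lemma \ref{cvgcs}; then group its tables by an independent ordered $(\beta',\theta/\alpha)$-CRP, forming super-tables whose composite spatial structure, after rescaling by $S^{\beta'}$ to account for the $\beta'$-self-similar action of the outer CRP on the inner interval of length $S$, is an $(\alpha\beta',\theta)$-regenerative partition of $[0,S^{\beta'}\widehat{K}]$ with atoms at $S^{\beta'}X_j$. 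The hard part will be this nested-CRP identity itself: verifying that composing an $(\alpha,\theta)$-CRP with an outer $(\beta',\theta/\alpha)$-CRP on its tables is distributionally equivalent to an $(\alpha\beta',\theta)$-CRP directly on the original customers, with super-table positions given precisely by $S^{\beta'}X_j$. This should follow from comparing the two local times via Lemma \ref{cvgcs} applied at both levels, combined with a compatible composition identity for the subordinators $\Phi_{\alpha,\theta}$ and $\Phi_{\beta',\theta/\alpha}$ behind them.
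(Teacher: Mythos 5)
Your ranked-duality step, the least-label/size-biased bookkeeping, and the moment computation for $S^{\beta'}\widehat{K}$ (which indeed telescopes via \eqref{mlmoms} to the ${\rm ML}(\alpha\beta',\theta)$ moments) are all fine, but they only deliver the \emph{unordered} content of the lemma: the ranked coagulated masses, the fragment GEMs, and the one-dimensional law of the length. The first and third bullets assert strictly more, namely that the coagulated masses, sitting at the positions inherited from $\widehat{\xi}$ on the stretched interval $[0,S^{\beta'}\widehat{K}]$, form an $(\alpha\beta',\theta)$-\emph{string of beads} (i.e.\ the full regenerative arrangement, not just the right PD ranked masses and the right length), and that this arranged string is independent of all the fragment sequences jointly. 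This is exactly the part you defer to the ``nested-CRP identity'', and as written that identity is not proved but assumed: it is essentially equivalent to the statement being established. Your suggested route is also not as routine as claimed. Lemma \ref{cvgcs} applied at the outer level gives a $(\beta',\theta/\alpha)$-regenerative arrangement in which super-tables are sized by \emph{inner-table counts}, whereas the target $(\alpha\beta',\theta)$-regenerative structure must be regenerative with respect to \emph{aggregated customer masses}; passing from one to the other is a genuine change of sampling measure along the interval, and it is not obtained by rescaling the interval by $S^{\beta'}$ nor by a naive Bochner-type composition of $\Phi_{\alpha,\theta}$ and $\Phi_{\beta',\theta/\alpha}$. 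Likewise, the independence in the third bullet involves the positions and the diversity $S$, so it does not follow from the unordered Pitman duality alone.

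For contrast, the paper proves the enriched statement directly at the discrete level: for each fixed $n$ it computes the probability that an \emph{ordered} $(\beta',\theta/\alpha)$-coagulation groups the tables of an unordered $(\alpha,\theta)$-CRP partition of $[n]$ into a given ordered configuration $(n_{1,1},\ldots,n_{1,k_1}),\ldots,(n_{m,1},\ldots,n_{m,k_m})$, and the probability that an unordered $(\alpha,-\alpha\beta')$-fragmentation of an \emph{ordered} $(\alpha\beta',\theta)$-CRP partition of $[n]$ yields the same configuration; elementary cancellations show the two expressions coincide for every $n$, and consistency in $n$ then identifies the limiting objects, so the regenerative arrangement, the length $S^{\beta'}\widehat{K}$ as a local time/diversity (via \eqref{alphadiv} and Lemma \ref{cvgcs}), the GEM orders of fragments, and the joint independence all come out of one identity. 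To complete your proof you would either need to carry out an analogous exact computation (or an equivalent decrement-matrix/regeneration argument for the coarse composition of customers), or import, with proof, a composition result for regenerative composition structures showing that nesting an ordered $(\beta',\theta/\alpha)$-structure over the tables of an $(\alpha,\theta)$-CRP yields an ordered $(\alpha\beta',\theta)$-structure over the customers together with the required conditional independence; as it stands, that is the missing core of the argument.
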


\begin{proof} This is an enriched instance of coagulation-fragmentation duality, see e.g. \cite[Section 5.5]{5}. We use a combinatorial approach, with notation
  $(x)_{n\uparrow\gamma}=x(x+\gamma)\cdots(x+(n-1)\gamma)$ and using known distributions of (ordered and unordered) Chinese restaurant partitions \cite{5,1}. 
  Fix $n\ge 1$. 

  What is the probability that an ordered $(\beta',\theta/\alpha)$-coagulation groups the tables of an unordered $(\alpha,\theta)$-Chinese restaurant partition of $[n]$
  into $m$ groups $(n_{1,1},\ldots,n_{1,k_1}),\ldots,(n_{m,1},\ldots,n_{m,k_m})$? If we denote by $\ell$ the number of new right-most groups opened, and $(\gamma)_{j\uparrow\delta}:=\gamma(\gamma+\delta)\cdots(\gamma+(j-1)\delta)$, then it is
  $$\frac{(\theta+\alpha)_{k_1+\cdots+k_m-1\uparrow\alpha}\prod_{i\in[m]}\prod_{j\in[k_i]}(1-\alpha)_{n_{ij}\uparrow 1}}
         {(1+\theta)_{n-1\uparrow 1}}
    \frac{(\beta')^{m-\ell-1}(\theta/\alpha)^\ell\prod_{i\in[m]}(1-\beta')_{k_i-1\uparrow 1}}
         {(1+\theta/\alpha)_{k_1+\cdots+k_m-1\uparrow1}}.$$
  What is the probability that an unordered $(\alpha,-\alpha\beta')$-fragmentation of an ordered $(\alpha\beta',\theta)$-Chinese restaurant partition of $[n]$ yields 
  $m$ tables further split into $(n_{1,1},\ldots,n_{1,k_1}),\ldots,(n_{m,1},\ldots,n_{m,k_m})$? If we denote by $\ell$ the number of new right-most tables, then it is 
  $$\frac{(\alpha\beta')^{m-\ell-1}\theta^\ell\prod_{i\in[m]}(1-\alpha\beta')_{n_{i,1}+\cdots+n_{i,k_i}-1\uparrow 1}}
	     {(1+\theta)_{n-1\uparrow 1}}
	\prod_{i\in[m]}\frac{(\alpha-\alpha\beta')_{k_i-1\uparrow\alpha}\prod_{j\in[k_i]}(1-\alpha)_{n_{ij}-1\uparrow 1}}
	                  {(1-\alpha\beta')_{n_{i,1}+\cdots+n_{i,k_i}-1\uparrow 1}}.$$
  Elementary cancellations show that these two expressions are equal for all $n\ge 1$. Since these structured partitions can be constructed in a consistent way, as 
  $n\ge 1$ varies, the statement of the lemma merely records different aspects of the limiting arrangement, either asymptotic frequencies in size-biased order of least
  labels coagulated by a labelled strings of beads, or respectively a string of beads with blocks further fragmented, with fragments in size-biased order of least labels.
\end{proof}

The following result can be proved using the same method.
\begin{lemma} \label{coag2}
  Let $P=(P_i,i\ge1)\sim{\rm GEM}(\alpha,\theta)$ and, for $\alpha \in (0,1), \theta >0$, let  $\widehat{\Lambda}=(\widehat{\Lambda}_1,\ldots,\widehat{\Lambda}_r)$ be an 
  independent ${\rm Dirichlet}(\theta_1/\alpha,\ldots,\theta_r/\alpha)$ partition of $\mathbb{N}$ with $\sum_{i\in[r]}\theta_i=\theta$. Use 
  $(\widehat{\Lambda}_1,\ldots,\widehat{\Lambda}_r)$ to coagulate $(P_i,i\ge1)$ into $R_j:=\sum_{i\in\widehat{\Lambda}_{j}}P_i$, with
  relative part sizes $Q_m^{(j)}:=P_{\pi_j(m)}/R_j$, $m\ge 1$, labelled by the increasing bijection $\pi_j\colon\bN\rightarrow\widehat{\Lambda}_j$, $j\in[r]$.  
  Then 
  \begin{itemize}\item the vector $(R_1,\ldots,R_r)$ of aggregate masses has a $ {\rm Dirichlet}(\theta_1,\ldots,\theta_r)$ distribution,
    \item the sequence of fragments $(Q_m^{(j)},m\ge 1)$ has a ${\rm GEM}(\alpha,\theta_j)$ distribution, for each $j\in[r]$,
    \item the vector $(R_1,\ldots,R_r)$ and the fragments $(Q_m^{(j)},m\ge 1)$ of $R_j$, $j \in [r]$, are independent.  
  \end{itemize} 
\end{lemma}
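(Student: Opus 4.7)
Following the combinatorial method used in the proof of Lemma \ref{coag1}, I would realise the statement at finite $n$ via a coupled Chinese restaurant construction and then pass to the limit. Specifically, for each $n\ge 1$, take an unordered $(\alpha,\theta)$-Chinese restaurant partition $\Pi_n$ of $[n]$ with blocks $\Pi_{n,1},\ldots,\Pi_{n,K_n}$ listed in order of least labels, and, independently of $\Pi_n$, assign to each block label $i\ge 1$ one of $r$ colours via a P\'olya urn with initial weights $\theta_1/\alpha,\ldots,\theta_r/\alpha$. By the standard equivalence between P\'olya-urn sampling and exchangeable Dirichlet--multinomial sampling, this realises the restriction to $[K_n]$ of the Dirichlet$(\theta_1/\alpha,\ldots,\theta_r/\alpha)$ partition $\widehat{\Lambda}$ of $\mathbb{N}$. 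As $n\to\infty$, the relative block frequencies of $\Pi_n$ in order of least labels converge a.s.\ to $(P_i,i\ge 1)\sim{\rm GEM}(\alpha,\theta)$, so the aggregate customer frequency in colour $j$ converges to $R_j$ and the within-colour relative frequencies to $(Q_m^{(j)},m\ge 1)$.

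The core step is to verify the following combinatorial identity: the joint law of the coloured partition above coincides with the law produced by the \emph{dual} construction in which (i) each of the $n$ customers is first coloured via a P\'olya urn with initial weights $\theta_1,\ldots,\theta_r$, and (ii) within each resulting colour class of size $n_j^*$ one runs an independent $(\alpha,\theta_j)$-Chinese restaurant partition. For any profile $(n_{j,1},\ldots,n_{j,k_j})_{j\in[r]}$ of block sizes grouped by colour, with $m=\sum_j k_j$ and $n=\sum_{j,i}n_{j,i}$, I would compute both joint probabilities and check that each equals
\[ \frac{\alpha^m\,\prod_{j=1}^r(\theta_j/\alpha)_{k_j\uparrow 1}\,\prod_{j,i}(1-\alpha)_{n_{j,i}-1\uparrow 1}}{(\theta)_{n\uparrow 1}}, \]
using the Pochhammer relations $(\theta+\alpha)_{m-1\uparrow\alpha}=\alpha^{m-1}(\theta/\alpha+1)_{m-1\uparrow 1}$ and $\theta_j(\theta_j+\alpha)_{k_j-1\uparrow\alpha}=\alpha^{k_j}(\theta_j/\alpha)_{k_j\uparrow 1}$, exactly as in the cancellation at the end of the proof of Lemma \ref{coag1}.

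Once the finite-$n$ identity is in hand, the three assertions fall out of the dual description by standard limits: the customer colouring with weights $(\theta_1,\ldots,\theta_r)$ yields asymptotic colour frequencies $(R_1,\ldots,R_r)\sim\mathrm{Dirichlet}(\theta_1,\ldots,\theta_r)$; each $(\alpha,\theta_j)$-sub-CRP gives, in order of least labels, fragments $(Q_m^{(j)},m\ge 1)\sim{\rm GEM}(\alpha,\theta_j)$; and joint independence across colours and against the aggregates is built into the dual construction. The main obstacle is careful alignment of three orderings (blocks of $\Pi_n$ by least label in $[n]$, sub-CRP blocks in colour $j$ by least colour-$j$ label, and the increasing bijection $\pi_j\colon\mathbb N\to\widehat{\Lambda}_j$ ordering $\widehat{\Lambda}_j$), together with the elementary but error-prone Pochhammer bookkeeping; once these are set up, the identity is transparent and the conclusions follow.
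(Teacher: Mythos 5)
Your proposal is correct and follows essentially the same route as the paper: Lemma \ref{coag2} is proved there "by the same method" as Lemma \ref{coag1}, namely the finite-$n$ comparison of the coagulation description (unordered $(\alpha,\theta)$-CRP blocks coloured by a Dirichlet$(\theta_1/\alpha,\ldots,\theta_r/\alpha)$ scheme) with the dual fragmentation description (Dirichlet$(\theta_1,\ldots,\theta_r)$ colouring of customers followed by independent $(\alpha,\theta_j)$-CRPs), verified by Pochhammer cancellations and then passed to the limit via consistency. Your displayed common value is indeed what both joint probabilities reduce to, and your remark about aligning the least-label orderings is exactly the bookkeeping the paper's argument also relies on.
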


In the context of Algorithm \ref{depstruc1}, it is useful to adopt the following terminology. Consider a branch point of the reduced stable tree and the 
associated Ford tree. The (unordered) $(\alpha,\theta)$-Chinese restaurant behind $P$ partitions the total branch point mass into subtrees (unmarked tables) which 
carry leaf labels of the stable tree (unmarked customers). A transition $k\rightarrow k+1$ of the algorithm spreads the subtrees over a new string of beads of the Ford
tree. The ordered structures $\widehat{\xi}$ and $\widehat{\Lambda}$, respectively, partition the leaf labels of the Ford tree (marked customers) into marked tables 
(whose sizes are captured by $\widehat{\nu}$ for each marked component separately). The coagulation takes subtrees as marked customers and so coagulates those unmarked 
tables that are listed in the same marked table to form a partition of unmarked customers (leaves of the stable tree) into marked tables. 
The further partition into unmarked tables within each marked table is then a fragmentation of the unmarked customers (leaf labels of the stable tree).  

\begin{proof}[Proof of Proposition \ref{algsame}] As the families of weighted discrete $\infty$-marked $\mathbb R$-trees in \eqref{algsameq}, suitably represented, are consistent and at step $k$ uniquely determine the trees at steps $0, \ldots, k-1$, it suffices to show that for fixed $k \geq 0$ 
\begin{equation}  \left(\mathcal R\left(\widehat{\mathcal T}_k, \widehat{\Sigma}_0^{(k)}, \ldots, \widehat{\Sigma}_k^{(k)}\right),\left(\widehat{\mathcal R}_k^{(i)}, i \geq 1\right), (\widehat{\pi}_k)_*\widehat{\mu}_k \right) \,{\buildrel d\over =}\,  \left({\mathcal T}_k^*,\left({\mathcal R}_k^{(i)}, i \geq 1\right), \mu_k^* \right). \label{algsameqq} \end{equation}
We will prove \eqref{algsameqq} by induction on $k$, showing that the LHS follows the characterisation of the distribution of the two-colour tree on the RHS given in Proposition \ref{starstrings}. The case $k=0$ follows from Proposition \ref{betastring} in combination with Corollary \ref{GEMS}.

For general $k \geq 0$, we obtain the shape $T_k$ of a stable tree $\mathcal T_k$ reduced to the first $k+1$ leaves from the stable tree growth processes with masses naturally embedded in Algorithm \ref{depstruc1}, and conditionally given its shape with $\ell$ branch points $v_1, \ldots, v_\ell$ of degrees $d_1, \ldots, d_\ell$, a Dirichlet$(\beta, \ldots, \beta, m_1+(1-2\beta), \ldots, m_\ell+(1-2\beta))$ mass split between edges and branch points as in Proposition \ref{masssel2} where $m_i:=d_i-2, i \in [\ell]$. We further obtain rescaled independent $(\beta, \beta)$-strings of beads on the branches of the stable tree, i.e.\ the unmarked branches of $\mathcal R(\widehat{\mathcal T}_k, \widehat{\Sigma}_0^{(k)}, \ldots, \widehat{\Sigma}_k^{(k)})$, cf. Theorem \ref{masssel2} and Proposition \ref{betastring}.

From the stick-breaking representation \eqref{v1v2} of GEM($\cdot, \cdot$) and Algorithm \ref{masssel}, the relative masses of the subtrees of $\mathcal T \setminus \mathcal T_k$ rooted at $v_i$ indexed in increasing order of smallest leaf labels form a vector with distribution GEM$(1-\beta, m_i(1-\beta)+(1-2\beta))$, independently for each branch point, $i\in[\ell]$. 

From the independent Ford tree growth processes via labelled strings of beads built from the $(\widehat{\xi}_k,k\ge 0)$ in Algorithm \ref{depstruc1}, we have the shapes of conditionally independent Ford trees with $m_1, \ldots, m_\ell$ leaves, and for each Ford tree conditionally given the shape, independently a ${\rm Dirichlet}(\beta^\prime,\ldots,\beta^\prime,1-\beta^\prime,\ldots,1-\beta^\prime)$ partition of $\mathbb{N}$ obtained by relabelling the edge-partition of labels $\mathbb{N}\setminus[m_i]$ by the increasing bijection $\mathbb{N}\setminus[m_i]\rightarrow\mathbb{N}$. These partitions are further split on each internal edge by a labelled $(\beta^\prime,\beta^\prime)$-string of beads, and on each external edge by a labelled $(\beta^\prime,1-\beta^\prime)$-string of beads, again all labelled by $\mathbb{N}$ and obtained by increasing bijections from $\mathbb N$ to the label sets of the edges.

We apply Lemma \ref{coag2} with $P$ as the ${\rm GEM}(1-\beta,m_i(1-\beta)+(1-2\beta))$ split into further subtree masses of the $i$th marked component
  and $\widehat{\Lambda}$ as the ${\rm Dirichlet}(\beta^\prime,\ldots,\beta^\prime,1-\beta^\prime,\ldots,1-\beta^\prime)$ partition of marked Ford labels in the $i$th component. 
  We note that we eventually place subtrees in their size-biased order in $P$ into the further Ford leaves of the $i$th component. Therefore, the coagulation of Lemma
  \ref{coag2} produces a ${\rm Dirichlet}(\beta,\ldots,\beta,1-2\beta,\ldots,1-2\beta)$ mass split onto the edges and independent ${\rm GEM}(1-\beta,\beta)$ and ${\rm GEM}(1-\beta,1-2\beta)$ sequences of fragments of these edge masses.

 We apply Lemma \ref{coag1} for each edge, with $P$ as the ${\rm GEM}(1-\beta,\beta)$ or ${\rm GEM}(1-\beta,1-2\beta)$ sequence of fragments and with the labelled
  $(\beta^\prime,\beta^\prime)$- or $(\beta^\prime,1-\beta^\prime)$-string of beads as $\widehat{\xi}$, independent. Again, we note that we eventually place subtrees in
  their size-biased order in $P$ according to the positions of the labels in the labelled string of beads. Therefore, the coagulation of Lemma \ref{coag1} produces a mass
  split according to a $(\beta,\beta)$- or $(\beta,1-2\beta)$-string of beads, respectively.

  We obtain two-colour shapes as needed for the distribution of the RHS of \eqref{algsameqq} characterised in Proposition \ref{starstrings}. Conditionally given the two-colour shape, we obtain independent Dirichlet splits onto edges that combine to a
  ${\rm Dirichlet}(\beta,\ldots,\beta,1-2\beta,\ldots,1-2\beta)$ split, with parameters $\beta$ for unmarked and marked internal edges and $1-2\beta$ for marked external edges.
  Again conditionally given the two-colour shape, we obtain, independently of the Dirichlet splits, for each unmarked and marked internal edge an independent 
  $(\beta,\beta)$-string of beads, and for each marked external edge a $(\beta,1-2\beta)$-string of beads. If we arrange the edges in the tree shape suitably by depth first
  search and sort the Dirichlet vectors and the vectors of strings accordingly, their joint conditional distribution does not depend on the two-colour shape, so the 
  two-colour shape, the overall Dirichlet split and the strings of beads are jointly independent. 

  Finally, Algorithm \ref{depstruc1} scales the strings of beads. We can write $(P^{(i)})^\beta(D^{(i)})^{\beta'}=(D_{m_i}^{(i)})^{\beta'}(P^{(i)}_{(m_i)})^\beta$,
  where $D_{m_i}^{(i)}$ is the $(1-\beta)$-diversity of $P$ in the application of Lemma \ref{coag2} above, independent of the total mass 
  $P^{(i)}_{(m_i)}=\sum_{j\ge m_i+1}P^{(i)}_j$ on the $i$th component, which is further split according to the Dirichlet distribution found above, as required.
  Altogether, the distribution is the same as in Proposition \ref{starstrings}.
\end{proof}

\begin{proof}[Proof of Theorem \ref{fordembb} and \eqref{subfordgrowth} in Theorem \ref{Mainresult1}] This is a direct consequence of Proposition \ref{algsame} and Corollary \ref{algsamecor}.\end{proof}

In Theorem \ref{fordembb}, we identified the tree growth processes $(\mathcal R_k^{(i)}, k \geq 1)$, $i \geq 1$, as consistent families of tree growth processes which obey the growth rules of a Ford tree growth process of index $\beta'=\beta/(1-\beta)$. Rescaling these processes to obtain i.i.d. sequences of Ford trees requires knowledge of the scaling factor which is incorporated in the limiting stable tree. It is, however, possible to approximate this scaling factor using the tree constructed up to step $k$ only. We are further able to obtain i.i.d. marked subtree growth processes obeying the Ford growth rules (but with wrong starting lengths) applying suitable scaling.

\begin{theorem}[Embedded Ford trees]\label{embford} Let $(\mathcal T_k^*, (\mathcal R_k^{(i)}, i \geq 1), \mu_k^*, k \geq 0)$ as in Algorithm \ref{twocolourmass}.
 \begin{itemize} \item[{\rm (i)}] The normalised tree growth processes in the components, with projected $\mu$-masses, are i.i.d.: 
\begin{equation} \left(\mathcal G_m^{(i)}, \mu_m^{(i)}, m\geq 1\right)= \left( \mu_{k_1^{(i)}}^* \left(\mathcal R_{k_1^{(i)}}^{(i)}\right)^{-\beta}\mathcal R_{k_m^{(i)}}^{(i)}, \mu_{k_1^{(i)}}^* \left(\mathcal R_{k_1^{(i)}}^{(i)}\right)^{-1} \mu_{k}^* \restriction_{\mathcal R_k^{(i)}}, m \geq 1\right), \quad i \geq 1. \label{embford1} \end{equation}
 
\item[{\rm (ii)}] The processes $\big( \mu_{k_1^{(i)}}^* \big(\mathcal R_{k_1^{(i)}}^{(i)}\big)^{-\beta}\mathcal R_{k_m^{(i)}}^{(i)}, m \geq 1\big)$, without 
  $\mu$-masses are i.i.d. Ford tree growth processes of index $\beta'=\beta/1-\beta$ as in Algorithm \ref{Ford}, $i\geq 1$, but starting from 
  {\rm ML}$(\beta, 1-2\beta)$, not {\rm ML}$(\beta', 1-\beta')$. %In particular, for each $i \geq 1$, there is $\mathcal R^{(i)}$ such that
%\begin{equation}
%\lim_{m \rightarrow \infty} \mathcal R_{k_m^{(i)}}^{(i)} = \mathcal R^{(i)} \quad \text{ a.s.}
%\end{equation}
%in the Gromov-Hausdorff topology. 
\item[{\rm (iii)}] For $i \geq 1$, define 
  $C_m^{(i)}:=\left(1-\beta\right)^{\beta} m^{-\beta^2/(1-\beta)} \mu_{k_m^{(i)}}^*\big(\mathcal R^{(i)}_{k_m^{(i)}}\big)^{-\beta}$. The processes 
  $(C_m^{(i)}\mathcal R_{k_m^{(i)}}^{(i)}, m \geq 1)$ with scaling constant depending on $m$, $i \geq 1$, are i.i.d., $\lim_{m \rightarrow \infty} C_m^{(i)}= \left(H^{(i)}\right)^{-\beta/(1-\beta)}\mu_{k_1^{(i)}}^*\big(\mathcal R_{k_1^{(i)}}^{(i)}\big)^{-\beta}$ a.s., where $H^{(i)} \sim {\rm ML}(1-\beta, 1-2\beta)$, and 
%\begin{equation}
$\lim_{m \rightarrow \infty} C_m^{(i)} \mathcal R_{k_m^{(i)}}^{(i)}=\mathcal F^{(i)}$ a.s. % \label{cgford}
%\end{equation} 
in the Gromov-Hausdorff topology where $(\mathcal F^{(i)}, i \geq 1)$ are i.i.d. Ford CRTs of index $\beta'$. 
\end{itemize}
\end{theorem}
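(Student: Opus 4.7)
My plan is to derive all three parts from the correspondence established in Proposition \ref{algsame} between Algorithm \ref{twocolourmass} and the stable-tree-based construction in Algorithm \ref{depstruc1}, combined with the distributional description in Proposition \ref{starstrings}.

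For part (i), I will argue that under the coupling in Algorithm \ref{depstruc1} each marked component $\widehat{\mathcal R}^{(i)}$ is built from (a) an independent sequence of labelled $(\beta',1-\beta')$-strings of beads $\widehat{\xi}_k$ indexed by those $k$ with $I_k=i$, together with (b) the subtree structure at the branch point $v_i$ of the stable tree. By Theorem \ref{masspart}(iii) the rescaled subtrees at distinct $v_i$ (and the associated $\widehat{\xi}$-families) are independent, so the components are independent across $i$. After normalisation by the initial mass $\mu_{k_1^{(i)}}^*(\mathcal R_{k_1^{(i)}}^{(i)})$, the initial edge of each component becomes a $(\beta,1-2\beta)$-string of beads of total mass $1$ and length $\sim{\rm ML}(\beta,1-2\beta)$ by the defining $\beta$-mixed structure, and all subsequent inputs are drawn from identical distributions. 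Hence the processes $(\mathcal G_m^{(i)},\mu_m^{(i)},m\geq 1)$ are i.i.d.

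For part (ii), since the processes are i.i.d.\ by (i) it suffices to analyse one component. From Proposition \ref{starstrings} the tree $\mathcal R_{k_m^{(i)}}^{(i)}$ has Ford-$\beta'$ shape on $m$ leaves with edges that are rescaled independent $(\beta,\beta)$- or $(\beta,1-2\beta)$-strings of beads. By Proposition \ref{DIRML} the conditional relative length split is ${\rm Dirichlet}(1,\ldots,1,(1-2\beta)/\beta,\ldots,(1-2\beta)/\beta)$, which coincides with the Ford length split of index $\beta'$ because $(1-\beta')/\beta'=(1-2\beta)/\beta$. Applying Lemma \ref{equivalence} edge by edge, the mass-based edge selection plus coin-tossing sampling in Algorithm \ref{twocolourmass} is equivalent to length-based selection plus ${\rm Beta}(1,\theta/\beta)$ point-selection, recovering the Algorithm \ref{Ford} rules: uniform for internal edges ($\theta=\beta$) and ${\rm Beta}(1,(1-2\beta)/\beta)={\rm Beta}(1,1/\beta'-1)$ for external edges ($\theta=1-2\beta$). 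The only discrepancy with Algorithm \ref{Ford} is the starting distribution: the first normalised edge is a $(\beta,1-2\beta)$-string, so its length has law ${\rm ML}(\beta,1-2\beta)$ rather than ${\rm ML}(\beta',1-\beta')$.

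For part (iii), the key identification I will establish from the coupling in Algorithm \ref{depstruc1} is
\begin{equation*}
\mu_{k_m^{(i)}}^*(\mathcal R_{k_m^{(i)}}^{(i)})=\sum_{j>m}P_j^{(i)};
\end{equation*}
at each step $k_m^{(i)}$ exactly one subtree at $v_i$ (the one of smallest label among those still attached to the selected bead, namely $\mathcal S_{k,1}$ in Algorithm \ref{depstruc1}) is separated off as the new unmarked branch, so after $m$ updates the projected mass remaining on the $i$-th Ford component is the tail sum. Since $(P_j^{(i)}/P^{(i)})_{j\ge 1}\sim{\rm GEM}(1-\beta,-\beta)$ by Corollary \ref{GEMS}, the $(1-\beta)$-diversity formula \eqref{alphadiv} yields $\big(\sum_{j>m}P_j^{(i)}/P^{(i)}\big)^{1-\beta}(1-\beta)^{\beta-1}m^\beta\to D^{(i)}$ a.s. Substituting into $C_m^{(i)}=(1-\beta)^\beta m^{-\beta^2/(1-\beta)}(M_m^{(i)})^{-\beta}$ and cancelling the $(1-\beta)^{\pm\beta}$ factors produces $C_m^{(i)}\to(P^{(i)})^{-\beta}(D^{(i)})^{-\beta'}=C^{(i)}$ a.s. To recast this in the form stated in the theorem, I use the regeneration identity $D^{(i)}=(1-V_1^{(i)})^{1-\beta}H^{(i)}$ with $H^{(i)}\sim{\rm ML}(1-\beta,1-2\beta)$ independent of $V_1^{(i)}=P_1^{(i)}/P^{(i)}$, obtained from the stick-breaking representation of ${\rm GEM}(1-\beta,-\beta)$ and Proposition \ref{usefulPD}; combined with $M_1^{(i)}=P^{(i)}(1-V_1^{(i)})$ and $(1-\beta)\beta'=\beta$ this rewrites $C^{(i)}$ as $(M_1^{(i)})^{-\beta}(H^{(i)})^{-\beta'}$, with the required independence of $H^{(i)}$ from $M_1^{(i)}$. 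Part (iii)(a) is then immediate as $C_m^{(i)}$ is internal to the $i$-th component. For the Gromov-Hausdorff convergence, Corollary \ref{algsamecor} gives $C^{(i)}\mathcal R_{k_m^{(i)}}^{(i)}\to\mathcal F^{(i)}$ a.s.; since $C_m^{(i)}/C^{(i)}\to 1$ a.s.\ and the diameters of $C^{(i)}\mathcal R_{k_m^{(i)}}^{(i)}$ remain bounded, the elementary estimate $d_{\rm GH}(cT,c'T)\le |c-c'|\,{\rm diam}(T)$ transfers the convergence to $C_m^{(i)}\mathcal R_{k_m^{(i)}}^{(i)}\to\mathcal F^{(i)}$ a.s. The main obstacle is the tail-sum identification of $\mu_{k_m^{(i)}}^*(\mathcal R_{k_m^{(i)}}^{(i)})$: it demands a careful matching, via Proposition \ref{algsame}, between the mass sent onto the new unmarked branch at each step of Algorithm \ref{twocolourmass} and the next subtree in the GEM-ordered enumeration of subtrees at $v_i$; once this is in place, the remainder is algebraic manipulation of diversity formulae together with standard perturbation of Gromov-Hausdorff limits by vanishing scaling factors.
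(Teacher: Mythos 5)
Your proposal is essentially correct, but for parts (i) and (iii) it takes a genuinely different route from the paper, while part (ii) coincides with the paper's argument (Lemma \ref{mms}/Corollary \ref{lms} combined with Lemma \ref{equivalence}). The paper proves (i) intrinsically: it introduces transition kernels $\kappa_m$ on spaces of weighted discrete trees and verifies the product formula \eqref{product} by writing the joint law as a sum over the insertion times $k_j^{(i)}$ and summing out geometric series, using only the distributional description of Proposition \ref{starstrings}; and it proves (iii) intrinsically as well, extracting a ${\rm PD}(1-\beta,1-2\beta)$ stick-breaking sequence from the Beta splits of Lemma \ref{mms} (so $H^{(i)}$ arises as its $(1-\beta)$-diversity without reference to the stable tree) and identifying the limit of $C_m^{(i)}\mathcal R^{(i)}_{k_m^{(i)}}$ as a Ford CRT through a Gamma-representation/law-of-large-numbers computation showing the rescaled reduced lengths are ${\rm ML}(\beta',m_0-\beta')$. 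You instead lean on the coupling with the stable tree (Proposition \ref{algsame}, Corollaries \ref{GEMS} and \ref{algsamecor}): your tail-sum identity $\mu^*_{k_m^{(i)}}\big(\mathcal R^{(i)}_{k_m^{(i)}}\big)=\sum_{j>m}P_j^{(i)}$ is correct (at the $m$th insertion into component $i$ exactly the $m$th subtree in least-label order is peeled onto the new unmarked branch), and together with Remark \ref{pidi} and the regeneration identity $D^{(i)}=(1-V_1^{(i)})^{1-\beta}H^{(i)}$ it yields $C_m^{(i)}\to(H^{(i)})^{-\beta/(1-\beta)}\mu^*_{k_1^{(i)}}\big(\mathcal R^{(i)}_{k_1^{(i)}}\big)^{-\beta}$, after which the elementary bound $d_{\rm GH}(cT,c'T)\le|c-c'|\,{\rm diam}(T)$ transfers the a.s.\ GH convergence supplied by Corollary \ref{algsamecor}. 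Your route is shorter and makes transparent why $H^{(i)}\sim{\rm ML}(1-\beta,1-2\beta)$ is independent of the initial component mass; the paper's route buys self-containedness (its stated aim is that the scaling can be recovered from the two-colour tree up to step $k$ alone) and, as a by-product of the kernel computation, an explicit description of the common law in (i). Two points you should tighten: the independence across $i$ in (i) should be derived from Corollary \ref{GEMS} (jointly i.i.d.\ GEM sequences at the branch points) together with the independence of the per-component families of labelled strings, rather than from Theorem \ref{masspart}(iii), which concerns a single spine; and you need to check that after normalisation the component process depends on component-internal data only, i.e.\ that the scale factor $(P^{(i)})^{\beta}(D^{(i)})^{\beta'}$ of Algorithm \ref{depstruc1} divided by $\big(P^{(i)}-P_1^{(i)}\big)^{\beta}$ equals $(H^{(i)})^{\beta'}$, so the absolute subtree mass cancels and identical distribution across $i$ (including the claim that the first normalised marked edge with projected masses is a $(\beta,1-2\beta)$-string, which follows from Proposition \ref{starstrings} or Lemma \ref{coag1}) indeed holds.
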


\begin{proof} See Section \ref{sec71} in the appendix. \end{proof}

%\begin{proof}[Proof of \eqref{subfordgrowth} in Theorem \ref{Mainresult1}] By Theorem \ref{embford}(ii), the sequences $(\cR_{k_m^{(i)}}^{(i)},k\geq 1)$, $i\geq 1$, are 
%  i.i.d. when scaled by $\mu_{k_1^{(i)}}^* \big(\mathcal R_{k_1^{(i)}}^{(i)}\big)^{-\beta}$, and the joint distribution of length proportions is correct apart from the
%  initial length. However, by Proposition \ref{algsame} and using notation from its proof, further scaling by the independent $(D_{(1)}^{(i)})^{-\beta^\prime}$, $i\ge 1$,
%  yields correctly scaled i.i.d. Ford tree growth processes, as required. 
%\end{proof}

\section{Continuum tree asymptotics} \label{sec5}

In this section, we use embedding to show the convergence of the constructions in Theorems \ref{Mainresult2} and \ref{branchrepl}.

\subsection{Embedding of the two-colour line-breaking construction into a binary compact CRT} \label{Embedding}

In \cite{RW} we constructed CRTs recursively based on recursive distribution equations as reviewed by Aldous and Bandyopadhyay 
\cite{14}. This method applied to a $\beta$-mixed string of beads yields a compact CRT $(\mathcal T^*, \mu^*)$ in which we can embed the two-colour line-breaking 
construction. Let us briefly recall the recursive construction of $(\mathcal T^*, \mu^*)$ from {\cite[Proposition 4.12]{RW}} including some useful notation. We only
outline the constructions without going into the mathematical details for which we refer to \cite{RW}. 

For $\beta \in (0,1/2]$, consider a sequence of independent strings of beads $({\xi}_{\mathbf i}$, $\mathbf{i} \in \mathbb{U})$, 
$${\xi}_{\mathbf i}=\left([0,{L}_{\mathbf i}], \sum_{j \geq 1} {P}_{\mathbf{i}j}\delta_{{X}_{\mathbf{i}j}}\right), \quad {\mathbf i} \in \mathbb{U},$$
where ${\xi}_{\varnothing}$ is a $(\beta, \beta)$-string of beads independent of the $\beta$-mixed strings of beads  
${\xi}_{\mathbf i}, \mathbf{i} \in \mathbb{U}\setminus \{\varnothing\}$, and $\mathbb U:=\bigcup_{n \geq 0} \mathbb N^n$ is the infinite Ulam-Harris tree. 
Let $(\check{\mathcal T}_0, \check{\mu}_0)= {\xi}_{\varnothing}$, and for $n \geq 0$, conditionally given $(\check{\mathcal T}_n, \check{\mu}_n)$ with 
$\check{\mu}_n=\sum_{\mathbf{i}j\in\bN^{n+1}}\check{P}_{\mathbf{i}j}\delta_{\check{X}_{\mathbf{i}j}}$, attach to each $\check{X}_{\mathbf{i}j}$ an isometric copy of the 
string of beads $\xi_{\mathbf{i}j}$ 
\begin{itemize} 
  \item with metric rescaled by $\check{\mu}_n(\check{X}_{\mathbf{i}j})^\beta$, and mass measure rescaled by $\check{\mu}_n(\check{X}_{\mathbf{i}j})$, 
  \item so that the atom ${P}_{\mathbf{i}jk}\delta_{{X}_{\mathbf{i}jk}}$ of $\xi_{\mathbf{i}j}$ is scaled to become an atom of $\check{\mathcal T}_{n+1}$ denoted by 
    $\check{P}_{\mathbf{i}jk}\delta_{\check{X}_{\mathbf{i}jk}}$, $k\geq 1$,
\end{itemize}
for all ${\mathbf i}j \in \mathbb N^{n+1}$ respectively. Denote the resulting tree by $(\check{\mathcal T}_{n+1}, \check{\mu}_{n+1})$.

By construction, $(\check{\mathcal T}_{n}, \check{\mu}_{n})$ only carries mass in the points $\check{X}_{{\mathbf i}j}, {\mathbf{i}j} \in \mathbb N^{n+1}$, i.e. $\check{\mu}_{n}(\check{\mathcal T}_{n} \setminus \check{\mathcal T}_{n-1})=0$ for $n \geq 0$. Note that, for any $\check{X}_{i_1 i_2 \cdots i_{n+1}} \in \check{\mathcal T}_{n}$, $n \geq 0$, 
$$ \check{\mu}_{n} \left(\check{X}_{i_1 i_2 \cdots i_{n+1}}\right)=\check{P}_{i_1i_2\cdots i_{n+1}}={P}_{i_1} {P}_{i_1 i_2} \cdots {P}_{i_1 i_2 \cdots i_{n+1}}.$$ 
This induces a recursive description of the trees $(\check{\mathcal T}_{n}, \check{\mu}_{n}, n \geq 0)$ via the strings of beads 
$({\xi}_{\mathbf i}, \mathbf{i} \in \mathbb{U})$.

\begin{theorem}[{\cite[Proposition 4.12]{RW}}] \label{reccon} Let $\beta \in (0,1/2]$ and $(\check{\mathcal T}_{n}, \check{\mu}_n, n \geq 0)$ as above. Then there exists a compact CRT $(\mathcal T^*, \mu^*)$ such that 
$$ \lim \limits_{n \rightarrow \infty}\left(\check{\mathcal T}_n, \check{\mu}_n\right)=\left(\mathcal T^*, \mu^*\right) \text{ a.s. } $$ with respect to the Gromov-Hausdorff-Prokhorov topology.
\end{theorem}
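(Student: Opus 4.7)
The plan is to establish convergence in two stages: first Gromov-Hausdorff convergence of the $\mathbb{R}$-trees $\check{\mathcal T}_n$, then Prokhorov convergence of the mass measures $\check{\mu}_n$; the continuum-tree properties of the limit then follow from the same estimates. By construction $\check{\mathcal T}_n \subseteq \check{\mathcal T}_{n+1}$ in the ambient embedding of $l^1(\mathbb{N}_0^2)$, so Proposition \ref{embprop}(iv) reduces the first task to proving $\delta_{\rm H}(\check{\mathcal T}_n, \check{\mathcal T}) \to 0$ with $\check{\mathcal T} := \overline{\bigcup_n \check{\mathcal T}_n}$; this automatically gives compactness of the limit.

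The key quantity to control is the maximal added branch length at level $n$, namely $H_n := \sup_{\mathbf i \in \mathbb{N}^n} \check{P}_{\mathbf i}^\beta L_{\mathbf i}$ for $n \geq 1$. A telescoping bound gives $\delta_{\rm H}(\check{\mathcal T}_{n-1}, \check{\mathcal T}) \leq \sum_{k \geq n} H_k$, so everything reduces to $\sum_n H_n < \infty$ a.s. Exploiting the independence of the strings $(\xi_{\mathbf i})_{\mathbf i \in \mathbb U}$ and the product structure $\check{P}_{\mathbf i} = \prod_k P_{\mathbf i^{(k)}}$ (where $\mathbf i^{(k)}$ is the length-$k$ prefix of $\mathbf i$), a Markov-type bound yields
\begin{equation*}
\mathbb{E}[H_n^p] \leq \mathbb{E}\Big[\sum_{\mathbf i \in \mathbb N^n} \check{P}_{\mathbf i}^{p\beta} L_{\mathbf i}^p\Big] = a_p \cdot b_p^{n-1} \cdot \mathbb{E}[L^p],
\end{equation*}
where $a_p = \mathbb{E}[\sum_i P_i^{p\beta}]$ for the $(\beta,\beta)$-string, $b_p$ is the analogous quantity for the $\beta$-mixed string, and $L \sim \mathrm{ML}(\beta, 1-\beta)$. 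For any $p > 1$ these sums are a.s.\ finite by the Poisson-Dirichlet tail $P_i^\downarrow \sim c\, i^{-1/\beta}$ extracted from \eqref{alphadiv}, and by dominated convergence $b_p \to 0$ as $p \to \infty$; hence one can choose $p > 1$ with $b_p < 1$, delivering geometric summability of $\mathbb{E}[H_n^p]$ and, by Borel-Cantelli, $\sum_n H_n < \infty$ a.s.

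For the mass measures, note that $\check{\mu}_{n+1}$ is obtained from $\check{\mu}_n$ by redistributing each atom $\check{P}_{\mathbf i}\delta_{\check{X}_{\mathbf i}}$ into atoms within distance $H_{n+1}$ of $\check{X}_{\mathbf i}$, with total mass preserved; therefore $\delta_{\rm P}(\check{\mu}_n, \check{\mu}_{n+1}) \leq H_{n+1}$, so $(\check{\mu}_n)$ is Cauchy in $\delta_{\rm P}$ and converges to a probability measure $\mu^*$ supported on $\mathcal T^* := \check{\mathcal T}$. To verify that $(\mathcal T^*, \mu^*)$ is a continuum tree, non-atomicity of $\mu^*$ and its support on leaves both follow from $\sup_{\mathbf i \in \mathbb N^n} \check{P}_{\mathbf i} \to 0$ a.s.\ (also extracted from the above moment bound) together with the fact that every atom of $\check{\mu}_n$ is split into strictly smaller atoms at the next step. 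The main obstacle is the sharp analysis of $b_p$ for the $\beta$-mixed string, whose atom masses arise from mixing two independent Poisson-Dirichlet families by a $\mathrm{Beta}(1-2\beta, \beta)$ weight as in Remark \ref{betamixedlength}; in \cite{RW} this is handled cleanly through the recursive distribution equation framework of Aldous and Bandyopadhyay, where a Wasserstein contraction estimate simultaneously provides the moment decay and the identification of $(\mathcal T^*, \mu^*)$ as the unique fixed point of the recursion.
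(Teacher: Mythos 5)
Two preliminary remarks: the paper itself offers no proof of Theorem \ref{reccon} — it is quoted verbatim from \cite[Proposition 4.12]{RW} — so you were reconstructing an external argument; and the convergence part of your reconstruction is essentially sound and in the spirit of that reference. The bound $\mathbb E[H_n^p]\le a_p\,b_p^{\,n-1}\,\mathbb E[L^p]$ is correct (the factorisation uses the independence of the strings $\xi_{\mathbf i}$, the fact that the ranked atoms of a $\beta$-mixed string are ${\rm PD}(\beta,1-\beta)$ and its length is ${\rm ML}(\beta,1-\beta)$ by Remark \ref{betamixedlength}); $a_p,b_p<\infty$ for every $p>1$, $b_p\downarrow 0$ by dominated convergence, so one may fix $p$ with $b_p<1$; Markov plus Borel--Cantelli with thresholds $\rho^n$, $b_p^{1/p}<\rho<1$, then gives $\sum_n H_n<\infty$ a.s., hence $\delta_{\rm H}(\check{\mathcal T}_n,\check{\mathcal T})\to 0$ and compactness via Proposition \ref{embprop}(iv), and $\delta_{\rm P}(\check\mu_n,\check\mu_{n+1})\le H_{n+1}$ settles the measures. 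Up to routine care about realising the construction as an increasing family of embedded trees, this part would stand on its own.

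The genuine gap is the last step, where you claim the limit is a continuum random tree. The implication you invoke — maximal atom mass of $\check\mu_n$ tends to $0$ and each atom is split into smaller atoms, hence $\mu^*$ is non-atomic and leaf-supported — is false in general: weak limits of purely atomic measures with vanishing largest atom can be atomic (take $\mu_n=n^{-1}\sum_{k\in[n]}\delta_{1/(nk)}\to\delta_0$), and this is a real danger here because the atoms of each string of beads are dense along the corresponding branch, so mass can a priori pile up at any skeleton point. A correct argument has to use the geometry of the construction: for non-atomicity one can, e.g., run your own contraction estimate with exponent $2$, noting $\mathbb E\bigl[\sum_{\mathbf i\in\mathbb N^{n+1}}\check P_{\mathbf i}^2\bigr]\to 0$ geometrically since $\mathbb E\bigl[\sum_j P_j^2\bigr]<1$ for the mixed string, and then show that two independent samples from $\mu^*$ can only coincide if they lie in the same level-$n$ descendant subtree for every $n$; this in turn requires proving that the $\mu^*$-mass of the descendant subtree rooted at $\check X_{\mathbf i}$ equals $\check P_{\mathbf i}$, i.e.\ that subtree masses are preserved in the weak limit, which you never establish. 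Likewise, leaf support needs $\mu^*(\check{\mathcal T}_n)=0$ for every $n$, and this does not follow from $\check\mu_m(\check{\mathcal T}_n)=0$ for $m>n$, because the portmanteau inequality for the closed set $\check{\mathcal T}_n$ goes the wrong way; and property (iii) of a continuum tree (positive mass above every skeleton point) requires the density of atoms on each branch, which you do not use. Your closing sentence defers precisely these identification issues back to the fixed-point machinery of \cite{RW}, i.e.\ it concedes the part of the statement that remained to be proved.
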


We will show that the increasing sequence $(\mathcal T_k^*, k \geq 0)$ of compact $\mathbb R$-trees from Algorithm \ref{twocolourmass} converges a.s. to a tree with the 
same distribution as $\mathcal T^*$. To do this and handle the marked components, we will embed the sequence of weighted $\infty$-marked $\mathbb R$-trees 
$(\mathcal T_k^*, (\mathcal R_k^{(i)}, i \geq 1), \mu_k^*, k \geq 0)$ into a given $(\mathcal T^*, \mu^*)$.  

%More precisely, we will show that 
%$\mathcal T^* \,{\buildrel d \over =}\ \overline{\bigcup_{k \geq 0} \mathcal T_k^*}$, equipped with the metric $d^*$ which is the natural extension of the metrics $d_k^*$
%on $\mathcal T_k^*$, $k \geq 0$. We will obtain a compact $\infty$-marked $\mathbb R$-tree via
%\begin{equation}
%\left(\mathcal T^*, \left(\mathcal R^{(i)}, i \geq 1 \right)\right) \quad \text{ where }
%\mathcal R^{(i)}:=\overline{\bigcup_{k \geq 0} \mathcal R_k^{(i)}}, \quad i \geq 1.
%\end{equation}
%
%To this end, in particular to show that $\overline{\bigcup_{k \geq 0} \mathcal T_k^*}$ is a compact $\mathbb R$-tree, we will embed

Note that the strings of beads ${\xi}_{\mathbf i}$, $\mathbf{i} \in \mathbb U \setminus \{\varnothing\}$, are $\beta$-mixed strings of beads as used in Algorithm 
\ref{twocolourmass} but are not elements of the space of (equivalence classes of) weighted $1$-marked $\mathbb R$-trees $\mathbb T_{\rm w}^{[1]}$, as there is no marked 
component. As we would like to embed into $(\mathcal T^*, \mu^*)$ the two-colour line breaking construction which carries colour marks on $\beta$-mixed strings of beads, 
we need to determine $I_1 =[0,K_1]\subset I=[0,K]$ such that $(I, I_1, \lambda) \sim \nu_{\beta}^{[1]}$ given some ${\xi}=(I=[0,K], \lambda) \sim \nu_{\beta}$, where 
$\nu_{\beta}$ and $\nu_{\beta}^{[1]}$ were introduced at the beginning of Section \ref{sec4} as distributions on one-branch trees in $\bT_{\rm w}$ and $\bT_{\rm w}^{[1]}$,
respectively. The existence of the conditional distribution of the point of the colour 
change $K_1$ given ${\xi}$ is stated in the following lemma.

\begin{lemma}\label{condis} Let ${\xi} \sim \nu_\beta$. Then there exists a unique probability kernel $\kappa$ from $\mathbb T_{\rm w}$ to $\mathbb R$ such that
\begin{equation} \mathbb P\left(K_1 \in \cdot \lvert {\xi} \right)=\kappa \left({\xi}, \cdot\right) \quad \text{a.s.}. \label{conddistr} \end{equation} 
\end{lemma}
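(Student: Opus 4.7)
The claim is essentially a direct application of the existence of regular conditional distributions for Polish-valued random elements. The plan is as follows.

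First, I would observe that $(\mathbb T_{\rm w}, d_{\rm GHP})$ is a Polish space: Proposition \ref{embprop}(i) establishes that $(\mathbb T_{\rm w}^{\rm emb}, \delta_{\rm HP})$ is separable and complete, and $\mathbb T_{\rm w}$ inherits these properties under $d_{\rm GHP}$ by standard arguments (cf.\ \cite{22}). Secondly, in the construction \eqref{twocolstr1}--\eqref{twocolstr2} of $\xi$ from the triple $(([0,K_1],\lambda_1),([0,K_2],\lambda_2),B)$, both the isometry class $\xi$ and the scalar $K_1$ are measurable functions on the underlying probability space, so the pair $(\xi, K_1)$ is a Borel random element of the Polish product space $\mathbb T_{\rm w} \times \mathbb R$.

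Next, I would invoke the classical disintegration theorem for Polish-valued random elements (e.g.\ Kallenberg, \emph{Foundations of Modern Probability}, Theorem 6.3): for any random element of such a product space, a probability kernel giving the regular conditional distribution of one coordinate given the other exists. Applied in the present setting this immediately produces a probability kernel $\kappa\colon \mathbb T_{\rm w} \times \mathcal B(\mathbb R) \to [0,1]$ satisfying
\[\mathbb P(\xi \in A,\; K_1 \in B) \;=\; \int_A \kappa(\eta, B)\, \nu_\beta(d\eta), \qquad A \in \mathcal B(\mathbb T_{\rm w}),\; B \in \mathcal B(\mathbb R),\]
which is equivalent to the identity \eqref{conddistr}.

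For uniqueness, suppose $\kappa'$ is another such kernel. Then for each fixed $B \in \mathcal B(\mathbb R)$ one has $\kappa(\xi, B) = \kappa'(\xi, B)$ almost surely. Since $\mathcal B(\mathbb R)$ is generated by a countable algebra (e.g.\ open intervals with rational endpoints), a standard monotone-class argument yields a single $\nu_\beta$-null set $N \subset \mathbb T_{\rm w}$ outside of which $\kappa(\eta, \cdot)$ and $\kappa'(\eta, \cdot)$ coincide as probability measures on $\mathbb R$. The only step requiring any verification is the Polishness of $\mathbb T_{\rm w}$; everything else is measure-theoretically routine, so I do not anticipate any genuine obstacle.
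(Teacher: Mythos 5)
Your proposal is correct and takes essentially the same route as the paper, whose entire proof is to invoke Theorem 6.3 of \cite{32} (the existence and a.s.\ uniqueness of regular conditional distributions) on the grounds that $\mathbb R$ is a Borel space. The only minor point is that this theorem needs only the target space $\mathbb R$ to be Borel, so your preliminary verification of Polishness of $\mathbb T_{\rm w}$, while harmless, is not actually required.
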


\begin{proof}
This is a special case of Theorem 6.3 in \cite{32}, since $\mathbb R$ is a Borel space.
\end{proof}

Given the weighted $\mathbb R$-tree $(\mathcal T^*, \mu^*)$, we will obtain a sequence of weighted $\infty$-marked $\mathbb R$-trees 
\begin{equation*} \left(\overline{\mathcal T}_k^*, \left(\overline{\mathcal R}_k^{(i)}, i \geq 1\right), \overline{\mu}_k^*, k \geq 0\right) 
\end{equation*} 
with the same distribution as $({\mathcal T}_k^*, ({\mathcal R}_k^{(i)}, i \geq 1), {\mu}_k^*, k \geq 0)$ as an increasing sequence of subsets 
$\overline{\mathcal T}_k^* \subset \mathcal T^*$, $k \geq 0$, where the mass measure $\overline{\mu}_k^*$ captures the masses of the connected components of 
$\mathcal T^* \setminus \overline{\mathcal T}_k^*$ projected onto $\overline{\mathcal T}_k^*$, $k \geq 0$. The recursive structure 
${\xi}_{\mathbf i}, {\mathbf i} \in \mathbb U$, provides the i.i.d.\ strings of beads needed in Algorithm \ref{twocolourmass}, which the colour change kernel 
\eqref{conddistr} turns into i.i.d. $1$-marked strings of beads.

\begin{algorithm}[Two-colour embedding] \label {twocemb} \rm Let $\beta\in(0,1/2]$. We embed into the tree $({\cT}^*,{\mu}^*)$ of Theorem \ref{reccon} weighted $\infty$-marked $\bR$-trees $(\overline{\cT}_k^*,(\overline{\cR}_k^{(i)},i\ge 1),\overline{\mu}_k^*)$, $k\ge 0$, as follows.
  \begin{enumerate}\setcounter{enumi}{-1}\item Let $(\overline{\cT}_0^*,\overline{\mu}_0^*)={\xi}_\varnothing$ be the initial $(\beta,\beta)$-string of beads; let $\overline{r}_0=0$ and $\overline{\cR}_0^{(i)}=\{\rho\}$, $i\ge 1$.
  \end{enumerate}
  Given $(\overline{\cT}_j^*,(\overline{\cR}_j^{(i)},i\ge 1),\overline{\mu}_j^*)$ with $\overline{\mu}_j^*=\sum_{x\in\overline{\cT}_j^*}\overline{\mu}_j^*(x)\delta_x$, $0\le j\le k$, let $\overline{r}_k=\#\{i\ge 1\colon\overline{\cR}_k^{(i)}\neq\{\rho\}\}$;  
  \begin{enumerate}\item select an edge $\overline{E}_k^*\subset\overline{\cT}_k^*$ with probability proportional to its mass $\overline{\mu}_k^*(\overline{E}_k^*)$; if 
      $\overline{E}_k^*\subset\overline{\cR}_k^{(i)}$ for some $i\in[\overline{r}_k]$, let $\overline{I}_k=i$; otherwise, i.e.\ if $\overline{E}_k^*\subset\overline{\cT}_k^*\setminus\bigcup_{i\in[\overline{r}_k]}\overline{\cR}_k^{(i)}$, let $\overline{r}_{k+1}\!=\!\overline{r}_k\!+\!1$, $\overline{I}_k\!=\!\overline{r}_{k+1}$;  
    \item if $\overline{E}_k^*$ is an external edge of $\overline{\cR}_k^{(i)}$, perform $(\beta,1-2\beta)$-coin tossing sampling on $\overline{E}_k^*$ to determine $\overline{J}_k^*\in \overline{E}_k^*$;  otherwise, i.e.\ if $\overline{E}_k^*\subset\overline{\cT}_k^*\setminus\bigcup_{i\in[\overline{r}_k]}\overline{\cR}_k^{(i)}$ or if $\overline{E}_k^*$ is an internal edge of $\overline{\cR}_k^{(i)}$, sample $\overline{J}_k^*$ from the normalised mass measure on $\overline{E}_k^*$;
    \item let $\mathbf{j}\in\mathbb U$ such that $\overline{J}_k^*=\check{X}_{\mathbf{j}}$ and
      $\overline{\mu}_k^*(\overline{J}_k^*)=\check{P}_{\mathbf{j}}$; sample
      a point $\overline{\Omega}_k$ from $\kappa(\xi_{\mathbf{j}},\cdot)$; to form $(\overline{\cT}_{k+1}^*,\overline{\mu}_{k+1}^*)$, 
      remove $\overline{\mu}_k^*(\overline{J}_k^*)\delta_{\overline{J}_k^*}$ from $\overline{\mu}_k^*$ and add to $\overline{\cT}_k^*$ the scaled copy of the string
      of beads $\xi_{\mathbf{j}}$ with $\overline{\Omega}_k$ embedded in $\cT^*$; set $\overline{\cR}_{k+1}^{(\overline{I}_k)}=\overline{\cR}_k^{(\overline{I}_k)}\cup[[\overline{J}_k^*,\overline{\Omega}_k]]$ and 
      $\overline{\cR}_{k+1}^{(i)}=\overline{\cR}_k^{(i)}$, $i\neq \overline{I}_k$. 
  \end{enumerate}
\end{algorithm}

The proof of the following statement can be found in the Appendix \ref{appen1}, together with similar proofs.

\begin{prop} \label{oversamedist} The sequences of trees constructed in Algorithm \ref{twocolourmass} and Algorithm \ref{twocemb}
have the same distribution, i.e. 
$({\mathcal T}_k^*,({\mathcal R}_k^{(i)}, i \geq 1), {\mu}_k^*, k \geq 0 )
\,{\buildrel d \over =}\,  (\overline{\mathcal T}_k^*, (\overline{\mathcal R}_k^{(i)}, i \geq 1), \overline{\mu}_k^*, k \geq 0).$
\end{prop}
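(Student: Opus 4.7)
The plan is to prove the equality in distribution by induction on $k\ge 0$, checking that both algorithms produce the same joint law of weighted $\infty$-marked $\mathbb{R}$-trees through step $k$. The base case $k=0$ is immediate, since both initialise with $\xi_\varnothing$, a $(\beta,\beta)$-string of beads, and set all marked components to $\{\rho\}$. For the induction step, I first observe that the selection procedures in steps 1--2 of the two algorithms are identical: proportional-to-mass edge selection, followed by either $(\beta,1-2\beta)$-coin tossing on external marked edges or mass-uniform selection on other edges. These rules depend only on the current weighted $\infty$-marked $\mathbb{R}$-tree, so by the inductive hypothesis, the triples $(\overline{J}_k^*,\overline{E}_k^*,\overline{I}_k)$ and $(J_k^*,E_k^*,I_k)$ have the same conditional law given their histories. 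It remains to show that the attachment in step 3 matches in distribution.

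The key step is to prove that the $1$-marked string attached in step 3 of Algorithm \ref{twocemb} is conditionally distributed as a fresh $\nu_\beta^{[1]}$-sample, as in Algorithm \ref{twocolourmass}. Let $\mathbf{j}_0,\ldots,\mathbf{j}_{k-1}\in\mathbb{U}\setminus\{\varnothing\}$ denote the indices selected at previous steps. Because each previous selection removes the mass at $\check{X}_{\mathbf{j}_m}$ and reveals its children in $\mathbb{U}$, the index $\mathbf{j}$ with $\overline{J}_k^*=\check{X}_{\mathbf{j}}$ always satisfies $\mathbf{j}\notin\{\varnothing,\mathbf{j}_0,\ldots,\mathbf{j}_{k-1}\}$. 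The state through step $k$ (including $\mathbf{j}$ itself) is measurable with respect to the $\sigma$-algebra $\mathcal{G}_k$ generated by $(\xi_{\mathbf{i}})_{\mathbf{i}\in\{\varnothing,\mathbf{j}_0,\ldots,\mathbf{j}_{k-1}\}}$ together with the auxiliary randomness driving the samplings. Since the family $(\xi_{\mathbf{i}})_{\mathbf{i}\in\mathbb{U}}$ is independent, $\xi_{\mathbf{j}}$ is independent of $\mathcal{G}_k$ with (conditional) law $\nu_\beta$. Applying Lemma \ref{condis}, sampling $\overline{\Omega}_k\sim\kappa(\xi_{\mathbf{j}},\cdot)$ then produces a $1$-marked string of joint distribution $\nu_\beta^{[1]}$, independent of $\mathcal{G}_k$, which matches the fresh $\nu_\beta^{[1]}$-string attached in Algorithm \ref{twocolourmass}, closing the induction.

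The main obstacle will be the careful bookkeeping of the embedding: one must verify that the projected mass $\overline{\mu}_k^*(\check{X}_{\mathbf{j}'})$ carried by each atom visible in $\overline{\mathcal{T}}_k^*$ equals $\check{P}_{\mathbf{j}'}=P_{i_1}\cdots P_{i_1\cdots i_n}$ (with $\mathbf{j}'=i_1\cdots i_n$), which depends only on the strings $(\xi_{\mathbf{i}})_{\mathbf{i}\in\{\varnothing,\mathbf{j}_0,\ldots,\mathbf{j}_{k-1}\}}$ revealed through step $k$ and not on $\xi_{\mathbf{j}}$. With this in hand, the metric rescaling by $\check{P}_{\mathbf{j}}^\beta$ and the mass rescaling by $\check{P}_{\mathbf{j}}$ built into the recursive construction of $\mathcal{T}^*$ coincide with the rescalings prescribed in Algorithm \ref{twocolourmass}, and the update step transfers verbatim.
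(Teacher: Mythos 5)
Your proposal is correct and follows essentially the same route as the paper: an induction over steps $k$, observing that the selection rules in steps 1--2 agree, that $\overline{\mu}_k^*(\overline{J}_k^*)=\check{P}_{\mathbf{j}}$ so the rescalings match, and that the newly revealed string $\xi_{\mathbf{j}}$ together with the $\kappa(\xi_{\mathbf{j}},\cdot)$-sampled mark point is a fresh, independent $\nu_\beta^{[1]}$-string by Lemma \ref{condis} and the independence of the family $(\xi_{\mathbf{i}})_{\mathbf{i}\in\mathbb{U}}$. The only (immaterial) difference is presentational: the paper phrases the induction as an explicit coupling of the two Markov chains, whereas you argue that the two chains share the same transition kernel, and you spell out the ``freshness'' $\sigma$-algebra argument slightly more explicitly than the paper does.
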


\subsection{Convergence of two-colour trees, and the proof of Theorem \ref{Mainresult2}} 

Theorem \ref{mainresult} and Corollary \ref{algsamecor} demonstrate that the two-colour line-breaking construction naturally combines the stable tree growth process, and infinitely many rescaled subtree growth processes that build rescaled independent Ford CRTs. We can show that the tree growth process $(\mathcal T_k^*, k \geq 0)$ converges to a compact CRT with the same distribution as the CRT $(\mathcal T^*, \mu^*)$ constructed in the beginning of Section \ref{Embedding}, using the embedding of Algorithm \ref{twocemb} and Proposition \ref{oversamedist}.

\begin{prop}[Convergence of $(\mathcal T_k^*, \mu_k^*, k \geq 0)$] \label{propo2} Let $({\mathcal T}_k^*, {\mu}_k^*, k \geq 0)$ be the sequence of weighted $\mathbb R$-trees from Algorithm \ref{twocolourmass}. Then, there is a compact CRT $(\mathcal T^*, \mu^*)$ such that  \begin{equation}
%\lim \limits_{k \rightarrow \infty} d_{\rm GH} \left( {\mathcal T}_k^*, {\mathcal T}^* \right) =0 \quad \text{a.s..}\qquad\mbox{and}\qquad
\lim \limits_{k \rightarrow \infty} d_{\rm GHP} \left( \left({\mathcal T}_k^*, {\mathcal \mu}_k^*\right), \left({\mathcal T}^*, \mu^*\right) \right) =0 \quad \text{a.s..} \label{alld1} \end{equation}
%Furthermore, we have
% \begin{equation}
%\lim \limits_{k \rightarrow \infty} d_{\rm GHP} \left( \left({\mathcal T}_k^*, {\mathcal \mu}_k^*\right), \left({\mathcal T}^*, \mu^*\right) \right) =0 \quad \text{a.s..} \label{alld2} \end{equation}
\end{prop}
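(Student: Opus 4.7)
The plan is to pass from $(\mathcal T_k^*, \mu_k^*, k \geq 0)$ to the coupled sequence $(\overline{\mathcal T}_k^*, \overline{\mu}_k^*, k \geq 0)$ of Algorithm~\ref{twocemb}, which by Proposition~\ref{oversamedist} has the same distribution and is realised as an increasing family of subsets $\overline{\mathcal T}_k^* \subseteq \overline{\mathcal T}_{k+1}^* \subseteq \mathcal T^*$ of the compact CRT $(\mathcal T^*, \mu^*)$ of Theorem~\ref{reccon}, with $\overline{\mu}_k^*=(\pi_k)_*\mu^*$ the projection of $\mu^*$ onto $\overline{\mathcal T}_k^*$. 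By Proposition~\ref{embprop}(ii) it is enough to show the almost sure limits $\delta_{\rm H}(\overline{\mathcal T}_k^*, \mathcal T^*) \to 0$ and $\delta_{\rm P}(\overline{\mu}_k^*, \mu^*) \to 0$.

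For the Hausdorff part, since $\mathcal T^*$ is compact, Proposition~\ref{embprop}(iv) reduces matters to showing that the closure of $\bigcup_k \overline{\mathcal T}_k^*$ equals $\mathcal T^*$ a.s. As $\mathcal T^* = \overline{\bigcup_n \check{\mathcal T}_n}$ a.s.\ by Theorem~\ref{reccon} and $\bigcup_n \check{\mathcal T}_n$ is (the union of suitably scaled embedded copies of) the strings $\xi_{\mathbf j}$ for $\mathbf j \in \mathbb U$, this further reduces to showing that, a.s., for every $\mathbf j \in \mathbb U$ the string $\xi_{\mathbf j}$ is eventually attached in Algorithm~\ref{twocemb}, i.e.\ the atom $\check{X}_{\mathbf j}$ is selected as some $\overline{J}_k^*$ in finite time.

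The key step is an induction on $|\mathbf j|$, exploiting the invariant that $\overline{\mu}_k^*(\check{X}_{\mathbf j})$ equals the positive and $\mathcal F_0$-measurable quantity $\check{P}_{\mathbf j} = \mu^*(\mathcal T^{*,(\mathbf j)})$ (the $\mu^*$-mass of the subtree of $\mathcal T^*$ above $\check{X}_{\mathbf j}$) for as long as $\xi_{\mathbf j}$ remains unattached. The base case $|\mathbf j|=1$ is immediate from $\check{X}_{\mathbf j} \in \xi_\varnothing = \overline{\mathcal T}_0^*$; for the induction step, once $\xi_{\mathbf j}$ gets attached at the finite time $\tau_{\mathbf j}$, each $\check{X}_{\mathbf j i}$ becomes a positive-mass atom of $\overline{\mu}_{\tau_{\mathbf j}+1}^*$. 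In either situation, a direct analysis of the edge selection and atom selection steps, distinguishing the mass-sampling case from the $(\beta,1-2\beta)$-coin-tossing case via Proposition~\ref{cointoss}, shows that on the event $\{\check{X}_{\mathbf j} \text{ not yet selected at step } k\}$ the conditional probability $\mathbb P(\overline{J}_k^* = \check{X}_{\mathbf j} \mid \mathcal F_k)$ is bounded below by a strictly positive, $\mathcal F_0$-measurable constant proportional to $\check{P}_{\mathbf j}$. The conditional Borel--Cantelli lemma then forces $\tau_{\mathbf j} < \infty$ a.s.

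For the Prokhorov part, once the density of $\bigcup_k \overline{\mathcal T}_k^*$ in $\mathcal T^*$ is in hand, for every $x \in \mathcal T^*$ the projection $\pi_k(x)$ converges to $x$ (using the tree structure: any $y \in \overline{\mathcal T}_k^*$ close to $x$ forces the branch point $y \wedge x$ into $\overline{\mathcal T}_k^*$ and close to $x$). Since $\overline{\mu}_k^* = (\pi_k)_* \mu^*$ and $\mathcal T^*$ is compact, dominated convergence yields $\int f\, d\overline{\mu}_k^* \to \int f\, d\mu^*$ for every bounded continuous $f$, hence weak convergence and $\delta_{\rm P}(\overline{\mu}_k^*, \mu^*) \to 0$ a.s. The main obstacle is the inductive selection argument, since the edge containing $\check{X}_{\mathbf j}$ may change its type (internal vs.\ external, marked vs.\ unmarked) over time, and one must uniformly lower-bound the selection probability across these regimes.
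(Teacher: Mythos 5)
Your overall frame (pass to the embedded coupled sequence of Algorithm \ref{twocemb} via Proposition \ref{oversamedist}, show that the increasing union $\bigcup_k\overline{\mathcal T}_k^*$ is dense in the compact $\mathcal T^*$ of Theorem \ref{reccon}, then get the Prokhorov part from $\overline{\mu}_k^*=(\pi_k)_*\mu^*$) matches the paper, and your measure argument is fine. The gap is in your proof of density. You reduce it to the claim that every atom $\check{X}_{\mathbf j}$ is selected as some $\overline{J}_k^*$ in finite time, and you drive this by conditional Borel--Cantelli using the assertion that, as long as $\check{X}_{\mathbf j}$ is unselected, $\mathbb P(\overline{J}_k^*=\check{X}_{\mathbf j}\mid\mathcal F_k)$ is bounded below by a fixed positive quantity proportional to $\check{P}_{\mathbf j}$. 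In the mass-sampling regime this is exact (the probability equals $\overline{\mu}_k^*(\check{X}_{\mathbf j})=\check{P}_{\mathbf j}$), but in the regime where the atom sits on an external marked edge the selection within the edge is $(\beta,1-2\beta)$-coin-tossing, which by design is \emph{not} proportional to atom mass: Proposition \ref{cointoss} only specifies the law of the mass split and of the two sub-strings, not a per-atom selection probability comparable to the atom's mass. The conditional probability of hitting a specific atom then depends on the configuration of all the other atoms on its current edge, and that edge (and hence the configuration) is history-dependent, since earlier insertions repeatedly split and reclassify it. You flag this as ``the main obstacle'' but do not resolve it, and a single $\mathcal F_0$-measurable lower bound, uniform over $k$ and over all possible selection histories, is exactly what is missing; without it the divergence of $\sum_k\mathbb P(\overline{J}_k^*=\check{X}_{\mathbf j}\mid\mathcal F_k)$ on the event of non-selection is not established, so the induction does not close.

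For comparison, the paper avoids per-atom selection probabilities altogether. It couples $(\overline{\mathcal T}_k^*,\overline{\mu}_k^*)$ with the contracted trees $(\widetilde{\mathcal T}_k,\widetilde{\mu}_k)$ so that $\widetilde{\mu}_k$ is a push-forward of $\overline{\mu}_k^*$; Theorem \ref{mainresult} and Corollary \ref{convstable} (Goldschmidt--Haas convergence to the stable tree) then give that the largest atom of $\widetilde{\mu}_k$, hence of $\overline{\mu}_k^*$, tends to $0$ a.s. If the closure of $\bigcup_k\overline{\mathcal T}_k^*$ missed part of $\mathcal T^*$, the continuum-tree property would produce a subtree $\mathcal T^*_x$ of positive mass $c$ disjoint from the path-connected union, forcing every $\overline{\mu}_k^*$ to retain an atom of mass at least $c$ --- a contradiction. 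If you want to keep your route, you would either need a genuine quantitative lower bound for coin-tossing selection of a fixed atom (uniform over the evolving edge configurations), or you should replace the Borel--Cantelli step by this max-atom-to-zero argument, which is the soft fact your approach is implicitly trying to reprove atom by atom.
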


\begin{proof} We prove the claim for the sequence of weighted $\mathbb R$-trees $(\overline{\mathcal T}_k^*, \overline{\mu}_k^*, k \geq 0)$  embedded in a given  $(\mathcal T^*, \mu^*)$ as in Section \ref{Embedding}. Then \eqref{alld1} will follow from Proposition \ref{oversamedist}.

By Theorem \ref{mainresult} and Corollary \ref{convstable}, we can couple a stable tree growth process $(\widetilde{\cT}_k,\widetilde{\mu}_k)\rightarrow(\cT,\mu)$ with 
$(\overline{\cT}_k^*,\overline{\mu}_k^*,k\geq 0)$ in such a way that $\widetilde{\mu}_k$ is a push-forward of $\overline{\mu}_k^*$. In particular, we have
\begin{equation}\max\{\overline{\mu}_k^*(x),x\in\overline{\cT}_k^*\}\le\max\{\widetilde{\mu}_k(x),x\in\widetilde{\cT}_k\}\rightarrow 0\qquad\mbox{a.s..}
  \label{noatoms}
\end{equation}
On the other hand, $\overline{\mu}_k^*$ is the pushforward of $\mu^*$ under the projection map $\overline{\pi}_k^*\colon\cT^*\rightarrow\overline{\cT}_k^*$. Now assume,
for contradiction that $\overline{\bigcup_{k\ge 0}\overline{\cT}_k^*}\neq\cT^*$. Since all leaves are limit points of $\cT^*\setminus{\rm Lf}(\cT^*)$ and by Theorem \ref{reccon}, $\cT^*$ is a
CRT, there is $x\in\cT^*\setminus\overline{\bigcup_{k\ge 0}\overline{\cT}_k^*}$ such that the subtree of $\cT^*$ above $x$ has positive mass 
$c:=\mu^*(\cT_x^*)>0$. Since $\overline{\bigcup_{k\ge 0}\overline{\cT}_k^*}$ is path-connected, $\cT_x^*\cap\overline{\bigcup_{k\ge 0}\overline{\cT}_k^*}=\varnothing$,
and hence all $\overline{\mu}_k^*$ must have an atom greater than $c$, which contradicts \eqref{noatoms}. 

We conclude that $\overline{\bigcup_{k\ge 0}\overline{\cT}_k^*}=\cT^*$. Since $\cT^*$ is compact and the union is increasing in $k\ge 0$, this implies GH-convergence.
%\begin{equation} d_{\rm GH}\left(\overline{\mathcal T}^*_{k}, {\mathcal T}^*\right) < \epsilon, \label{coningh} \end{equation}
%and hence $\lim_{k \rightarrow \infty} d_{\text GH}(\overline{\mathcal T}^*_{k}, {\mathcal T}^*) =0$ a.s., i.e. the GH part of \eqref{alld1} follows. 
The convergence in the GHP sense follows since the mass measure $\overline{\mu}_k^*$ is the projection of $\mu^*$ onto $\overline{\mathcal T}_k^*$, see the proof of \cite[Corollary 23]{1} for details of this argument. 
\end{proof}

\begin{corollary}[Convergence of two-colour trees] \label{contwoc} Let $(\mathcal T_k^*, (\mathcal R_k^{(i)}, i \geq 1 ), \mu_k^*, k \geq 0)$ be the two-colour tree growth process from Algorithm \ref{twocolourmass} for some $\beta \in (0,1/2]$. Then there exist a compact CRT $(\mathcal T^*, \mu^*)$, an i.i.d. sequence $(\mathcal F^{(i)}, i \geq 1)$ of Ford CRTs of index $\beta'=\beta/(1-\beta)$ and scaling factors $(C^{(i)}, i \geq 1)$ as in Corollary \ref{algsamecor} with 
%\begin{equation*} \lim_{k \rightarrow \infty} d_{\rm GH}^\infty  \left( \left( \mathcal T_k^*, \left(\mathcal R_k^{(i)}, i \geq 1\right) \right) , \left(\mathcal T^*, \left( \left(C^{(i)}\right)^{-1}  \mathcal F^{(i)}, i \geq 1 \right) \right) \right) = 0 \quad \text{a.s.}. \end{equation*}
%Furthermore, 
$ \lim_{k \rightarrow \infty} d_{\rm GHP}^\infty  \big( \big( \mathcal T_k^*, \big(\mathcal R_k^{(i)}, i \geq 1\big), \mu_k^* \big) , \big(\mathcal T^*, \big( \big(C^{(i)}\big)^{-1} \mathcal F^{(i)}, i \geq 1 \big), \mu^* \big) \big) = 0$ a.s..
 \end{corollary}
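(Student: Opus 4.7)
The plan is to leverage the embedding provided by Algorithm \ref{twocemb} together with Proposition \ref{oversamedist}: the embedded sequence $(\overline{\mathcal{T}}_k^*,(\overline{\mathcal{R}}_k^{(i)},i\ge 1),\overline{\mu}_k^*)$ has the same joint distribution as the sequence produced by Algorithm \ref{twocolourmass}, so it suffices to exhibit the a.s.\ convergence in the embedded setup, where everything lives inside a fixed compact CRT $(\mathcal{T}^*,\mu^*)$ supplied by Theorem \ref{reccon}. In that coupling, the marked components $\overline{\mathcal{R}}_k^{(i)}$ are genuine closed subsets of $\mathcal{T}^*$, increasing in $k$, and $\overline{\mu}_k^*$ is the push-forward of $\mu^*$ under the projection $\overline{\pi}_k^*\colon\mathcal{T}^*\to\overline{\mathcal{T}}_k^*$.

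I would then handle each component of $d_{\rm GHP}^\infty$ separately. Proposition \ref{propo2} already delivers $\delta_{\rm H}(\overline{\mathcal{T}}_k^*,\mathcal{T}^*)\to 0$ and $\delta_{\rm P}(\overline{\mu}_k^*,\mu^*)\to 0$ a.s. For each fixed $i\ge 1$, the sequence $(\overline{\mathcal{R}}_k^{(i)})_{k\ge 0}$ is increasing in the compact space $\mathcal{T}^*$, so its closure $\overline{\mathcal{R}}^{(i)}:=\overline{\bigcup_{k\ge 0}\overline{\mathcal{R}}_k^{(i)}}$ is compact and $\delta_{\rm H}(\overline{\mathcal{R}}_k^{(i)},\overline{\mathcal{R}}^{(i)})\to 0$ a.s.\ by Proposition \ref{embprop}(iv). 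Since $\mathcal{R}_k^{(i)}$ is constant in $k$ between consecutive $k_m^{(i)}$, the identification $\overline{\mathcal{R}}^{(i)}=(C^{(i)})^{-1}\mathcal{F}^{(i)}$ with $(\mathcal{F}^{(i)},i\ge 1)$ a sequence of i.i.d.\ Ford CRTs of index $\beta'=\beta/(1-\beta)$ independent of $\mathcal{T}^*$ follows from Theorem \ref{fordembb} combined with Corollary \ref{algsamecor}.

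To assemble these into convergence in $d_{\rm GHP}^\infty$, I would exploit the single common embedding into $\mathcal{T}^*$: taking the identity maps in the infimum defining $d_{\rm GHP}^{[k]}$ yields, for every $k\ge 1$,
\[
d_{\rm GHP}^{[k]}\bigl((\overline{\mathcal{T}}_n^*,(\overline{\mathcal{R}}_n^{(i)})_{i\in[k]},\overline{\mu}_n^*),\,(\mathcal{T}^*,(\overline{\mathcal{R}}^{(i)})_{i\in[k]},\mu^*)\bigr)\le\max\Bigl\{\delta_{\rm H}(\overline{\mathcal{T}}_n^*,\mathcal{T}^*),\,\max_{i\in[k]}\delta_{\rm H}(\overline{\mathcal{R}}_n^{(i)},\overline{\mathcal{R}}^{(i)}),\,\delta_{\rm P}(\overline{\mu}_n^*,\mu^*)\Bigr\},
\]
which tends to $0$ a.s.\ as $n\to\infty$ for every fixed $k$. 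Each $d_{\rm GHP}^{[k]}$ is bounded by a constant depending only on $\mathrm{diam}(\mathcal{T}^*)$, which is a.s.\ finite by compactness, so dominated convergence applied to $\sum_{k\ge 1}2^{-k}d_{\rm GHP}^{[k]}$ concludes the proof.

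The main obstacle is not the convergence itself, which is essentially monotone and follows from Proposition \ref{embprop}(iv) once the common embedding is in place, but the joint distributional identification of the limits: the Hausdorff limits $\overline{\mathcal{R}}^{(i)}$ of the embedded marked components must be matched with the rescaled Ford CRTs $(C^{(i)})^{-1}\mathcal{F}^{(i)}$ of Corollary \ref{algsamecor}, jointly with $(\mathcal{T}^*,\mu^*)$ and with the right independence across $i$. This matching was already carried out through Proposition \ref{algsame} and Corollary \ref{algsamecor} via the $\mathrm{GEM}(1-\beta,-\beta)$ structure at each branch point of the stable tree, so here we only need to invoke it rather than reprove it.
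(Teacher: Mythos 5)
Your proposal is correct and takes essentially the same route as the paper, whose proof of this corollary is simply to combine Proposition \ref{propo2} (a.s.\ GHP convergence of $(\mathcal T_k^*,\mu_k^*)$ to $(\mathcal T^*,\mu^*)$ via the embedding of Algorithm \ref{twocemb} and Proposition \ref{oversamedist}) with Corollary \ref{algsamecor}; your extra assembly step, bounding each $d_{\rm GHP}^{[k]}$ through the common embedding and summing the series, is a fine way to make the passage to $d_{\rm GHP}^\infty$ explicit. One small slip: the rescaled Ford CRTs $(C^{(i)}\mathcal R^{(i)}, i\ge 1)$ are independent of the contracted stable tree $\widetilde{\mathcal T}$ (Theorem \ref{fordembb}/Corollary \ref{algsamecor}), not of $\mathcal T^*$ itself, of which they are subtrees; the statement being proved does not require the stronger claim, so this does not affect the argument.
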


\begin{proof}This is a direct consequence of Proposition \ref{propo2} and Corollary \ref{algsamecor}.
\end{proof}

It will be convenient to use the representation of 
Algorithm \ref{depstruc1}. We note the following consequences of the construction, in the light of the Proposition \ref{propo2}. 

\begin{corollary}\label{corproj} In the setting of Algorithm \ref{depstruc1}
  \begin{enumerate}
    \item[(i)] the closure $\widehat{\mathcal{T}}$ in $l^1(\mathbb N_0^2)$ of the increasing union 
      $\bigcup_{k\ge 0}\mathcal{R}(\widehat{\mathcal{T}}_k,\widehat{\Sigma}_0^{(k)},\ldots,\widehat{\Sigma}_k^{(k)})$ is compact;
    \item[(ii)] the natural projection of $\widehat{\mathcal T}$ onto the subspace spanned by $e_{k,0}$, $k\ge 0$, is the stable tree $\mathcal T$;
    \item[(iii)] the natural projection of $\widehat{\mathcal T}$ onto the subspace spanned by $e_{m,i}$, $m\ge 1$, scaled by the scaling factor $C^{(i)}$ of
      Remark \ref{pidi}, is a Ford CRT for each $i\ge 1$.
  \end{enumerate} 
\end{corollary}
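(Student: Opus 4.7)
The plan is to derive (i), (ii), and (iii) from the distributional equality of Proposition \ref{algsame} combined with the explicit coordinate structure of Algorithm \ref{depstruc1}, in which the $e_{k,0}$ directions carry the original stable tree while each $e_{m,i}$ with $i\ge 1$ carries (branches and shifts associated with) the $i$-th marked component.

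For (i), Proposition \ref{algsame} identifies the law of the reduced sequence $(\mathcal{R}(\widehat{\mathcal{T}}_k,\widehat{\Sigma}_0^{(k)},\ldots,\widehat{\Sigma}_k^{(k)}), k\ge 0)$, viewed as weighted $\infty$-marked $\mathbb{R}$-trees, with that of $(\mathcal{T}_k^*, k\ge 0)$ from Algorithm \ref{twocolourmass}, which by Corollary \ref{contwoc} converges a.s.\ to a compact limit. Since convergence is a measurable event in the Polish space $(\mathbb{T}^\infty, d_{\rm GH}^\infty)$ (Corollary \ref{33}), this transfers a.s.\ to the Algorithm \ref{depstruc1} sequence. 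As the latter is an increasing union of compact embedded $\mathbb{R}$-trees in $l^1(\mathbb{N}_0^2)$, Proposition \ref{embprop}(iv) upgrades the GH-convergence to Hausdorff convergence to the closure $\widehat{\mathcal T}$, which is thus compact.

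For (ii), let $\pi_0$ denote the $1$-Lipschitz coordinate projection of $l^1(\mathbb{N}_0^2)$ onto $\overline{\mathrm{span}}\{e_{k,0}\colon k\ge 0\}$. I would argue by induction on $k$ that $\pi_0(\mathcal{R}(\widehat{\mathcal{T}}_k,\widehat{\Sigma}_0^{(k)},\ldots,\widehat{\Sigma}_k^{(k)})) = \mathcal{R}(\mathcal{T}, \Sigma_0,\ldots,\Sigma_k) =: \mathcal{T}_k$: the base case holds because $\widehat{\mathcal T}_0 = \mathcal T$, and at step~3 of Algorithm \ref{depstruc1} the new branch $\widehat{J}_k + [0,K_k]e_{M_{k+1},I_k}$ and every subtree shift $X_{k,j} e_{M_{k+1}, I_k}$ lie in a direction with $I_k \ge 1$ and are therefore annihilated by $\pi_0$. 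Continuity of $\pi_0$ together with the compactness of $\widehat{\mathcal T}$ from (i) then yields $\pi_0(\widehat{\mathcal T}) = \overline{\bigcup_k \mathcal{T}_k} = \mathcal{T}$. Part (iii) is analogous: for fixed $i\ge 1$, the coordinate projection $\pi_i$ onto $\overline{\mathrm{span}}\{e_{m,i}\colon m\ge 1\}$ collapses every branch and shift added in a step with $I_k\ne i$ and is isometric on those added with $I_k = i$, so $\pi_i(\widehat{\mathcal T})$ is the GH-limit of $(\widehat{\mathcal R}_{k_m^{(i)}}^{(i)}, m\ge 1)$. By inspection of the scaling in Algorithm \ref{depstruc1}, multiplying by $C^{(i)} = ((P^{(i)})^\beta (D^{(i)})^{\beta'})^{-1}$ turns the branch added at step $k_m^{(i)}-1$ into a copy of $\widehat{\xi}_{k_m^{(i)}-1}$ rescaled in metric by $\widehat{\nu}_k(\widehat{J}_k)^{\beta'}$ and in mass by $\widehat{\nu}_k(\widehat{J}_k)$, which is precisely the Ford growth rule of Proposition \ref{labelledbeadford}. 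Hence $C^{(i)} \pi_i(\widehat{\mathcal T})$ is a Ford CRT of index $\beta' = \beta/(1-\beta)$, and independence across $i$ and from $\mathcal T$ is inherited from the independence of the $\widehat{\xi}_k$.

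The main obstacle is Part (i): one needs to move carefully from abstract a.s.\ GH-convergence, obtained via the distributional identification with Algorithm \ref{twocolourmass}, to Hausdorff convergence of the \emph{particular} embedded sequence in $l^1(\mathbb{N}_0^2)$, so that Proposition \ref{embprop}(iv) applies directly and yields compactness of $\widehat{\mathcal T}$ as a closed subset of $l^1(\mathbb{N}_0^2)$ rather than of an abstract limit. Once this is settled, Parts (ii) and (iii) reduce to elementary coordinate bookkeeping.
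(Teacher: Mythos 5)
Your proposal is correct and follows essentially the same route as the paper: compactness in (i) via the distributional identification of Proposition \ref{algsame} together with the a.s.\ convergence of Proposition \ref{propo2}/Corollary \ref{contwoc} and Proposition \ref{embprop}(iv), and (ii), (iii) by coordinate bookkeeping in Algorithm \ref{depstruc1} plus the Ford identification (Proposition \ref{labelledbeadford}/Corollary \ref{algsamecor}). The extra details you supply (measurable transfer of a.s.\ convergence, continuity of the coordinate projections) are exactly what the paper's terse citations leave implicit.
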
 
\begin{proof} (i) It follows from Propositions \ref{algsame} and \ref{propo2}, that the closure $\widehat{\mathcal{T}}$ in $l^1(\mathbb N_0^2)$ of the increasing union 
  is compact. (ii) holds by construction since all steps of Algorithm \ref{depstruc1} preserve this projection property for the trees $\widehat{\mathcal T}_k$, $k\ge 0$.
  (iii) holds by Corollary \ref{algsamecor} since the scaled projections of 
  $\mathcal{R}(\widehat{\mathcal{T}}_k,\widehat{\Sigma}_0^{(k)},\ldots,\widehat{\Sigma}_k^{(k)})$ are Ford tree growth processes whose $m$th growth step is for 
  $k=k_m^{(i)}$, $m\ge 1$, $i\ge 1$.
\end{proof} 

These two corollaries imply Theorem \ref{Mainresult2}.

\subsection{Branch point replacement in a stable tree, and the proof of Theorem \ref{branchrepl}} \label{bprepl}

The aim of this section is to replace branch points of the stable tree by rescaled independent Ford CRTs. Let us denote the independent Ford tree growth processes underlying  Corollary \ref{corproj}(iii) by $({\mathcal F}^{(i)}_m,m\ge 1)$, and the Ford CRTs with leaf labels
by $(\mathcal{F}^{(i)},\Omega^{(i)}_m,m\ge 1)$, $i\ge 1$, all embedded in the appropriate coordinates. Now fix $i\ge 1$, and focus on the $m$th subtree of the $i$th branch point of $\mathcal{T}$, suppose $\Sigma_n$ 
is its smallest label. In Algorithm \ref{depstruc1}, each insertion into the $i$th marked component shifts some subtrees of the $i$th branch point, and the subtree we 
consider stops being shifted at the $m$th insertion. 

The branch point replacement algorithm can be viewed as a change of order of the insertions of Algorithm \ref{depstruc1}. The $k$th step of Algorithm \ref{depstruc1}
gets $\Sigma_k$ into its final position $\widehat{\Sigma}_k^{(k)}$ by inserting one branch of a marked component. The $i$th step of the branch point replacement 
algorithm gets the smallest labelled leaf of all subtrees of the $i$th branch point into their final positions by making all insertions into the $i$th component. This 
amounts to shifting the $m$th subtree of the $i$th branch point by $\Omega^{(i)}_m$, $m\ge 1$.

\begin{algorithm}[Branch point replacement in the stable tree] \label{algbprepl}
We construct a sequence of weighted $i$-marked $\mathbb R$-trees $(\mathcal B^{(i)},(\mathcal{R}^{(1)},\ldots,\mathcal{R}^{(i)}),\mu^{(i)})$.
Let $(\mathcal B^{(0)}, \mu^{(0)})=(\mathcal T, \mu)$ be the embedded stable tree with leaves $\Sigma_n^{(0)}\!=\!\Sigma_n, n\!\ge\! 0$. 
For $i \!\geq\! 1$, conditionally given $(\mathcal B^{(i-1)},(\mathcal{R}^{(1)},\ldots,\mathcal{R}^{(i-1)}),\mu^{(i-1)},(\Sigma_n^{(i-1)},n\ge 0))$, shift the connected
components $\mathcal S_m^{(i)}$, $m\in\{0,1,2,\ldots;\infty)$, of $\mathcal B^{(i-1)}\setminus v_i^{(i-1)}$ of the $i$th branch point $v_i^{(i-1)}$: 
\begin{equation*} \mathcal B^{(i)}:= \mathcal S_\infty^{(i)}\cup\mathcal S_0^{(i)}\cup\left(v_i^{(i-1)}+\left(C^{(i)}\right)^{-1}\mathcal{F}^{(i)}\right)\cup\bigcup_{m\ge 1}\left(\left(C^{(i)}\right)^{-1}\Omega_m^{(i)}+\mathcal S_m^{(i)}\right)
\end{equation*}
where $\mathcal{F}^{(i)}$ is the independent Ford CRT with labelled Ford leaves $(\Omega_m^{(i)},m\ge 1)$. Take as $\mu^{(i)}$ the measure $\mu^{(i-1)}$ shifted with each 
of the connected components and set 
$\mathcal R^{(i)}:=\left(v_i^{(i-1)}+\left(C^{(i)}\right)^{-1}\mathcal{F}^{(i)}\right)$.
\end{algorithm}

%We consider the $\mathbb R$-tree $({\mathcal B}^*, d_{\mathcal B^*}, \rho)$ where $\mathcal B^*:=\overline{\mathcal B \cup 
%\bigcup_{i \geq 1} \mathcal F^{(i)}}$, and $d^*$ is the natural extension of $d_{\mathcal B_i}$ on $\mathcal B^*$. We obtain an $\infty$-marked $\mathbb R$-tree $\left(\mathcal B^*, \left(\mathcal F^{(i)}, i \geq 1 \right), \mu \right)$.

%\begin{remark} Algorithm \ref{algbprepl} can be made precise in the space $ l^{1}(\mathbb N_0^2)$ as in Section \ref{sec42}.
%\end{remark}

\begin{theorem}[Branch point replacement] The $\mathbb R$-trees 
  $(\mathcal B^{(i)},(\mathcal{R}^{(1)},\ldots,\mathcal{R}^{(i)},\{0\},\{0\},\ldots),\mu^{(i)})$, of Algorithm \ref{algbprepl} 
  converge in $(\mathbb{T}_{\rm w}^{\infty},d_{\rm GHP}^{\infty})$ to a limit with the same distribution as in Corollary \ref{contwoc}, i.e.
  \begin{equation*} \lim_{i \rightarrow \infty} d_{\rm GHP}^\infty  \left( \left( \mathcal B^{(i)}, \left(\mathcal R^{(1)},\ldots,\mathcal R^{(i)},\{0\},\ldots\right), \mu^{(i)}\right) , \left(\mathcal T^*, \left( \left(C^{(i)}\right)^{-1} \mathcal F^{(i)}, i \geq 1 \right), \mu^* \right) \right) = 0 \quad \text{a.s.}. \end{equation*}
\end{theorem}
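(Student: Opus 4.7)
The plan is to couple Algorithm \ref{algbprepl} with Algorithm \ref{depstruc1} on a common probability space, so that both constructions populate the same limiting tree $\widehat{\mathcal T}$ from Corollary \ref{corproj}. Concretely, I would use the same embedded stable tree $(\mathcal T,\mu,(\Sigma_n,n\ge 0))\subset l^1(\mathbb{N}_0^2)$ in the $e_{\cdot,0}$-coordinates, the same i.i.d.\ Ford CRTs $(\mathcal F^{(i)},(\Omega^{(i)}_m,m\ge 1))_{i\ge 1}$ of index $\beta'=\beta/(1-\beta)$ placed in the $e_{\cdot,i}$-coordinates with scaling $(C^{(i)})^{-1}$ as in Remark \ref{pidi}, and the same assignment of subtrees to Ford leaves. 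Feed the labelled line-breaking construction of each $\mathcal F^{(i)}$ (Proposition \ref{labelledbeadford}) into the labelled $(\beta',1-\beta')$-strings of beads $\widehat{\xi}_k$ of Algorithm \ref{depstruc1}, so that the $m$th branch added to $\widehat{\mathcal R}_k^{(i)}$ terminates at the rescaled Ford leaf corresponding to $\Omega_m^{(i)}$.

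Under this coupling, both algorithms place the $m$th subtree $\mathcal S_m^{(i)}$ of the $i$th branch point $v_i$ at the same final position in $l^1(\mathbb{N}_0^2)$, shifted by $(C^{(i)})^{-1}\Omega_m^{(i)}$ in the $e_{\cdot,i}$-direction. The only distinction is the order of operations: Algorithm \ref{algbprepl} processes one branch point at a time and shifts all its subtrees simultaneously, while Algorithm \ref{depstruc1} interleaves incremental shifts across branch points according to the leaf-insertion order of $(\Sigma_n,n\ge 0)$. Since shifts in the $e_{\cdot,i}$-coordinates for different $i$ are $l^1$-orthogonal and hence commute, a straightforward induction on $i$ shows that $\mathcal B^{(i)}\subset\widehat{\mathcal T}$ and that $\mathcal R^{(j)}$ from Algorithm \ref{algbprepl} equals the limit $\widehat{\mathcal R}^{(j)}=(C^{(j)})^{-1}\mathcal F^{(j)}+v_j$ in the coupling, for $j\in[i]$. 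Moreover, $\bigcup_{i\ge 0}\mathcal B^{(i)}$ contains every vertex ever placed by Algorithm \ref{depstruc1}, hence is dense in $\widehat{\mathcal T}$.

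Since $\widehat{\mathcal T}$ is compact by Corollary \ref{corproj}(i), Proposition \ref{embprop}(iv) yields $\delta_{\rm H}(\mathcal B^{(i)},\widehat{\mathcal T})\to 0$ a.s.. For each fixed $k\ge 1$, once $i\ge k$, the $k$-marked components $(\mathcal R^{(1)},\ldots,\mathcal R^{(k)})$ of $\mathcal B^{(i)}$ coincide with the first $k$ components of $\widehat{\mathcal T}$, so $d_{\rm GH}^{[k]}\to 0$. For $k>i$, the trivial $k$th component $\{0\}$ in $\mathcal B^{(i)}$ is compared with $\widehat{\mathcal R}^{(k)}$ whose diameter is bounded by $\mathrm{diam}(\widehat{\mathcal T})$, giving a uniform bound for $d_{\rm GH}^{[k]}$. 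The tail of the weighted sum therefore satisfies $\sum_{k>i}2^{-k}\mathrm{diam}(\widehat{\mathcal T})\le 2^{-i}\mathrm{diam}(\widehat{\mathcal T})\to 0$, so $d_{\rm GH}^{\infty}\to 0$ a.s..

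For the Prokhorov component, $\mu^{(i)}$ is the pushforward of $\mu$ under the map that shifts each stable-tree leaf by the accumulated offsets associated with the branch points above it among $v_1,\ldots,v_i$. Compactness of $\widehat{\mathcal T}$ forces the remaining shifts to have $l^1$-norm tending to $0$ uniformly, so this map converges uniformly to the embedding of $\mathcal T$ into $\widehat{\mathcal T}$. Hence $\mu^{(i)}$ converges weakly, and in Prokhorov distance within $l^1(\mathbb{N}_0^2)$, to the limit mass $\mu^*$ on $\widehat{\mathcal T}$, which by Proposition \ref{algsame} together with Propositions \ref{oversamedist} and \ref{propo2} has the distribution stated in Corollary \ref{contwoc}. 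The main obstacle is the careful bookkeeping of the coordinate shifts needed to confirm both the set-identification $\mathcal B^{(i)}\subset\widehat{\mathcal T}$ and the uniform summability of the tail shifts; everything else reduces to compactness and the already-established distributional identity.
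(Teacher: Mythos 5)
Your coupling set-up is the right one (indeed Algorithm \ref{algbprepl} is already formulated with the very Ford CRTs and scaling factors underlying Algorithm \ref{depstruc1}), but the step you rest the convergence on is false: $\mathcal B^{(i)}$ is \emph{not} a subset of $\widehat{\mathcal T}$, and the family $(\mathcal B^{(i)},i\ge 0)$ is not increasing, so Proposition \ref{embprop}(iv) does not apply. Any point $x$ of $\mathcal B^{(i)}$ lies above infinitely many branch points of the stable tree, all but finitely many of which have index $l>i$; in $\mathcal B^{(i)}$ the shifts $(C^{(l)})^{-1}\Omega^{(l)}_m$ in the coordinates $e_{\cdot,l}$ coming from these not-yet-replaced branch points are still missing, whereas they are present in $\widehat{\mathcal T}$ (and partially in $\mathcal B^{(j)}$ for $j\ge l$). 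So generic points of $\mathcal B^{(i)}$ move again at every later stage, $\mathcal B^{(i)}\not\subset\mathcal B^{(i+1)}$, $\bigcup_i\mathcal B^{(i)}$ is not contained in, let alone dense in, $\widehat{\mathcal T}$, and the subsequent Hausdorff argument collapses. For the same reason the marked components do not ``coincide'': the root of $\mathcal R^{(j)}$ inside $\mathcal B^{(i)}$ lacks the shifts from lower branch points of index $>i$, so it is a translate of, not equal to, the $j$th component of $\widehat{\mathcal T}$; the claim $d_{\rm GH}^{[k]}\to 0$ therefore still requires proof. Finally, the assertion that ``compactness forces the remaining shifts to have $l^1$-norm tending to $0$ uniformly'' is exactly the quantitative statement that needs an argument: the tail is indexed by the order of appearance of branch points, not by height along a path, and uniformity over all points is not automatic.

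What the coupling does give you exactly, and what the paper's proof runs on, is agreement of the \emph{reduced} trees: for every $k$, the tree spanned by the root and the first $k+1$ leaves, with its marked components and projected mass measure, is literally the same in both algorithms, cf.\ \eqref{brepldis}. By Proposition \ref{algsame} and Corollary \ref{contwoc} these reduced trees converge in $d_{\rm GHP}^{\infty}$ to $(\mathcal T^*,((C^{(i)})^{-1}\mathcal F^{(i)},i\ge 1),\mu^*)$, so for $k$ large all components of $\widehat{\mathcal T}$ hanging off the reduced tree have uniformly small height; the corresponding components of $\mathcal B^{(k)}$ off the same reduced tree are height-dominated by them (they differ only by omitting further Ford insertions), and since $\mu^{(k)}$ and $\widehat{\mu}$ have the same projection onto the reduced tree, the GHP distance from $\mathcal B^{(k)}$ to the reduced tree is also small; the triangle inequality then yields the theorem. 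In short: replace your containment/monotonicity claim by this reduced-tree sandwich — that is the missing idea, and it simultaneously supplies the uniform control you tried to extract from compactness alone.
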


\begin{proof} By construction, the trees spanned by the first $k$ leaves are the same in Algorithms \ref{depstruc1} and \ref{algbprepl}: 
%
%We briefly outline the proof, and show that the sequences of reduced trees in Algorithm \ref{twocolourmass} and Algorithm \ref{algbprepl} have the same distribution, i.e.
\begin{equation} \left( \mathcal R\left(\widehat{\mathcal T}_k, \Sigma^{(k)}_0, \ldots, \Sigma_k^{(k)} \right), \left(\widehat{\mathcal R}_k^{(i)}, i \geq 1\right) , \widehat{\mu}_k^*, k \geq 0\right) \,=\, \left(\mathcal B^{(k)}_k, \left(\mathcal U_k^{(i)}, i \geq 1 \right), \lambda_k, k \geq 0\right) \label{brepldis} \end{equation}
where $\mathcal B_k^{(k)}:=\mathcal R(\mathcal B^{(k)}, \Sigma_0^{(k)}, \ldots, \Sigma_k^{(k)} )$, $\mathcal U_k^{(i)}:=\mathcal R^{(i)} \cap \mathcal B_k^{(k)}$, and $\lambda_k=(\pi_k^{\mathcal B})_*\mu^{(k)}$ denotes the projected mass measure.

%The main idea of the proof is a coupling of the tree sequences from Algorithm \ref{depstruc1} and Algorithm \ref{algbprepl}. The claim will then follow from Proposition \ref{algsame}. 
%
%We start with the same stable tree $(\mathcal T, \mu)$ equipped with a leaf sequence $(\Sigma_k, k \geq 0)$ as in \eqref{findimmarg1} in both algorithms. Conditionally given the stable tree with leaves, we obtain a sequence of branch points $(v_i, i \geq 1)$ in order of appearance, and insertion times $(k_m^{(i)}-1, m \geq 1)$ into the $i$th marked component in Algorithm \ref{depstruc1}, i.e. $$\left\{k_m^{(i)}, m \geq 1\right\}=\left\{k \geq 0\colon \widehat{\mathcal R}_{k+1}^{(i)} \neq  \widehat{\mathcal R}_k^{(i)}\right\}, \quad i \geq 1.$$
%
%Each Ford CRT $\mathcal F^{(i)}$ equipped with the leaf sequence $(\Omega_{m}^{(i)}, m \geq 1)$ in Algorithm \ref{algbprepl} gives rise to a sequence of labelled bead trees $(\mathcal F_m^{(i)}, \nu_m^{(i)}, \mathcal P_m^{(i)}, m \geq 1)$ built from a sequence of i.i.d. labelled $(\beta',1-\beta')$-strings of beads $(\xi_m^{(i)}, m\geq 1)$ as in Proposition \ref{labelledbeadford}. We set $\xi_m^{(i)}=\widehat{\xi}_{k_m^{(i)}}$, $m \geq 1$, $i \geq 1$. 
%
%Then, using Lemma \ref{coag1} and Lemma \ref{coag2} for the sequence of reduced trees on the LHS of \eqref{brepldis}  in the same way as in the proof of Proposition \ref{algsame}, we concude \eqref{brepldis}.

By Proposition \ref{algsame} and Corollary \ref{contwoc}, we have convergence of reduced trees to the claimed limit. In particular, for all $\varepsilon>0$, there is
$k_0\ge 0$ such that for all $k\ge k_0$,
$$d_{\rm GHP}^\infty  \left( \left(\mathcal B^{(k)}_k, \left(\mathcal U_k^{(i)}, i \geq 1 \right), \lambda_k\right), \left(\widehat{\mathcal T}, \left( \left(C^{(i)}\right)^{-1} \mathcal F^{(i)}, i \geq 1 \right), \widehat{\mu} \right) \right)<\varepsilon/3.$$
But this is only possible if all connected components of $\widehat{\mathcal{T}}\setminus\mathcal B^{(k)}_k$ have height less than $2\varepsilon/3$. By construction, the 
components of $\mathcal B^{(k)}\setminus\mathcal B^{(k)}_k$ are bounded in height by the corresponding components of height less than $2\varepsilon/3$. Since 
$\widehat{\mu}$
and $\mu^{(k)}$ have the same projection onto $\widehat{\mathcal T}_k=\mathcal B^{(k)}_k$, we conclude that also
$$d_{\rm GHP}^\infty  \left( \left(\mathcal B^{(k)}_k, \left(\mathcal U_k^{(i)}, i \geq 1 \right), \lambda_k\right), \left(\mathcal B^{(k)}, \left(\mathcal R^{(1)}, \ldots,\mathcal R^{(k)},\{0\},\ldots \right), \mu^{(k)} \right) \right)<2\varepsilon/3.$$
By the triangle inequality, this completes the proof.
\end{proof}

This formalises and proves Theorem \ref{branchrepl}.

\section{Discrete two-colour tree growth processes} \label{Sec6}

%We study scaling limits of discrete trees where we view discrete trees as $\mathbb R$-trees with unit edge lengths.

Marchal \cite{18} introduced a tree growth model related to the stable tree. Specifically, he built a sequence of discrete trees $(T_n, n \geq 0)$, which we view as rooted $\mathbb R$-trees with unit edge lengths, equipped with the graph distance, i.e. the distance between two vertices $x,y \in T_n$ is the number of edges between $x$ and $y$. 

%% MARCHAL'S ALGORITHM

\begin{algorithm}[Marchal's algorithm] \label{Marchal}
Let $\beta \in (0,1/2]$. We grow discrete trees $ T_n$, $n \geq 0$, as follows. 

\begin{itemize}
\item[0.]Let $T_0$ consist of a root $\rho$ and a leaf $\Sigma_0$, connected by an edge. \end{itemize}
Given $T_n$, with leaves $\Sigma_0,\ldots,\Sigma_n$, 
\begin{itemize}
\item[1.] distribute a total weight of $n+\beta$ by assigning $(d-3)(1-\beta)+1-2\beta$ to each vertex of degree $d \geq 3$ and $\beta$ to each edge of $T_n$; select a vertex or an edge in $T_n$ at random according to these weights;
\item[2.] if an edge is selected, insert a new vertex, i.e. replace the selected edge by two edges connecting the new vertex to the vertices of the selected edge; proceed with the new vertex as the selected vertex;
\item[3.] in all cases, add a new edge from the selected vertex to a new leaf $\Sigma_{n+1}$ to form $T_{n+1}$.
\end{itemize}
\end{algorithm}
\noindent Strengthening a result by Marchal \cite{18}, Curien and Haas \cite{11} showed that the sequence of trees $(T_n, n \geq 0)$ has the stable tree $\mathcal T$ of index $\beta$ as its a.s.\ scaling limit, in the following strong sense: 
\begin{equation*}
\lim \limits_{n \rightarrow \infty} n^{-\beta} T_n=\mathcal T \quad \text{a.s. in the Gromov-Hausdorff topology}.
\end{equation*}

The trees $(\mathcal F_m, m \geq 1)$ of a Ford tree growth process can also be obtained as scaling limits of a discrete tree growth process, the so-called Ford alpha-model. Both Marchal's model related to the stable tree and Ford's alpha-model are contained as special cases in the alpha-gamma-model studied in \cite{10}. 
\begin{definition}[The alpha-gamma-model] Let $\alpha \in [0,1]$ and $\gamma \in (0,\alpha]$. We grow discrete trees $T_n, n \geq 1$:
\begin{itemize}
\item[0.] Let $T_1$ consist of a root $\rho$ and a leaf $\Sigma_1$, connected by an edge. 
\end{itemize}
Given $T_n$, with leaves $\Sigma_1,\ldots,\Sigma_n$, 
\begin{itemize}
\item[1.] distribute a total weight of $n-\alpha$ by assigning $(d-2)\alpha -\gamma$ to each vertex of $T_n$ of degree $d \geq 3$, $1-\alpha$ to each external edge of $T_n$, and $\gamma$ to each internal edge of $T_n$; select a vertex or an edge in $T_n$ at random according to these weights;
\item[2.] if an edge is selected, insert a new vertex, i.e. replace the selected edge by two edges connecting the new vertex to the vertices of the selected edge; proceed with the new vertex as the selected vertex;
\item[3.] in all cases, add a new edge from the selected vertex to a new leaf $\Sigma_{n+1}$ to form $T_{n+1}$.
\end{itemize}
\end{definition}

Note that the case $\gamma=1-\alpha=\beta$ gives Marchal's model, Algorithm \ref{Marchal}, while the case $\gamma=\alpha=\beta'$ was introduced by Daniel Ford in his 
thesis \cite{9} and is referred to as Ford's alpha-model. 
In the latter, branch points get assigned weight zero after their creation, i.e. the trees in Ford's alpha model are binary.

%\begin{definition}[The $(\alpha,1-\alpha)$-model] Consider a binary tree growth process $(T_n, n \geq %1)$ for some $\alpha \in (0,1)$ as follows. Let $T_1$ be a single edge of length one, connecting the root %$\rho$ with a leaf $\Omega_1$. For $n \geq 1$, conditionally given $T_n$, do the following.
%\begin{itemize}
%\item[(i)] Assign weight $\alpha$ to each of the $n-1$ internal edges of $T_n$, and weight $1-\alpha$ %to each of the $n$ external edges of $T_n$.

%\item[(ii)] Select an edge $a_n \rightarrow c_n$ of $T_n$ proportionally to these weights.

%\item[(iii)] Create $T_{n+1}$ from $T_n$ by splitting the selected edge $a_n \rightarrow c_n$ into three %edges $a_n \rightarrow b_n$, $b_n \rightarrow c_n$ and $b_n \rightarrow \Omega_{n+1}$, where $b_n$ %is a new binary branch point, and $\Omega_{n}$ a new leaf.
%\end{itemize}
%\end{definition}

%It was shown in \cite{2} that the sequence $(T_n, n \geq 1)$ has a distributional scaling limit in the %following sense.
 
\begin{lemma}[Convergence of reduced trees] Let $(T_n, n \geq 1)$ be an alpha-gamma tree-growth process for some $\alpha \in (0,1)$ and $\gamma \in (0, \alpha]$. For $k \geq 1$, consider the reduced tree $\mathcal R \left( T_n, \Sigma_1, \ldots, \Sigma_k \right)$ spanned by the root and the first $k$ leaves, equipped with the graph distance on $T_n$, i.e.\ for any edge $a \rightarrow b$ in $\mathcal R \left( T_n, \Sigma_1, \ldots, \Sigma_k \right)$  the number of edges between $a$ and $b$ in $T_n$.
Then there exists an $\mathbb R$-tree $\mathcal R_k$ such that
\begin{equation*} \lim_{n \rightarrow \infty} n^{-\gamma} \mathcal R \left( T_n, \Sigma_0, \ldots, \Sigma_k\right)= \mathcal R_k \quad \text{a.s.} \end{equation*}
in the Gromov-Hausdorff topology. Furthermore, conditionally given that $T_k$ has a total of $k+\ell$ edges, i.e. that $T_k$ has $\ell$ branch points, the edge lengths of $\mathcal R_k$ are given by $L_k V_k^\gamma D_k$ where
\begin{equation*} D_k \sim \rm{Dirichlet}\left( ({1-\alpha})/{\gamma}, \ldots, ({1-\alpha})/{\gamma}, 1, \ldots, 1\right) \end{equation*}
with a weight of $(1-\alpha)/\gamma$ for each external edge, and weight $1$ for each internal edge, and 
\begin{equation*}
L_k \sim {\rm{ML}}(\gamma,\ell\gamma+k(1-\alpha)), \quad V_k \sim {\rm{Beta}}\left(k(1-\alpha)+\ell\gamma, (k-1)\alpha-\ell\gamma\right)
\end{equation*}
are conditionally independent. 
\end{lemma}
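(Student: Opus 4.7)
The plan is to analyse the super-edge lengths of $\mathcal R(T_n, \Sigma_1, \ldots, \Sigma_k)$ conditionally on the shape of $T_k$ via a two-scale Pólya urn argument, following the approach used for the alpha-gamma-model in \cite{10}. Condition on the shape of $T_k$ (with $\ell$ branch points and $k+\ell$ edges). Since no new leaf among $\Sigma_1,\ldots,\Sigma_k$ is inserted past step $k$, the shape of $\mathcal R(T_n,\Sigma_1,\ldots,\Sigma_k)$ remains that of $T_k$ for every $n\ge k$, and only the length $N_j^{(n)}$ of each super-edge $j$ (the number of $T_n$-edges constituting it) evolves. A direct bookkeeping of the alpha-gamma weights shows that the total weight $w_j^{(n)}$ of super-edge $j$ decomposes as $w_j^{(n)} = c_j + \alpha M_j^{(n)}$, where $c_j = 1-\alpha$ for external super-edges and $c_j = \gamma$ for internal ones, and $M_j^{(n)}$ counts the cumulative number of insertions into super-edge $j$. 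Regarding super-edges together with the remainder colour $*$ (carrying all side branches off the reduced tree and the branch points of $T_k$ itself) as $k+\ell+1$ colours, each super-edge $j$ pick adds $\alpha$ to $w_j$ and $1-\alpha$ to $w_*$, whereas an $*$-pick adds $1$ to $w_*$ --- a generalised triangular Pólya urn with eigenvalues $\alpha$ (multiplicity $k+\ell$) and $1$.

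Classical Pólya urn limit theorems (Athreya--Karlin embedding or Janson's functional limit theorems) give $n^{-\alpha} w_j^{(n)} \to W_j$ almost surely, with the individual super-edge weights evolving as independent pure-birth processes in the continuous-time embedding (they interact only through the $*$ colour). Inside each super-edge the length $N_j^{(n)}$ increases by $1$ only when a constituent edge rather than an interior branch point is selected, with conditional probability $N_j^{(n)}\gamma/w_j^{(n)}$ given a super-edge $j$ pick (up to lower-order contributions from the outermost external edge). Switching to the local clock $t = M_j^{(n)}$ this becomes the reinforcement ODE $dN/dt = N\gamma/(c_j + \alpha t)$ with solution $N \sim ((c_j + \alpha t)/c_j)^{\gamma/\alpha}$; combined with the global scaling $M_j^{(n)} \sim n^\alpha W_j/\alpha$, this yields $n^{-\gamma}N_j^{(n)} \to A_j := (W_j/c_j)^{\gamma/\alpha}$ almost surely. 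I would make the local step rigorous via a martingale argument tracking $N_j^{(n)}(w_j^{(n)})^{-\gamma/\alpha}$, which together with the joint control of the super-edge weights also delivers the Gromov-Hausdorff convergence of the rescaled reduced trees.

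It remains to identify the joint law of $(A_j)$ in the product form $L_k V_k^\gamma D_k$ stated in the lemma, which is a repackaging of the Pólya urn limit via two applications of Proposition \ref{DIRML}. Applied to the full $(k+\ell+1)$-colour Dirichlet$(c_1,\ldots,c_{k+\ell},w_*^{(k)})$ structure of the urn (with $w_*^{(k)} = (k-1)\alpha - \ell\gamma$) and parameter $\beta = \gamma$, the proposition rewrites the joint limit --- which in its ``RHS form'' consists of $(X_j^\gamma S^{(j)})$ with independent Mittag-Leffler factors $S^{(j)} \sim \mathrm{ML}(\gamma, c_j)$ --- in the ``LHS form'' $S\cdot(Y_j)$ with $S \sim \mathrm{ML}(\gamma,k-\alpha)$ and $Y \sim \mathrm{Dirichlet}(c_j/\gamma, w_*^{(k)}/\gamma)$. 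Restricting to the first $k+\ell$ (super-edge) coordinates and splitting the resulting Dirichlet into the proportion $V_k \sim \mathrm{Beta}(\sum c_i, w_*^{(k)}) = \mathrm{Beta}(k(1-\alpha)+\ell\gamma,(k-1)\alpha - \ell\gamma)$ of the sum and the internal Dirichlet$(c_1/\gamma,\ldots,c_{k+\ell}/\gamma)$ proportions --- which is exactly Dirichlet$((1-\alpha)/\gamma,\ldots,(1-\alpha)/\gamma,1,\ldots,1) = D_k$ --- and a second application of Proposition \ref{DIRML} in two-colour form with $\beta=\gamma$ produces the scaling $L_k V_k^\gamma$ with $L_k \sim \mathrm{ML}(\gamma,k(1-\alpha)+\ell\gamma)$, the power $V_k^\gamma$ appearing precisely as the $X_1^\gamma$ factor of the proposition. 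The required conditional independence of $L_k$, $V_k$ and $D_k$ is built into the independence assertions of Proposition \ref{DIRML}.

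The main obstacle will be the local-scale analysis within each super-edge: because the edge-vs-branch-point selection probability depends nonlinearly on the current state, turning the fluid heuristic $N_j \sim M_j^{\gamma/\alpha}$ into an almost sure limit and coupling it cleanly with the global Pólya urn requires a martingale or pure-birth embedding argument rather than a direct application of classical balanced-urn convergence. The continuous-time embedding is also the cleanest route for the distributional identification, since it decouples the dominant (eigenvalue $1$) and subdominant (eigenvalue $\alpha$) components and delivers the joint Pólya-urn limit in exactly the form needed for the two applications of Proposition \ref{DIRML} above.
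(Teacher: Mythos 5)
First, note that the paper does not actually prove this lemma: it is imported from the alpha-gamma literature, and the paper explicitly says (for the analogous Proposition \ref{distwoccon}) that the proof uses ``exactly the same techniques'' as \cite[Propositions 21 and 22]{10} and \cite[Proposition 14]{1}. Your overall architecture -- fix the shape of $T_k$, track super-edge weights $w_j^{(n)}=c_j+\alpha M_j^{(n)}$ with $c_j=1-\alpha$ (external) or $\gamma$ (internal), analyse the number of constituent edges within each super-edge, and then repackage the limit law via Proposition \ref{DIRML} -- is the same nested-urn strategy as in those references, and your final Mittag-Leffler/Dirichlet bookkeeping (with $\theta=\sum_j c_j+w_*^{(k)}=k-\alpha$, $w_*^{(k)}=(k-1)\alpha-\ell\gamma$) is essentially right: the two applications of Proposition \ref{DIRML} do convert the joint law $\big(X_j^\gamma S^{(j)}\big)_{j\le k+\ell}$, $X\sim{\rm Dirichlet}(c_1,\dots,c_{k+\ell},w_*^{(k)})$, $S^{(j)}\sim{\rm ML}(\gamma,c_j)$ independent, into $L_kV_k^\gamma D_k$ (with the small correction that $V_k\sim{\rm Beta}(k(1-\alpha)+\ell\gamma,(k-1)\alpha-\ell\gamma)$ is produced by the second application of Proposition \ref{DIRML}, not as the proportion of the sum of the ${\rm Dirichlet}(c_j/\gamma)$ coordinates, which would have both parameters divided by $\gamma$).

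The genuine gap is in the middle step, where you pass from the triangular urn to that ``RHS form''. Your claim $n^{-\gamma}N_j^{(n)}\to A_j=(W_j/c_j)^{\gamma/\alpha}$ is false: given the times of insertions into super-edge $j$, the edge count is not asymptotically a deterministic function of $W_j$. Indeed, $N_j^{(n)}-1$ evolves, along the path-insertion clock, exactly as the table count of a $(\gamma/\alpha,c_j/\alpha)$ Chinese restaurant process (the probability of an edge split at the $s$th path pick is $\gamma N_j/(c_j+\alpha(s-1))$ for internal super-edges, and the external case matches the same CRP after the $K=N_j-1$ shift), so the rescaled limit carries a nondegenerate ${\rm ML}(\gamma/\alpha,c_j/\alpha)$ factor; the fluid ODE only identifies the exponent, and your martingale $N_j(w_j)^{-\gamma/\alpha}$ converges to a random, not constant, limit. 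This also makes paragraph two inconsistent with paragraph three, where independent ${\rm ML}(\gamma,c_j)$ factors suddenly appear. Moreover, the joint law of the triangular-urn limits $(W_j)$ is not a Dirichlet structure, so the asserted ${\rm Dirichlet}(c_1,\dots,c_{k+\ell},w_*^{(k)})$ input to Proposition \ref{DIRML} cannot be read off from $(W_j)$ as written. The missing argument (which is how \cite{10,1} proceed) is a two-level CRP/P\'olya decomposition: partition the total weight into the descendant classes of the super-edges -- this is a classical P\'olya urn, giving class proportions $\hat X\sim{\rm Dirichlet}(c_1,\dots,c_{k+\ell},w_*^{(k)})$; within class $j$, path-versus-offpath picks form an $(\alpha,c_j)$-CRP, giving an independent ${\rm ML}(\alpha,c_j)$ diversity; within the path, the $(\gamma/\alpha,c_j/\alpha)$-CRP above gives an independent ${\rm ML}(\gamma/\alpha,c_j/\alpha)$ factor; composing (as in the first bullet of Lemma \ref{coag1}) yields $n^{-\gamma}N_j^{(n)}\to\hat X_j^\gamma S^{(j)}$ with $S^{(j)}\sim{\rm ML}(\gamma,c_j)$ independent of each other and of $\hat X$, after which your Proposition \ref{DIRML} computation, and the Gromov--Hausdorff convergence of the finitely many rescaled super-edges, go through.
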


Note that in the stable case, the total length is a $V_k \sim {\rm{Beta}}((k+\ell)(1-\alpha), (k-1-\ell)\alpha-\ell)$ proportion of $L_k \sim {\rm{ML}}(1-\alpha,(k+\ell)(1-\alpha))$, and is uniformly distributed amongst the $k+\ell$ edges. In Ford's model, we have $\ell=k-1$, and we distribute the \enquote{full} length $L_k \sim {\rm{ML}}(\alpha, k-\alpha)$ according to a Dirichlet variable $D_k$ with a parameter of $1/\alpha-1$ for each external edge and parameter $1$ for each internal edge.

In a similar manner, we can obtain the two-colour trees $(\mathcal T_k^*, (\mathcal R_k^{(i)}, i \geq 1))$, $k \geq 0$, as a.s. scaling limits of the following discrete tree growth process in the space of $\infty$-marked $\mathbb R$-trees with unit edge lengths. %The marked discrete trees 

\begin{definition}[The discrete two-colour model] \label{deftwoc} 
Let $\beta \in (0,1/2]$. We grow discrete two-colour trees $(T^*_n, (R_n^{(i)}, i \geq 1))$, $n \geq 0$, as follows.
\begin{itemize}
\item[0.] Let $T_0$ consist of a root $\rho$ and a leaf $\Sigma_0$ connected by an edge, let $R_0^{(i)}=\{\rho\}$, $i \geq 1$, and $r_0=0$.
\end{itemize}
Given $(T^*_n, (R_n^{(i)}, i \geq 1))$, with leaves $\Sigma_0,\ldots,\Sigma_n$ and $r_n=\#\{i\geq 1\colon R_n^{(i)}\neq \{\rho\}\}$, 
\begin{itemize}
\item[1.] distribute a total weight of $n+\beta$ by assigning
$\beta$ to each unmarked and each internal marked edge of $T_n$, and $1-2\beta$ to each external marked edge of $T_n$; select an edge in $T_n$ at random according to these weights;
\item[2.] if the selected edge is unmarked, replace it by two unmarked edges connecting the new vertex to the vertices of the selected edge and set $I_n=r_n+1$; if the selected edge is a marked edge of  $R_n^{(i)}$ for some $i\geq 1$, replace it by two marked edges and set $I_n=i$; proceed with the new vertex as the selected vertex;
\item[3.] add a new degree-2 vertex, connect it to the selected vertex by a marked edge, and to a new leaf $\Sigma_{n+1}$ by an unmarked edge; add the marked edge to $R_n^{(I_n)}$ to form $R_{n+1}^{(I_n)}$; set $R_{n+1}^{(i)}=R_n^{(i)}$ for $i\neq I_n$. \pagebreak
\end{itemize}
\end{definition}

\begin{prop}[Convergence of the discrete two-colour model] \label{distwoccon} Consider the discrete two-colour tree growth process $(T_n^*, (R_n^{(i)}, i \geq 1), n \geq 0)$ from Definition \ref{deftwoc}, which we view as a sequence of $\infty$-marked $\mathbb R$-trees with unit edge lengths. For all $k \geq 0$, let
$\mathcal R( T_n^*, (R_n^{(i)},i \geq 1)  , \Sigma_0, \ldots, \Sigma_k)$
denote the reduced tree spanned by the root $\rho$ and the leaves $\Sigma_0, \ldots, \Sigma_k$. Then \begin{equation*} \lim_{n \rightarrow \infty} n^{-\beta}\mathcal R \left(T_n^*, \left(R_n^{(i)}, i \geq 1 \right) , \Sigma_0, \ldots, \Sigma_k \right) = \left(\mathcal T_k^*, \left(\mathcal R_k^{(i)}, i \geq 1 \right) \right) \quad \text{a.s.} \end{equation*} 
with respect to the distance $d_{GH}^\infty$ defined in \eqref{GHK}, where $(\mathcal T_k^*,(\mathcal R_k^{(i)}, i \geq 1),k\ge 0)$ is as in Algorithm \ref{twocolour}.

Conditionally given that $T_k^*$ has $r_k$ marked components $R_k^{(i)} \neq \{\rho\}$ with $d_1-2, \ldots, d_{r_k}-2$ leaves, the distribution of the edge lengths of $(\mathcal T_k^*, (\mathcal R_k^{(i)}, i \geq 1 ) ) $ is given by $S_k^* D_k$ where $S_k^* \sim {\rm{ML}}(\beta, \beta+k)$ and 
\begin{equation*} D_k \sim \rm{Dirichlet}\left( 1, \ldots, 1, 1/\beta-2, \ldots, 1/\beta-2\right)
\end{equation*}
with weight $1$ for each unmarked edge and each internal marked edge, and weight $1/\beta-2$ for each external marked edge, are conditionally independent.
\end{prop}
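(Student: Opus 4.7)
The plan is to identify the discrete two-colour model with Marchal's algorithm on the contracted tree and Ford's alpha-model within each marked component, then analyse edge lengths for $n\geq k$ as a P\'olya urn. First, I would verify at the shape level that contracting each marked component $R_n^{(i)}$ to a single vertex in $\widetilde{T}_n^*$ reproduces Marchal's dynamics for the stable tree of index $\beta$: a marked component with $m$ leaves has total weight $m(1-2\beta)+(m-1)\beta=(m-1)(1-\beta)+(1-2\beta)$, matching the Marchal weight $(d-3)(1-\beta)+(1-2\beta)$ on a degree-$d=m+2$ branch point. Conditionally on the contracted shape, the selection between internal and external marked edges within a component occurs with probabilities proportional to $\beta$ and $1-2\beta$, i.e.\ to $\alpha$ and $1-\alpha$ for $\alpha=\beta/(1-\beta)$, reproducing Ford's alpha-model. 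Thus the shape of $T_k^*$ has the distribution described in Proposition \ref{starstrings} for $(\mathcal T_k^*,(\mathcal R_k^{(i)},i\geq 1))$.

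For the length analysis, observe that for $n\geq k$ the shape of $\mathcal R(T_n^*,\Sigma_0,\ldots,\Sigma_k)$ remains equal to $T_k^*$, and only the number $X_e^{(n)}$ of $T_n^*$-sub-edges along each reduced edge $e$ changes. The key observation is that splits preserve the classification of the leaf-touching sub-edge: for a reduced edge $e$ originally externally marked at step $k$, its path in $T_n^*$ consists at all later times of exactly one external sub-edge (adjacent to the leaf of the marked component) of weight $1-2\beta$, together with $X_e^{(n)}-1$ internal marked sub-edges of weight $\beta$. Writing $Y_e^{(n)}$ for the total weight of $e$, one obtains $Y_e^{(n)}=\beta X_e^{(n)}$ on unmarked or originally internal marked reduced edges and $Y_e^{(n)}=\beta X_e^{(n)}+(1-3\beta)$ on originally external marked reduced edges. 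A case check of the two split scenarios within an externally marked reduced edge shows that selecting $e$ always increases $Y_e$ by exactly $\beta$ while leaving the other $Y_f$ unchanged. Since the selection probability of any sub-edge of $e$ is $Y_e^{(n)}/(n+\beta)$, the family $(Y_e^{(n)})_e$ evolves as a standard P\'olya urn with common increment $\beta$.

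Classical P\'olya urn convergence then yields almost surely the joint limit of the proportions $Y_e^{(n)}/\sum_f Y_f^{(n)}$ to a ${\rm Dirichlet}(Y_e^{(k)}/\beta)_e={\rm Dirichlet}(1,\ldots,1,1/\beta-2,\ldots,1/\beta-2)$ random vector $D_k$, while a non-negative martingale argument based on the identity $\mathbb{E}[\sum_e Y_e^{(n+1)}\mid\mathcal{F}_n]=\sum_e Y_e^{(n)}(n+2\beta)/(n+\beta)$ and the Stirling asymptotic $\Gamma(n+\beta)/\Gamma(n+2\beta)\sim n^{-\beta}$ shows $n^{-\beta}\sum_e Y_e^{(n)}\to \beta S_k^*$ a.s.\ for a random variable $S_k^*$ independent of $D_k$. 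Moment computations (or identification with the known convergence $n^{-\beta}\widetilde{T}_n^*\to\widetilde{\mathcal{T}}$ of Marchal's algorithm, cf.\ \cite{11}) identify $S_k^*\sim{\rm ML}(\beta,\beta+k)$. Since the offset $(1-3\beta)$ is $o(n^\beta)$, the convergence $n^{-\beta}X_e^{(n)}\to S_k^* D_k(e)$ follows a.s.\ for each $e$, yielding a.s.\ GH-convergence in $d_{\rm GH}^\infty$ to $(\mathcal{T}_k^*,(\mathcal{R}_k^{(i)},i\geq 1))$ with the stated joint distribution of edge lengths. The main obstacle is isolating the correct P\'olya urn structure: working with sub-edge counts $X_e^{(n)}$ alone would naively suggest different growth rates $n^{w(e)}$ for the two edge types, and only after passing to the total weights $Y_e^{(n)}$ does the uniform $n^\beta$ scaling and Dirichlet-with-ML limit structure become transparent.
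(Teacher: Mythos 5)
Your proposal is correct and takes essentially the approach the paper intends: the paper omits the details, pointing to the urn/Chinese-restaurant techniques used for the alpha-gamma and $(\alpha,\theta)$ growth models, and your reduction to a P\'olya-type urn on the total weights $Y_e^{(n)}$ of the reduced edges (common increment $\beta$ in every split scenario, Dirichlet limit of the proportions with parameters $Y_e^{(k)}/\beta$, Mittag--Leffler limit of the $n^{-\beta}$-rescaled total via the martingale normalisation $\Gamma(n+\beta)/\Gamma(n+2\beta)$) is exactly that technique, with the passage from sub-edge counts $X_e^{(n)}$ to weights $Y_e^{(n)}$ making the uniform $n^{\beta}$ scaling transparent. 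In a full write-up only two minor points deserve a line each: the independence of $S_k^*$ and $D_k$ (the allocation among reduced edges at path-insertion times is a standard P\'olya urn whose draws are independent of the times at which path insertions occur, which alone determine $S_k^*$), and the convention for marked components created after step $k$, which meet the reduced tree in single points and must be identified with trivial components in the $d_{\rm GH}^{\infty}$ limit.
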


The proof of Proposition \ref{distwoccon} is based on exactly the same techniques as the proof of the corresponding result for the alpha-gamma model, cf. \cite[Propositions 21 and 22]{10}, and the result for $(\alpha, \theta)$-tree growth processes, cf. \cite[Proposition 14]{1}. We omit the details.

\begin{remark} One can obtain the mass measures $\mu_k^*$, $k \geq 0$, as the scaling limits of the empirical measures on the leaves of $T_n$, projected onto the reduced trees, using the same methods as in \cite{1}. In particular, each edge equipped with limiting relative projected subtree masses is a rescaled $(\beta, \theta)$-string of beads where $\theta=\beta$ for internal marked and unmarked edges, and $\theta=1-2\beta$ for external marked edges. It can be shown directly that these strings of beads are independent of each other and of the mass split on $\mathcal T_k^*$, which has distribution ${\rm{Dirichlet}}( \beta, \ldots, \beta, 1-2\beta, \ldots, 1-2\beta)$, with parameter $\beta$ for each internal marked and unmarked edge, and parameter $1-2\beta$ for each external marked edge of $\mathcal T_k^*$, as in Proposition \ref{starstrings}.
\end{remark}

\appendix

\section{Appendix} \label{appen}

We present the proofs postponed from earlier parts of this paper.

\subsection{Coupling proofs of Proposition \ref{masssel2}, Theorem \ref{mainresult} and Proposition \ref{oversamedist}}\label{appen1}

The proofs of Proposition \ref{masssel2}, Theorem \ref{mainresult} (i) and (ii), and of Proposition \ref{oversamedist} are based on coupling arguments and are quite similar to one another. We present the proof of Theorem \ref{mainresult}(i) first.

\begin{proof}[Proof of Theorem \ref{mainresult}(i)]
Recall the constructions of $(\mathcal T_k^*, \mu_k^*)$ in Algorithm \ref{twocolourmass} and $(\widetilde{\mathcal T}_k, \widetilde{\mu}_k)$ in \eqref{ttilde}. We couple $(\mathcal T_k, \mu_k, k \geq 0)$ to $(\mathcal T_k^*, \mu_k^*, k \geq 0)$ and identify the distribution as required for Algorithm \ref{masssel}:

\begin{itemize}

\item We couple the initial $(\beta, \beta)$-strings of beads to be equal 
$(\mathcal T_0, \mu_0)=(\widetilde{\mathcal T}_0, \widetilde{\mu}_0)=(\mathcal T_0^*, \mu_0^*)$.

\item Supposing that $(\mathcal T_k, \mu_k)=(\widetilde{\mathcal T}_k, \widetilde{\mu}_k)$ for some $k \geq 0$, set $J_k:=\widetilde{J}_k=[J_k^*]_{\sim}$, $\xi_k=\xi_k^{(2)}$, and 
\begin{equation*} Q_k:=\left(1-\gamma_k\right) {\mu_k^*({J_k^*})}/{\widetilde{\mu}_k(\widetilde{J}_k)},
\end{equation*}
where we recall that $(1-\gamma_k) \sim \rm{Beta}(\beta, 1-2\beta)$ is the independent scaling factor for $\xi_k^{(2)}$ in the construction of a $\beta$-mixed string of 
beads from $\xi_k^{(1)}$, $\xi_k^{(2)}$ and $\gamma_k$, as at the beginning of Section \ref{sec4}. If the selected atom $J_k^*$ is an element of a marked component,
$Q_k$ is the proportion of the mass of $J_k^*$ added to this marked component in the form of a rescaled independent $(\beta,\beta)$-string of beads $\xi_k^{(2)}$, 
while a proportion of $1-Q_k$ is split into an unmarked rescaled $(\beta, 1-2\beta)$-string of beads $\xi_k^{(1)}$.
\end{itemize}
Since $\widetilde{J}_k$ was sampled from $\widetilde{\mu}_k$, $J_k$ is sampled from $\mu_k$, as required for Algorithm \ref{masssel}.
It remains to check that the scaling factor $Q_k {\widetilde{\mu}_k}({\widetilde{J}_k})$ induced by Algorithm \ref{twocolourmass}, applied to the $(\beta, \beta)$-string of beads $\xi_k=\xi^{(2)}_k$ that is used in the attachment procedure, is as needed for Algorithm \ref{masssel}. We work conditionally given the event that $\widetilde{\mathcal T}_k$ has $\ell$ branch points $\widetilde{v}_j$ of sizes $d_j={\rm deg}(\widetilde{v}_j, \widetilde{\mathcal T}_k)$, $j \in [\ell]$, respectively.

\begin{itemize} 
\item If $\widetilde{J}_k \neq \widetilde{v}_i$ for $i \in [\ell]$, then $J_k=\widetilde{J}_k=J_k^*$, and a new branch point $\widetilde{J}_k$ of degree deg$(\widetilde{J}_{k}, \widetilde{\mathcal T}_{k+1})=3=1+\text{deg}(\widetilde{J}_{k}, \widetilde{\mathcal T}_{k})$ is created. The mass ${\mu}_k^*(J_k^*)=\widetilde{\mu}_k(\widetilde{J}_k)$ is split by the independent random variable $\gamma_k \sim \text{Beta}(1-2\beta, \beta)$ into a branch point weight $\widetilde{\mu}_{k+1}(\widetilde{J}_{k})=\gamma_k \widetilde{\mu}_k(\widetilde{J}_k)$ and the isometric copy of the $(\beta, \beta)$-string of beads $\xi_k^{(2)}=\xi_k$, scaled by $\widetilde{\mu}_k(\widetilde{J}_k)(1-\gamma_k)=\widetilde{\mu}_k(\widetilde{J}_k)Q_k$ where $Q_k\sim \text{Beta}(\beta, 1-2\beta)$
is conditionally independent of $\xi_k$ and $(\mathcal T_k, \mu_k, J_k)$ given ${\rm deg}(J_k, \mathcal T_k)=2$, as required.

\item If $\widetilde{J}_k = \widetilde{v}_i$ of degree deg$(\widetilde{v}_i,\widetilde{\mathcal T}_k)=d_i$ for some $i \in [\ell]$, we first select an edge $E_k^*$ of $\mathcal R_k^{(i)}$ from $\mu_k^*$ restricted to ${\mathcal R_k^{(i)}}$. Conditionally given that $E_k^*$ has been selected, we choose $J_k^* \in E_k^*$ according to $(\beta, \theta)$-coin tossing sampling, where $\theta=\beta$ if $E_k^*$ is an internal edge of $\mathcal R_k^{(i)}$, and $\theta=1-2\beta$ otherwise. By Proposition \ref{cointoss} and Proposition \ref{Diri}(iii)-(iv), conditionally given $J_k^* \in E_k^*$, the relative mass split in $\mathcal R_k^{(i)}$ is
\begin{equation*} {\rm Dirichlet}\left(\beta, \ldots, \beta, 1-2\beta, \ldots, 1-2\beta, \beta, 1-\beta, \theta\right) \end{equation*}
with parameter $\beta$ for each non-selected internal edge of $\mathcal R_k^{(i)}$, $1-2\beta$ for each non-selected external edge of $\mathcal R_k^{(i)}$, $\beta$ for the part of $E_k^*$ closer to the root, $\theta$ for the other part of $E_k^*$, and $1-\beta$ for the atom $J_k^*$. In any case (i.e. no matter if $E_k^*$ is internal or external), we get by Proposition \ref{Diri}(i)-(ii) that, conditionally given $\widetilde{J}_k=\widetilde{v}_i$, 
\begin{equation*} \mu_k^*(J_k^*)/\widetilde{\mu}_k(\widetilde{J}_k) \sim \text{Beta} \left(1-\beta, (d_i-2)(1-\beta)\right) \end{equation*}
is independent of $\widetilde{\mu}_k(\widetilde{J}_k)$, as the internal relative mass split in $\mathcal R_k^{(i)}$ is independent of its total mass, see Proposition \ref{starstrings} and Proposition \ref{Diri}(ii). Overall, still conditionally given $\widetilde{J}_k = \widetilde{v}_i$, we have that
\begin{equation*} \mu_k^*\left(J_k^*\right)\left(1-\gamma_k\right)= \left(1-\gamma_k\right) \left( \mu_k^*\left(J_k^*\right) \widetilde{\mu}_k \left(\widetilde{J}_k\right)^{-1}\right) \widetilde{\mu}_k \left(\widetilde{J}_k\right) =  Q_k\widetilde{\mu}_k\left({\widetilde{J}_k}\right) \end{equation*}
where $Q_k \sim \text{Beta}(\beta, d_i(1-\beta)-1)$, as is easily checked using Proposition \ref{Diri}(i)-(iii). Note that $Q_k$ is also conditionally independent of $\widetilde{\mu}_k(\widetilde{J}_k)$ given $\widetilde{J}_k=\widetilde{v}_i$ and ${\rm deg}(\widetilde{v}_j,\widetilde{\mathcal T}_k)=d_i$. This is due to the fact that the mass split within $\mathcal R_k^{(i)}$, and the mass split between the edges of $\widetilde{\mathcal T}_k$ and its branch points are conditionally independent given there are $\ell$ branch points $\widetilde{v}_j$ with ${\rm deg}(\widetilde{v}_j, \widetilde{\mathcal T}_k)=d_j$, $j \in [\ell]$. \qedhere
\end{itemize} 
\end{proof}

\begin{proof}[Proof of Proposition \ref{oversamedist}] Similarly to the proof of Theorem \ref{mainresult}(i), let us couple so that the initial weighted $\infty$-marked $\mathbb R$-trees coincide, i.e. let $(\mathcal T_0^*, (\mathcal R_0^{(i)}, i \geq 1), \mu^*_0):=(\overline{\mathcal T}^*_0, (\overline{\mathcal R}_0^{(i)}, i\geq 1), \overline{\mu}^*_0)$. Then, $(\mathcal T_0^*, \mu_0^*)$ is a $(\beta, \beta)$-string of beads, and $\mathcal R_0^{(i)}=\{\rho\}$ for all $i \geq 1$, as required for  Algorithm \ref{twocolourmass}.

Supposing that $(\mathcal T^*_k, (\mathcal R_k^{(i)}, i \geq 1), \mu_k^*)=(\overline{\mathcal T}^*_k, (\overline{\mathcal R}_k^{(i)}, i\geq 1), \overline{\mu}^*_k)$ for 
some $k\geq 0$, set $J_k^*:=\overline{J}_k^*$, $I_k: =\overline{I}_k$, and if $\overline{J}_k^*=\check{X}_{\mathbf{i}j}$, take as $(E_k^+, \mu_k^+)$ the scaled copy of 
$\xi_{\mathbf{i}j}$ embedded in $\mathcal T^*$ and $R_k^+=[[\overline{J}_k^*, \overline{\Omega}_k]]$. We need to check that the induced update step from 
$(\mathcal T_k^*,(\mathcal R_k^{(i)}, i \geq 1), \mu_k^*)$ to $(\mathcal T_{k+1}^*,(\mathcal R_{k+1}^{(i)}, i \geq 1), \mu_{k+1}^*)$ is as required in Algorithm 
\ref{twocolourmass}. Selecting $\overline{J}_k^*$ in Algorithm \ref{twocemb}, we first select an edge $\overline{E}_k^*$ of $\mathcal T_k^*$  proportionally to 
$\overline{\mu}_k^*(\overline{E}_k^*)$, and perform $(\beta, 1-2\beta)$-coin tossing if $\overline{E}_k^*$ is an external marked edge, and uniform sampling from 
$\overline{\mu}_k \restriction_{\overline{E}_k^*}$ otherwise, and since $\mu_k^*=\overline{\mu}_k^*$, this means that $J_k^*$ is sampled precisely as required for Algorithm 
\ref{twocolourmass}, and in particular we have $\mu_k^*(J_k^*)=\overline{\mu}_k^*(\overline{J}_k^*)$. Furthermore, $(E_k^+, R_k^+,\mu_k^+)$ is an independent $\beta$-mixed string of beads, as it is obtained from ${\xi}_{\mathbf{i}j}$ and the transition kernel $\kappa({\xi}_{\mathbf{i}j}, \cdot)$ of Lemma \ref{condis}. 
Therefore, 
\begin{equation*} \left(\left(\mathcal T^*_k, \left(\mathcal R_k^{(i)}, i \geq 1\right), \mu_k^*\right), \left(\mathcal T^*_{k+1}, \left(\mathcal R_{k+1}^{(i)}, i \geq 1\right), \mu_{k+1}^*\right)\right)
\end{equation*}
has the same distribution as 
$\big((\overline{\mathcal T}^*_k, (\overline{\mathcal R}_k^{(i)}, i \geq 1), \mu_k^*), (\overline{\mathcal T}^*_{k+1}, (\overline{\mathcal R}_{k+1}^{(i)}, i\geq 1), \overline{\mu}^*_{k+1})\big)$,
which proves Proposition \ref{oversamedist}, as both Algorithm \ref{twocolourmass} and Algorithm \ref{twocemb} specify Markov chains.
\end{proof}

\begin{proof}[Proof of Proposition \ref{masssel2}] Construction \eqref{findimmarg1} and Algorithm \ref{masssel} use the same notation. To avoid confusion in this proof, 
we denote the sequence of trees of \eqref{findimmarg1} by $({\mathcal T}'_k, {\mu}'_k, k \geq 0)$. We will couple the construction of $(\mathcal T_k, \mu_k, k \geq 0)$ 
of Algorithm \ref{masssel} to the given sequence $({\mathcal T}'_k, {\mu}'_k, k \geq 0)$, specifically identifying the sequences $(J_k, k \geq 0)$ of attachment points, 
and $(Q_k, k \geq 0)$ of update random variables. 

The coupling is as follows. Set $(\mathcal T_0, \mu_0)=({\mathcal T}'_0, {\mu}'_0)$, and, given $(\mathcal T_k, \mu_k)=({\mathcal T}'_k, {\mu}'_k)$ for some $k \geq 0$, set $J_k:={J}'_k$ where 
\begin{equation*} {J}'_k:=\text{arg}\inf\left\{d\left(\rho, x\right)\colon x \in {\mathcal T}'_{k+1} \setminus {\mathcal T}'_k \right\}, 
\end{equation*}
let $Q_k=1-{\mu}'_{k+1}({J}'_k)/{\mu}'_{k}({J}'_k)$, and 
$\xi_k:=\big( {\mu}'_{k+1}({\mathcal T}'_{k+1} \setminus {\mathcal T}'_{k})^{-\beta} {\mathcal T}'_{k+1} \setminus {\mathcal T}'_{k},\ {\mu}'_{k+1}({\mathcal T}'_{k+1} \setminus {\mathcal T}'_{k})^{-1} {\mu}'_{k+1}\restriction_{{\mathcal T}'_{k+1} \setminus {\mathcal T}'_{k}}\big)$.

By Proposition \ref{betastring}, $({\mathcal T}'_0, {\mu}'_0)$ is a $(\beta, \beta)$-string of beads, as required in Algorithm \ref{masssel}. Now assume that $(\mathcal T_k, \mu_k)=({\mathcal T}'_k, {\mu}'_k)$ for some $k \geq 0$ with the distribution claimed in Proposition \ref{masssel2}. Denote the connected components of $\mathcal T \setminus {\mathcal T}'_k$ by $\overline{\mathcal S}_j^{(i)\downarrow}$, $j \ge 1$, $i \ge 1$, completed by their root vertices $\rho_i \in  {\mathcal T}'_k$, $i \ge 1$, respectively. Note that ${\mu}'_k(\rho_i)=\sum_{j \ge 1} {\mu}(\overline{\mathcal S}_j^{(i)\downarrow})$.

Since we sample $\Sigma_{k+1}$ from the mass measure $\mu$ on $\mathcal T$, the conditional probability that $\Sigma_{k+1} \in \overline{\mathcal S}_j^{(i)\downarrow}$, 
given $(\mathcal T, \mu)$, $({\mathcal T}'_k, {\mu}'_k)$ and $(\overline{\mathcal S}_j^{(i)\downarrow}, j \ge 1, i \ge 1)$, is 
$\mu( \overline{\mathcal S}_j^{(i)\downarrow})={\mu}_k'(\rho_i)({\mu}(\overline{\mathcal S}_j^{(i)\downarrow})/{\mu}'_k(\rho_i))$, i.e.\ we can sample ${J}'_k$ in two 
steps: first, select one of the atoms $\rho_i$ of ${\mathcal T}'_k$ proportionally to ${\mu}'_k(\rho_i)$, and second, select one of the components 
$\overline{\mathcal S}_j^{(i)\downarrow}$ with root $\rho_i$ proportionally to relative mass $\mu(\overline{\mathcal S}_j^{(i)\downarrow})/{\mu}'_k(\rho_i)$. 
By Theorem \ref{masspart}(ii) and Proposition \ref{usefulPD}, we further note that conditionally given 
$(\mathcal{T}_k^\prime,\mu_k^\prime)$ with $\mu_k^\prime=\sum_{i\ge 1}\mu_k^{\prime}(\rho_i)\delta_{\rho_i}$, we have  
$(\mu(\mathcal S_j^{(i)})/{\mu}'_k(\rho_i), j \geq 1)^{\downarrow} \sim {\rm PD}(1-\beta,(d_i-3)(1-\beta)+1-2\beta)$ with $d_i={\rm deg}(\rho_i,\mathcal{T}_k^\prime)$, 
$i\ge 1$, independently.

We have ${J}'_k=\rho_i$ with probability ${\mu}'_k(\rho_i)$, and hence $J_k$ is sampled from $\mu_k$, as required in Algorithm \ref{masssel}. 
By Theorem \ref{masspart}(iii), the weighted $\mathbb R$-trees 
$$\left(\mu\left(\overline{\mathcal S}_j^{(i)\downarrow}\right)^{-\beta}\overline{\mathcal S}_j^{(i)\downarrow},
        \mu\left(\overline{\mathcal S}_j^{(i)\downarrow}\right)^{-1}\mu \restriction_{\overline{\mathcal S}_j^{(i)\downarrow}}\right),\qquad j\ge 1,\ i\ge 1,$$ 
are independent copies of $(\mathcal T, \mu)$, i.e. conditionally given $\Sigma_{k+1} \in \overline{\mathcal S}_j^{(i)\downarrow}$, the sampling procedure of 
$\Sigma_{k+1} \in \overline{\mathcal S}_j^{(i)\downarrow}$ from 
$\mu(\overline{\mathcal S}_j^{(i)\downarrow})^{-1}\mu \restriction_{\overline{\mathcal S}_j^{(i)\downarrow}}$ is like sampling $\Sigma_0 \in \mathcal T$ from $\mu$. 
Hence, $\xi_k$ is an independent $(\beta, \beta)$-string of beads, as required in Algorithm \ref{masssel}.

Let us consider the distribution of $Q_k$. Conditionally given ${\rm deg}({J}'_k, {\mathcal T}'_k)=2$, $\Sigma_{k+1}$ is a leaf of a connected component 
$\overline{\mathcal S}_j^{(i)\downarrow}$ of $\mathcal T \setminus {\mathcal T}'_k$ with root $\rho_i={J}'_k$, which is chosen independently and proportionally to 
relative mass $\mu(\overline{\mathcal S}_j^{(i)\downarrow})/{\mu}'_k(\rho_i)$. As noted above, the relative mass partition above ${J}'_k$ is 
PD$(1-\beta,-\beta)$, i.e. by Proposition \ref{usefulPD}, $Q_k \sim {\rm Beta}(\beta, 1-2\beta)$, as required in Algorithm \ref{masssel}.

Conditionally given ${\rm deg}({J}'_k, {\mathcal T}'_k)=d$ for some $d \geq 3$, $\Sigma_{k+1}$ is a leaf of a connected component 
$\overline{\mathcal S}_j^{(i)\downarrow}$ of $\mathcal T \setminus {\mathcal T}'_k$ with root $\rho_i={J}'_k$. Then  
the relative mass partition of the connected components $\mathcal T \setminus {\mathcal T}'_k$ with root $\rho_i$ is PD$(1-\beta,(d-3)(1-\beta)+1-2\beta)$ where we note 
that ${J}'_k$ must have been selected $d-2$ times up step $k$ in order to obtain ${\rm deg}({J}'_k, {\mathcal T}'_k)=d$. Therefore, by Proposition \ref{usefulPD}, 
conditionally given ${\rm deg}({J}'_k, {\mathcal T}'_k)=d$, $Q_k \sim {\rm Beta}(\beta, (d-3)(1-\beta)+1-2\beta)$, as required in Algorithm \ref{masssel}. Also, by Proposition \ref{usefulPD}, $Q_k$ is conditionally independent of ${\mu}'_k({J}'_k)$ given ${\rm deg}({J}'_k, {\mathcal T}'_k)=d$. The mass split in 
$({\mathcal T}'_{k+1}, {\mu}'_{k+1})$ is easily found from Proposition \ref{Diri}, cf. the proof of Proposition \ref{starstrings} for a similar elementary Dirichlet
argument.\end{proof}

\begin{proof}[Proof of Theorem \ref{mainresult}(ii)]

Recall that the ingredients in Algorithm \ref{GH} to construct the sequence on the RHS of \eqref{emstlb2} are the Mittag-Leffer Markov chain $(S_k, k \geq 0)$, attachment points $(J_k, k \geq 0)$, and %an independent sequence of 
i.i.d.\ random variables $B_k$, $k \geq 0$, with $B_1 \sim {\rm Beta}(1, 1/\beta-2)$. We recover these ingredients from the random variables incorporated in the construction of the LHS of \eqref{emstlb2} via the following coupling. 
\begin{itemize}
\item Set $S_0=S_0^*$, i.e. $S_0 \sim {\rm ML}(\beta, \beta)$ is the length of the initial $(\beta, \beta)$-string of beads $(\mathcal T_0^*, \mu_0^*)=(\widetilde{\mathcal T}_0, \widetilde{\mu}_0)$. For $k \geq 0$, set $S_k$ equal to the total length of $\mathcal T_k^*$, i.e. $S_k=S_k^*$.

\item Set $(J_k, k \geq 0)=(\widetilde{J}_k, k \geq 0)$.

\item Set $(B_k, k \geq 0)=(B_k^*, k \geq 0)$, where $B_k^*$ denotes the length split between the unmarked and the marked part of the independent $\beta$-mixed string of beads $(E_k^+, R_k^+, \mu_k^+)$ built from $\xi_k^{(1)}$, $\xi_k^{(2)}$ and $\gamma_k$. By Remark \ref{betamixedlength}, $(B_k, k \geq 0)$ is an i.i.d. sequence with $B_1 \sim {\rm Beta}(1, 1/\beta-2)$, as required.
\end{itemize}

We will show that 
\begin{align} \left( \widetilde{\mathcal T}_{k}, \left( \widetilde{W}_{j}^{(i)}, 0 \leq j \leq k, i \geq 1\right) \right)  \,{\buildrel d \over =}\,   \left({\mathcal T}_{k}, \left( {W}_j^{(i)}, 0 \leq j \leq k, i \geq 1\right) \right) \label{toshow} \end{align}
for all $k \geq 0$, which implies \eqref{emstlb2} as the families of trees $(\widetilde{\mathcal T}_k, k \geq 0)$ and $(\mathcal T_k, k \geq 0)$ are consistent, i.e.\ given the tree $\mathcal T_k$ at step $k$, we can recover the previous steps $\mathcal T_{k-1}, \ldots, \mathcal T_0$ of the tree sequence. 

We prove \eqref{toshow} by induction on $k$. For $k=0$ the claim is trivial. Suppose that \eqref{toshow} holds up to $k$.
%, in particular that 
%\begin{equation}  \left(\widetilde{\mathcal T}_{k}, \left( \widetilde{W}_k^{(i)}, i \geq 1 \right) \right)  \,{\buildrel d \over =}\,  \left({\mathcal T}_{k}, \left( {W}_k^{(i)}, i \geq 1\right)\right). \label{indass} \end{equation}
In the tree growth process $(\widetilde{\mathcal T}_k, k \geq 0)$ edge and branch point selection is based on masses, whereas in $(\mathcal T_k, k \geq 0)$ edges are selected based on length and branch points based on weights. We first prove the correspondence of the selection rules, where we work conditionally given the shape of the tree $\widetilde{\mathcal T}_k=\mathcal T_k$, in particular conditionally given that $\mathcal T_k^*$ has $\ell$ marked components $\mathcal R_k^{(i)} \neq \{\rho\}$, $i \in [\ell]$, of sizes $d_i-2$, $i \in  [\ell]$, respectively, or, in other words, that $\widetilde{\mathcal T}_k$ has $\ell$ branch points $\widetilde{v}_i, i \in [\ell]$, of degrees $d_i, i \in [\ell],$ respectively, and a total of $k+\ell+1$ edges. 
By (i) and Proposition \ref{masssel2}, the total mass split in $\widetilde{\mathcal T}_k$ is
\begin{equation}
\left(\widetilde{\mu}_k\left(E_k^{(1)}\right), \ldots, \widetilde{\mu}_k\left(E_k^{(k+\ell+1)}\right), \widetilde{\mu}_k\left(\widetilde{v}_1\right), \ldots, \widetilde{\mu}_k\left(\widetilde{v}_\ell\right)\right)   \sim \text{Dirichlet}\left(\beta, \ldots, \beta, w(d_1), \ldots, w(d_\ell)\right) \label{massintikay}
\end{equation}
where $w(d_i)=(d_i-3)(1-\beta)+1-2\beta$ for $i \in [\ell]$. We denote the edge lengths and the branch point weights in $\widetilde{\mathcal T}_k$ by 
\begin{equation} \widetilde{L}_k=\left(\widetilde{ L}_k^{(1)}, \ldots, \widetilde{ L}_k^{(k+\ell+1)} \right), \quad \widetilde{W}_k=\left(\widetilde{W}_k^{(1)}, \ldots, \widetilde{W}_k^{(\ell)}\right), 
\end{equation} 
and use corresponding notation in $\mathcal T_k$. We will show that the joint distributions of edge lengths, weights and the selected attachment points $\widetilde{J}_k$ and $J_k$ in $\widetilde{\mathcal T}_k$ and $\mathcal T_k$, respectively, are the same in Algorithm \ref{twocolourmass} and Algorithm \ref{GH}, i.e. 
for any $k \geq 0$, and any continuous and bounded function $f\colon \mathbb R^{k+2\ell+1} \rightarrow \mathbb R$,
\begin{align}
\mathbb E \left [ f\left( \widetilde{L}_k, \widetilde{W}_k\right) \mathbbm{1}_{\{\widetilde{J} _k\in E_k^{(j)}\}} \right] &= \mathbb E \left [ f\left( {L}_k, {W}_k\right) \mathbbm{1}_{\{{J} _k\in E_k^{(j)}\}} \right]\qquad\mbox{for any $j \in [k+\ell+1]$,}  \label{equalexp1}\\
\mbox{and}\qquad\mathbb E \left [ f\left( \widetilde{L}_k, \widetilde{W}_k\right) \mathbbm{1}_{\{\widetilde{J}_k = v_j \}} \right] &= \mathbb E \left [ f\left( {L}_k, {W}_k\right) \mathbbm{1}_{\{{J}_k=v_j\}} \right]\qquad\ \mbox{for any $j \in [\ell]$.} 
\label{equalexp2} \end{align}
 Then, together with the coupling, this completes the induction step. It remains to prove \eqref{equalexp1} and \eqref{equalexp2}.

\begin{itemize} 

\item \textit{Proof of \eqref{equalexp1}}.
Fix some $j \in [k+\ell+1]$, and consider the LHS of \eqref{equalexp1} first. Conditioning on $\widetilde{J}_k \in E_k^{(j)}$, and using the mass split \eqref{massintikay} and Proposition \ref{Diri}(iv), we obtain
\begin{equation*}
\mathbb E \left [ f\left( \widetilde{L}_k, \widetilde{W}_k \right) \mathbbm{1}_{\{\widetilde{J} _k\in E_k^{(j)}\}} \right] = \frac{\beta}{k+\beta} \mathbb E \left [ f\left( \widetilde{L}_k, \widetilde{W}_k \right) \middle| {\widetilde{J} _k\in E_k^{(j)}} \right].
\end{equation*}
By Proposition \ref{Diri}(iv) and \eqref{massintikay}, conditionally given ${\widetilde{J} _k\in E_k^{(j)}}$, the distribution of the mass split 
\begin{equation} \left(X_k^{(1)}, \ldots, X_k^{(j-1)}, X_k^{(j)}, X_k^{(j+1)}, \ldots, X_k^{(k+\ell+1)}, X_k^{(k+\ell+2)}, \ldots, X_k^{(k+2\ell+1)}\right) \label{xmasses} \end{equation}  with $X_k^{(i)}=\widetilde{\mu}_k(E_k^{(i)})$ for $i \in [k+\ell+1]$ and $X_k^{(i)}=\widetilde{\mu}_k(\widetilde{v}_{i-(k+\ell+1)})$ for $i \in [k+2\ell+1]\setminus [k+\ell+1]$ is 
\begin{equation}
\text{Dirichlet}\left(\beta, \ldots, \beta, 1+\beta, \beta, \ldots, \beta, w(d_1), \ldots, w(d_\ell)\right). \label{xnotation}
\end{equation}
 
Furthermore, still conditionally given ${\widetilde{J} _k\in E_k^{(j)}}$, $\widetilde{J}_k$ is an atom of mass $\widetilde{\mu}_k(\widetilde{J}_k)=:U_k^{(j)} X_k^{(j)}$ sampled from the rescaled independent $(\beta, \beta)$-string of beads related to $E_k^{(j)}$, splitting $E_k^{(j)}$ into two edges $E_{k+1}^{(j)}$ and $E_{k+1}^{(k+\ell+3)}$ of masses $\widetilde{\mu}_k(E_{k+1}^{(j)})=:U_k^{(-)} X_k^{(j)}$ and $\widetilde{\mu}_{k}(E_{k+1}^{(k+\ell+3)})=:U_k^{(+)} X_k^{(j)}$, respectively. By Proposition \ref{cointoss}, the relative mass split on $E_k^{(j)}$ is given by \begin{align*} \left(U_k^{(-)}, U_k^{(j)}, U_k^{(+)} \right) \sim \text{Dirichlet}\left(\beta, 1-\beta, \beta\right),
\end{align*}
and is independent of $X_k^{(j)}=\widetilde{\mu}_k(E_k^{(j)})$, since, by (i) and Proposition \ref{masssel2}, the $(\beta,\beta)$-string of beads
\begin{equation*} \left(\left(X_k^{(j)} \right)^{-\beta} E_k^{(j)}, \left( X_k^{(j)}\right)^{-1} \widetilde{\mu}_k \restriction_{E_k^{(j)}}\right) \end{equation*}
is independent of the scaling factor $X_k^{(j)}$. We obtain the refined mass split
\begin{equation}
\left(\overline{X}_k^{(1)}, \ldots, \overline{X}_k^{(j-1)}, \overline{X}_k^{(-)},  \overline{X}_k^{(j)},\overline{X}_k^{(+)}, \overline{X}_k^{(j+1)}, \ldots, \overline{X}_k^{(k+2\ell+1)}\right) \label{xbar} \end{equation}
where $\overline{X}_k^{(i)}=X_k^{(i)}$, $i \in [k+2\ell+1] \setminus \{j, +, -\}$ and $\overline{X}_k^{(-)}= U_k^{(-)}X_k^{(j)}$, $\overline{X}_k^{(j)}= U_k^{(j)}X_k^{(j)}$ and $\overline{X}_k^{(+)}= U_k^{(+)}X_k^{(j)}$. By Proposition \ref{Diri}(iii), the distribution of \eqref{xbar} is
\begin{equation*}\text{Dirichlet}\left(\beta, \ldots, \beta, \beta,1-\beta, \beta, \beta, \ldots, \beta, w(d_1), \ldots, w(d_\ell)\right).
\end{equation*}
Furthermore, the atom $\widetilde{J}_k$ induces the two rescaled independent $(\beta, \beta)$-strings of beads
\begin{equation*}
\left(\left(X_k^{(-)}\right)^{-\beta} E_{k+1}^{(j)},\left(X_k^{(-)}\right)^{-1} \widetilde{\mu}_k \restriction_{E_{k+1}^{(j)}}\right), \quad \left(\left(X_k^{(+)} \right)^{-\beta} E_{k+1}^{(k+\ell+3)},\left(X_k^{(+)}\right)^{-1} \widetilde{\mu}_k \restriction_{E_{k+1}^{(k+\ell+3)}}\right)
\end{equation*}
where $X_k^{(i)}=U_k^{(i)}X_k^{(j)}, i \in \{-, +\}$, i.e. the lengths of the edges $E_{k+1}^{(j)}$ and $E_{k+1}^{(k+\ell+3)}$ are given by \begin{equation*} \widetilde{L}_{k+1}^{(j)}=\left(U_k^{(-)}X_k^{(j)}\right)^{\beta} M_k^{(-)}, \quad \widetilde{L}_{k+1}^{(k+\ell+3)}=\left(U_k^{(+)}X_k^{(j)}\right)^{\beta} M_k^{(+)}, \end{equation*} respectively, where $M_k^{(i)} \sim \text{ML}(\beta,\beta)$, $i \in \{-,+\}$, are independent, see Proposition \ref{cointoss}. Conditionally given $\widetilde{J}_k \in E_k^{(j)}$, by \eqref{toshow} and Corollary \ref{Massesaslengths}, the weights $\widetilde{W}_k^{(i)}$ of $\widetilde{\mathcal T}_k$ are therefore
\begin{equation*}  \widetilde{W}_k^{(i-(k+\ell+1))}=\left(X_k^{(i)}\right)^\beta M_k^{(i)}, \quad i \in [k+2\ell+1]\setminus [k+\ell+1], \end{equation*}
where the lengths $\widetilde{L}_k^{(i)}$ are  
\begin{equation*} \widetilde{L}_k^{(i)}= \begin{cases} \left(X_k^{(i)}\right)^\beta M_k^{(i)},  & i \in [k+\ell+1] \setminus [j], \\ \left(U_k^{(-)}X_k^{(j)}\right)^\beta M_k^{(-)}+\left(U_k^{(+)}X_k^{(j)}\right)^\beta M_k^{(+)}, & i=j,
\end{cases} \end{equation*}
for independent random variables 
\begin{equation*} M_k^{(i)} \sim \begin{cases} \text{ML}(\beta, \beta), & i \in [k+\ell+1] \setminus \{j\} \cup \{-, +\},\\ 
\text{ML}(\beta, w(d_{i-(k+\ell+1)})), & i \in [k+2\ell+1] \setminus [k+\ell+1]. \end{cases} \end{equation*}
Also note that, by the definition of $(S_k^*, k \geq 0)$ and the attachment procedure,
\begin{equation*}
S_{k+1}^*-S_k^*= \widetilde{\mu}_k\left(\widetilde{J}_k\right)^\beta M_k^* = \left(U_k^{(j)}X_k^{(j)}\right)^{\beta} M_k^* \end{equation*}
where $M_k^*\sim {\rm ML}(\beta,1-\beta)$ is the length of the attached, independent $\beta$-mixed string of beads. We conclude by Proposition \ref{DIRML} and Proposition \ref{Diri}(i)-(ii) that 
$S_k^*=A_k^*S_{k+1}^* \sim {\rm ML}(\beta, k+\beta)$
where $S_{k+1}^* \sim {\rm ML}\left(\beta, k+1+\beta\right)$ and $A_k^* \sim {\rm Beta}(k/\beta+2, 1/\beta-1)$ are independent, and that, conditionally given $\widetilde{J}_k \in E_k^{(j)}$, we have $(\widetilde{L}_k, \widetilde{W}_k) = S_{k+1}^*  A_k^*  {\widetilde{Z}}_k$ where 
\begin{align*} \widetilde{Z}_k=\left({\widetilde{Z}}_k^{(1)}, \ldots, {\widetilde{Z}}_k^{(j-1)},{\widetilde{Z}}_k^{(j)},{\widetilde{Z}}_k^{(j+1)}, \ldots, {\widetilde{Z}}_k^{(k+\ell+1)}, {\widetilde{Z}}_k^{(k+\ell+2)}, \ldots, {\widetilde{Z}}_k^{(k+2\ell+1)}\right) \end{align*}
is independent of $S_{k+1}^*$ and $A_k^*$, and has a ${\rm Dirichlet}(1, \ldots, 1,2,1,\ldots,1, w(d_1)/\beta, \ldots, w(d_\ell)/\beta)$ distribution. Hence,
\begin{align*}
\mathbb E \left [ f\left( \widetilde{L}_k, \widetilde{W}_k \right) \mathbbm{1}_{\{\widetilde{J} _k\in E_k^{(j)}\}} \right] \nonumber =\frac{\beta}{k+\beta} \mathbb E \left [ f\left(S_k^* {\widetilde{Z}}_k\right) \middle| \widetilde{J}_k \in E_k^{(j)} \right].
\end{align*}

We now consider the RHS of \eqref{equalexp1}. We condition on $J_k \in E_k^{(j)}$, and apply Lemma \ref{GH1} and Proposition \ref{Diri}(iv) to obtain
\begin{align*}
\mathbb E &\left[ f\left( L_k^{(1)}, \ldots, L_k^{(k+\ell+1)}, W_k^{(1)}, \ldots, W_k^{(\ell)}\right) \mathbbm{1}_{\{J_k \in E_k^{(j)}\}}\right] =\frac{\beta}{k+\beta} \mathbb E \left[ f\left( S_k Z_k\right) \middle| {J}_k \in E_k^{(j)} \right] 
\end{align*}
where $S_k \sim \text{ML}(\beta, k+\beta)$ is independent of 
\begin{align*} Z_k=\left(Z_k^{(1)}, \ldots, Z_k^{(j-1)}, Z_k^{(j)}, Z_k^{(j+1)}, \ldots, Z_k^{(k+\ell+1)},  Z_k^{(k+\ell+2)}, \ldots,  Z_k^{(k+2\ell+1)} \right)  \end{align*}
and $Z_k \sim \text{Dirichlet}(1, \ldots,1,2,1,\ldots, 1,w(d_1)/\beta, \ldots, w(d_\ell)/\beta)$. Hence, we conclude \eqref{equalexp1}.

\vspace{0.5cm}
%% Left-hand side Equalexp2

\item \textit{Proof of \eqref{equalexp2}}. Consider now the LHS of \eqref{equalexp2}. We follow the lines of the proof of \eqref{equalexp1}. Conditionally given $\widetilde{J}_k = \widetilde{v}_j$, the mass split \eqref{xmasses} has distribution
\begin{align}
\text{Dirichlet}\left(\beta, \ldots, \beta, w(d_1), \ldots, w(d_{j-1}),1+w(d_j), w({d_j+1}),\ldots, w(d_\ell)\right). \label{massintikay2}
\end{align}
By (i), the mass $\widetilde{\mu}_k(\widetilde{v}_j)$ is split by an independent vector $\left(Q_k, 1-Q_k\right) \sim {\rm Dirichlet}\left(\beta, w(d_j)+1-\beta\right)$ into a new unmarked $(\beta, \beta)$-string of beads 
\begin{equation*} \left( \widetilde{\mu}_{k+1}\left(E_{k+1}^{(k+l+2)}\right)^{-\beta}  E_{k+1}^{(k+\ell+2)}, \widetilde{\mu}_{k+1}\left(E_{k+1}^{(k+\ell+2)}\right)^{-1} \widetilde{\mu}_{k+1}\restriction_{E_{k+1}^{(k+\ell+2)}} \right) \end{equation*}
attached to $\widetilde{v}_j$ and the mass $\widetilde{\mu}_{k+1}(\widetilde{v}_j)$, i.e. 
$\widetilde{\mu}_{k+1}(E_{k+1}^{(k+\ell+2)})=Q_k\widetilde{\mu}_{k}(\widetilde{v}_j)$, $\widetilde{\mu}_{k+1}(\widetilde{v}_j) = (1-Q_k) \widetilde{\mu}_{k}(\widetilde{v}_j)$.

By (i), Proposition \ref{starstrings} and the induction hypothesis, we get $\widetilde{W}_{k+1}^{(i-(k+\ell+2))}=\widetilde{\mu}_{k+1}(\widetilde{v}_{i-(k+\ell+2)})^{\beta}M_{k+1}^{(i)}$ for $i \in [k+2\ell+2]\setminus [k+\ell+2], i \neq j+(k+\ell+2)$ and $\widetilde{L}_{k+1}^{(i)}=\widetilde{\mu}_{k+1}\left(E_{k+1}^{(i)}\right)^{\beta} M_{k+1}^{(i)}$ for $i \in [k+\ell+2]$ where the random variables $M_{k+1}^{(i)} \sim {\rm ML}(\beta, \beta)$ for $i  \in [k+\ell+2]$, $M_{k+1}^{(i)} \sim {\rm ML}(\beta, w(d_{i-(k+\ell+2)}))$ for $i \in [k+2\ell+2] \setminus [k+\ell+2], i \neq j+(k+\ell+2)$ and $M_{k+1}^{(i)} \sim {\rm ML}(\beta, w(d_{i-(k+\ell+2)})+(1-\beta))$ for $i=j+(k+\ell+2)$, are independent of the mass split \eqref{massintikay2}. By Proposition \ref{starstrings},
$\widetilde{W}_{k}^{(j)} = B_k^*  \widetilde{W}_{k+1}^{(j)}$, 
where $B_k^* \sim {\rm Beta}((d_j-2)(1/\beta-1) , 1/\beta-2)$ is independent of $\widetilde{W}_{k+1}^{(j)}$. Note that $\widetilde{W}_{k+1}^{(i)}=\widetilde{W}_{k}^{(i)}$ for $i \neq j$, $\widetilde{L}_{k+1}^{(i)}=\widetilde{L}_{k}^{(i)}$ for $i \in [k+\ell+1]$, and hence, %\begin{equation*}
$S_{k+1}^*-S_k^*=\widetilde{L}_{k+1}^{(k+\ell+2)}+ \left(\widetilde{W}_{k+1}^{(j)}-\widetilde{W}_{k}^{(j)}\right)$.
%\end{equation*}
By Proposition \ref{DIRML}, $S_k^*=A_k^*S_{k+1}^*$ where $S_{k+1}^* \sim {\rm ML}\left(\beta, k+1+\beta\right)$ and $A_k^* \sim {\rm Beta}(k/\beta+2, 1/\beta-1)$ are independent. The rest of the proof of \eqref{equalexp2} is analogous to the proof of \eqref{equalexp1}.\vspace{-0.6cm}
\end{itemize}
%The statement in Theorem \ref{mainresult}(ii) now directly follows from \eqref{equalexp1}-\eqref{equalexp2} and the coupling. 
\end{proof}

\subsection{Proof of Theorem \ref{embford}} \label{sec71}

We first consider the evolution of marked subtrees $(\mathcal R_k^{(i)},k \geq 1)$, $i \geq 1$. Recall the notation in Algorithm \ref{twocolourmass}. Given that $\mathcal R_k^{(i)}$ has size $m$, i.e. $k_m^{(i)} \leq k \leq k_{m+1}^{(i)}-1$, we denote the edges and the edge lengths of $\mathcal R_k^{(i)}$ by
\begin{equation}
E_{m,i}=\left(E_{m,i}^{(1)}, \ldots, E_{m,i}^{(2m-1)}\right), \quad L_{m,i}=\left(L_{m,i}^{(1)}, \ldots, L_{m,i}^{(2m-1)}\right),
\end{equation}
respectively, where we note that $\mathcal R_{k}^{(i)}$ is a binary tree, i.e. it has $2m-1$ edges for $k_m^{(i)} \leq k \leq k_{m+1}^{(i)}-1$. Recall that $E_{m,i}^{(j)}$ is an internal edge of $\mathcal R_k^{(i)}$ if $1 \leq j \leq m-1$, and an external edge if $m \leq j \leq 2m-1$.

\begin{lemma}[Mass split in marked subtrees]\label{mms} Let $(\mathcal T_k^*, (\mathcal R_k^{(i)}, i \geq 1), \mu_k^*, k \geq 0)$ be as in Algorithm \ref{twocolourmass}, and fix some $i \geq 1$. Then, for $m \geq 1$, conditionally given $k_m^{(i)}=k$, the relative mass split in $\mathcal R_k^{(i)}$ given by
\begin{equation}
\mu_k^{*}\left( \mathcal R_{k}^{(i)}\right)^{-1} \left(\mu_k^{*}\left(E_{m,i}^{(1)}\right), \ldots, \mu_k^{*}\left(E_{m,i}^{(m-1)}\right),\mu_k^{*}\left(E_{m,i}^{(m)}\right), \ldots, \mu_k^{*}\left(E_{m,i}^{(2m-1)}\right) \right)  \label{msms}
\end{equation}
has a $\rm{Dirichlet}\left(\beta, \ldots, \beta, 1-2\beta, \ldots, 1-2\beta \right)$ distribution and is independent of $\mu_k^*(\mathcal R_k^{(i)})$ and of the mass split in $\mathcal T_{k}^* \setminus \mathcal R_k^{(i)}$. Furthermore, for $j \in [2m-1]$,
\begin{equation}
\left(  \mu_k^*\left( E_{m,i}^{(j)} \right)^{-\beta}
E_{m,i}^{(j)}, \mu_k^*\left( E_{m,i}^{(j)} \right)^{-1} \mu_k^* \restriction_{E_{m,i}^{(j)}} \right) \label{mssob1}
\end{equation}
is a $(\beta, \theta)$-strings of beads, where $\theta=\beta$ for $j \in [m-1]$ and $\theta=1-2\beta$ for $j \in [2m-1] \setminus [m-1]$. The strings of beads \eqref{mssob1} are independent of each other and of the mass split in $\mathcal R_k^{(i)}$ given by \eqref{msms}. 
Conditionally given that $k_{m+1}^{(i)}=k'$,
\begin{equation}
\mu_{k'}^*\left(\mathcal R_{k'}^{(i)}\right)=\left(1-Q_m^{(i)}\right)\mu_{k}^*\left(\mathcal R_{k}^{(i)}\right) \label{mssob3}
\end{equation}
where $Q_m^{(i)} \sim {\rm{Beta}}(\beta, m(1-\beta)+1-2\beta)$ is independent of $\mathcal R_{k'}^{(i)}$ normalised to unit mass.
\end{lemma}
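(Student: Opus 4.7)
The two statements \eqref{msms} and \eqref{mssob1} are essentially extractions from Proposition \ref{starstrings}. Conditioning on $k_m^{(i)}=k$ fixes the shape of $\mathcal R_k^{(i)}$ as a binary tree with $m$ leaves, $m-1$ internal edges and $m$ external edges. Proposition \ref{starstrings} then describes the total mass split in $\mathcal T_k^*$ as Dirichlet with parameter $\beta$ for each unmarked edge and each internal marked edge, and $1-2\beta$ for each external marked edge, jointly with independent $(\beta,\theta)$-strings of beads on each edge, the whole structure being independent of the shape. Applying the aggregation-deletion property of the Dirichlet distribution (Proposition \ref{Diri}(ii)) to the sub-vector of edge masses within $\mathcal R_k^{(i)}$ yields \eqref{msms}, together with its independence from the sum $\mu_k^*(\mathcal R_k^{(i)})$ of its components and from the Dirichlet vector describing the mass split in $\mathcal T_k^*\setminus\mathcal R_k^{(i)}$. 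The statement \eqref{mssob1} is then a direct read-off from Proposition \ref{starstrings}.

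For \eqref{mssob3}, observe first that for $k_m^{(i)}\le k\le k_{m+1}^{(i)}-1$ the restriction of $\mu^*$ to $\mathcal R^{(i)}$ is unchanged, since every insertion in these intermediate steps occurs at an atom outside $\mathcal R^{(i)}$ (otherwise $\mathcal R^{(i)}$ would acquire a new edge). At step $k_{m+1}^{(i)}$, an atom $J^*\in\mathcal R^{(i)}$ of mass $c=\mu_k^*(J^*)$ is selected by the two-step mass-biased procedure, and a fresh $\beta$-mixed string of mass $c$ is attached at $J^*$; its marked part, of mass $Bc$ with $B\sim{\rm Beta}(1-2\beta,\beta)$ independent of everything prior, joins $\mathcal R^{(i)}$, while its unmarked part, of mass $(1-B)c$, is diverted to a new unmarked branch. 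Therefore $\mu_{k'}^*(\mathcal R_{k'}^{(i)})=\mu_k^*(\mathcal R_k^{(i)})-(1-B)c$, which rearranges to $Q_m^{(i)}=(1-B)\cdot c/\mu_k^*(\mathcal R_k^{(i)})$.

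It remains to identify the distribution of $Y:=c/\mu_k^*(\mathcal R_k^{(i)})$ and the stated independence of $Q_m^{(i)}$ from the normalised $\mathcal R_{k'}^{(i)}$. By Lemma \ref{equivalence}, the two-step selection of $J^*$ is distributionally equivalent to a uniform mass-proportional sample from $\mu_k^*\restriction_{\mathcal R^{(i)}}/\mu_k^*(\mathcal R_k^{(i)})$, so $Y$ is the first size-biased pick from the normalised combined atom partition on $\mathcal R_k^{(i)}$. Invoking Lemma \ref{coag2} with aggregate Dirichlet parameters $\theta_1,\ldots,\theta_{2m-1}\in\{\beta,1-2\beta\}$ (summing to $\theta:=(m-1)\beta+m(1-2\beta)=m-(m+1)\beta$) and independent ${\rm GEM}(\beta,\theta_j)$ fragments on each edge (namely, the size-biased atom sequences of the $(\beta,\theta_j)$-strings of beads, ordered by least label as in the regenerative Chinese restaurant of Section \ref{Sec22}), one identifies the combined size-biased atom sequence on $\mathcal R_k^{(i)}$ as ${\rm GEM}(\beta,\theta)$. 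Hence $Y\sim{\rm Beta}(1-\beta,\beta+\theta)={\rm Beta}(1-\beta,m(1-\beta))$. Decomposing $(BY,(1-B)Y,1-Y)$ by the decimation property (Proposition \ref{Diri}(iii)) gives a ${\rm Dirichlet}(1-2\beta,\beta,m(1-\beta))$ joint distribution; its middle marginal yields $Q_m^{(i)}=(1-B)Y\sim{\rm Beta}(\beta,m(1-\beta)+1-2\beta)$, while aggregation-deletion of the middle coordinate shows that the normalised composition $(BY,1-Y)/(1-(1-B)Y)$—which, combined with the residual ${\rm PD}(\beta,\beta+\theta)$ partition from Proposition \ref{usefulPD} and the independent atoms on the newly attached $(\beta,1-2\beta)$-string, describes the mass structure of $\mathcal R_{k'}^{(i)}$ after normalisation—is independent of $Q_m^{(i)}$.

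The main obstacle will be the identification of the combined size-biased atom sequence on $\mathcal R_k^{(i)}$ as ${\rm GEM}(\beta,m-(m+1)\beta)$ via Lemma \ref{coag2}: this requires matching the size-biased orderings within each edge (from the labelled-string-of-beads perspective) with the Dirichlet aggregate and fragment roles in the coagulation statement, using the same combinatorial conventions as in the proof of Proposition \ref{algsame}.
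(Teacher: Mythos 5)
Your treatment of \eqref{msms} and \eqref{mssob1} is exactly the paper's: read off Proposition~\ref{starstrings} and apply Proposition~\ref{Diri}(ii). Your identity $Q_m^{(i)}=(1-B)\,c/\mu_k^*(\mathcal R_k^{(i)})$ with $c=\mu_{k'-1}^*(J_{k'-1}^*)$ and the observation that $\mu^*\restriction_{\mathcal R^{(i)}}$ is unchanged between insertions into the $i$th component are also the same as in the paper, and your final Beta parameters are correct.

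However, the route you take to the law of $Y=c/\mu_k^*(\mathcal R_k^{(i)})$ has a genuine gap. First, Lemma~\ref{equivalence} does not say that ``mass-proportional edge selection followed by coin-tossing'' is a uniform mass-proportional sample from $\mu_k^*\restriction_{\mathcal R^{(i)}}$: it compares mass-based selection with length-based selection, and the paper explicitly notes that $(\alpha,\theta)$-coin tossing reduces to sampling from the mass measure only when $\theta=\alpha$; on external marked edges $\theta=1-2\beta\neq\beta$ (unless $\beta=1/3$), so conditionally on the configuration the selected atom is \emph{not} a size-biased pick, and the distributional equivalence you want (for the pair ``selected mass, remaining normalised configuration'') would itself require an argument, essentially the Dirichlet$(\beta,1-\beta,\theta)$ split of Proposition~\ref{cointoss}. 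Second, the identification of the merged atom sequence on $\mathcal R_k^{(i)}$ as ${\rm GEM}(\beta,m-(m+1)\beta)$ via Lemma~\ref{coag2} is left unproven — you flag it yourself as ``the main obstacle'' — so the proposal does not actually close the argument. The paper avoids all of this with a short finite-dimensional Dirichlet computation: conditionally on which edge is selected, Proposition~\ref{Diri}(iv) size-biases that edge's parameter in the ${\rm Dirichlet}(\beta,\ldots,\beta,1-2\beta,\ldots,1-2\beta)$ split of Proposition~\ref{starstrings}, Proposition~\ref{cointoss} splits the selected edge as ${\rm Dirichlet}(\beta,1-\beta,\theta)$, and decimation/aggregation (Proposition~\ref{Diri}(i)--(iii)) give $Y\sim{\rm Beta}(1-\beta,m(1-\beta))$ irrespective of the selected edge; multiplying by the independent $1-\gamma\sim{\rm Beta}(\beta,1-2\beta)$ then yields $Q_m^{(i)}\sim{\rm Beta}(\beta,m(1-\beta)+1-2\beta)$, and the same Dirichlet manipulations give the independence from the normalised $\mathcal R_{k'}^{(i)}$, which in your write-up is also only sketched. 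Replacing your size-biasing/coagulation detour by this direct computation (it is spelled out in the appendix proof of Theorem~\ref{mainresult}(i)) would make the argument complete.
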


\begin{proof} This is a direct consequence of Proposition \ref{starstrings}, and Proposition \ref{Diri}(ii). To see \eqref{mssob3}, note that $\mathcal R_{k'}^{(i)} \setminus \mathcal R_k^{(i)}=E_{m+1, i}^{(2m)}$ and that $\mu_{k'}^*(E_{m+1,i}^{(2m)})=\gamma_k\mu_k^*(J_k^*)$ where $\gamma_k \sim {\rm Beta}(1-2\beta, \beta)$ is independent, and apply Proposition \ref{Diri}(i)-(ii). \end{proof}

\begin{corollary}[Length split in marked subtrees] \label{lms} In the setting of Lemma \ref{mms}, let $\widetilde{S}_{m,i}\!=\!\sum_{j\in[2m-1]} L_{m,i}^{(j)}$ denote the total length of $\mathcal R_{k_m^{(i)}}^{(i)}$, $m \geq 1$. Then, conditionally given $k_m^{(i)}=k$,
\begin{equation}
\left( L_{m,i}^{(1)}, \ldots, L_{m,i}^{(m-1)}, L_{m,i}^{(m)}, \ldots, L_{m,i}^{(2m-1)}\right)= \mu^*_k\left(\mathcal R_{k}^{(i)} \right)^{\beta} S_{m,i} \cdot \left(Z_{m,i}^{(1)}, \ldots, Z_{m,i}^{(m-1)},Z_{m,i}^{(m)}, \ldots, Z_{m,i}^{(2m-1)} \right)\label{nl1}
\end{equation}
where $\mu^*_{k}(\mathcal R_{k}^{(i)} )$, $S_{m,i}\sim {\rm{ML}}(\beta, (m-1)(1-\beta)+1-2\beta)$ and
\begin{equation*} \left(Z_{m,i}^{(1)}, \ldots, Z_{m,i}^{(m-1)},Z_{m,i}^{(m)}, \ldots, Z_{m,i}^{(2m-1)} \right) \sim {\rm{Dirichlet}}\left( 1, \ldots, 1, {1}/{\beta}-2, \ldots, {1}/{\beta}-2\right)\end{equation*} are independent.
In particular, $\widetilde{S}_{m,i}=\mu_{k}^*(\mathcal R_k^{(i)})^\beta S_{m,i}$. Furthermore, for $m \geq 1$, 
\begin{equation} \widetilde{S}_{m,i}=B_{m,i} \widetilde{S}_{m+1,i} \label{claimed}
\end{equation} where $B_{m,i} \sim {\rm Beta}(m(1/\beta-1), 1/\beta-2)$ and $\widetilde{S}_{m+1,i}$ are independent, i.e. the sequence of lengths of each marked subtree 
is a Markov chain with the same transition rule as the Mittag-Leffler Markov chain with parameter $\beta/(1-\beta)$ starting from 
${\rm ML}(\beta/(1-\beta), (1-2\beta)/(1-\beta))$.%, cf. Section \ref{subsec2}
\end{corollary}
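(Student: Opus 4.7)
The plan is to establish the two claims—the edge length decomposition \eqref{nl1} (with its consequence $\widetilde S_{m,i}=\mu_k^*(\mathcal R_k^{(i)})^\beta S_{m,i}$) and the Beta ratio identity \eqref{claimed}—via two applications of Proposition \ref{DIRML}. For \eqref{nl1}, I would combine Lemma \ref{mms} with Proposition \ref{DIRML}. Conditionally on $k_m^{(i)}=k$, Lemma \ref{mms} gives (i) a ${\rm Dirichlet}(\beta,\ldots,\beta,1-2\beta,\ldots,1-2\beta)$ mass split $X=(X_j)$ on the $2m-1$ edges of $\mathcal R_k^{(i)}$ (with $m-1$ internal $\beta$-parameters and $m$ external $(1-2\beta)$-parameters), independent of $\mu_k^*(\mathcal R_k^{(i)})$, and (ii) each normalised edge as an independent $(\beta,\theta)$-string of beads of total length $M^{(j)}\sim{\rm ML}(\beta,\theta_j)$. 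Writing $L_{m,i}^{(j)}=\mu_k^*(\mathcal R_k^{(i)})^\beta X_j^\beta M^{(j)}$ and applying Proposition \ref{DIRML} with $n=2m-1$ rewrites $(X_j^\beta M^{(j)})_j$ as $S_{m,i}\cdot Z_{m,i}$, where $S_{m,i}\sim{\rm ML}(\beta,(m-1)(1-\beta)+1-2\beta)$ and $Z_{m,i}\sim{\rm Dirichlet}(1,\ldots,1,1/\beta-2,\ldots,1/\beta-2)$ are independent. This yields \eqref{nl1}, and summing edge lengths gives $\widetilde S_{m,i}=\mu_k^*(\mathcal R_k^{(i)})^\beta S_{m,i}$.

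For \eqref{claimed}, the key idea is to decompose $\mathcal R_{k_{m+1}^{(i)}}^{(i)}$ into Group A, the $2m$ edges inherited from $\mathcal R_{k_m^{(i)}}^{(i)}$ (with the selected edge split into two pieces at the atom $J^*_{k_m^{(i)}}$), and Group B, the single new external marked edge added by the update. Since splitting an edge at a point atom preserves the sum of edge lengths (atoms carry no length), the total length of Group A is exactly $\widetilde S_{m,i}$, so $\widetilde S_{m+1,i}=\widetilde S_{m,i}+L^{\rm new}$, where $L^{\rm new}$ is the length of the new edge. A short case analysis according to whether the selected edge in $\mathcal R_{k_m^{(i)}}^{(i)}$ is internal or external shows that Group A always contains exactly $m$ internal and $m$ external edges of $\mathcal R_{k_{m+1}^{(i)}}^{(i)}$; by Lemma \ref{mms} applied at step $m+1$ together with aggregation of Dirichlet parameters (Proposition \ref{Diri}(ii)), the group-level mass split satisfies $(M_A,M_B)/\mu_{k'}^*(\mathcal R_{k'}^{(i)})\sim{\rm Dirichlet}(m(1-\beta),1-2\beta)$, independent of the total mass $\mu_{k'}^*(\mathcal R_{k'}^{(i)})=M_A+M_B$.

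Applying the same reasoning as in the proof of \eqref{nl1} separately within each group produces $\widetilde S_{m,i}=M_A^\beta\overline S_{m,i}$ and $L^{\rm new}=M_B^\beta K_1$, where $\overline S_{m,i}\sim{\rm ML}(\beta,m(1-\beta))$ arises from Group A (whose $2m$ within-group Dirichlet parameters sum to $m(1-\beta)$) and $K_1\sim{\rm ML}(\beta,1-2\beta)$ arises from Group B (a single $(\beta,1-2\beta)$-string of beads). The independence assertions of Lemma \ref{mms} at step $m+1$ (independent strings of beads on distinct edges, jointly independent from the full mass split and the total mass) ensure that $\overline S_{m,i},K_1,(M_A,M_B)/(M_A+M_B)$ and $\mu_{k'}^*(\mathcal R_{k'}^{(i)})$ are jointly independent. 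Proposition \ref{DIRML} with $n=2$, $\theta_1=m(1-\beta)$, $\theta_2=1-2\beta$, applied in the converse direction to the RHS $(M_A^\beta\overline S_{m,i},M_B^\beta K_1)$, then yields $(\widetilde S_{m,i},L^{\rm new})\stackrel{d}{=}\widetilde S_{m+1,i}\cdot(Y_1,Y_2)$ with $(Y_1,Y_2)\sim{\rm Dirichlet}(m(1/\beta-1),1/\beta-2)$ independent of $\widetilde S_{m+1,i}\sim{\rm ML}(\beta,m(1-\beta)+1-2\beta)$. Taking the first coordinate gives \eqref{claimed} with $B_{m,i}=Y_1\sim{\rm Beta}(m(1/\beta-1),1/\beta-2)$; matching $m/\beta'=m(1/\beta-1)$ and $1/\beta'-1=1/\beta-2$ for $\beta'=\beta/(1-\beta)$ identifies this transition with that of the Mittag-Leffler Markov chain of parameter $\beta'$. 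The main obstacle will be the careful bookkeeping of the Group A/Group B decomposition (especially the case analysis ensuring the Dirichlet parameter sum on Group A equals $m(1-\beta)$ regardless of whether the selected edge was internal or external) and the verification of the joint independence needed to invoke Proposition \ref{DIRML} in reverse.
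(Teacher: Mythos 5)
Your proposal is correct and follows essentially the same route as the paper: \eqref{nl1} via Lemma \ref{mms} combined with Proposition \ref{DIRML}, and \eqref{claimed} by decomposing the step-$(m+1)$ tree into the $2m$ inherited edges (whose total length is $\widetilde S_{m,i}$, with $m$ internal and $m$ external edges in either case) plus the new external edge, then using Dirichlet aggregation and Proposition \ref{DIRML} to identify the ${\rm Beta}(m(1/\beta-1),1/\beta-2)$ ratio independent of $\widetilde S_{m+1,i}$. The only difference is bookkeeping: you aggregate the mass split first and apply Proposition \ref{DIRML} within and across the two groups, whereas the paper applies Proposition \ref{DIRML} once at step $m+1$ and then aggregates the resulting length split via Proposition \ref{Diri}(i)--(ii); the two computations are equivalent.
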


\begin{proof} Fix $i \geq 1$, and set
$X_j= \mu_{k_m^{(i)}}^*(E_{m,i}^{(j)})$, $j \in [2m-1]$, so that $\sum_{j\in[2m-1]}X_j=\mu_{k_m^{(i)}}^*(\mathcal R_{k_m^{(i)}}^{(i)})$. 
By Lemma \ref{mms}, the edge lengths $L_{m,i}^{(j)}$, $j \in [2m-1]$, are given by $L_{m,i}^{(j)}=X_j^\beta M_m^{(j)}$ where $M_m^{(j)} \sim \text{ML}(\beta, \beta)$ for $j \in [m-1]$, $M_m^{(j)} \sim \rm{ML}(\beta, 1-2\beta)$ for $j \in [2m-1] \setminus [m-1]$, $\sum_{j\in[2m-1]}X_j$ and \begin{equation*} \left(\sum_{j\in[2m-1]} X_j\right)^{-1} \left(X_1, \ldots, X_{m-1}, X_m, \ldots X_{2m-1}\right) \sim {\text{Dirichlet}}\left(\beta, \ldots, \beta, 1-2\beta, \ldots, 1-2\beta\right) \end{equation*} are independent.
We apply Proposition \ref{DIRML} with $n=2m-1$, $\theta_j = \beta$ for $j \in [m-1]$ and $\theta_j=1-2\beta$ for $j \in [2m-1] \setminus [m-1]$ to the vector \begin{align}
\left( L_{m,i}^{(1)}, \ldots, L_{m, i}^{(2m-1)}\right) =\left(\sum_{j\in[2m-1]}X_j\right)^\beta \left( \left(\frac{X_1}{\sum_{j\in[2m-1]}X_j}\right)^\beta M_m^{(1)}, \ldots,\left(\frac{X_{2m-1}}{\sum_{j\in[2m-1]}X_j}\right) ^\beta M_m^{(2m-1)}\right)  \label{lab1}
\end{align}
Then $\theta=(m-1)(1-\beta)+1-2\beta$, and hence \eqref{nl1} follows. 

To see \eqref{claimed}, recall that $E_{m+1,i}^{(2m)}=\mathcal R_{k_{m+1}^{(i)}}^{(i)} \setminus \mathcal R_{k_{m}^{(i)}}^{(i)}$. By \eqref{lab1} for $m+1$, and Proposition \ref{Diri}(i)-(ii),
$\mu_{k_m^{(i)}}^*\big(\mathcal R_{k_m^{(i)}}^{(i)}\big)^{\beta}S_{m,i}=B_{m,i}
\mu_{k_{m+1}^{(i)}}^*\big(\mathcal R_{k_{m+1}^{(i)}}^{(i)}\big)^{\beta} S_{m+1,i}$ 
where the variables $S_{m+1,i}\sim {\rm ML}(\beta, m(1-\beta)+1-2\beta)$, $B_{m,i} \sim \text{Beta}(m(1/\beta-1), 1/\beta -2 )$ and  $\mu_{k_{m+1}^{(i)}}^*(\mathcal R_{k_{m+1}^{(i)}}^{(i)})$, are independent, i.e. $\widetilde{S}_{m,i}=B_{m,i}\widetilde{S}_{m+1,i}$.
\end{proof}

\begin{proof}[Proof of Theorem \ref{embford}]

(i) Consider a space $\mathbb T_{[m]}$ of weighted discrete $\mathbb R$-trees $(\mathcal T, \mu)$ with $m$ leaves labelled by $[m]$ and mass measure $\mu$ of total mass $\mu(\mathcal T) \in (0,1]$, $m\geq 1$, see e.g. \cite[Section 3.3]{1} for a formal introduction. We define transition kernels $\kappa_m$ from $\mathbb T_{[m]}$ to $\mathbb T_{[m+1]}$, $m \geq 1$: given any $(\mathcal T, \mu) \in \mathbb T_{[m]}$, 
\begin{itemize}
\item select an edge $E$ of $\mathcal T$ according to the normalised mass measure $\mu(\mathcal T)^{-1} \mu$; given $E$, select an atom $J$ of $\mu \restriction_E$ according to $(\beta, \theta)$-coin tossing sampling where $\theta=\beta$ if $E$ is internal, and $\theta=1-2\beta$ if $E$ is external; this determines a selection probability $p_m(x)$ for each atom $x \in \mathcal T$; 
\item  given $J$, let $\gamma \sim {\rm Beta}(1-2\beta, \beta)$ be independent, and attach to $J$ an independent $(\beta, 1-2\beta)$-string of beads with mass measure rescaled by $\gamma\mu(J)$ and metric rescaled by $(\gamma\mu(J))^{\beta}$, and label the new leaf by $m+1$.
\end{itemize}

We use the convention that if no atom is selected, we apply a scaling factor of $0$. Note that, in our setting with $(\beta, \beta)$-strings of beads on internal edges and $(\beta,1-2\beta)$-strings of beads on external edges, this does not happen almost surely. Denote by $\kappa_m((\mathcal T, \mu), \cdot)$ the distribution of the resulting tree. We further consider the kernel $\kappa_{0}(\cdot)=\kappa_0((\{\rho\}, \delta_\rho), \cdot)$ taking the singleton tree $\{\rho\}$ of mass $1$, and associating a $(\beta, 1-2\beta)$-string of beads with $\{\rho\}$. We will show that each process in \eqref{embford1} evolves according to the transition kernels $\kappa_m, m \geq 1$, starting from an independent $(\beta,1-2\beta)$-string of beads whose distribution is given by $\kappa_0(\cdot)$.

More formally, for $\ell \geq 1$ and some $m_i \geq 1$, $i \in [\ell]$, we will show that 
\begin{align} \mathbb E &\left[\prod_{i \in [\ell]} f_i \left( \left( \mathcal G_m^{(i)}, \mu_m^{(i)}\right), m \in [m_i] \right) \right] \nonumber \\ & \hspace{-0.2cm} = \prod_{i \in [\ell]} \int \int \cdots \int f_i \left( R_1, \ldots,  R_{m_i}\right) \kappa_{m_i-1}\left( R_{m_i-1},d  R_{m_i}\right) \cdots \kappa_1\left( R_1, d  R_2\right) \kappa_0 \left(d  R_1\right) \label{product} \end{align}
for any bounded continuous functions $f_i \colon \mathbb T_{[1]} \times \cdots \times \mathbb T_{[m_i]} \rightarrow \mathbb R$, $i \in [\ell]$.

We first show the equation \eqref{product} for $\ell=1$. For notational convenience, we write $( \mathcal G_m, \mu_m)=( \mathcal G_m^{(1)}, \mu_m^{(1)})$ and $f=f_1$. We further use the notation $\xi_{\beta, \beta}$ and $\xi_{\beta, 1-2\beta}$ for $(\beta, \beta)$- and $(\beta, 1-2\beta)$-strings of beads, respectively, and recall that we denote by $p_m(x)$ the selection probability of $x \in \mathcal T$ for $\mathcal T \in \mathbb T_{[m]}$ using the edge selection rule in combination with coin tossing sampling, as described above. $B_{\beta, 1-2\beta}(\cdot)$ denotes the density of Beta$(\beta, 1-2\beta)$.
We obtain, 
\begin{align*}
\mathbb E \left[ f \left( \mathcal G_1, \ldots, \mathcal G_{m_1} \right) \right] 
  =& \sum \limits_{k_1^{(1)}=1, k_2^{(1)}, \ldots, k_{m_1}^{(1)}} \int \limits_{\xi_0}  \sum_{v \in \mathcal \xi_0} \mu_0 \left(v\right) \int \limits_{x_1} {B}_{\beta,1-2\beta}(x_1) \\ 
   &\int \limits_{\xi_1} \left(1-\mu_0\left(v\right)\left(1-\overline{x}_1\right)\right)^{k_2^{(1)}-k_1^{(1)}-1} \mu_0\left(v\right)\left(1-\overline{x}_1\right) \sum \limits_{w_1 \in R_1} p_1\left(w_1\right) \int \limits_{x_2}  {B}_{\beta,1-2\beta}(x_2) \\ 
   & \int \limits_{\xi_2}  \cdots  \left(1-\mu_0\left(v\right)\prod \limits_{i \in [m_1-1]}\left(1-\overline{x}_i\right)\right)^{k_{m_1}^{(1)}-k_{m_1-1}^{(1)}-1} \mu_0\left(v\right)\prod \limits_{i \in [m_1-1]}\left(1-\overline{x}_i\right)  \\ 
   & \qquad\qquad\sum \limits_{w_{m_1-1} \in R_{m_1-1}} p_{m_1-1}\left(w_{{m_1}-1}\right) \int \limits_{x_{m_1}}  {B}_{\beta,1-2\beta}(x_{m_1}) \int \limits_{\xi_{m_1}} f\left( R_1, \ldots, R_{m_1} \right)  \\ 
   & \mathbb P\left(\xi_{\beta, 1-2\beta} \in d\xi_{m_1}\right)dx_{m_1} \cdots \mathbb P\left(\xi_{\beta, 1-2\beta} \in d\xi_2\right) dx_2 \mathbb P\left(\xi_{\beta, 1-2\beta} \in d\xi_1\right)dx_1  \mathbb P\left(\xi_{\beta, \beta} \in d\xi_0\right)\end{align*}
where 
\begin{itemize}
\item $\mu_0$ is the mass measure of $\xi_0$;
\item $R_1=\xi_1$ with mass measure $\mu_1^{(1)}$ is the initial string of beads, and, for $m \geq 2$, $R_m$ with mass measure $\mu_m^{(1)}$ is created by attaching to $w_{m-1}\in R_{m-1}$ the string of beads $\xi_m$ rescaled by the proportion $x_{m-1}$ of the mass of $w_{m-1}$;
\item the sequence $(\overline{x}_i, i \geq 1)$ is defined by $\overline{x}_1=x_1$, $\overline{x}_i = 1-\frac{\mu_{i-1}^{(1)}(w_{i-1})}{\mu^{(1)}_{i-1}(R_{i-1})}(1-x_{i})$, $i=2, \ldots, m_1$;
\item the integrals are taken over the whole ranges of $x_i \in [0,1]$ and the subspaces of $\xi_i\in\mathbb T_{\rm w}$ that correspond to strings of beads.
\end{itemize}

Note that $\mu_0\left(v\right)\prod_{i \in [m-1]} (1-\overline{x}_i)$ 
is the relative remaining mass of the first marked component after $m$ transition steps have been carried out in this component.

We can move the sum over $k_1^{(1)}, \ldots, k_{m_1}^{(1)}$ inside the integrals, and note that there is only one term which depends on $k_{m_1}^{(1)}$. Moving the sum over $k_{m_1}^{(1)}$ in front of this factor, we obtain 
\begin{equation*}
\sum \limits_{k_{m_1}^{(1)} \geq k_{m_1-1}^{(1)}+1 } \left(1-\mu_0\left(v\right) \prod \limits_{i \in [m_1-1]} \left(1-\overline{x}_i\right)\right)^{k_{m_1}^{(1)} - k_{m_1-1}^{(1)}-1}\mu_0\left(v\right) \prod \limits_{i \in [m_1-1]} \left(1-\overline{x}_i\right) =1
\end{equation*}
as this is the sum over the probability mass function of a geometric random variable (there are infinitely many insertions into the first marked component almost surely). We can proceed inductively and sum the corresponding geometric probabilities over $k_1^{(1)}, \ldots, k_{m_1-1}^{(1)}$ to obtain
\begin{align*}
\mathbb E \left[ f \left( \mathcal G_1, \ldots, \mathcal G_{m_1} \right) \right] =
  &\int \limits_{\xi_0}  \sum_{v \in \mathcal \xi_0} \mu_0\left(v\right) \int \limits_{x_1} {B}_{\beta,1-2\beta}(x_1) \int \limits_{\xi_1} \sum \limits_{w_1 \in R_1} p_1\left(w_1\right)  \int \limits_{x_2}  {B}_{\beta,1-2\beta}(x_2) \\
  & \int \limits_{\xi_2}  \cdots \sum \limits_{w_{m_1-1} \in R_{m_1-1}}  p_{m_1-1}\left(w_{{m_1}-1}\right) \int \limits_{x_{m_1}}  {B}_{\beta,1-2\beta}(x_{m_1}) \int \limits_{\xi_{m_1}}   f\left(R_1, \ldots, R_{m_1}\right) \\
  & \mathbb P\left(\xi_{\beta, 1-2\beta} \in d\xi_{m_1}\right)dx_{m_1} \cdots \mathbb P\left(\xi_{\beta, 1-2\beta} \in d\xi_2\right) dx_2 \mathbb P\left(\xi_{\beta, 1-2\beta} \in d\xi_1\right)dx_1 \mathbb P\left(\xi_{\beta, \beta} \in d\xi_0\right).
\end{align*}

We can now take the sum $ \sum_{v \in \xi_0} \mu_0(v)=1$ and the outer integral, as the inner terms are independent of $\mu_0(v)$ and $\xi_0$. This results in  
\begin{align*}
\mathbb E \left[ f \left( \mathcal G_1, \ldots, \mathcal G_{m_1} \right) \right] =
  & \int \limits_{x_1} {B}_{\beta,1-2\beta}(x_1) \int \limits_{\xi_1} \sum \limits_{w_1 \in R_1} p_1\left(w_1\right) \int \limits_{x_2}  {B}_{\beta,1-2\beta}(x_2) \\
  &  \int \limits_{\xi_2}  \cdots\sum \limits_{w_{m_1-1} \in R_{m_1-1}}  p_{m_1-1}\left(w_{{m_1}-1}\right)  \int \limits_{x_{m_1}}  {B}_{\beta,1-2\beta}(x_{m_1})  \int \limits_{\xi_{m_1}}  f\left(R_1, \ldots, R_{m_1}\right)  \\
  & \mathbb P\left(\xi_{\beta, 1-2\beta} \in d\xi_{m_1}\right)dx_{m_1} \cdots  \mathbb P\left(\xi_{\beta, 1-2\beta} \in d\xi_2\right) dx_2 \mathbb P\left(\xi_{\beta, 1-2\beta} \in d\xi_1\right)dx_1. 
\end{align*}
We recognise the definition of the transition kernels $\kappa_m, m \geq 1$, and rewrite this integral in the form \begin{align*}
\mathbb E &\left[f \left( \mathcal G_1, \ldots, \mathcal G_{m_1} \right) \right]  = \int \limits \int \limits \cdots \int \limits f\left(R_1, \ldots, R_m\right)   \kappa_{m-1}\left(R_{m-1}, dR_m\right) \cdots \kappa_1\left(R_1, dR_2\right) \kappa_0\left(dR_1\right) \end{align*}
 
To see \eqref{product} in the general setting, we express the left-hand side in terms of the distribution of $(\mathcal T_0^*, \mu_0^*)$ and the two-colour transition kernels, which can be described via Algorithm \ref{twocolourmass}, as a sum over $k_j^{(i)}, j\in [m_i], i \in [\ell]$. Then we can proceed as follows.
\begin{itemize}
\item First integrate out irrelevant transitions which affect components $i \geq \ell+1$ and parts of earlier transitions such as unmarked strings of beads after the creation of the $\ell$th component. These transitions do not affect the marked components $i \in [\ell]$.

\item Move the sums over $k_{m_\ell}^{(\ell)}, \ldots, k_2^{(\ell)}$ inside the integrals. Notice that there is only one term depending on $k_{m_\ell}^{(\ell)}$, i.e. we obtain the sum over $k_{m_\ell}^{(\ell)} \geq k_{m_\ell-1}^{(l)}$, $k_{m_\ell}^{(l)} \neq k_j^{(i)}, j \in [m_i], i \in [\ell-1]$ of the probabilities of selecting the $\ell$th marked component at step $k_{m_\ell}^{(\ell)}$, skipping indices $k_j^{(i)}$ of insertions into other marked components $i \in [\ell-1]$, i.e.
\begin{align*}
\sum \limits_{k_{m_\ell}^{(\ell)} \geq k_{m_\ell-1}^{(\ell)}+1, k_{m_\ell}^{(l)} \neq k_j^{(i)}, j \in [m_i], i \in [\ell-1]} &\left( 1- \mu_{k_1^{(\ell)}-1}\left(v_\ell\right) \prod_{r \in [m_\ell-1]}\left(1-\overline{x}_r^{(\ell)}\right)\right)^{ k(m,\ell)}   \\&\mu_{k_1^{(\ell)}-1}\left(v_\ell\right) \prod_{r \in [m_\ell-1]}\left(1-\overline{x}_r^{(\ell)}\right),
\end{align*}
where $k(m,\ell):=k_{m_\ell}^{(\ell)}-k_{m_\ell-1}^{(\ell)}-\#\{k_{m_\ell -1}^{(\ell )} < k < k_{m_\ell }^{(\ell )}\colon k=k_j^{(i)}, j \in [m_i], i \in [\ell -1]\}$, 
and where the sequences $({x}_{i}^{(\ell )}, i \geq 1)$ and $(\overline{x}_{i}^{(\ell )}, i \geq 1)$ are defined as $(x_i, i \geq 1)$ and $(\overline{x}_i, i \geq 1)$, respectively. Note that
\begin{equation*}
\mu_{k_1^{(\ell )}-1}\left(v_\ell \right) \prod_{r \in [m-1]}\left(1-\overline{x}_r^{(\ell )}\right) \end{equation*}
is the mass of the $\ell $th marked component after $m$ transition steps have been carried out in this component. As we have a sum over the probability mass function of a geometric random variable, no matter when insertions into components $i \in [\ell -1]$ happen, this sum is $1$. We can proceed inductively down to $k_2^{(\ell )}$.

\item The sum over the insertion point $v_\ell $ is just a sum over the bead selection probabilities $$\mu_{k_1^{(\ell )}-1}(v_\ell ), \quad k_1^{(\ell )} \geq k_1^{(\ell -1)}+1,$$ which sum to the probability of creating the $\ell $th component (no matter what the sizes of the other components are at this step). The sum over $k_1^{(\ell )}$ is not geometric but it is a sum over the probabilities of success in a Bernoulli sequence with increasing success probability. This sum is again $1$ (as we will open the $\ell $th marked component with probability one).

\item We can put the integrals over the ingredients for the $\ell $th subtree growth process in front of the other integrals, as they do not depend on anything else.

\item Inductively, for $j=\ell -1, \ldots, 1$, repeat these steps to lose all sums over insertion times $k_i^{(j)}$ and first insertion points $v_i$, $i \in [\ell ]$. 

\item Finally, the integrand of the outer integral over the distribution of $\xi_0$ is constant, so the integral can be dropped. We obtain precisely the product form of the right-hand side \eqref{product}. \qedhere
\end{itemize}

(ii) Note that, by Lemma \ref{mms} (and Proposition \ref{starstrings}), for each $i$ and $k=k_m^{(i)}-1$ for some $m \geq 1$, we are in the situation of Lemma \ref{equivalence} with $n=2m-1$, $\theta_1=\cdots=\theta_{m-1}=\beta$, $\theta_m =\cdots=\theta_{2m-1}=1-2\beta$, $\alpha=\beta$. We recover Algorithm \ref{Ford} with index $\beta'=\beta/(1-\beta)$ and the \enquote{wrong} starting length ${\rm ML}(\beta,1-2\beta)$, cf. Corollary \ref{lms}.

\medskip

(iii) First, note that, by Corollary \ref{lms}, the lengths of the trees $C_m^{(i)} \mathcal R_{k_m^{(i)}}^{(i)}$ do not depend on $\mu_{k_m^{(i)}}^*(\mathcal R_{k_m^{(i)}}^{(i)})$. Fix some $i \geq 1$ and recall from Lemma \ref{mms} that there are independent random variables $Q_m^{(i)}\sim {\rm Beta}(\beta, m(1-\beta)+1-2\beta)$ such that 
\begin{equation*}
\mu_{k_{m+1}^{(i)}}^*\left(\mathcal R_{k_{m+1}^{(i)}}^{(i)}\right)=\left(1-Q_m^{(i)}\right)\mu_{k_{m}^{(i)}}^*\left(\mathcal R_{k_{m}^{(i)}}^{(i)}\right), \quad m \geq 1.
\end{equation*}
Define $P_1^{(i)}:=Q_1^{(i)}\sim {\rm Beta}(\beta, 2-3\beta)$, and, for $m \geq 1$, define $P_m^{(i)}:= \overline{Q}_1^{(i)} \overline{Q}_2^{(i)} \cdots  \overline{Q}_{m-1}^{(i)} Q_m^{(i)}$, 
where $\overline{Q}=1-Q$ for any random variable $Q$. Note that $P_m^{(i)}$ is the proportion of the mass of $\mu_{k_1^{(i)}}^{*}(\mathcal R_{k_1^{(i)}}^{(i)})$ attached to the $(m+1)$st leaf of the $i$th marked component. 

We recognise the stick-breaking construction \eqref{v1v2} of a PD$(1-\beta, 1-2\beta)$ vector $(P_m^{(i)}, m \geq 1)^{\downarrow}$, and obtain the corresponding $(1-\beta)$-diversity $H^{(i)}$ by
\begin{equation}
H^{(i)}=\lim_{m \rightarrow \infty} \left(1-\sum_{j\in[m]} P_j^{(i)}\right)^{1-\beta} \left(1-\beta\right)^{-(1-\beta)}m^{\beta} \sim {\rm ML}\left(1-\beta, 1-2\beta \right),
\end{equation}
as in \eqref{alphadiv}. Now fix some $m_0 \geq 1$ and let $k \geq k_{m_0}^{(i)}$. We consider the reduced tree

\begin{equation}
\mathcal R\left(C_m^{(i)} \mathcal R_{k_m^{(i)}}^{(i)}, \Omega_1^{(i)}, \ldots, \Omega_{m_0}^{(i)} \right) \label{red516} \end{equation}
spanned by the root $v_i$ and the leaves $\Omega_1^{(i)}, \ldots, \Omega_{m_0}^{(i)}$ of $\mathcal R_k^{(i)}$. Recall from (i), Corollary \ref{lms} and Proposition \ref{Forddis} that the shape and the Dirichlet$(1, \ldots, 1, 1/\beta-2, \ldots, 1/\beta-2)$ length split between the edges $E_{m_0,i}^{(i)}, \ldots, E_{m_0,i}^{(2m_0-1)}$ of $\mathcal R_{k_{m_0}^{(i)}}^{(i)}$ are as required for the reduced tree associated with a Ford CRT of index $\beta'$.
Scaling by $C_m^{(i)}$ only affects the total length of the reduced tree \eqref{red516}. We will show that the total length of \eqref{red516} scaled by $C_m^{(i)}$ converges a.s. to some $S_{m_0}' \sim {\rm ML}(\beta', m_0-\beta')$, which is the total length of the reduced tree spanned by the root and the first $m_0$ leaves of a Ford CRT of index $\beta'$, i.e. that 
\begin{equation*} \lim_{m \rightarrow \infty}  {\rm Leb}\left(\mathcal R\left(C_m^{(i)} \mathcal R_{k_m^{(i)}}^{(i)}, \Omega_1^{(i)}, \ldots, \Omega_{m_0}^{(i)} \right) \right) = S_{m_0}' \sim {\rm ML}\left(\beta', m_0-\beta'\right) \end{equation*}
where we will use that 
$$C_m^{(i)}:= \left(1-\beta\right)^{\beta}m^{-\beta^2/(1-\beta)}  \mu_{k_m^{(i)}}^*\left(\mathcal R_{k_m^{(i)}}^{(i)}\right)^{-\beta}=\left(1-\sum_{j\in[m]} P_j^{(i)}\right)^{-\beta} \left(1-\beta\right)^{\beta}m^{-\beta^2/(1-\beta)}  \mu_{k_1^{(i)}}^*\left(\mathcal R_{k_1^{(i)}}^{(i)}\right)^{-\beta} \label{uhusd},$$
since $1-\sum_{j\in[m]} P_j^{(i)}=\mu_{k_m^{(i)}}^*(\mathcal R^{(i)}_{k_m^{(i)}})/\mu_{k_1^{(i)}}^*(\mathcal R^{(i)}_{k_1^{(i)}})$. Hence $\lim \limits_{m \rightarrow \infty} C_i(m)=(H^{(i)})^{-\beta/(1-\beta)}\mu_{k_1^{(i)}}^*(\mathcal R_k^{(i)})^{-\beta}$ a.s.. Note that $H^{(i)}$ is independent of $\mu_{k_1^{(i)}}^*(\mathcal R_k^{(i)})$ as it only depends on the sequence of independent random variables $(Q_i, i \geq 1)$ which is independent of $\mu_{k_1^{(i)}}^*(\mathcal R_k^{(i)})$.

The shape of $\mathcal R^{(i)}_{k_{m}^{(i)}}$ has the same distribution as the shape of $\mathcal F_m$ where $(\mathcal F_m, m \geq 1)$ is a Ford tree growth process of index $\beta'$. In particular, we already know that the number of edges $N_m+2m_0-1, m \geq m_0$, of the reduced trees \eqref{red516} as a subset of $\mathcal R_{k_m^{(i)}}^{(i)}$ behaves like the number of tables in a $(\beta', m_0-\beta')$-CRP, started at $m_0$, i.e.\ by \eqref{tables},
\begin{equation}
\lim_{m \rightarrow \infty} \left(m-m_0\right)^{-\beta/(1-\beta)} N_m =\lim_{m \rightarrow \infty} m^{-\beta/(1-\beta)} N_m =S'_{m_0}  \quad\text{ a.s. }  \label{scaling1}
\end{equation}
where $S'_{m_0}  \sim {\rm ML}(\beta', m_0-\beta')$. By Lemma \ref{mms}, we conclude that, in the limit, the length of $\mathcal R_{k_{m_0}^{(i)}}^{(i)}$ is given by \begin{equation} \lim_{m \rightarrow \infty} \mu^*_{k_m^{(i)}}\left(\mathcal R_{k_m^{(i)}}^{(i)}\right)^{\beta} \sum_{j\in[N_m]} \left(X'_j\right)^{\beta} M^{(j)} = \mu_{k_{m_0}^{(i)}}^*\left(\mathcal R_{k_{m_0}^{(i)}}^{(i)} \right)^\beta S_{m_0,i} =\widetilde{S}_{m_0,i}\label{scaling2} \end{equation} where $X':=(X_1', \ldots, X_{m-1}', X_m', \ldots, X_{2m-1}') \sim {\rm Dirichlet}(\beta, \ldots, \beta, 1-2\beta, \ldots, 1-2\beta)$,  $\mu^*_{k_m^{(i)}}(\mathcal R_{k_m^{(i)}}^{(i)})$, $(N_m, m \geq 1)$ and the i.i.d. random variables $M^{(j)} \sim {\rm ML}(\beta, \beta), j \geq 1$, are independent.. Note that we do not consider the lengths of the $m_0$ external edges leading to the leaves of the reduced tree \eqref{red516} and the initial $m_0-1$ internal edges, which does not affect the asymptotics. 
We will use the representation of a Dirichlet vector $X'\sim {\rm Dirichlet}(\beta, \ldots, \beta, 1-2\beta, \ldots, 1-2\beta)$ in terms of independent Gamma variables, i.e.
\begin{equation*} X'\,{\buildrel d \over=}\,Y^{-1}\left(Y_1, \ldots,Y_{m-1}, Y_1', \ldots, Y'_{m}\right) \end{equation*}
for independent i.i.d. sequences $(Y_j, j \geq 1)$, $(Y_j', j \geq 1)$ with $Y_1 \sim {\rm Gamma}(\beta, 1)$, $Y_1 \sim {\rm Gamma}(1-2\beta,1)$, and $Y=\sum_{j\in[m-1]} Y_j + \sum_{j\in[m]}Y_j' \sim {\rm Gamma}((m-1)(1-\beta)+ 1-2\beta,1)$. By \eqref{scaling2}, \begin{align*}
C_m^{(i)} \widetilde{S}_{m_0,i} &= C_m^{(i)} \mu^*_{k_m^{(i)}}\left(\mathcal R_{k_m^{(i)}}^{(i)}\right)^{\beta} \left(\sum_{j\in[N_m+(m_0-1)]} \left(X'_j\right)^{\beta} M^{(j)} + \sum_{j=0}^{m_0-1} \left(X'_{j+m}\right)^{\beta} \overline{M}^{(j)} \right)\end{align*}
where $\overline{M}_j$, $j \geq 1$, are i.i.d. with $\overline{M}^{(1)} \sim {\rm ML}(\beta, 1-2\beta)$ and independent of $X'$ and $N_m$, $m \geq 1$, and hence $C_m^{(i)} \widetilde{S}_{m_0,i}$ has the same distribution as \begin{align}
\frac{N_m\left(1-\beta\right)^{\beta}}{m^{\beta/(1-\beta)}} \left(m^{-1} \left(\sum_{j\in[m-1]} Y_j + \sum_{j\in[m]}Y_j' \right)\right)^{-\beta} \left(N_m^{-1} \left(\sum_{j\in[N_m+(m_0-1)]} Y_j^{\beta}M^{(j)}+\sum_{j\in[m_0]}Y_j'^{\beta}\overline{M}^{(j)} \right)\right). \label{cim}
\end{align}

By the strong law of large numbers, we have $\lim_{m \rightarrow \infty} N_m^{-1} \sum_{j\in[N_m]} Y_j^{\beta}M_m^{(j)} = \mathbb E[Y_1^\beta M_m^{(j)}]=1$ a.s. since $N_m \rightarrow \infty$ a.s., $\mathbb E[Y_1^{\beta}]=\Gamma(2\beta)/\Gamma(\beta)$, and where we use the first moment of the Mittag-Leffler distribution \eqref{mlmoms}. Furthermore, note that $Y_j'':=Y_j+Y_j' \sim {\rm Gamma}(1-\beta, 1)$, $j \in [m-1]$, are i.i.d., and hence $m^{-1} (\sum_{j\in[m-1]}Y_j + \sum_{j\in[m]}Y_j') \rightarrow \mathbb{E}[Y_1'']= 1-\beta$ a.s..
By \eqref{scaling1}, we conclude that the expression in \eqref{cim} converges to $S_{m_0}'$ a.s. where $S_{m_0}' \sim {\rm ML}(\beta', m_0-\beta')$. We already know that $\mathcal R^{(i)}_{k_m^{(i)}}$ and the scaling factor $C_m^{(i)}$ converge almost-surely, and hence, by Proposition \ref{Forddis}, \begin{equation*} \lim_{m \rightarrow \infty} C_m^{(i)} \mathcal R_{k_{m_0}^{(i)}}^{(i)}=\mathcal F_{m_0}^{(i)} \quad \text{ a.s. } \end{equation*} for $(\mathcal F_m^{(i)}, m\geq 1)$ are i.i.d. Ford tree growth processes of index $\beta'$, i.e. (ii) follows as $m_0 \rightarrow \infty$. \qedhere \end{proof}

%\section*{Acknowledgement} 

\bibliographystyle{abbrv}
\bibliography{BinEmb}	 

\nocite{*}

\end{document}